\documentclass{amsart}
  \usepackage[top=1in, bottom=1in, left=1.375in, right=1.375in]{geometry}
 \usepackage{amsmath,amssymb}
 \usepackage{algpseudocode}
 \usepackage{algorithm}
 \usepackage{algorithmicx}
 \usepackage{caption}
 \usepackage{subcaption}
 \usepackage{amsthm}
 \numberwithin{equation}{section}
 \usepackage{amsfonts}
 \usepackage{graphicx}
 \usepackage{hyperref}

\usepackage{amsmath}
\usepackage{amssymb}
\usepackage{amsthm}
\usepackage{amsmath}
\usepackage{setspace}
\usepackage{xcolor}
\usepackage{fancyhdr}
\usepackage{amssymb}
\usepackage{amsthm}
\usepackage{listings}
    \usepackage{cite}
    \usepackage{tabularx}
    \usepackage{graphicx}
    \usepackage{epstopdf}    
    \usepackage{epsfig}
    \usepackage{float}
    \usepackage{ listings}
    \usepackage{appendix}
 \theoremstyle{plain}
  \newtheorem{thm}{Theorem}
 \newtheorem{prop}{Proposition}[section]
 \newtheorem{lem}[prop]{Lemma}
 
 \theoremstyle{definition}
 \newtheorem{definition}[prop]{Definition}
 
 \theoremstyle{remark}
 \newtheorem{remark}[prop]{Remark}

 \let\pa=\partial
 \let\al=\alpha
 \let\b=\beta
 \let\d=\delta
 \let\g=\gamma
 \let\e=\varepsilon

 \let\lam=\lambda
 
 \let\s=\sigma
 \let\f=\frac
  \let\infim = \inf
 \let\inf = \infty
 \let \les = \lesssim
 \let \gtr = \gtrsim
 \let\om=\omega
 
 \let \th = \theta
  \let \vth = \vartheta
 \let \vp = \varphi
 \let\G= \Gamma
\let\B = \Big
 \let\D=\Delta
 \let\Lam=\Lambda
 \let\S=\Sigma
 \let\Om=\Omega
 \let \Ups = \Upsilon
 \let\td = \tilde
 \let\wt=\widetilde

 \let\teq \triangleq
 
 \let\pa=\partial
 \let \bsh = \backslash

 \def \dist{{\mathrm dist}}

 \def\cA{{\mathcal A}}
 
 \def\cC{{\mathcal C}}

 \def\cH{{\mathcal H}}

 \def\cL{{\mathcal L}}

 \def\cP{{\mathcal P}}
 
 \def\cR{{\mathcal R}}
 
 \def\cT{{\mathcal T}}

 \def\cW{{\mathcal W}}


 \def\na{\nabla}
 \def\la{\langle}
 \def\ra{\rangle}
\def\lt{\left}
\def\rt{\right}
\def\one{\mathbf{1}}

 \newcommand{\beq}{\begin{equation}}
 \newcommand{\eeq}{\end{equation}}
  \newcommand{\bal}{\begin{aligned} }
  \newcommand{\eal}{\end{aligned}}
 \newcommand{\ben}{\begin{eqnarray}}
 \newcommand{\een}{\end{eqnarray}}
 \newcommand{\beno}{\begin{eqnarray*}}
 \newcommand{\eeno}{\end{eqnarray*}}




 \newcommand{\uu}{\mathbf{u}}
 \newcommand{\vv}{\mathbf{v}}
 
 \newcommand{\xx}{\mathbf{x}}

  \newcommand{\LL}{\mathbf{L}}

 \newcommand{\R}{\mathbb{R}}

  \newcommand{\BT}{\mathbb{T}}
  \newcommand{\BZ}{\mathbb{Z}}

 \newcommand{\supp}{\mathrm{supp}}

 \author{Jiajie Chen and Thomas Y. Hou}
 \address{Applied and Computational Mathematics, California Institute of Technology, Pasadena, CA 91125, USA. Email: jchen@caltech.edu}
 \date{\today}

\title[Euler stability ]{
On stability and instability of $C^{1,\alpha}$ singular solutions to the 3D Euler and 2D Boussinesq equations}

 \begin{document}
\begin{abstract}
Singularity formation of the 3D incompressible Euler equations is known to be extremely challenging \cite{majda2002vorticity,gibbon2008three,kiselev2018,Elg22,constantin2007euler}. In \cite{elgindi2019finite} (see also \cite{elgindi2019stability}), Elgindi proved that the 3D axisymmetric Euler equations with no swirl and $C^{1,\alpha}$ initial velocity develops a finite time singularity. Inspired by Elgindi's work,  we proved that the 3D axisymmetric Euler and 2D Boussinesq equations with $C^{1,\alpha}$ initial velocity and boundary develop a stable asymptotically self-similar (or approximately self-similar) finite time singularity \cite{chen2019finite2} in the same setting as the Hou-Luo blowup scenario \cite{luo2014potentially,luo2013potentially-2}. On the other hand, the authors of \cite{vasseur2020blow,lafleche2021instability}  recently showed that blowup solutions to the 3D Euler equations are hydrodynamically unstable. The instability results obtained in \cite{vasseur2020blow,lafleche2021instability} require some strong regularity assumption on the initial data, which is not satisfied by the $C^{1,\alpha}$ velocity field. In this paper, we generalize the analysis of \cite{elgindi2019finite,chen2019finite2,vasseur2020blow,lafleche2021instability} to show that the blowup solutions of the 3D Euler and 2D Boussinesq equations with $C^{1,\alpha}$ velocity are unstable under the notion of stability introduced in \cite{vasseur2020blow,lafleche2021instability}. These two seemingly contradictory results reflect the difference of the two approaches in studying the stability of 3D Euler blowup solutions. The stability analysis of the blowup solution obtained in \cite{elgindi2019finite,chen2019finite2} is based on the stability of a dynamically rescaled blowup profile in space and time, which is nonlinear in nature. The linear stability analysis in \cite{vasseur2020blow,lafleche2021instability} is performed by directly linearizing the 3D Euler equations around a blowup solution in the original variables. It does not take into account the changes in the blowup time, the dynamic changes of the rescaling rate of the perturbed blowup profile and the blowup exponent of the original 3D Euler equations using a perturbed initial condition when there is an approximate self-similar blowup profile. Such information has been used in an essential way in establishing the nonlinear stability of the asymptotically self-similar blowup profile in \cite{elgindi2019finite,chen2019finite2,elgindi2019stability}.

\end{abstract}

 \maketitle

\vspace{-0.1in}
\section{Introduction}

Whether the 3D incompressible Euler equations can develop a finite time singularity from smooth initial data with finite energy is one of the most challenging open questions in nonlinear partial differential equations \cite{majda2002vorticity,gibbon2008three,kiselev2018,Elg22,constantin2007euler}. 
In \cite{luo2014potentially,luo2013potentially-2}, the authors provided convincing numerical evidence that the 3D incompressible Euler equations with smooth initial data and boundary develop a finite time singularity. This work has inspired a number of subsequent theoretical studies,see e.g.\cite{kiselev2013small,choi2015finite,kiselev2015finite,choi2014on,
chen2019finite,chen2019finite2,chen2021HL,elgindi2018finite,elgindi2017finite}. Inspired by Elgindi's seminal work on singularity formation of the 3D axisymmetric Euler equations with no swirl and   $C^{1,\alpha}$ velocity \cite{elgindi2019finite}, we have proved rigorously that the axisymmetric Euler and the 2D Boussinesq equations with $C^{1,\alpha}$ initial velocity of finite energy and boundary develop a stable asymptotically self-similar (or approximately self-similar) finite time singularity \cite{chen2019finite2}. There has been some important progress on singularity formation and small-scale creation in incompressible fluids. We refer to \cite{kiselev2018,Elg22} for excellent surveys. On the other hand, in two recent papers \cite{vasseur2020blow,lafleche2021instability}, the authors showed that blow-up solutions to the 3D Euler equations are hydrodynamically unstable. The instability results obtained in \cite{vasseur2020blow,lafleche2021instability}  require some strong regularity assumption on the initial data, which is not satisfied by the $C^{1,\alpha}$ velocity. In this paper, we generalize the analysis of \cite{elgindi2019finite,chen2019finite2,elgindi2019stability,vasseur2020blow,lafleche2021instability}  to prove that the $C^{1,\al}$ blowup solutions of the 3D Euler and the 2D Boussinesq equations \cite{elgindi2019finite,chen2019finite2,elgindi2019stability} are unstable under the notion of stability introduced in \cite{vasseur2020blow,lafleche2021instability}. 

These two seemingly contradictory results reflect the difference of the two approaches in studying the stability of singular solutions to the 3D Euler equations. The stability analysis in \cite{vasseur2020blow,lafleche2021instability} is based on the linearized Euler equations around a blowup solution in the original physical variables. However, the perturbed solution of the linearized Euler equations is completely different from the perturbed solution of the original 3D Euler equations using a perturbed initial condition. If the perturbed initial condition leads to a blowup time $T^*$ that is smaller than the blowup time $T$ of the background blowup solution, i.e. $T^* < T$, the perturbed solution of the linearized Euler equations would not be able to capture this effect and will remain regular for  $t \in [T^*,T)$. 
On the other hand, if $T^* > T$, then the  perturbed solution of the linearized Euler equations cannot be extended beyond $T$ due to the singularity of the background singular solution. But the solution of the original Euler equations is still regular for $t \in [T,T^*)$. Thus, the linearized Euler equations do not capture the singular behavior of the original Euler equations close to the blowup time due to a small perturbation in the initial data. This seems to be one of the main sources of instability induced by the framework of studying stability of a singular solution to the 3D Euler equations using the linearized Euler equations. Note that the blowup time $T^*$ depends nonlinearly on the perturbed initial data \cite{chen2019finite2,elgindi2019stability}.


The nonlinear stability of the asymptotically self-similar (or approximately self-similar) blowup profile using the dynamic rescaling formulation or the modulation technique in \cite{elgindi2019finite,chen2019finite2,elgindi2019stability} is very different from the linear stability performed in \cite{vasseur2020blow,lafleche2021instability}. The dynamic rescaling formulation or the modulation technique involve a nonlinear transform of the physical equations by rescaling the solution dynamically in the spatial  and the temporal variables. The dynamic rescaling formulation allows us to incorporate the changes of the blowup time, the blowup profile and the blowup exponent (see $\b$ below) by choosing suitable rescaling parameters that come from the scaling symmetry of \eqref{eq:euler} or \eqref{eq:bous}. Since the linearization around an approximate blowup profile is performed {\it after} we make this nonlinear transform, the linear stability under this framework is nonlinear in nature.

We remark that the authors of \cite{lafleche2021instability} also studied the profile instability of a self-similar blowup solution to the 3D Euler equations in \cite{lafleche2021instability}. 
More specifically, given a background self-similar blowup solution $u(x, t) = (T- t)^{\al} U( t, \f{x}{ (T-t)^{\b}} )$, the authors assumed that the perturbed solution of the linearized equation \eqref{eq:euler_lin} takes  the same form  $v(x, t) = (T- t)^{\al} V( t, \f{x}{ (T-t)^{\b}} )$.
 Thus, the perturbed solution of the linearized equation does not capture the change in the blowup time and the dynamic changes of the rescaling rate of the perturbed profile and the 
blowup exponent $\b$ of the original 3D Euler equations using a perturbed initial condition. Therefore, the perturbed profile of the linearized Euler equations cannot be used to study the stability of the self-similar blowup profile of the original 3D Euler equations close to the blowup time using a perturbed initial condition.


The 3D incompressible Euler equations read 
\beq\label{eq:euler}
  \uu_{t} + \uu \cdot \nabla \uu = -\nabla p, \quad  \nabla \cdot \uu = 0,
\eeq
where $\uu$ is the velocity field and $p$ is the scalar pressure.
In \cite{vasseur2020blow}, the authors studied the stability of a singular solution $\uu(t)$ of the 3D Euler equations by analyzing the growth of the perturbation $\vv(t)$ using the following linearized Euler equations around $\uu(t)$:
\beq\label{eq:euler_lin}
\vv_t + \uu \cdot \na \vv + \vv \cdot \na \uu + \na q = 0, \quad \na \cdot \vv = 0.
\eeq

In a subsequent paper \cite{lafleche2021instability}, the authors generalized their earlier results to the axisymmetric Euler equations. Recall that a vector field $f(x)$ is axisymmetric \cite{majda2002vorticity} if it can be represented as 
\beq\label{axi}
f(x) = f^r(r, z) e_r + f^{\vth}(r, z) e_{\vth} + f^z(r, z) e_z,
\eeq
where 
$(r,\vth, z)$ are the cylindrical coordinate with basis $e_r = ( \cos \vth, \sin \vth, 0 )$, $ e_{\vth} = ( - \sin \vth, \cos \vth, 0)$, $e_z = (0, 0, 1)$. For a solution $\uu$ with axisymmetric initial data $\uu_0$, the axisymmetry property is preserved dynamically by the Euler equations \eqref{eq:euler}.

\subsection{Main results}



We consider singular solutions $\uu$ to \eqref{eq:euler} in a domain $D$ with the following symmetry in $z$
\beq\label{eq:sym}
\tag{Sym}
\uu = u^r e_r + u^{\vth} e_{\vth} + u^z e_z, \quad  u^r, u^{\vth} \mathrm{ \ are \ even \ in \ } z , \quad u^{z} \mathrm{ \ is \ odd \ in \ } z.
\eeq

Denote by $X$ the set of axisymmetric functions with symmetry given in \eqref{eq:sym},  $H_X^1(D) = H^1( D) \cap X$. Let $\vv$ be the solution of the linearized Euler equations \eqref{eq:euler_lin} with initial data $\vv_0$. Following \cite{lafleche2021instability}, we define the growth factors $\lam_{p, \s, D }(t)$ and $\lam^{sym}_{p, \s, D }(t)$ as follows: 
\beq\label{eq:instab1}
\lam_{p, \s, D }(t) = \sup_{ \vv_0 \in H^1( D) , \vv_0 \neq 0 } \f{ || r^{-\s} \vv(t, \cdot) ||_{L^{p}( D  ) } }{ || r^{-\s} \vv_0 ||_{L^{p}(D ) } } , \quad
\lam^{sym}_{p, \s, D }(t) = \sup_{ \vv_0 \in H_X^1( D), \vv_0 \neq 0 } 
\f{ || r^{-\s} \vv(t, \cdot) ||_{L^{p}( D ) } }{ || r^{-\s} \vv_0 ||_{L^{p}( D ) } } .
\eeq
Note that $\lam^{sym}_{p, \s}(t) \leq \lam_{p, \s}(t)$ since $H_X^1(D)$ is a subclass of axisymmetric functions in $H^1( D )$. 

In the first main result, we consider \eqref{eq:euler} in a cylinder $D = \{ (r, z) : r \leq 1,  z \in \BT\}$ periodic in $z$ (axial direction) with period $2$, where $r$ is the radial variable and $\BT = \R / (2 \BZ)$. This setting is the same as that in \cite{luo2014potentially,luo2013potentially-2,chen2019finite2}. 
We prove that the blowup solution constructed in \cite{chen2019finite2} is linearly unstable under the notion of stability introduced in \cite{lafleche2021instability}, even in the symmetry class \eqref{eq:sym}.

\begin{thm}\label{thm:Euler_HL}
There exists $\al_0 > 0$ such that for any $0 < \al < \al_0$, the 3D axisymmetric Euler equations \eqref{eq:euler} in the cylinder $(r, z) \in [0, 1] \times \BT$  develops a singularity at finite time $T_*$ from some $C^{1,\al}$ initial data $\uu_0$ with finite energy. Moreover, there exists $R_{2,\al} < \f{1}{4}$, such that the solution $\uu$ \eqref{axi} satisfies 
$u^r, u^z, u^{\vth} \in L^{\inf}( [0, T] ,  C^{50}( \S) )$ for any compact domain $\S \subset  \{ (r, z) : r \in (0, 1) , z \neq 0 \} \cap B_{ (1,0)}( R_{2,\al})$ and $T < T_*$. For any $p \in [1, \infty)$ and $\s \in \R$, we have 
\[
 \lim_{t \to T_*} \lam^{sym}_{p, \s, D}(t) = \infty. 
\]

\end{thm}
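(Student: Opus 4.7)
The plan is to produce a divergence in \eqref{eq:instab1} using a perturbation generated by the time-translation symmetry of \eqref{eq:euler} applied to the asymptotically self-similar Chen-Hou blowup. Formally, $\pa_t \uu$ solves the linearized system \eqref{eq:euler_lin}, is divergence-free, satisfies $v^r|_{r=1}=0$, and preserves the symmetry class \eqref{eq:sym}. It remains to truncate $\pa_t \uu(\cdot,0)$ to obtain a legitimate $H^1_X(D)$ datum, and to show that the linearized evolution blows up at the self-similar rate.

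I would first invoke \cite{chen2019finite2} to obtain the $C^{1,\al}$ blowup $\uu$ in the cylinder $D$ with finite energy, symmetry \eqref{eq:sym}, and the asymptotically self-similar structure
\[
\uu(x,t) = C_u(t) \, U\!\left( (x - x_*)/C_l(t),\, \tau(t) \right),
\]
with $x_* = (1,0)$ the blowup corner in $(r,z)$ coordinates, $U(\cdot,\tau) \to U^*$ as $\tau \to \infty$, $C_u(t) \asymp (T_*-t)^{c_u}$, $C_l(t) \asymp (T_*-t)^{c_l}$, and $c_l > 0$ small. The $C^{50}$ interior regularity on compact $\S \Subset \{r \in (0,1),\, z \neq 0\} \cap B_{(1,0)}(R_{2,\al})$ reflects the concentration of the singular region in an $O(C_l(t))$-neighborhood of $x_*$, and follows by propagating the interior smoothness of $\uu_0$ along the $C^{1,\al}$ Lagrangian flow, as in \cite{chen2019finite2}.

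Next, choose $\vv_0 \in H^1_X(D)$ supported in $\{1/2 < r < 1\}$, vanishing at $r=1$, and agreeing with $\pa_t \uu(\cdot,0)$ on a fixed neighborhood $\S_0' \ni x_*$ of the blowup corner. This is possible because, by the interior regularity above, $\pa_t \uu(\cdot,0) = -\PP(\uu_0 \cdot \na \uu_0)$ is $C^{49}$ on $\S_0'$ and extends to a divergence-free $H^1_X$ vector field via a cutoff-plus-pressure-correction construction. The denominator $\|r^{-\s}\vv_0\|_{L^p(D)}$ is finite for every $(p,\s)$ since $\vv_0$ is bounded and $r$ is bounded away from $0$ on $\supp \vv_0$. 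By linearity of \eqref{eq:euler_lin}, $\vv(t) = \pa_t \uu(t) + \ww(t)$ where $\ww$ solves \eqref{eq:euler_lin} with data $\ww_0 = \vv_0 - \pa_t \uu(\cdot,0)$ supported outside $\S_0'$.

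The conclusion then reduces to computing the self-similar blowup rate of $\pa_t \uu(t)$ on $\S_0'$ and bounding $\ww(t)$ on $\S_0'$. From $\pa_t \uu = \dot C_u U + C_u \pa_t U$ and the change of variables $X = (x-x_*)/C_l(t)$, one obtains
\[
\| r^{-\s} \pa_t \uu(\cdot,t) \|_{L^p(\S_0')} \asymp (T_*-t)^{c_u - 1 + 3 c_l / p} \quad \text{as } t \to T_*,
\]
which diverges for every $p \in [1,\infty)$ and $\s \in \R$ provided $c_u < 1$ and $c_l < p/3$ — both satisfied by the Chen-Hou scaling for sufficiently small $\al$. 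Combined with the triangle inequality and the bound on $\|r^{-\s}\ww(t)\|_{L^p(\S_0')}$, this yields $\lam^{sym}_{p,\s,D}(t) \geq \|r^{-\s}\vv(t)\|_{L^p}/\|r^{-\s}\vv_0\|_{L^p} \to \infty$. The main obstacle is controlling $\ww(t)$ on $\S_0'$ uniformly for $t \in [0,T_*)$: because the pressure in \eqref{eq:euler_lin} is nonlocal, pure finite-speed-of-propagation does not suffice, and one needs a weighted elliptic estimate ensuring that the contribution from $D \setminus \S_0'$ does not accumulate at $x_*$ faster than $(T_*-t)^{c_u - 1 + 3 c_l/p}$. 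The $C^{50}$ interior regularity is essential both for defining $\pa_t \uu$ classically near $x_*$ and for validating the self-similar asymptotics of the leading term on $\S_0'$.
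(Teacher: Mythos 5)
Your proposal takes a genuinely different route from the paper, and it does not work as written: the central quantitative claim is false for the solution at hand. The paper never uses $\pa_t \uu$. Instead it builds a highly oscillatory WKB approximate solution $v_{\e,\d}=\e\,\mathrm{curl}\big(\f{b\times \xi}{|\xi|^2}\vp e^{iS/\e}\big)$ localized along a single trajectory near $(1,0)$, proves $\lam^{sym}_{p,\s}(T)\gtrsim \b(T)$ where $\b(T)$ is the supremum of the amplitude $|b_T|$ of the bicharacteristics-amplitude ODE system (Proposition \ref{prop:lam}), and proves $\b(T)^2 \geq ||\om_p(T)||_{L^{\infty}}/||\om_p^{in}||_{L^{\infty}}$ (Proposition \ref{prop:beta}), which diverges by Theorem \ref{thm:euler_blowup}(a). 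The point of that machinery is precisely that no velocity-level quantity of the background needs to blow up in $L^p$: the $L^p$ growth of the perturbation is inherited from the vorticity growth through $b_t$.

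The fatal error in your rate computation: for the Chen--Hou $C^{1,\al}$ blowup the rescaling parameters satisfy $c_{\om}\approx -1$ and $c_l\approx \f{1}{\al}+3$, so the velocity amplitude near the singular point is $C_u = C_{\om}^{-1}C_l \sim (T_*-t)^{2+1/\al}$ and the length scale is $C_l\sim (T_*-t)^{3+1/\al}$. Your exponent $c_u-1+ 2c_l/p$ (the effective dimension near $r=1$ is $2$, not $3$, but this is immaterial) equals $1+\f{1}{\al}+\f{2}{p}(3+\f{1}{\al})>0$, so $||r^{-\s}\pa_t\uu(\cdot,t)||_{L^p(\S_0')}$ tends to \emph{zero}, not infinity: for these $C^{1,\al}$ solutions the velocity amplitude and the support of the singular region both collapse, only $\na\uu$ (equivalently $\om$) blows up, and $\uu\cdot\na\uu$ remains bounded in $L^p$ up to $T_*$. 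Your condition $c_u<1$ fails by a factor of order $1/\al$; it would hold for a hypothetical Leray-type profile but not for the solution this theorem is about. Separately, even if the leading term diverged, the remainder $\ww$ cannot be controlled: the only a priori bound for \eqref{eq:euler_lin} is a Gronwall estimate with factor $\exp(C\int_0^t ||\na\uu||_{L^{\infty}}ds)$, which diverges by the Beale--Kato--Majda criterion, and no finite-speed or weighted elliptic argument is offered that would make $\ww$ subordinate to the leading term on $\S_0'$. This is exactly why the paper confines the "exact derivative solution" idea to Section \ref{sec:Euler_unstable}, where it only yields instability in norms stronger than $L^{\infty}$, and resorts to the WKB construction for the $L^p$ statement.
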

Note that the range of $\s$ is larger than that in \cite{lafleche2021instability}.
We can prove the whole range of $\s$ since the singular solution \cite{chen2019finite2} is supported near $(r, z) = (1, 0)$, which allows us to construct a unstable solution  supported near $(r, z) = (1, 0)$. Thus, the weight $r^{-\s}$ in \eqref{eq:instab1} is essentially equal to $1$.

In the second main result, we consider the singular solution in $\R^3$ constructed by Elgindi \cite{elgindi2019finite} (see also \cite{elgindi2019stability}) and prove a similar instability result for a smaller range of parameter $\s < -1$.
\begin{thm}\label{thm:Euler_R3}
There exists $\al_0 > 0$  such that for any $0 < \al < \al_0$, the 3D axisymmetric Euler equations \eqref{eq:euler} in $\R^3$  develops a singularity at finite time $T_*$ from some $C^{1,\al}$ initial data $\uu_0$ with finite energy and without swirl. Moreover, the solution $\uu$ \eqref{axi} satisfies $u^{\vth} \equiv 0, u^r, u^z  \in L^{\inf}( [0, T], C^{50}( \S) )$ for any compact domain $\S \subset \{ (r, z): r >0, z \neq 0 \} $ and $T < T_* $. For any $p \in (2, \infty)$ and $\s \in ( - \f{2 (p-1)}{ p}, -1 )$, we have 
\[
 \lim_{t \to T_* } \lam^{sym}_{p, \s, \R^3}(t) = \infty. 
\]
\end{thm}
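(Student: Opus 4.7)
The plan is to mirror the architecture of Theorem~\ref{thm:Euler_HL} but to substitute Elgindi's $C^{1,\al}$ axisymmetric no-swirl blowup in $\R^3$ from \cite{elgindi2019finite,elgindi2019stability} for the Hou--Luo-type background used there. Take $\uu$ to be this background singular solution; in the dynamic rescaling variables of \cite{elgindi2019finite,chen2019finite2}, its vorticity resolves to $\om^{\vth}(x,t)\approx A(t)\,\bar{\Om}\bigl(x/L(t)\bigr)$, with length scale $L(t)\to 0$ as $t\to T_*$ at the Elgindi rate $L(t)\sim(T_*-t)^{c/\al}$, and with $\bar{\Om}$ smooth away from the origin. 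The asserted $C^{50}$ regularity of $u^r,u^z$ on compact $\S\subset\{(r,z):r>0,z\neq 0\}$ up to any $T<T_*$ follows from the smoothness of the rescaled stream function off the origin, together with propagation of regularity along the Lagrangian flow, which is smooth on such compacts.

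The perturbation comes from the observation, implicit in \cite{vasseur2020blow,lafleche2021instability}, that time differentiation commutes with the linearization: differentiating \eqref{eq:euler} in $t$ shows that $\vv(t):=\pa_t\uu(t)$ is an exact solution of \eqref{eq:euler_lin} with pressure $q=\pa_t p$. Choose $\vv_0:=\pa_t\uu|_{t=0}=-\uu_0\cdot\na\uu_0-\na p_0$. This vector field inherits the symmetry class $X$ since $\uu_0$ does, and the range of $(p,\s)$ in the theorem is precisely what places $\vv_0$ in $H^1_X(\R^3)\cap L^p(r^{-p\s}\,dx)$: the lower bound $\s>-\f{2(p-1)}{p}$ is what is needed to control $\na p_0$ at infinity after inverting $-\D p_0=\tr(\na\uu_0\,\na\uu_0^\top)$ via a Calder\'on--Zygmund/Hardy estimate, while the upper bound $\s<-1$ ensures that the singular region near the origin controls the norm at times close to $T_*$.

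To obtain divergence of $\|r^{-\s}\vv(t)\|_{L^p}$ as $t\to T_*$, apply the chain rule to the self-similar ansatz for $\uu$, which gives
\beno
\vv(x,t)\;\approx\;\dot A(t)\,\bar{\uu}\bigl(x/L(t)\bigr)\;-\;A(t)\,\f{\dot L(t)}{L(t)^2}\,x\cdot\na\bar{\uu}\bigl(x/L(t)\bigr),
\eeno
where $\bar{\uu}$ is the velocity profile associated with $\bar{\Om}$. Changing variables $y=x/L(t)$, so that $r=L(t)\r$ and $dx=L(t)^3 dy$, converts $\|r^{-\s}\vv(t)\|_{L^p}^p$ into $L(t)^{3+(a-1)p-\s p}\,(\dot L/L)^p$ times a fixed profile integral (with $a$ the Euler self-similar velocity exponent); the Elgindi exponents combined with $\s\in(-\f{2(p-1)}{p},-1)$ make the overall power of $L(t)$ strictly negative, and the modulation factor $|\dot L/L|\to\infty$ supplies further amplification. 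Combined with $\|r^{-\s}\vv_0\|_{L^p}<\infty$, this yields $\lam^{sym}_{p,\s,\R^3}(t)\to\infty$.

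The principal technical obstacle is making the self-similar approximation rigorous in the weighted $L^p$ norm: the profile is only $C^\al$ at the origin, and the weight $r^{|\s|}$ amplifies the tails, so one must combine the nonlinear stability bounds of \cite{chen2019finite2,elgindi2019stability} with a truncation of $\vv_0$ to a neighborhood of the singular region, verifying that the truncated linearized flow still captures the divergent mode and that the discarded far-field contribution does not spoil either the finiteness of $\|r^{-\s}\vv_0\|_{L^p}$ or the quantitative lower bound on $\|r^{-\s}\vv(t)\|_{L^p}$ as $t\to T_*$.
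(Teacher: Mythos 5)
Your proposal takes a genuinely different route from the paper, but the route has a fatal gap at its core: the scaling computation for $\vv=\pa_t\uu$ goes the wrong way. In Elgindi's solution the profile lives in the variable $R=\rho^{\al}$, so the collapsing length scale in physical space is $L(t)\sim (T_*-t)^{(1+\d)/\al}$; the volume factor $L(t)^3$ in your change of variables therefore decays like $(T_*-t)^{3(1+\d)/\al}$, which for small $\al$ (the only regime the theorem covers) overwhelms any fixed power $(T_*-t)^{-p}$ supplied by $\dot A$ or $|\dot L/L|$. The "overall power of $L(t)$" you assert to be strictly negative is in fact positive for every fixed $p\in(2,\inf)$ once $\al$ is small, so $\|r^{-\s}\pa_t\uu(t)\|_{L^p}$ does not diverge. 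The weight compounds the problem: $\s<-1$ means $r^{-\s}=r^{|\s|}$ with $|\s|>1$, which \emph{vanishes} on the axis $r=0$ where the singularity concentrates, so it suppresses rather than amplifies the singular region. Your reading of the two endpoints of the $\s$-interval is also not where they come from: the lower bound $\s>-\f{2(p-1)}{p}$ is the admissibility range for the weighted pressure/WKB estimates (Lemma 4.1 of \cite{lafleche2021instability}) entering Proposition \ref{prop:lam_R3}, and $\s<-1$ is needed to make the exponent $-1-\s$ positive in the step described next.

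The paper's mechanism is entirely different, and is forced by the fact that the no-swirl solution has $\om_p\equiv 0$, so the poloidal-vorticity route of Theorem \ref{thm:Euler_HL} is unavailable. One solves the bicharacteristics-amplitude system \eqref{eq:bichar1}--\eqref{eq:bichar3} with $b_0=r_0^{\s}e_{\vth}$ for base points off $\Ups_3=\{r=0 \text{ or } z=0\}$, invokes the bound $r_0^{\s+1}\leq r_T|b_T|$ from \cite{lafleche2021instability} to get $\td\b_{\s}(T)\geq \sup (r_0/r_T)^{\s+1}=(\sup r_T/r_0)^{-1-\s}$, and then uses the outward radial flow on the plane $z=0$ near the origin (Lemma \ref{lem:expand}) together with the one-point blowup criterion $\int_0^{T_*}u^r_r(t,0,0)\,dt=\inf$ to show $\sup r_T/r_0\to\inf$. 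Since $-1-\s>0$, this forces $\td\b_{\s}(T)\to\inf$, and Proposition \ref{prop:lam_R3} transfers the growth to $\lam^{sym}_{p,\s}$. The instability is driven by radial spreading of Lagrangian trajectories measured against the weight $r^{-\s}$, not by the temporal blowup rate of the background, and no exact solution of the form $\pa_t\uu$ or $\pa_i\uu$ is used.
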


Note that for $p \in [1, 2]$, the interval $( - \f{2 (p-1)}{ p}, -1 )$ is empty.

Next, we generalize the instability results to the 2D Boussinesq equations in $\R_2^+$
\beq\label{eq:bous}
\bal
\om_t +  \uu \cdot \na \om  &= \th_{x},  \quad  \th_t + \uu \cdot  \na \th  =  0 , 
\eal
\eeq
where the velocity field $\uu = (u , v)^T : \R_+^2 \times [0, T) \to \R^2_+$ is determined via the Biot-Savart law
\beq\label{eq:biot}
 - \D \psi = \om , \quad  u =  - \psi_y , \quad v  = \psi_x,
\eeq
with no flow boundary condition $v(x, 0) = 0$. Given a singular solution $(\th, \uu)$, the linearized equations of \eqref{eq:bous} in the velocity-density formulation around $(\th, \uu)$ read
\beq\label{eq:bous_lin0}
\bal
&\pa_t \eta + \uu \cdot \na \eta + \vv \cdot \na \th  = 0, \\
&\pa_t \vv + \uu \cdot \na \vv + \vv \cdot \na \uu + \na q = -  (0, \eta)^T, \quad  \mathrm{div} \ \vv = 0.  \\
\eal
\eeq
Denote ${\bf w} = (\eta, \vv)$ and define
\[
\g^{sym}_{p}(T) = \sup_{  || {\bf w}_0||_{L^{p}} \leq 1 } || {\bf w} ||_{L^p} , \quad  \mathrm{ with } \
|| {\bf w} ||_{L^p} \sim || \eta ||_{L^p} + || \vv ||_{L^p}, 
\]
with the symmetry property that $v_1(x, y)$ is odd in $x$ and $v_2(x, y), \eta_0$ are even in $x$.

We have the following instability result for the singular solution constructed in \cite{chen2019finite2}.

\begin{thm}\label{thm:bous}
There exists $\al_0 > 0$ such that for $0< \al < \al_0$, the 2D Boussinesq equations \eqref{eq:bous} in $D = \R_2^+$ develops a singularity at finite time $T_*$ from some initial data $ \om_0 \in C_c^{\al}( \R^2_+),  \th_0 \in C_c^{1,\al}(\R^2_+) $. The initial data satisfy that $\om_0(x, y)$ is odd in $x$, $\th_0(x, y)$ is even in $x$, and $ \uu_0$ has finite energy $||\uu_0||_2 < +\infty$. Moreover, the solution satisfies $(\uu, \th ) \in L^{\inf}( [0, T], C^{50}( \S) ) $ for any $T < T_*$ and any compact domain $\S \subset \{ (x, y): x \neq 0, y > 0 \}$. For any $ p \in (1, \inf)$, we have 
\[
\lim_{t \to T_*}  \g^{sym}_p(t) = \inf.
\vspace{-0.05in}
\]
\end{thm}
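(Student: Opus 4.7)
The plan splits into two components. First, the existence of the $C^{1,\al}$ blowup with the stated compact support, symmetry, and local smoothness follows directly from the main theorem of \cite{chen2019finite2}, which constructs an asymptotically self-similar blowup for \eqref{eq:bous} in $\R^2_+$ starting from $\om_0 \in C_c^{\al}$, $\th_0 \in C_c^{1,\al}$ with the required parity in $x$ and finite energy. The singularity concentrates at the origin, and the dynamically rescaled profile is smooth on any fixed compact set in the rescaled coordinates, which translates on the physical side to $C^{50}$ regularity of $(\uu, \th)$ on any compact $\S \subset \{(x, y) : x \neq 0, y > 0\}$.

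Second, to construct an unstable solution of \eqref{eq:bous_lin0}, I will use the time-translation symmetry of \eqref{eq:bous}. Since the nonlinear system is autonomous, differentiating it in $t$ and comparing term by term with \eqref{eq:bous_lin0} shows that
\[
(\eta, \vv) := (\pa_t \th, \pa_t \uu)
\]
solves the linearized equations with pressure $q = \pa_t p$, and $\mathrm{div}\, \vv = \pa_t(\mathrm{div}\, \uu) = 0$. The initial data $\eta_0 = -\uu_0 \cdot \na \th_0$ and $\vv_0 = -\uu_0 \cdot \na \uu_0 - \na p_0 - (0, \th_0)^T$ are nonzero elements of $L^p(\R^2_+)$ for every $p \in (1, \inf)$, using the compact support of $\om_0, \th_0$ and the spatial decay of $\uu_0, \na \uu_0, \na p_0$ coming from Biot-Savart applied to compactly supported vorticity. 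The parity constraint defining $\g^{sym}_p$ is preserved because reflection $x \mapsto -x$ commutes with $\pa_t$ and the underlying $(\th, \uu)$ has the required parity; the no-flow boundary condition on $\{y = 0\}$ is likewise preserved.

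To conclude $\|(\eta(t), \vv(t))\|_{L^p} \to \inf$ as $t \to T_*$, I will use the asymptotically self-similar structure from \cite{chen2019finite2}: in rescaled variables $y = x/(T_*-t)^{c_l}$, $s = -\log(T_*-t)$, one has $\uu(x, t) = (T_*-t)^{c_u} U(s, y) + \text{l.o.t.}$ with $c_u = -1 + c_l$, $c_l = O(\al)$, and $U(s,\cdot)$ converging in a suitable weighted norm to a nontrivial stationary profile $U_*$ as $s \to \inf$. Differentiating in $t$ yields
\[
\pa_t \uu(x, t) = (T_*-t)^{c_u - 1}\bigl[-c_u U + c_l y \cdot \na_y U + \pa_s U\bigr](s, y) + \text{l.o.t.},
\]
and changing variables in the $L^p$ integral gives $\|\pa_t \uu(t)\|_{L^p} \sim (T_*-t)^{c_u - 1 + 2c_l/p} = (T_*-t)^{-2 + c_l(1+2/p)}$. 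For $0 < \al < \al_0$ sufficiently small, $c_l$ is small enough that this exponent is strictly negative for every fixed $p \in (1, \inf)$, so $\|\vv(t)\|_{L^p} \to \inf$, which already suffices for $\g^{sym}_p(t) \to \inf$.

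The main technical obstacle is upgrading the weighted, dynamically-rescaled convergence in \cite{chen2019finite2} to a sharp lower bound on $\|\pa_t \uu(t)\|_{L^p}$ in physical variables. This requires verifying (a) the leading profile $-c_u U_* + c_l y \cdot \na_y U_*$ has strictly positive $L^p$ norm, which follows from the non-degeneracy of the limiting stationary profile $U_*$ (were it in the kernel of $-c_u \Id + c_l y \cdot \na_y$, it would be homogeneous of a specific degree, contradicting the spatial localization established in \cite{chen2019finite2}), and (b) the lower-order corrections in the asymptotic expansion do not cancel the main term at leading order in $(T_*-t)$, which is available from the quantitative convergence rate in \cite{chen2019finite2} in a norm controlling $L^p$ locally near the rescaled origin. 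Together these yield the claimed $L^p$-divergence of the linearized solution and hence the instability.
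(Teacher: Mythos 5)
Your first component (existence of the blowup, symmetry, finite energy, and $C^{50}$ interior regularity) is exactly what the paper records in Theorem \ref{thm:bous_blowup}, and your observation that $(\eta,\vv)=(\pa_t\th,\pa_t\uu)$ is an exact solution of \eqref{eq:bous_lin0} with $q=\pa_t p$, preserving the parity class and the no-flow condition, is correct. The instability argument, however, fails at the scaling step. You assert that the spatial collapse rate satisfies $c_l=O(\al)$, but for the blowup of \cite{chen2019finite2} the rescaling parameters are $\bar c_l=\tfrac{1}{\al}+3$, $\bar c_{\om}=-1$ (see \eqref{eq:profile}), so $T_*-t\sim C_{\om}$ and the physical length scale is $C_l\sim (T_*-t)^{1/\al+3}$; in your notation $c_l=\tfrac{1}{\al}+3$, which is \emph{large} for small $\al$, not small. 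Then $c_u=-1+c_l=\tfrac{1}{\al}+2>0$, i.e.\ the velocity amplitude near the singularity tends to zero, as does $\th$ (amplitude $\sim(T_*-t)^{1/\al+1}$). Your exponent becomes $-2+(\tfrac{1}{\al}+3)(1+\tfrac{2}{p})$, which is large and \emph{positive}, so the self-similar core contributes $o(1)$ to $\|\pa_t\uu(t)\|_{L^p}$; likewise $\pa_t\th=-\uu\cdot\na\th$ has amplitude $\sim(T_*-t)^{1/\al}$ there. Since the solution stays regular away from the core, $\|(\pa_t\th,\pa_t\uu)(t)\|_{L^p}$ remains bounded as $t\to T_*$, and this particular solution does not witness $\g^{sym}_p(t)\to\infty$. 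The quantities that actually blow up are $\na\uu\sim(T_*-t)^{-1}$ and $\na\th\sim(T_*-t)^{-2}$; they enter \eqref{eq:bous_lin0} as multiplicative forcing of the perturbation, not through time derivatives of the background, and a perturbation must be chosen with amplitude decoupled from the background to feel this amplification.

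This is why the paper does not use a symmetry-generated solution here (contrast Section \ref{sec:Euler_unstable}, where $\pa_i\uu$ works only because the norm $X$ there is stronger than $L^{\infty}$). Instead it adapts the WKB/bicharacteristics-amplitude construction of \cite{shao2022instability}: the perturbation is a high-frequency wave packet supported along a single trajectory that stays in a compact subset of $\{x\neq 0,\ y>0\}$ where the $C^{1,\al}$ solution is in fact $C^{50}$ (Lemma \ref{lem:traj_bous}); the amplitude $b_t$ solves \eqref{eq:bichar3_bous} and is driven by $\pa_x\vec z$, i.e.\ by $\na\uu$ and $\na\th$; Proposition \ref{prop:al_bous} shows $\|\na\th(t)\|_{\infty}\le \|\na\th_0\|_{\infty}\,\al(t)^2$ and Proposition \ref{prop:lam_bous} shows $\al(t)\lesssim_p\g^{sym}_p(t)$, so the conclusion follows from $\|\na\th(t)\|_{\infty}\to\infty$. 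To repair your argument you would need to replace $\pa_t(\th,\uu)$ by a perturbation of this localized oscillatory type; the time-derivative route cannot be made to work for this $C^{1,\al}$ blowup.
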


For the same $C^{1,\al}$ blowup solution to these equations in Theorems \ref{thm:Euler_HL}-\ref{thm:bous}\;, stability of the asymptotically (or approximately) self-similar blowup profile has been established in \cite{elgindi2019finite,chen2019finite2,elgindi2019stability} using the dynamic rescaling formulation \cite{mclaughlin1986focusing,  landman1988rate} or the modulation technique  \cite{merle1997stability,kenig2006global}.

\subsection{Comparison of the stability and instability results}
Given that the same blowup solution of the 3D Euler equations can be both linearly unstable under one definition and nonlinearly stable under a different definition, it is important to have a better understanding how we define stability and how to quantify instability. 
First of all, we would like to emphasize that the instability results in Theorems \ref{thm:Euler_HL}-\ref{thm:bous} measure the \textit{absolute} instability, i.e. the growth of the perturbation relative to the initial perturbation. The fact that instability develops using the linearized equation is not due to the violation of certain symmetry conditions for the perturbation. In fact, the perturbation in Theorems \ref{thm:Euler_HL}-\ref{thm:bous} satisfies the same symmetry as the blowup solution, e.g., \eqref{eq:sym}.
This rapid growth is not surprising since the background singular solution $\uu$ blows up and contributes to a singular forcing term $\vv \cdot \na \uu$ to the linearized equations \eqref{eq:euler_lin}. Such instability is quite common in several nonlinear PDEs. In Section \ref{sec:Riccati}, we will use a nonlinear PDE of Riccati type and the inviscid Burgers' equation to show that a similar forcing term generates linear instability for these equations. 

Since we consider the stability of a blowup solution, we believe that it is more reasonable to study the \textit{relative} stability or instability, which measures the relative growth of the perturbation compared with the growth of the background singular solution. 
In Section \ref{sec:Ric_PDE}, we use a nonlinear PDE of Riccati-type to illustrate that the blowup profile is very unstable when we compare the growth of its perturbation relative to its initial perturbation using the linearized equation similar to \eqref{eq:euler_lin}. In fact, the growth rate of the perturbation can be much faster than that of the background blowup solution. On the other hand, by incorporating the changes of the blowup time, the blowup profile and the blowup exponent via the dynamic rescaling formulation, we can establish the nonlinear stability of the blowup solution. This stability results show that the relative growth of the perturbed profile compared with the growth of the background blowup profile remains small up to the blowup time.


More importantly, the nonlinear stability results presented in \cite{elgindi2019finite,chen2019finite2,elgindi2019stability}
quantify the \textit{relative} stability: for a small initial perturbation to the blowup profile, some weighted norm $X$ of the perturbation remains \textit{relatively} small up to the blowup time. These estimates and the embedding inequalities imply that the growth of the perturbation of the vorticity $|| \td \om||_{L^{\inf}}$ remains much smaller than the growth of the blowup solution $||  \om||_{L^{\inf}}$ up to the blowup time. The $L^{\inf}$ norm of the vorticity is of fundamental importance since it controls the blowup of \eqref{eq:euler}. 
Moreover, this stability result implies that for a small initial perturbation, the change of the blowup time $T_*$ is very small. Thus, if a blowup solution has stability similar to that obtained in \cite{elgindi2019finite,chen2019finite2,elgindi2019stability}, one can perform reliable numerical computations to provide compelling evidence of finite time blowup despite unavoidable numerical errors \cite{luo2013potentially-2,luo2014potentially,hou2022potential,Hou-euler-2021,Hou-nse-2021}.

Studying stability of the blowup based on the self-similar variables, dynamic rescaling formulation, or the modulation technique has been used in many other equations, such as the nonlinear heat equations \cite{merle1997stability}, the Burgers' equation \cite{collot2018singularity}, the complex Ginzburg-Landau equation \cite{plechavc2001self,masmoudi2008blow}, the nonlinear Schr\"odinger equation \cite{merle2005blow}, the generalized KdV equation \cite{martel2014blow}, and the compressible Euler equations \cite{buckmaster2019formation,buckmaster2019formation2}. 
On the other hand, there are also some instability results of the blowup based on these approaches. For example, the authors in \cite{collot2018singularity} proved that many blowup profiles of the 1D Burgers' equation have a finite number of unstable directions. See also the blowup of the nonlinear Schr\"odinger equation \cite{merle2022blow} and the blowup of compressible fluids \cite{merle2019implosion} with finite many potential unstable directions.






%
\vspace{-0.05in}
\subsection{Main ideas in the instability analysis}
%
There are several main ideas in proving the main instability results stated in Theorems \ref{thm:Euler_HL}, \ref{thm:Euler_R3}. One of the main difficulties in proving Theorems \ref{thm:Euler_HL}, \ref{thm:Euler_R3} is to relax the regularity assumptions in the arguments \cite{lafleche2021instability,vasseur2020blow} by using the properties of the singular solutions in \cite{elgindi2019finite,chen2019finite2}. 
We then construct an axisymmetric approximate solution to \eqref{eq:euler_lin} and follow the arguments in \cite{lafleche2021instability} to prove the main theorems. 

For the 2D Boussinesq equations, we use ideas similar to the 3D Euler equations to relax the regularity assumption in \cite{shao2022instability} and then apply the argument in \cite{shao2022instability} to prove Theorem \ref{thm:bous}.

\vspace{0.1in}
\noindent
{\bf Relaxing the regularity assumption.}
 In \cite{lafleche2021instability}, the regularity assumption $\uu \in C^0( [0, T), H^s) \cap C^1( [0, T) , H^{s-1}) $ with $s > \f{7}{2}$ is to ensure 
 
(a) 
the solvability of the bicharacteristics-amplitude ODE system \cite{lafleche2021instability,vasseur2020blow,friedlander1991dynamo};

(b) that the poloidal component of the vorticity $ \om_p = \om^r e_r + \om^z e_z$ satisfies $ \f{1}{r^a}  \om_p \in L^{\inf}$ for some $a > 0$, which is used in \cite{lafleche2021instability} to connect the blowup criteria with the instability.

To relax the regularity assumption for (a), we make an important observation that the singular solution $\uu$ constructed in \cite{elgindi2019finite,chen2019finite2} is smooth away from the symmetry axis and the boundary. The $C^{1, \al}$ low regularity is used essentially near the singularity, the symmetry axis, and the boundary to weaken the advection. The higher-order interior regularity of the solution $\uu$ can be propagated by using careful higher-order weighted energy estimates and the elliptic estimates with weights degenerated near the symmetry axis and the boundary \cite{elgindi2019finite,chen2019finite2}. In particular, in a compact interior domain, the weighted energy norms are comparable to the standard Sobolev norms, which allows us to establish higher-order interior regularity of the solution using the embedding inequalities. See Theorems \ref{thm:bous_blowup}-\ref{thm:euler_R3_blowup}.

Using the higher-order interior regularity, we can solve the bicharacteristics-amplitude ODE system, which is local in nature, in the interior of the domain and construct smooth solution to the modified bicharacteristics-amplitude ODE system. See Lemma \ref{lem:traj} and Proposition \ref{prop:lam}\;.

\begin{remark}\label{rem:oversight}
In \cite{chen2019finite2}, we proved the blowup results for the 3D axisymmetric Euler equations with initial data $(u_0^{\vth})^2,  u_0^r, u_0^z \in C^{1,\alpha}$ and $\omega_0^{\vth} \in C^{\alpha}$. Though the velocity $u^r, u^z$ in the axisymmetric setting is $C^{1,\alpha}$, our interpretation that the velocity is $C^{1,\alpha}$ is not correct since $u^{\vth}$ is not $C^{1,\alpha}$. This oversight can be fixed easily with minor changes in the construction of the approximate steady state and the truncation of the approximate steady state. 
These changes do not affect the nonlinear stability estimates of the 3D Euler equations, 
see the update arXiv version of \cite{chen2019finite2}.
\end{remark}

\noindent
{\bf Blowup quantities.}
An important step in \cite{lafleche2021instability,vasseur2020blow} is to show that the growth factor $\lam_{p, \s}$ \eqref{eq:instab1} controls $|| \om||_{\inf}$, which blows up for a singular solution \cite{beale1984remarks}.
The singular solutions in \cite{elgindi2019finite,chen2019finite2} are self-similar 
or approximately self-similar. In addition to $|| \om||_{\inf}$,
there are several other blowup quantities. By comparing some of these blowup quantities and the growth  factor $\lam_{p, \s}$ \eqref{eq:instab1}, we can simplify the proof in \cite{lafleche2021instability} and further relax some constraints. 
For example, in the proof of Theorem \ref{thm:Euler_HL}, we use the property that $|| \om_p||_{\inf}$ (the poloidal component) blows up and thus do not rely on the blowup criterion 
on $|| \om_p  / r^a||_{\inf}$ for some $a > 0$ established in \cite{lafleche2021instability}. 
This relaxes the condition (b). 


The singularity considered in \cite{elgindi2019finite} develops near the axis $r = 0$ and has zero swirl $u^{\th}\equiv 0$, which implies $\om_p \equiv 0$. 
Thus we cannot follow the argument in \cite{lafleche2021instability} to prove Theorem \ref{thm:Euler_R3}. Instead, we use the bicharacteristics-amplitude ODE system and the flow structure near the singularity in \cite{elgindi2019finite} to show that the growth $\lam_{\s, p}(t)$ controls another blowup quantity.



\vspace{0.1in}
\noindent
{\bf Axisymmetric velocity.}
Another important step in proving Theorems \ref{thm:Euler_HL} and \ref{thm:Euler_R3} is to construct 
an axisymmetric solution to \eqref{eq:euler_lin}. We remark that the initial data of \eqref{eq:euler_lin} constructed in \cite{lafleche2021instability} is not axisymmetric under the canonical notion \eqref{axi} \cite{majda2002vorticity}, see Remark \ref{rem:non_axi} for more discussions.  
We use the PDE (Eulerian) form of the 
bicharacteristics-amplitude ODE system to construct the amplitude $b(t, x)$ and the phase $S(t, x)$ in the WKB construction of the approximate solution to \eqref{eq:euler_lin}. 
The initial data $b(0, x), \xi(0, x)$ are axisymmetric flows in the whole domain, which are constructed by extending some constant initial data $b_0 , \xi_0 = \na S_0 \in \R^3$  of the bicharacteristics-amplitude ODE system.
The axisymmetry properties of $b(t, x), \xi(t, x)$ are preserved dynamically by the equations. 
We further show that $b(t, x)$ controls the solution to the bicharacteristics-amplitude ODE system and captures the growth of the vorticity.
Based on these functions, we construct the axisymmetric velocity using the formula in \cite{lafleche2021instability,vasseur2020blow}.


%
%
%
\vspace{0.1in}
\noindent
 {\bf Symmetry of the unstable solution.}
The singular solutions constructed in \cite{chen2019finite2,elgindi2019finite} are symmetric with respect to some axis, e.g., \eqref{eq:sym}, and the flow does not cross the symmetry axis or the symmetry plane.
This allows us to first construct unstable solution in the upper half domain following \cite{lafleche2021instability}, and then extend it naturally to a symmetric solution to the linearized Euler equations using linear superposition. 
Therefore, we can further restrict the perturbation in \eqref{eq:instab1} to the natural symmetry class.


\vspace{0.1in}
The rest of the paper is organized as follows. 
In Section \ref{sec:Riccati}, we use several nonlinear PDEs, including a simple nonlinear PDE of Riccati type and the inviscid Burgers' equation, to demonstrate the difference between the notion of stability introduced in \cite{vasseur2020blow,lafleche2021instability} and the stability based on dynamically rescaling formulation or modulation technique. Section \ref{sec:main} is devoted to prove the main theorems of this paper. Some important properties that we use in proving the main theorems will be established for the 2D Boussinesq equations in Section \ref{sec:high} and for the 3D Euler equations in Sections \ref{sec:euler_blowup}, respectively. Some technical lemmas are deferred to the Appendix.

\section{Comparison of stability vs instability through several nonlinear PDEs}
\label{sec:Riccati}

In this section, we will use several examples to demonstrate that under the notion of stability introduced in \cite{vasseur2020blow}, linear instability of a blowup solution is quite common in several nonlinear equations, even for those nonlinear equations whose blowup solutions can be shown to be nonlinearly stable using a suitable functional space and the dynamic rescaling formulation.

\subsection{The 3D Euler equations}\label{sec:Euler_unstable}

We first consider the 3D Euler equations. Suppose that $\uu(x, t)$ is a singular solution of the 3D Euler equations that blows up at a finite time $T$ with $|| \uu||_{L^2} < +\inf$. Clearly, we have $\pa_{i} \uu_0 \neq 0$ for all $i$. If $\pa_{i} \uu_0 \equiv 0$ for some $i$, the initial velocity $\uu_0$ would have reduced to the two dimensional Euler equations, which could not blow up in a finite time. 

For a domain without boundary, e.g. $\BT^3$ or $\R^3$, the linearized equation \eqref{eq:euler_lin} has exact solutions $\vv = \pa_i \uu $ for $i=1,2,3$, which was observed in \cite{vasseur2020blow} for the Navier Stokes equations. Suppose that $X$ is some functional space equipped with a norm that is stronger than the $L^{\inf}$ norm, e.g. $X = L^{\inf}, C^{k, \al}, k \geq 0, \al \in (0, 1)$, or $X = H^s, s > \f{3}{2}$, and it satisfies $ \na \uu_0 \in X$. Since $\int_0^t ||\na \uu(s)||_{\inf} ds$ controls the blowup of the solution, we obtain 
\[
 \infty = \limsup_{t \to T}  \sum_{i=1}^3 \f{ || \pa_i \uu (t)||_{L^{\inf}} }{ || \pa_i \uu_0||_X}
 \les  \limsup_{t \to T}  \sum_{i=1}^3 \f{ || \pa_i \uu (t)||_{X}}{ || \pa_i \uu_0||_X}
 \les   \limsup_{t \to T} \sup_{ \vv_0 \in X, \vv_0 \neq 0}  \f{ || \vv(t)||_{X}}{ || \vv_0||_X}.
\]
Under the notion of stability introduced in \cite{vasseur2020blow}, the blowup is linearly unstable in the norm of $X$. Yet, this instability result is a direct consequence of the blowup criterion and does not use further properties of the blowup solution, e.g., the blowup profile and the blowup exponent.

\subsection{1D models for the 3D Euler equations}

Consider the De Gregorio model \cite{DG90,DG96} and the generalized Constantin-Lax-Majda model \cite{OSW08}
\beq\label{eq:gCLM}
\om_t + a  u \om_x = u_x \om , \quad  u_x(\om) = H \om , \quad x \in \R  \mathrm{ \ or \ } S^1,
\eeq
where $H$ is the Hilbert transform and $a$ is a parameter. If $a=1$, \eqref{eq:gCLM} becomes the De Gregorio model. We consider the following linearized equation for a singular solution $\bar \om(t)$ that develops a finite time singularity at $T$
\beq\label{eq:gCLM_lin}
\pa_t \om + a \bar u \om_x + a u  \bar \om_x = \bar u_x \om + u_x \bar \om, \quad u_x = H \om.
\eeq
It is easy to see that $ \om = \pa_x \bar \om$ is a solution to \eqref{eq:gCLM_lin}. Following \cite{vasseur2020blow}, we introduce the growth factor 
\beq\label{eq:gCLM_lam}
\lam_p(t) \teq \sup_{ \om_0 \in L^p, \om_0 \neq 0}  \f{ || \om(t)||_p}{ || \om_0||_p}, \quad p \in (1, \inf).
\eeq

For $a=1$, in a joint work with Huang \cite{chen2019finite}, we constructed a finite time blowup of the De Gregorio model (\eqref{eq:gCLM} with $a=1$) from $C_c^{\inf}$ initial data. The singular solution satisfies 
\[
\bar \om(x, t) = C_{\om}(t)^{-1} \Om( C_{\om}(t) x, t) ,  \quad C_{\om}(0) = 1, \quad  \lim_{t \to T} C_{\om}(t) = 0,  \quad \Om(x, t) = \bar \Om(x) + \td \Om(x, t),
\]
where $C_{\om}(t)$ is decreasing, $\Om( \cdot, t) \in C^{\inf}$, $\bar \Om$ is the approximate self-similar profile, and $\td \Om(x, t)$ is a small perturbation. In particular, the estimates in \cite{chen2019finite} imply
\[
| \td \Om(x, t)| \les  |x|^{3/2}, \quad   |\bar \Om(x) - A x | \les x^2
\]
for some $A \neq 0$, where the implicit constants are time-independent. Therefore, for some small $\d > 0$, we get $ |\Om(\d, t)| \geq \f{A}{2} \d > 0$ for $t\in [0, T)$. For any $p\in [1, \inf)$, we obtain
\[
 || \pa_x \bar \om ||_{L^p} = C_{\om}(t)^{-1/p} || \Om_x(\cdot, t)||_p
 \gtr_p  C_{\om}(t)^{-1/p} \int_0^{\d} |\Om_x( y , t)|  dy
 \gtr_p C_{\om}(t)^{-1/p} | \Om( \d, t) |
 \gtr_p C_{\om}(t)^{-1/p}  A \d .
\]
Since $ || \pa_x \bar \om_0 ||_{L^p} \neq 0$ and $C_{\om}(t) \to 0$ as $t \to T$, we yield $\lam_p(t) \to \infty$ \eqref{eq:gCLM_lam} as $t \to T$. 

Similarly, we can obtain that the smooth blowup solutions of \eqref{eq:gCLM} on $\R$ with small $|a|$ \cite{chen2019finite,Elg17}, on $\R$ with $a$ close to $\f{1}{2}$ \cite{chen2020singularity}, and on $S^1$ with $a$ slightly less than $1$ \cite{chen2020slightly} are linearly unstable in the $L^p$ norm with $p \in [1, \inf)$ under the notion of stability introduced in \cite{vasseur2020blow}.

On the other hand,  nonlinear stability of these blowup solutions in some weighted $H^1$ norms has been established in \cite{chen2019finite,chen2020singularity,chen2020slightly,Elg19} using the dynamic rescaling formulation \cite{mclaughlin1986focusing,landman1988rate}. The nonlinear stability in \cite{chen2019finite,chen2020singularity,chen2020slightly,Elg19} is established by analyzing the stability of the asymptotically (or approximate) self-similar blowup profile, which is very different from the linear stability in \cite{vasseur2020blow}.

Similar discussions on the stability of the blowup solution in the dynamic rescaling equations and the instability of the blowup solution in the linearized equation apply to the singular solution of De Gregorio model on $S^1$ \cite{chen2021regularity} and of the Hou-Luo model \cite{chen2021HL}.


\subsection{A nonlinear Riccati PDE and the inviscid Burgers' equation}

In the next two subsections, we consider the blowup solutions of the 
inviscid Burgers' equation
\beq\label{eq:Burger}
\pa_t u + u u_x = 0 , \quad x \in \R
\eeq
and a nonlinear PDE of Riccati type
\beq\label{eq:Ric}
 \pa_t u (t, x) = u^2(t, x), \quad x \in \R.
\eeq
We will show that the blowup solutions of these two nonlinear PDEs are unstable under the notion of stability introduced in \cite{vasseur2020blow}. See Theorems \ref{thm:burger}, \ref{thm:Ric_instab}. On the other hand, using the dynamic rescaling formulation, we can prove the nonlinear stability of the blowup solutions to \eqref{eq:Ric} in Theorem \ref{thm:Ric_stab}. 
In Section \ref{sec:Ric_PDE}, we will use \eqref{eq:Ric} to illustrate the importance of studying the stability of the asymptotically (or approximate) self-similar blowup profile using suitable rescaling and renormalization rather than studying the stability of the blowup solution itself.


Following \cite{vasseur2020blow}, we define the growth factor 
\beq\label{eq:Ric_instab}
\lam_p(t) = \sup_{ v_0 \neq 0, v_0 \in L^p } \f{ || v(t)||_{L^p}}{ || v_0||_{ L^p}}
\eeq
for the solution $v$ to the linearized equations of \eqref{eq:Burger} or \eqref{eq:Ric} around a singular solution.



We first consider the Burgers' equation. It is well-known that \eqref{eq:Burger} blows up (develops a shock) in finite time $T$ for initial data $u_0 \in C_c^{\inf}$ satisfying $ u_0(0) =0$ and that $\pa_x u_0$ is minimal at $0$ with $\pa_x u_0(0) < 0$. Let $v$ be a solution to the linearized equation of \eqref{eq:Burger} around the blowup solution $u$
\beq\label{Burger_lin}
v_t + \pa_x(  u v) = v_t +  u v_x + u_x v =  0.
\eeq
It has been shown in \cite{vasseur2020blow} that the blowup is linearly stable in $L^1$ in the sense that $\lam_p(t) \leq 1$ \eqref{eq:Ric_instab} up to the blowup time. However, this stability result does not generalize to $L^p$ with $p> 1$. In particular, we have the following instability result. 
\begin{thm}\label{thm:burger}
Suppose that the initial data $u_0 \in C^{1}$ of \eqref{eq:Burger} satisfies that $u_0(0) = 0$,  $\pa_x u_0$ is minimal at $0$ with $\pa_x u_0(0) < 0$. Then the solution $u$ blows up in finite time $T_* = - \f{1}{ u_{0, x}(0)}$. Moreover,  for any $p \in (1, \inf)$, we have
\[
\lim_{t \to T_*} \lam_p(t) \to \infty.
\]
\end{thm}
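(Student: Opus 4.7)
The plan is to use the method of characteristics to solve both \eqref{eq:Burger} and its linearization \eqref{Burger_lin} explicitly, and then to exhibit initial perturbations $v_0$ concentrated near the shock-forming point $x_0=0$ whose evolution forces $||v(t)||_{L^p}/||v_0||_{L^p}\to\infty$ as $t\to T_*$ for every $p>1$.

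First I would recall the characteristic analysis for \eqref{eq:Burger}: along $X(t;x_0) = x_0 + u_0(x_0)t$, $u(X(t;x_0),t) = u_0(x_0)$ and $\pa_{x_0} X = 1+ u_0'(x_0)t$. Since $u_0'$ attains its minimum at $0$ with $u_0'(0) < 0$, this Jacobian first vanishes at $x_0=0$ at time $T_* = -1/u_0'(0)$, and differentiating $u_0(x_0) = u(X(t;x_0),t)$ in $x_0$ gives $u_x(X(t;x_0),t) = u_0'(x_0)/(1+u_0'(x_0)t)$ on $[0, T_*)$. The linearization \eqref{Burger_lin} along the same characteristics reduces to the ODE $\f{d}{dt}[v(X(t;x_0),t)] = -u_x(X(t;x_0),t)\,v(X(t;x_0),t)$, which integrates via $\int_0^t u_x(X(s;x_0),s)\,ds = \log(1+u_0'(x_0)t)$ to give the explicit formula
\[
v(X(t;x_0),t) \;=\; \f{v_0(x_0)}{1+u_0'(x_0)t}.
\]
Changing variables $x = X(t;x_0)$ on $\R$ (whose Jacobian is $1+u_0'(x_0)t>0$ on $[0,T_*)$) yields
\[
||v(t)||_{L^p}^p \;=\; \int_\R \f{|v_0(x_0)|^p}{(1+u_0'(x_0)t)^{p-1}}\,dx_0.
\]
At $p=1$ this reproduces the conservation $||v(t)||_{L^1} = ||v_0||_{L^1}$ observed in \cite{vasseur2020blow}, whereas for $p>1$ the integrand develops a singularity at $x_0=0$ as $t\to T_*$.

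To exploit this singularity, I would test with indicator functions $v_0 = \mathbf{1}_{[-\eta,\eta]}$. By continuity of $u_0'$ at its minimum, for any $\delta>0$ I can choose $\eta=\eta(\delta)>0$ small enough that $u_0'(x_0) \leq u_0'(0)+\delta$ on $[-\eta,\eta]$. Combined with the monotonicity of $s \mapsto (1+st)^{-(p-1)}$ where it is positive, the displayed formula then gives
\[
\f{||v(t)||_{L^p}^p}{||v_0||_{L^p}^p} \;\geq\; \f{1}{(1+(u_0'(0)+\delta)\,t)^{p-1}},
\]
so $\lam_p(t) \geq (1+(u_0'(0)+\delta)\,t)^{-(p-1)/p}$ for every admissible $\delta>0$. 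Sending $\delta\to 0^+$ and using $1+u_0'(0)t = (T_*-t)/T_*$ yields
\[
\lam_p(t) \;\geq\; \left(\f{T_*}{T_*-t}\right)^{(p-1)/p},
\]
which tends to $\infty$ as $t\to T_*$ for every $p>1$, completing the proof.

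The one point requiring a bit of technical care is the validity of the explicit characteristic formula for $v$ when $v_0$ is merely in $L^p$ rather than smooth. This should be routine: the characteristic flow is a $C^1$-diffeomorphism of $\R$ onto itself for each $t\in[0,T_*)$, so the formula above defines a distributional solution of \eqref{Burger_lin} that agrees with the classical solution for smooth data, and uniqueness in the $L^p$ class identifies the two. Beyond this the argument is elementary, and as a byproduct it shows how sharp the $L^1$ stability of \cite{vasseur2020blow} really is: for every $p>1$ the growth factor blows up at the precise quantitative rate $(T_*-t)^{-(p-1)/p}$, which smoothly degenerates to $1$ at $p=1$.
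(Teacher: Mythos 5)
Your proof is correct, and it takes a genuinely different route from the paper's. The paper argues in Eulerian variables: it observes that the inward-pointing flow near $x=0$ traps the support of $v$ in $[-\d,\d]$, performs an $L^p$ energy estimate on $v_t+(uv)_x=0$ (the integration by parts produces the factor $1-\f{1}{p}$), bounds $-u_x$ from below on $[-\d,\d]$ by $-\f{1}{2}u_x(t,0)=\f{1}{2}(T_*-t)^{-1}$, and closes with Gronwall to get $\lam_p(T)\gtrsim (T_*-T)^{-\f{1}{2}(1-\f1p)}$. You instead integrate everything along characteristics, obtain the exact Lagrangian formula $v(X(t;x_0),t)=v_0(x_0)/(1+u_0'(x_0)t)$, and compute $\|v(t)\|_{L^p}^p$ by the change of variables $x=X(t;x_0)$, which cleanly isolates the exponent $p-1$ in the Jacobian. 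Your approach buys two things: the sharp rate $\lam_p(t)\gtrsim (T_*-t)^{-(p-1)/p}$ (the paper's energy argument loses a factor $\f12$ in the exponent from the crude pointwise bound on $u_x$ near $0$), and an exact identification of the borderline case $p=1$ as Jacobian cancellation, recovering the $L^1$ conservation of \cite{vasseur2020blow} rather than merely being consistent with it. What the paper's argument buys in exchange is robustness: the energy/support-localization method does not require solving the background equation explicitly, which is why essentially the same scheme is reused for the Riccati PDE and, in localized form, for the Euler and Boussinesq instability proofs. Your technical caveat about $L^p$ data is handled adequately (the flow is a $C^1$ diffeomorphism onto its image with Jacobian bounded below by $1-t/T_*$ for $t\le T<T_*$, so the pushforward formula defines the unique solution; alternatively one may mollify the indicator), and in any case the definition \eqref{eq:Ric_instab} only requires exhibiting one admissible $v_0$ per $t$, which your indicators provide.
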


Since the linearized equation \eqref{Burger_lin} contains a singular forcing term $u_x v$, it is not surprising that $v(t)$ can blow up in some $L^p$ norm. In the following proof, since the equation is local, we localize the perturbation $v$ to the region where $-u_x$ blows up to show that $v$ can grow rapidly. 

We remark that the stability of the blowup of \eqref{eq:Burger} has been studied in details in \cite{collot2018singularity} using the modulation technique. The stability of the blowup of \eqref{eq:Burger} has been used to establish shock formation in the 2D and the 3D compressible Euler equations \cite{buckmaster2019formation,buckmaster2019formation2}.


\begin{proof}
Fix $T <  T_* = - \f{1}{u_{0,x}(0)} = \f{1}{ |u_{0,x}(0)|}$. 
It is easy to obtain that $u(t, 0) = 0$ for any $ t < T_*$. Note that $u_x(t, 0)$  satisfies the ODE
\beq\label{eq:Burger_ux0}
\pa_t u_x(t, 0) = - ( u_x(t, 0) )^2, \quad u_{0,x}( 0) < 0 , \quad u_x(t, 0) = \f{ - |u_{0, x}(0)| }{ 1 - t |u_{0, x}(0)| } = - \f{1}{T_*- t},
\eeq
where we have used $T_* =  | u_{0, x}(0)|^{-1}$ in the last equality. It follows the blowup result and $u_x(t, 0) \leq u_{0,x}( 0) < 0 $. Since $u(t) \in C^0( [0, T], C^1)$, there exists $\d >0$ such that 
\beq\label{eq:Burger_ux}
u_x(t, x) \geq  - \f{1}{2} u_x( t, 0 ) > 0 , \quad x  \in  [-\d, \d],
\eeq
for any $t \leq T$, which implies 
\beq\label{eq:Burger_flow}
 u(t, x) \leq 0 , \ x \in [0, \d] , \quad u(t, x) \geq 0, \ x \in [-\d, 0],
\eeq
for any $t \leq T$. Consider $v_0 \in C^{\inf}, v_0 \neq 0, \supp(v_0) \subset [-\d, \d]$. Due to \eqref{eq:Burger_flow},  $\supp( v(t)) $ remains in $[-\d, \d]$ for $t \leq T$. Performing $L^p$ estimate on \eqref{Burger_lin} and using integration by parts,  we obtain 
\[
  \f{1}{p} \f{d}{dt} || v||_{L^p}^p 
  = \int_{\R} - (u v)_x \cdot |v|^{p-2} v  dx 
= \int_{\R} - u_x |v|^p -  u v_x  |v|^{p-2} v  dx 
= \int_{\R} - u_x |v|^p + \f{1}{p}  u_x  |v|^p  dx .
\]
Since $\supp( v(t)) \subset [-\d, \d]$, using \eqref{eq:Burger_ux}, we further obtain 
\[
\f{1}{p}\f{d}{dt} || v||^p_{L^p} = (1- \f{1}{p}) \int_{[-\d, \d]} - u_x |v|^p   dx \geq
(1 - \f{1}{p}) \f{ - u_x(t, 0)}{2} \int_{[-\d, \d]}  |v|^p   dx 
= (1 - \f{1}{p}) \f{ - u_x(t, 0)}{2} || v||_{L^p}^p.
\]
Solving the above ODE and using \eqref{eq:Burger_ux0}, we prove 
\[
\bal
|| v(T)||_{L^p} &\geq || v_0||_{L^p} \exp\B( \f{1}{2}(1 - \f{1}{p})\int_0^T - u_x( t, 0)  dt \B)
=  || v_0||_{L^p} \exp\B( - \f{1}{2}(1 - \f{1}{p}) \log(T_* - T) \B). \\
\eal 
\]
From the definition of $\lam_p(t)$, we yield 
\[
\lam_p(T) \geq \exp\B( - \f{1}{2}(1 - \f{1}{p}) \log(T_* - T) \B) =(T_* - T)^{ - \f{1}{2}(1 - \f{1}{p}) }.
\]
Since $p > 1$, taking $T \to T_*$, we obtain the desired result.
\end{proof}

\subsection{The Riccati-type PDE}\label{sec:Ric_PDE}

It is easy to show that if the initial data $u_0$ of \eqref{eq:Ric} satisfies $\max (u_0)  > 0$, the PDE blows up at finite time $T(u_0) = \f{1}{ \max (u_0)}$. Moreover, the equation can develop a self-similar blowup
\beq\label{eq:Ric_self}
 \bar u (t, x) = \f{1}{1 - t+ x^2} = \f{1}{1-t} \bar U( \f{ x}{ (1 -t)^{1/2} }), \quad \bar U = \f{1}{1 + x^2}. 
\eeq

The linearized equation around the blowup solution $\bar u$ \eqref{eq:Ric} reads
\beq\label{eq:Ric_lin}
\pa_t v = 2 \bar u v .
\eeq

Denote $P_{\e}$
\beq\label{eq:Ric_init}
 P_{\e} \teq\{ u : u = C( \bar U + V_0 ), \ C> 0 , \  |  V_0 | \leq \e \min(1, |x|^3) \}.
\eeq
We will study the stability of the blowup solution of \eqref{eq:Ric} for initial data in $P_{\e}$. Let us motivate the class $P_{\e}$. 
For initial data $u_0$ close to \eqref{eq:Ric_self}, we have $u_0(x) = u_0(0) u_1(x)$ with $u_1(0) = 1$ and $u_1$ being a perturbation of $\bar U$. Since the solution $u$ first blows up at $\arg\max u_0$ and $ \bar U(x) = 1 - x^2 + O(x^4)$ near $x=0$, we require that $V_0$ vanishes to higher order $O(|x|^3)$ near $x=0$ and $\e$ is small so that the maximum of $u_0$ does not shift. The vanishing order can be relaxed to $ |v| \les |x|^{2+ \d} $ for any $\d > 0$, and the stability result similar to that in Theorem \ref{thm:Ric_stab} holds.

To further study the instability of the blowup profile $\bar U$ \eqref{eq:Ric_self} to \eqref{eq:Ric}, 
we consider the following ansatz of the linearized solution \eqref{eq:Ric_lin} and the rescaled growth factor $\Lam_p(t)$ similar to that for the 3D Euler equations in \cite{lafleche2021instability} 
\beq\label{eq:Ric_ansatz}
v(t, x) = \f{1}{1 - t} V( \f{x}{ (1-t)^{\b} }, t), \quad \b = \f{1}{2},  \quad \Lam_p( v, t) = \f{ || V(t)||_{L^p} }{ || V_0||_{L^p}}.
\eeq
Since the blowup exponent $\f{1}{1-t}$ is factored out,  $\Lam_p$ can be seen as measuring the \textit{relative} linear instability between $V$ and the background profile $\bar U$ \eqref{eq:Ric_self}, while $\lam_p$ \eqref{eq:Ric_init} measures the \textit{absolute} linear instability. We have the following instability results.



\begin{thm}\label{thm:Ric_instab}
For any $ v_0 \in C_c^0$ with $v_0(0)> 0$ and any $p \in [1, \infty]$, we have 
\[
|| v(t) ||_{L^p} \gtr C(v_0, p) (1- t)^{-2 + \f{1}{2p} }, \quad 
\lim_{ t \to 1}  || v(t)||_p = \infty, \quad  \lim_{t \to 1} \f{ || v(t)||_{L^p}}{ || u(t) ||_{L^p} } = \infty.
\]
As a result, we have $\lam_p(t) \to \infty, \Lam_p(v, t) \to \infty$ as $t \to 1$. 
\end{thm}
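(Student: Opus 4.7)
The plan is to exploit that the linearized equation \eqref{eq:Ric_lin} contains no spatial derivatives: for each fixed $x$ it is simply a linear ODE in $t$, so
\[
v(t,x) = v_0(x)\exp\!\left(2\int_0^t \bar u(s,x)\,ds\right) = v_0(x)\!\left(\frac{1+x^2}{1-t+x^2}\right)^{\!2},
\]
where the second equality uses $\int_0^t (1-s+x^2)^{-1}\,ds = \ln\frac{1+x^2}{1-t+x^2}$. Every claim in the theorem then reduces to a direct computation on this closed-form representation.

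Since $v_0\in C_c^0$ with $v_0(0)>0$, I would pick $\delta>0$ and $c>0$ with $v_0(x)\geq c$ on $[-\delta,\delta]$, and drop the factor $(1+x^2)^{2p}\geq 1$ to obtain
\[
\|v(t)\|_{L^p}^p \geq c^p\int_{-\delta}^{\delta}\frac{dx}{(1-t+x^2)^{2p}}.
\]
Setting $\varepsilon=1-t$ and rescaling $x=\sqrt{\varepsilon}\,y$ converts the right-hand side into
\[
\varepsilon^{\frac{1}{2}-2p}\int_{-\delta/\sqrt{\varepsilon}}^{\delta/\sqrt{\varepsilon}}\frac{dy}{(1+y^2)^{2p}},
\]
and for any $p\geq 1$ the integral converges to the positive constant $\int_{\R}(1+y^2)^{-2p}\,dy$ as $\varepsilon\to 0$. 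Taking $p$-th roots yields $\|v(t)\|_{L^p}\gtr (1-t)^{\frac{1}{2p}-2}$, which is precisely the stated lower bound; the endpoint $p=\infty$ is covered even more directly by $\|v(t)\|_{L^\infty}\geq v(t,0)=v_0(0)(1-t)^{-2}$.

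To treat the two ratios, I would run the same scaling substitution on $\bar u$ itself to obtain $\|\bar u(t)\|_{L^p}\sim (1-t)^{\frac{1}{2p}-1}$ for all $p\in[1,\infty]$, hence
\[
\frac{\|v(t)\|_{L^p}}{\|\bar u(t)\|_{L^p}} \gtr (1-t)^{-1}\to \infty.
\]
For the rescaled amplitude in the ansatz \eqref{eq:Ric_ansatz}, a one-line change of variables gives $\|V(t)\|_{L^p}=(1-t)^{1-\frac{1}{2p}}\|v(t)\|_{L^p}\gtr (1-t)^{-1}$, so $\Lam_p(v,t)\to\infty$; and $\lam_p(t)\to\infty$ is immediate by using the particular $v_0$ above as a test function in the supremum \eqref{eq:Ric_instab}. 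There is no genuine obstacle, as the equation is an ODE pointwise in $x$; the only care required is to track the scaling exponents uniformly in $p\in[1,\infty]$ and to note that the dominant mass of $v(t,\cdot)$ concentrates at scale $\sqrt{1-t}\to 0$ around the origin, so the cutoff $|x|\leq\delta$ does not discard the leading contribution.
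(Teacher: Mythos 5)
Your proof is correct and follows essentially the same route as the paper: solve the linearized equation pointwise in $x$ to get the closed form $v(t,x)=v_0(x)(1+x^2)^2(1-t+x^2)^{-2}$, localize to $|x|\le\delta$ where $v_0\ge c>0$, and rescale $x=\sqrt{1-t}\,y$ to extract the exponent $-2+\frac{1}{2p}$, then compare with $\|\bar u(t)\|_{L^p}\sim(1-t)^{-1+\frac{1}{2p}}$. Your explicit treatment of $p=\infty$ and of the $\Lambda_p$ scaling merely fills in details the paper leaves to the reader.
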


In the above theorem, we can choose perturbation $v_0$ with $u_0 =  \bar u + v_0 \in P_{\e}$ \eqref{eq:Ric_init}. On the other hand, we can prove stability of the blowup for $u_0 \in P_{\e}$ in Theorem \ref{thm:Ric_stab}. 

We remark that the above instability results are not surprising since $\bar u$ in the forcing term $\bar u v$ \eqref{eq:Ric_lin} blows up. The problems of using the ansatz \eqref{eq:Ric_ansatz} to study the stability of the blowup profile $\bar U$ \eqref{eq:Ric_self} are the following. For initial data $u_0$ perturbed from $\bar u$, we expect that the blowup time $T$ changes and the blowup exponent $\b$ in \eqref{eq:Ric_ansatz} can also change. Moreover, to observe the blowup profile, we need to rescale the solution using a different rescaling rate in the spatial variable. These lead to the following ansatz of the singular solution $u$ from initial data $u_0$ near $\bar u$
\beq\label{eq:Ric_ansatz2}
 u(x, t) = \f{1}{\td T- t} U\B(   \mu   \f{x}{ (\td T - t)^{ \td \b }}, t \B), \quad \td \b \approx \f{1}{2}, \quad \td T \approx 1, \quad \mu \approx 1 , \quad U \approx \bar U.
\eeq

However, in \eqref{eq:Ric_ansatz}, the parameter $\td T, \td \b,  \mu$ are all fixed. Moreover, in the above ansatz, due to the composition, the parameters $\td \b, \td T, \mu$ depend on the initial data and perturbation in a nonlinear fashion. Thus, they cannot be captured by the linearized equation \eqref{eq:Ric_lin} around $\bar u$. Without incorporating the perturbation of these parameters, it is not expected to observe the stability of the profile.

\begin{remark}
There is some progress on modulating the instability caused by the change of the blowup time $T(u)$ using the Calkin semi-norm \footnote{
Pages 9 and 13 in the slide of the talk "Instability of finite time blow-ups for incompressible Euler" 
by Alexis Vasseur in New Mechanisms for Regularity, Singularity, and Long Time Dynamics in Fluid Equations. \url{https://www.birs.ca/events/2021/5-day-workshops/21w5110/videos/watch/202107301430-Vasseur.html}}
\beq\label{eq:calkin}
\lam_p(t)= \infim_{ K \mathrm{ \ compact \ operator} } \sup_{ || v_0||_{L^p} \leq 1 } || v(t) - K v_0 ||_{L^p}. 
\eeq
Suppose that $\uu$ is a solution that blows up at $T_*$, and $u_{\e}$ is another solution from a perturbed initial data $ \uu_{\e}^0 = \uu + \e v_0$ and blows up at $T_{\e}^*$.
Considering the change of the blowup time, if $\e$ is very small, one would expect that the solution $v(t)$ to \eqref{eq:euler_lin} takes the form 
\[
v(t, x) \approx \f{ \uu_{\e}(t, x) - \uu(t + T^* - T_{\e}^* , x) }{\e}
= \f{ \uu_{\e}(t, x) - \uu(t, x)}{\e} + \f{ \uu(t, x) - \uu(t + T^* - T_{\e}^* , x) }{\e}
\teq I + II.
\]
The first part measures the change of the profile, and the second part measures the effect due to the change of the blowup time. Using the Taylor expansion, for small $\e$, one has
\[
II \approx  c[v_0] \pa_t \uu , \quad  c[v_0] \approx \f{ T_{\e}^*  - T^*}{\e} .
\]

If $\uu$ blows up with a rate $(T_* - t)^{-\al}$, then $\pa_t \uu$ can blow up even faster with a rate $(T_* - t)^{-\al-1}$. This can lead to the fast growth of $v(t, x)$. If $c[v_0]$ is a linear operator of $v_0$, since $\pa_t \uu(t, x)$ is a given function, then $c(v_0) \pa_t \uu$ is a rank-one operator. Therefore, to modulate this effect, the speaker proposed to subtract $v(t)$ by $K v_0$ for a compact operator $K$, and then $v(t) - K v_0$ measures the more interesting quantity $I$. See more details from the link in the footnote.

However, since the blowup time $T(u_0)$ depends nonlinearly on the initial data $u_0$, $c[v_0]$ may not be linear in $v_0$. For example, in \eqref{eq:Ric}, the blowup time is  $T(u_0) = \f{1}{ \max( \max u_0, 0) }$. 
Consider initial data $\bar u_0$ with $\bar u_0(0) = 1$ and $\bar u_0(x) < 1$ for $x \neq 0$. 
For any perturbation $v_0$, we yield 
\[
  \lim_{\e \to 0}  \f{  - T( \bar u_0 + \e v_0) + T( \bar u_0)}{\e} 
  = \lim_{\e \to 0}   \f{ \max( \bar u_0 + \e v_0)  - 1}{ \e \cdot \max( \bar u_0 + \e v_0) }
  = \lim_{\e \to 0} \f{   \max( \bar u_0 + \e v_0)  - 1 }{\e} 
  = \max( v_0(0), 0).
\] 
The functional $ \max( v_0(0), 0)$ is a nonlinear functional of $v_0$. In general, the blowup time $T$ and the corresponding functional can depend on the initial data in a nonlinear fashion, and thus the modification using the Calkin semi-norm \eqref{eq:calkin} may not remove the instability due to the change of the blowup time. 
\end{remark}

Using the dynamic rescaling formulation, we prove the stability of the blowup of \eqref{eq:Ric}.

\begin{thm}\label{thm:Ric_stab}
There exists an absolute constant $\e > 0$, such that for any $u_0 \in P_{\e} \cap L^{\inf}$, we have
\beq\label{eq:Ric_self2}
u( x, t(\tau)) = \f{1}{ T - t(\tau)} U\B( T^{1/2} \f{ x}{ (T- t(\tau))^{1/2} } , \tau \B) , \quad T = \f{1}{C} = \f{1}{u_0(0)},  \quad t(\tau) =  T( 1 - e^{-\tau}),
\eeq
for any $\tau \in [0, \infty)$. Moreover, we have the following stability estimate
\beq\label{eq:Ric_stab}
     || ( U(\cdot, \tau) - \bar U ) ( |x|^{-3} + 1)||_{L^{\inf}} 
     \leq      || ( U_0 - \bar U ) (|x|^{-3} + 1)||_{L^{\inf}} e^{- \f{\tau}{4}} .
\eeq
\end{thm}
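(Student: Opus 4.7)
The equation $\partial_t u = u^2$ is a pointwise ODE in $x$, so my plan is to exploit the explicit solution
\[
u(x,t) \;=\; \frac{u_0(x)}{1 - t\,u_0(x)}
\]
rather than work with a linearized PDE. First I would verify that $T = 1/C$ is indeed the first blowup time. Since $\bar U(0)=1$ and $V_0(0)=0$ (the bound $|V_0|\le \e\min(1,|x|^3)$ forces cubic vanishing at the origin), we have $u_0(0)=C$. Using $\bar U(x)<1$ for $x\ne 0$ together with the cubic vanishing of $V_0$ near $0$ and $|V_0|\le \e$ away from $0$, one checks that for $\e$ small enough $u_0$ attains its positive maximum at $x=0$, so the solution first blows up at $t=1/C=T$, justifying the scaling.

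Next I would substitute the self-similar ansatz into the explicit solution. With $y = xe^{\tau/2}$ and $t=T(1-e^{-\tau})$, setting $G(y,\tau)=1/U(y,\tau)$ gives, after direct computation,
\[
G(y,\tau) \;=\; 1 + e^{\tau}\bigl(G_0(x)-1\bigr), \qquad G_0(x) = \frac{1}{\bar U(x)+V_0(x)}, \qquad x=ye^{-\tau/2}.
\]
The background satisfies $\bar G(y) := 1/\bar U(y) = 1+y^2$, and the deviation $H := G - \bar G$ has the closed form
\[
H(y,\tau) \;=\; e^{\tau} H_0\bigl(ye^{-\tau/2}\bigr), \qquad H_0(x) \;=\; \frac{-V_0(x)}{\bar U(x)\bigl(\bar U(x)+V_0(x)\bigr)}.
\]
This is transparent from the viewpoint of the self-similar equation $G_\tau + (y/2)G_y = G-1$: the deviation $H$ satisfies the linear transport $H_\tau + (y/2)H_y = H$, whose characteristics $y(\tau)=y_0 e^{\tau/2}$ yield exactly the formula above.

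Translating back to $W := U-\bar U$, the identity $W = -H/(G\bar G)$ gives
\[
|W(y,\tau)| \;=\; \frac{e^{\tau}\,|H_0(x)|}{G(y,\tau)\,(1+y^2)}.
\]
To estimate $\|W(\tau)(|y|^{-3}+1)\|_{L^\infty}$, I would split the analysis by whether $|x|=|y|e^{-\tau/2}$ is less than or greater than $1$, i.e.\ $|y|\lessgtr e^{\tau/2}$. In the inner region, $|V_0(x)|\le \e|x|^3$ and $\bar U(x)\gtrsim 1$ give $|H_0(x)|\lesssim \e|x|^3$, so the factor $e^\tau|x|^3 = |y|^3 e^{-\tau/2}$ combines with the decaying denominator $(1+y^2)^2$ and the weight $|y|^{-3}+1$ to produce the exponential decay. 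In the outer region, $|V_0(x)|\le \e$ and the quadratic decay of $\bar U^{-1}=1+y^2$ control $e^\tau$ through the denominator $(1+y^2)^2 \sim e^{2\tau}$ when $|y|\sim e^{\tau/2}$ and beyond. The smallness of $\e$ is used once, to guarantee the lower bound $G(y,\tau)\gtrsim 1+y^2$ (hence that $U$ stays positive and the rescaled solution is globally defined in $\tau$); this is automatic from $|H|/(1+y^2) \ll 1$ for small initial perturbation.

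The main obstacle I expect is not conceptual but bookkeeping: obtaining the precise constant $1$ and the rate $e^{-\tau/4}$ stated in the theorem requires using the weighted norm $\|W_0(|y|^{-3}+1)\|_{L^\infty}$ of $V_0$ directly, rather than the cruder pointwise bound $|V_0|\le \e\min(1,|x|^3)$, and carefully balancing the two regions at the breakpoint $|y|\sim e^{\tau/2}$. Because the problem reduces to a scalar ODE at each $x$, there is no nonlinear interaction to control beyond the single denominator $G$, which is handled once and for all by the smallness hypothesis; this is what makes the explicit-solution approach much cleaner here than a Grönwall-type energy method based on $W_\tau = (2\bar U - 1)W - (y/2)W_y + W^2$.
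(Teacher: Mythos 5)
Your route is correct in structure and genuinely different from the paper's. The paper never integrates the ODE explicitly: it passes to the dynamic rescaling equation \eqref{eq:Ric_dyn}, normalizes $\td c_l = \td c_{\om} = 0$ so the perturbation $V = U - \bar U$ solves $\pa_{\tau}V + \f{1}{2}x\pa_x V = -V + 2\bar U V + V^2$, and runs a weighted $L^{\infty}$ maximum-principle estimate on $V\rho$ with $\rho = |x|^{-3}+1$; everything reduces to the one-variable inequality $D = -1 + \f{2}{1+x^2} + \f{1}{2}\f{x\rho_x}{\rho} \le -\f{1}{2}$ (a Young's-inequality computation) followed by Gr\"onwall, with the quadratic term absorbed by the choice $\e = \f{1}{8}$. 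Your observation that $\pa_t u = u^2$ is a pointwise ODE, so that $G = 1/U$ obeys the linear transport $G_{\tau} + \f{y}{2}G_y = G-1$ and $H = G - \bar G$ is exactly $e^{\tau}H_0(ye^{-\tau/2})$, is correct and in principle yields strictly more than the paper's argument (an exact solution rather than a differential inequality). Your identification of the blowup time and the formula $W = -H/(G\bar G)$ are also correct.

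The caveat is that the step you dismiss as bookkeeping is where the entire quantitative content of \eqref{eq:Ric_stab} lives, and it is not routine. Substituting $|V_0(x)| \le E_0/\rho(x)$ into your formula, the claimed bound with constant exactly $1$ and rate $e^{-\tau/4}$ becomes the two-parameter inequality
\[
\f{1+s^{3/2}a^3}{1+a^3}\cdot\f{(1+a^2)\,(1+a^2+H_0)}{(1+sa^2)\,(1+sa^2+sH_0)} \le s^{1/4},
\qquad s = e^{\tau},\ a = |x|,
\]
which is an \emph{equality} at $s=1$ for every $a$. There is therefore no uniform margin with which to absorb the $H_0$-corrections; one must verify the inequality globally in $(a,s)$ and check its $s$-derivative at $s=1$ (which is precisely the paper's pointwise computation $D\le -\f{1}{2}$ in disguise), using the cubic vanishing of $V_0$ at the origin and the smallness of $\e$ in a position-dependent way. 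A second point glossed over by the change of variables $G = 1/U$: since $V_0$ is only $O(\e)$ for $|x|\ge 1$, one has $U_0 = \bar U + V_0 \le 0$ for $|x|\gtrsim \e^{-1/2}$, so $G_0 = 1/U_0$ is singular or negative there and your formula for $H$ must be interpreted through the equivalent expression $W = e^{-\tau}V_0(x)\,[(1-(1-e^{-\tau})U_0(x))(1-(1-e^{-\tau})\bar U(x))]^{-1}$ and that regime estimated separately. None of this is a conceptual obstruction --- the exactness of your solution guarantees the inequality is true --- but as written the proof is incomplete precisely at the point where the theorem's constants are produced, and the paper's Gr\"onwall route reaches them with a single elementary calculus step.
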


The formula \eqref{eq:Ric_self2} and estimate \eqref{eq:Ric_stab} are consistent with the ansatz \eqref{eq:Ric_ansatz2}. For  initial data  $u$ different from $\bar u$ \eqref{eq:Ric_self}, we have a different blowup time $T$ and we need to adjust the rescaling rate $T^{1/2} (T-t)^{-1/2}$ in the spatial variable. To study the stability of the blowup profile, we rescale the spatial variable, the temporal variable, and normalize the amplitude of the solution according to the initial data. These rescaling relations and renormalization are nonlinear and thus are not captured by the ansatz \eqref{eq:Ric_ansatz} and the linearized equation  \eqref{eq:Ric_lin}.

We first prove Theorem \ref{thm:Ric_instab} and then Theorem \ref{thm:Ric_stab}\;.
\begin{proof}[Proof of Theorem \ref{thm:Ric_instab}]
Recall $\bar u = \f{1}{1 - t+ x^2}$ from \eqref{eq:Ric_self}. Using \eqref{eq:Ric_lin}, we obtain 
\[
\bal
v(t, x) & = v_0(x) \exp\B( \int_0^t 2 \bar u( s, x) ds  \B)
= v_0(x) \exp\B( \int_0^t  2 \f{1 }{1-s + x^2} ds  \B) \\
& = v_0(x) \exp(  2 \log(1 + x^2) - 2 \log(1 - t + x^2) )
= v_0(x) \f{ (1 + x^2)^2}{ (1 - t + x^2)^2 }  \\
& = \td v(x) \f{1}{(1-t)^2} ( \bar U ( \f{x}{ (1-t)^{1/2}} ) )^2, \quad \td v = v_0(x) (1 + x^2)^2,
\eal
\]
where $\bar U$ is given in \eqref{eq:Ric_self}. In particular, $v$ blows up with a rate $(1-t)^{-2}$, which is even faster than that of $\bar u$. We remark that the exponent $2$ in $ (1-t)^{-2}$ is generic and does not relate to the coefficient $2$ in \eqref{eq:Ric_lin} or the formulation of \eqref{eq:Ric}. We obtain the same exponent if we consider $u_t = c u^2$ for other constant $c>0$ instead of \eqref{eq:Ric}. 

Since $v_0(0) > 0$ and $v_0 \in C_c^0$, there exists $c, \d >0$ such that $v_0(x) \geq c$ for $
|x| \leq \d$. For any $p \in [1, \infty)$, we have
\[
\int_{\R} |v|^p dx \geq c \int_{ |x| \leq \d} (1-t)^{-2p} ( \bar U ( \f{x}{ (1-t)^{1/2}} ) )^{2p} dx
= c (1-t)^{-2p+ 1/2} \int_{ |y| \leq \d (1-t)^{-1/2}}  | \bar U(y)|^{2p} d y.
\]

For $t$ sufficiently close to $1$, we get 
\[
|| v(t)||_{L^p} \gtr C(v_0, p) (1-t)^{-2 + \f{1}{2p}}.
\]

Recall $\bar u$ from \eqref{eq:Ric_self}. A direct calculation yields  $|| \bar u(t) ||_p = C_p (1-t)^{-1 + \f{1}{2p}}$ for some absolute constant $C_p>0$. For $p \in [1, \infty) $, these estimates imply the result in Theorem \ref{thm:Ric_instab}\;. For $p=\infty$, the calculation is even simpler and thus is omitted. 
\end{proof}

\begin{remark}
For smooth initial data $v_0$ with $v_0(0) > 0$, since $v(t)$ blows up even faster than $u(t)$, it is expected that the relative instability $ || v(t)||_X / || \bar u||_X$ occurs in many norms $X$, e.g., the Sobolev norms $W^{k, p}$ and the Holder norms $C^{k, \al}$. This relative instability is generic 
for \eqref{eq:Ric_lin}. Thus, using the linearized equation \eqref{eq:Ric_lin} around a blowup solution $\bar{\uu}$ is not suitable to study the stability of the profile \eqref{eq:Ric_self}. 
\end{remark}

\paragraph{\bf{Dynamic rescaling formulation}}

To study the stability of the blowup, we use the dynamic rescaling formulation. Suppose that $u$ is a solution to \eqref{eq:Ric}. Then it is easy to show that 
\beq\label{eq:Ric_res1}
U(x, \tau) = C_{\om}(\tau) u( C_l(\tau) x, t(\tau))
\eeq
is the solution to the dynamic rescaling equation 
\beq\label{eq:Ric_dyn}
\pa_{\tau} U + c_l x \pa_x U = c_{\om} U + U^2,
\eeq
where 
\beq\label{eq:Ric_res2}
  C_{\om}(\tau) = C_{\om}(0) \exp\lt( \int_0^{\tau} c_{\om} (s)  d \tau\rt), \ C_l(\tau) = \exp\lt( \int_0^{\tau} -c_l(s) ds    \rt) , \  t(\tau ) = \int_0^{\tau} C_{\om}( s) ds .
\eeq

We have the freedom to choose the scaling parameters $c_l, c_{\om}$ dynamically.

Note that a similar dynamic rescaling formulation was employed in \cite{mclaughlin1986focusing,  landman1988rate} to study the nonlinear Schr\"odinger (and related) equation. 
This formulation is closely related to the modulation technique, 
which has been developed by Merle, Raphael, Martel, Zaag and others, see e.g. \cite{merle1997stability,kenig2006global,merle2005blow,martel2014blow,merle2015stability}.
It has been a very effective tool to study singularity formation for many problems like the nonlinear Schr\"odinger equation \cite{kenig2006global,merle2005blow}, the nonlinear wave equation \cite{merle2015stability}, the nonlinear heat equation \cite{merle1997stability}, the generalized KdV equation \cite{martel2014blow}, compressible Euler equations \cite{buckmaster2019formation,buckmaster2019formation2}. Recently, it has been used to establish singularity formation in 
3D incompressible Euler equations \cite{chen2019finite2,elgindi2019finite,elgindi2019stability}, and related De Gregorio model \cite{chen2019finite,chen2021regularity}, the Hou-Luo model \cite{chen2021HL}, and the gCLM model \cite{chen2019finite,chen2020singularity,chen2020slightly,Elg19}.

\begin{proof}[Proof of Theorem \ref{thm:Ric_stab}]
Firstly, since $\bar U$ is a self-similar solution to \eqref{eq:Ric}, it is easy to see that 
$ \bar U , \bar c_l = \f{1}{2}, \bar c_{\om} = -1 $
is the steady state to \eqref{eq:Ric_dyn}. For any $u_0  = C ( \bar U + v ) \in P_{\e}$ \eqref{eq:Ric_init}, we choose $C_{\om}(0) = C^{-1} = u_0(0)^{-1}$. We summarize these parameters below
\beq\label{eq:Ric_para1}
\bar c_l = \f{1}{2} ,\quad \bar c_{\om} = -1, \quad C_{\om}(0) = C^{-1 } = u_0(0)^{-1}.
\eeq
Then from \eqref{eq:Ric_res1}, we get $U_0(x) = C_{\om}(0)u_0(x) = \bar U + V_0$. Denote $U(x, \tau) = \bar U(x) + V(x, \tau), c_l(\tau) = \bar c_l + \td c_l, c_{\om} = \bar c_{\om} + \td c_{\om}$. Substituting $U(x, \tau) = \bar U(x) + V(x, \tau)$ into \eqref{eq:Ric_dyn}, we obtain the equation for $V$ as follows:
\beq\label{eq:Ric_lin2}
\pa_{\tau} V + \bar c_l x \pa_x V + \td c_l x \pa_x \bar U + \td c_l x \pa_x V 
=\bar c_{\om} V +  \td c_{\om} \bar V + \td  c_{\om} V +  2 \bar U V + V^2,
\eeq
with $V(x,0) = V_0, |V(x, 0)| \les \min(1, |x|^3)$.
We choose normalization conditions on $c_l(\tau), c_{\om}(\tau)$ such that 
\[
V( 0, \tau) \equiv 0, \quad \pa_{xx} V(0, \tau) \equiv 0,
\]
for sufficiently smooth $V$. These requirements motivate the following conditions 
\beq\label{eq:Ric_normal}
\td c_{\om}(\tau)\equiv 0 , \quad \td c_l(\tau) \equiv 0. 
\eeq
Thus, we can simplify \eqref{eq:Ric_lin2} as follows
\[
\pa_{\tau} V + \bar c_l  x \pa_x V = \bar c_{\om } V + 2 \bar U V + V^2.
\]
The above equation implies that $\pa_{x}V(0, \tau) = 0$ is also preserved. Thus, we have $|V(x,\tau)| \les |x|^3$ near $x=0$. We choose $\rho = |x|^{-3} + 1$ and estimate $V \rho$
\[
\pa_{\tau} ( V\rho) + \bar c_l x\pa_x( V \rho) 
= \bar c_{\om} V \rho + 2 \bar U V \rho + \bar c_l x \pa_x \rho V + V^2 \rho
= ( \bar c_{\om} + 2 \bar U + \f{ \bar c_l x \rho_x}{\rho} )  \rho V + \rho V^2 \; .
\]

Denote $E = || V \rho ||_{L^{\inf}}$. Since $x \pa_x \rho = -3 |x|^{-3}$, a direct calculation yields 
\[
D \teq \bar c_{\om} + 2 \bar U + \f{ \bar c_l x \rho_x}{\rho}
= -1 + \f{2}{1 + x^2} + \f{1}{2} \cdot \f{-3 |x|^{-3}}{ |x|^{-3} + 1}  
= 1  - \f{2x^2}{1 + x^2} - \f{3}{2} + \f{3}{2} \cdot \f{|x|^3}{1 + |x|^3}.
\]
Using Young's inequality, we get $4 x^2(1 + |x|^3) - 3 |x|^3 (1 + x^2) = |x|^5 + 4 x^2  - 3 |x|^3
=|x|^5 + 2 x^2 + 2x^2 - 3|x|^3 \geq 3 (4)^{1/3} |x|^3 - 3 |x|^3 \geq 0$. It follows 
\[
 3 \cdot \f{|x|^3}{1 + |x|^3} \leq 4 \f{x^2}{1 + x^2}, \quad D \leq -\f{1}{2}.
\]
Performing $L^{\inf}$ estimate on $V \rho$ and using $ |V| \leq || V\rho||_{\inf} = E$, we establish 
\[
\f{1}{2} \f{d}{d \tau} || V \rho||_{L^{\inf}}^2
\leq -\f{1}{2}  || V \rho||_{L^{\inf}}^2 + || V\rho||_{L^{\inf}}^3, \quad 
\f{d}{d\tau} E(\tau) \leq (-\f{1}{2} +  E) E .
\]
Now, we pick $\e = \f{1}{8}$. Recall the assumption of the initial perturbation $V_0$ from \eqref{eq:Ric_init}. We yield $|V_0| (1 + |x|^{-3})
\leq \e \min(1, |x|^3)(1 + |x|^{-3}) \leq 2\e \leq \f{1}{4} $. Solving the above inequality, we prove
\[
\f{d}{d \tau} E(\tau) \leq -\f{1}{4} E(\tau), \quad  E(\tau) \leq E(0) e^{-\tau / 4},
\]
which is exactly the stability estimate \eqref{eq:Ric_stab}. 

Using \eqref{eq:Ric_para1} and \eqref{eq:Ric_normal}, we can compute the rescaling parameters \eqref{eq:Ric_res2}
\[
\bal
c_l(\tau) \equiv \f{1}{2}, \quad c_{\om}(\tau) \equiv -1, \quad C_{\om}(\tau) = C_{\om}(0) \exp(-\tau), \ C_l(\tau) = \exp( \f{ - \tau}{2}), \\
 t(\tau) =  \int_0^{\tau} C_{\om}(s) ds= C_{\om}(0) (1 - e^{-\tau}), \quad T = t(\inf) = C_{\om}(0) = \f{1}{u_0(0)}, \quad T - t(\tau) =  C_{\om}(\tau).
 \eal
\]
It follows $C_l(\tau) = (\f{ T-t(\tau)}{T} )^{1/2}$.
Plugging the above relations into \eqref{eq:Ric_res1}, we prove 
\[
u(x, t(\tau)) = C_{\om}(\tau)^{-1} U( C_l(\tau)^{-1} x, \tau)
= \f{1}{ T-t} U\B( \f{ x T^{1/2}}{ (T-t)^{1/2}} ,\tau \B), \quad T = \f{1}{ u_0(0)} ,
\]
which is exactly \eqref{eq:Ric_self2}.
\end{proof}

\section{Proof of main theorems}\label{sec:main}

In this Section, we first discuss several important properties of the singular solutions constructed in \cite{chen2019finite2,elgindi2019finite}. We will generalize the arguments and estimates in \cite{chen2019finite2} to prove some of these properties and defer the proofs to Sections \ref{sec:high} and \ref{sec:euler_blowup}. Using these properties of the blowup solutions, we will prove Theorems \ref{thm:Euler_HL}-\ref{thm:bous} by generalizing the arguments in \cite{lafleche2021instability,shao2022instability}.


\vspace{0.1in}
\paragraph{\bf{Notations}}
We first introduce some notations to be used in the analysis.
We use $(r, \vartheta, z)$ to denote the cylindrical coordinate in $\R^3$. The associated basis is 
\beq\label{eq:polar_basis}
e_r = ( \cos \vth, \sin \vth, 0 ), \quad e_{\vth} = ( - \sin \vth, \cos \vth, 0), \quad e_z = (0, 0, 1).
\eeq
For $x$ with coordinate $(x_r, x_{\vth}, x_z)$ and $A \subset \R^3$, we use $\td x, \td A$ to denote the poloidal component 
\beq\label{eq:polo}
  \td x = (x_r, x_z) , \quad \td A = \{ \td x : x \in A \}.
\eeq

The poloidal component of the axisymmetric vorticity $\om$ is defined as follows 
\beq\label{eq:polo_w}
\om_p \teq \om^r e_r + \om^z e_z,  \quad \om = \om^r e_r  + \om^{\vth} e_{\vth} + \om^z e_z.
\eeq

In the analysis of the axisymmetric Euler equations, 
for any 2D domain $\S$ of $(r, z)$, we abuse the notation and use
\beq\label{eq:nota_abuse}
  x \in \S  \quad \mathrm{  if   } \quad \td x = ( x_r, x_z ) \in \S.
\eeq
For example,  $x \in B_{(1,0)}(\d)$ means $(x_r, x_z ) \in  B_{(1,0)}(\d)$, or equivalently, $x$ in the annulus $B_{(1,0)}(\d) \times \R / (2\pi \BZ) $. We abuse this notation since the flow is axisymmetric and thus many variables, e.g., $u^r, u^z, u^{\vth}, \om^{\vth}$, depend on $(r, z)$ only.


\subsection{The WKB expansion and the bicharacteristics-amplitude ODEs}\label{sec:review}


The main idea in \cite{vasseur2020blow,lafleche2021instability} is to construct an approximate solution  to \eqref{eq:euler_lin} using a WKB expansion 
\beq\label{eq:WKB00}
v(t, x) \approx b(t, x) \exp( \f{ i S(t, x)}{\e} )
\eeq
for sufficiently small $\e$, where $b(t, x) \in \R^3$ and $S$ is a scalar, and the following bicharacteristics-amplitude ODE system \eqref{eq:bichar1}-\eqref{eq:bichar3} \cite{lafleche2021instability,vasseur2020blow}
\begin{align}
\dot \g_t  & = \uu(t, \g_t) , \quad \g_0 = x_0, 
\label{eq:bichar1} \\
\dot \xi_t &= -( \na \uu)^T(t, \g_t)   \xi_t, \label{eq:bichar2} \\
\dot b_t & = - (\na \uu)(t, \g_t) b_t + 2 \f{ \xi_t^T  ( \na \uu)(t, \g_t)   b_t}{ |\xi_t|^2} \xi_t, \label{eq:bichar3}
\end{align}
with initial data $(x_0, \xi_0, b_0)$, where $(\na u)_{ij} = \pa_j u_i$. The regularity assumption $\uu \in C^0( [0, T], H^s), s > 9/2$ in \cite{vasseur2020blow} is mainly used to guarantee the solvability of the above ODEs with smooth dependence on the initial data.

The ODE system \eqref{eq:bichar1}-\eqref{eq:bichar3} has been derived in \cite{friedlander1991dynamo} to define the fluid Lyapunov exponent and used to study the stability of steady states of the Euler equations \cite{friedlander1991instability,friedlander1997nonlinear}. The WKB expansion \eqref{eq:WKB00} was developed in \cite{Vishik1996spectrum} to study the spectrum of small oscillations in an ideal incompressible fluid. 
It has also been used to study the local stability conditions for the Euler equations \cite{lifschitz1991local}.

For the sake of completeness, in Appendix \ref{app:WKB}, we begin with the WKB expansion \eqref{eq:WKB00} and then explain the use of the bicharacteristics-amplitude ODE system \eqref{eq:bichar1}-\eqref{eq:bichar3}, which arise naturally in the construction of the approximate solution. We also explain the connections among the WKB expansion, the bicharacteristics-amplitude ODE system \eqref{eq:bichar1}-\eqref{eq:bichar3}, and the growth of the unstable solution. From the review in Appendix \ref{app:WKB}, we have a few remarks.

\begin{remark}
(a)  From the proof in \cite{vasseur2020blow} and the simplified derivations in Appendix \ref{app:WKB}, the WKB construction and the high frequency \eqref{eq:WKB00} are mainly used to construct an approximate solution to \eqref{eq:euler_lin} with a small error in the $L^p$ norm but not used to show the growth of the unstable solution.

(b) The growth of the solution $v$ and the linear instability are coupled with the growth of the vorticity via the ODE system \eqref{eq:bichar1}-\eqref{eq:bichar3} and \eqref{eq:growth}.

(c) As we mentioned in Section \ref{sec:Euler_unstable}, for a domain without boundary, $\pa_i \uu, i=1,2,3$ are the exact solutions to \eqref{eq:euler_lin} and blow up in a functional space $X$ equipped with a norm stronger than the $L^{\inf}$ norm. These  
simple instability results do not use \eqref{eq:WKB00} and \eqref{eq:bichar1}-\eqref{eq:bichar3}. 

(d) The argument in \cite{vasseur2020blow} has an advantage that several nonlocal terms become local. It is based on the characteristics and is local in nature. Due to this local property, we can relax the regularity assumptions in the proof in \cite{vasseur2020blow} for the singular solutions in \cite{chen2019finite2,elgindi2019finite} and generalize it to prove Theorems \ref{thm:Euler_HL}-\ref{thm:bous}. 
\end{remark}

\subsection{Properties of the singular solutions}

The singular solution to the 2D Boussinesq equations \eqref{eq:bous}-\eqref{eq:biot} constructed in \cite{chen2019finite2} satisfies the following properties. The $\cC^k$ norm in the following theorem is defined in \eqref{norm:ck}. The reader should not confuse it with the standard $C^k$ norm.

\begin{thm}\label{thm:bous_blowup}
Let $\omega$ be the vorticity and $\theta$ be the density in the 2D Boussinesq equations described by \eqref{eq:bous}-\eqref{eq:biot}. There exists $\al_0 > 0$ such that for $0 < \al < \al_0$, the unique local solution of the 2D Boussinesq equations in the upper half plane develops a focusing asymptotically self-similar singularity in finite time $T_*$ for some initial data $\om_0 \in C_c^{\al}(\R_+^2) , \th_0 \in C_c^{1, \al}(\R_+^2)$.
Moreover, we have $\lim_{t \to T_*} || \na \th(t)||_{\inf} = \inf$, the velocity field is in $C^{1, \al}$ with finite energy. For any $T < T_*$ and any compact domain $\S$ in the interior of  $ \{ (x, y) : x \neq 0, y > 0  \} $, we have $\th_0 \in C^{50}(\S)$ and 
$\om, \na \th , \f{1}{ \sqrt{x^2 + y^2}} \uu \in L^{\inf}( [0, T] , \cC^{50} \cap C^{50}( \S) ), 
 \uu  \in L^{\inf}( [0, T],  C^{50}(\S) ) $.
\end{thm}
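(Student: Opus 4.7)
The first three conclusions---finite-time singularity at $T_*$, blowup of $\|\nabla\theta\|_\infty$, and $C^{1,\alpha}$ velocity with finite energy---are exactly the content of the main theorem in \cite{chen2019finite2}, so I would simply invoke that result for these. The substantive new content in Theorem \ref{thm:bous_blowup} is the higher-order interior regularity: for an arbitrary compact $\Sigma\subset\{(x,y):x\neq 0,\,y>0\}$ and any $T<T_*$, one has $\theta_0\in C^{50}(\Sigma)$ and $\om,\nabla\theta, \tfrac{1}{\sqrt{x^2+y^2}}\uu\in L^\infty([0,T],\cC^{50}\cap C^{50}(\Sigma))$ together with $\uu\in L^\infty([0,T],C^{50}(\Sigma))$. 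The guiding principle, as flagged in the introduction, is that the $C^{1,\alpha}$ low regularity of \cite{chen2019finite2} is only used near the origin (where the self-similar singularity forms), near the symmetry axis $\{x=0\}$, and near the boundary $\{y=0\}$; away from these three loci the solution is smooth, and this smoothness can be propagated dynamically.

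\textbf{Weighted high-order energy estimates.} I would introduce a hierarchy $\cC^k$ of weighted Sobolev-type norms indexed by $k\leq 50$, built from the weights already employed in \cite{chen2019finite2} raised to appropriate powers so that they degenerate to the correct order at $x=0$, at $y=0$ and near the singularity. Differentiating \eqref{eq:bous} $k$ times, multiplying by the weight times the $k$-th derivative and integrating by parts, the transport term $\uu\cdot\nabla$ generates only commutator terms that the weight is engineered to absorb (this is the same mechanism that underlies the low-regularity estimates in \cite{chen2019finite2}, iterated). The density equation contributes no growth beyond what is produced by the flow map, while the vorticity equation inherits a forcing $\partial_x\theta$ whose $\cC^{k}$ norm is controlled by the $\cC^{k+1}$ norm of $\theta$, closing the hierarchy up to $k=50$ by a standard induction.

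\textbf{Weighted elliptic estimates for the Biot--Savart law.} The nonlocal step is to bound $\nabla\uu$ in $\cC^k$ from $\om$ via $-\Delta\psi=\om$, $\uu=\nabla^\perp\psi$. I would commute up to $50$ derivatives through this elliptic equation and control the result in the weighted norms by combining the odd/even symmetry of $(\om,\uu)$ in $x$ with a weighted Calder\'on--Zygmund estimate; the degeneracy of the weights at $x=0$ and $y=0$ is precisely what is needed to avoid boundary losses, while in the far field the finite-energy assumption suffices. I expect this to be the main technical obstacle, since one must check that the degenerate weighted elliptic estimate at order $\alpha$ established in \cite{chen2019finite2} survives commutation with up to $50$ derivatives without losing integrability at the axis or the boundary; this reduces to verifying that each commutator produces factors with the correct vanishing/degeneracy compatible with the weight.

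\textbf{From weighted to classical regularity on compact interior sets.} Once the weighted bounds are propagated globally on $[0,T]$ by Gr\"onwall, I restrict to a fixed compact $\Sigma\subset\{x\neq 0,\,y>0\}$. On such $\Sigma$ all the degenerate weights are bounded above and bounded away from zero, so the $\cC^{50}$ norm is equivalent to the ordinary $H^{50}(\Sigma)$ norm; Sobolev embedding then yields the $C^{50}(\Sigma)$ bounds for $\om$, $\nabla\theta$ and $r^{-1}\uu$, and because $\sqrt{x^2+y^2}\geq c(\Sigma)>0$ on $\Sigma$ the corresponding estimate for $\uu$ itself follows. The claim $\theta_0\in C^{50}(\Sigma)$ reduces to inspection of the explicit initial profile chosen in \cite{chen2019finite2}: the $C_c^{1,\alpha}$ data there is actually smooth off the origin and the boundary, so it automatically lies in $C^{50}(\Sigma)$. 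This completes the proof modulo the weighted estimates deferred to Section \ref{sec:high}.
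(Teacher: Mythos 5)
Your proposal takes a genuinely different route from the paper, and the route you chose is the one the paper explicitly considers and declines to follow. You propose to differentiate the physical equations \eqref{eq:bous} fifty times and run a weighted Gr\"onwall argument in the original variables. The paper instead works entirely in the dynamic rescaling formulation: it writes the solution as the approximate self-similar profile $(\bar\Om,\bar\eta,\bar\xi)$ plus a perturbation in the modified polar coordinates $(R,\b)=(r^{\al},\arctan(y/x))$, generalizes the nonlinear stability estimates of \cite{chen2019finite2} from order $3$ to order $k=100$ (Proposition \ref{prop:Hk} together with the weighted $\cC^k$ estimate of $\xi$), closes a bootstrap $E<K_*\al^2$ using the \emph{damping} term $-\tfrac{1}{15}E_k^2$ produced by the linearization around the profile, and only then transfers back to physical variables via the scaling relations \eqref{eq:rescal1}--\eqref{eq:rescal2} and the embedding of Lemma \ref{lem:cCk}. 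This matters because in physical variables the solution is collapsing to a point and every fixed-weight norm of the full solution degenerates as $t\to T_*$; the mechanism that actually closes the estimates is the coercivity of the linearized operator around the stationary profile in the rescaled frame, which your plan has no substitute for. Relatedly, $\cC^{50}$ in the statement is not a norm you are free to design: it is the specific $L^\infty$-based norm \eqref{norm:ck} built from $D_R=R\pa_R$, $D_\b=\sin(2\b)\pa_\b$ and the weights $\phi_1,\phi_2$, and proving membership in it is part of the claim.

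Beyond the choice of frame, two steps in your outline are asserted exactly where the difficulty lies. First, the claim that the transport commutators are "absorbed by the weight" and that the degenerate elliptic estimate "survives commutation with up to 50 derivatives" is the entire technical content; in the paper this is Propositions \ref{prop:key}, \ref{prop:psi}, \ref{prop:tran1}--\ref{prop:tran3} and \ref{prop:prod3}, all proved for the operator $-\al^2R^2\pa_{RR}-\al(4+\al)R\pa_R-\pa_{\b\b}-4$ with the leading singular part extracted through $L_{12}(\Om)$, not by a weighted Calder\'on--Zygmund argument in $(x,y)$. Second, your derivative count does not close: you propagate a hierarchy "indexed by $k\le 50$" and then invoke embedding on a compact interior set, but both the paper's Lemma \ref{lem:cCk} ($\|f\|_{C^{l-1}(\S)}\les\|f\|_{\cC^{l}}$) and any Sobolev embedding lose at least one derivative, so order-$50$ weighted control cannot yield $C^{50}(\S)$. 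The paper propagates $k=100$ weighted derivatives precisely to leave this margin. The parts of your proposal that do align with the paper are the reduction of the blowup statements to \cite{chen2019finite2}, the observation that the weights are comparable to constants on compact interior sets, and the inspection of the explicit initial data for $\th_0\in C^{50}(\S)$.
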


The regularity $\cC^{50}, C^{50}$ can be further improved to $\cC^k, C^k$ with larger $k$ directly by choosing smaller $\al_0$. The first part of the theorem about the blowup has been proved in \cite{chen2019finite2}. To prove the regularity in the interior of the domain, we generalize the weighted energy estimates for the perturbation and the estimates of the approximate steady state in \cite{chen2019finite2} to sufficiently high order. Since the weighted norms used in \cite{chen2019finite2} and the energy estimates, e.g. $\cH^k$ (see \eqref{norm:Hk}), are comparable to the standard Sobolev norms $H^k$ in the interior of the domain, we establish the interior regularity using the embedding inequalities. See Section \ref{sec:high} for the proof.

In \cite{chen2019finite2}, the 3D axisymmetric Euler equations are studied in a cylinder $D = \{ (r,z) : r \in [0,1], z \in \BT \}, \BT = \R / ( 2 \BZ)$ that is periodic in $z$. Here, $r, z$ are the cylindrical coordinates in $\R^3$. The equations are given below:
\beq\label{eq:euler1}
\pa_t (ru^{\vth}) + u^r (r u^{\vth})_r + u^z (r u^{\vth})_z = 0, \quad 
\pa_t \f{\om^{\vth}}{r} + u^r ( \f{\om^{\vth}}{r} )_r + u^z ( \f{\om^{\vth}}{r})_z = \f{1}{r^4} \pa_z( (r u^{\vth})^2 ),
\eeq
where $\om^{\vth}$ is the angular vorticity and $u^{\vth}$ is the angular velocity. The radial and the axial components of the velocity can be recovered from the Biot-Savart law
\beq\label{eq:euler2}
-(\pa_{rr} + \f{1}{r} \pa_{r} +\pa_{zz}) \td{\psi} + \f{1}{r^2} \td{\psi} = \om^{\vth}, 
 \quad  u^r = -\td{\psi}_z, \quad u^z = \td{\psi}_r + \f{1}{r} \td{\psi}  
\eeq
with a no-flow boundary condition on the solid boundary $r = 1$
\beq\label{eq:euler21}
\td{\psi}(1, z ) = 0
\eeq
and a periodic boundary condition in $z$. For the 3D Euler equations, we have the following results 
\begin{thm}\label{thm:euler_blowup}
There exists $\al_0 > 0$ such that for $0 < \al < \al_0$, the unique local solution of the 3D axisymmetric Euler equations in the cylinder $D = \{ r,z \in [0, 1] \times \BT \}$ given by \eqref{eq:euler1}-\eqref{eq:euler21} develops a singularity in finite time $T_*$ for some initial data $\om_0^{\vth} \in C^{\al}(D) , u_0^{\vth} \in C^{1, \al}(D)$.
The initial data $\om_0^{\vth}, u_0^{\vth}$ are supported away from the symmetry axis $r=0$ with $u_0^{\vth} \geq 0$, $\om_0^{\vth}$ is odd in $z$, $u_0^{\vth}$ is even in $z$, and the velocity field $\uu_0$ in each period has finite energy.

Moreover, the singular solution satisfies the following properties. 

(a) The poloidal component $\om_p = \om^r e_r + \om^z e_z$ blows up $\lim_{t\to T_* } || \om_p(t)||_{\inf} = \inf$.



(b) There exists constants $0 < 4 R_{1, \al} < R_{2, \al} < \f{1}{4}$ such that for any particle within the support of $\om^{\vth}_0, u_0^{\vth}$, its trajectory up to the blowup time is within $B_{ (1, 0) }( R_{1,\al}) \cap D$.

(c) For any compact domain $\S$ in $ \{ (r, z) : r \in (0, 1) , z \neq 0 \} \cap B_{ (1,0)}( R_{2,\al})$ and $T < T_*$, we have 
$ u^{\vth}_0 \in  C^{50}(\S), \om^{ \vth}, (u^{\vth})^2, u^r, u^z, u^{\vth} \in L^{\inf}( [0, T] ,  C^{50}( \S) )$.

\end{thm}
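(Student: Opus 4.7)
The plan is to take the main finite-time blowup assertion directly from \cite{chen2019finite2} and concentrate on the three refined properties (a)--(c). The existence of $T_* < \infty$, the $C^{1,\alpha}$ regularity of the initial data, the symmetry and compact-support conditions, and the finiteness of energy per period are the content of the main theorem of \cite{chen2019finite2}. The working framework throughout is the dynamic rescaling formulation of that paper: after rescaling, the solution is a small perturbation of an approximate self-similar profile $\bar U = (\overline{\omega^\vartheta/r},\ \overline{(ru^\vartheta)^2})$ supported in a small neighborhood of $(r,z)=(1,0)$, and the stability of $\bar U$ is controlled in a weighted energy space $\cH^k$ whose weights degenerate only at the symmetry axis $r=0$, the boundary $r=1$, and the symmetry plane $z=0$.

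For property (c) I would follow the argument used for the Boussinesq case in Theorem \ref{thm:bous_blowup}: extend the weighted $\cH^k$ energy estimates of \cite{chen2019finite2} from low order to $k$ sufficiently large (for instance $k \geq 60$) so that Sobolev embedding yields $C^{50}$ regularity. On any compact set $\S \subset \{r \in (0,1),\ z \neq 0\} \cap B_{(1,0)}(R_{2,\alpha})$ the weights are bounded above and below, so $\cH^k$ is equivalent to the standard $H^k(\S)$. The approximate steady state has the required interior regularity directly from its explicit construction in \cite{chen2019finite2}. The Biot--Savart system \eqref{eq:euler2}--\eqref{eq:euler21} together with interior elliptic regularity then upgrades $\omega^\vartheta \in L^\infty_t C^{50}(\S)$ to $u^r, u^z \in L^\infty_t C^{50}(\S)$, and interior regularity of $u^\vartheta$ follows from the transport equation $\partial_t(ru^\vartheta) + u^r \partial_r(ru^\vartheta) + u^z \partial_z(ru^\vartheta) = 0$, using that the flow map has $C^{50}$ interior regularity inherited from $u^r, u^z$.

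For property (b) I would fix $R_{1,\alpha}$ slightly larger than the radius of the support of the rescaled approximate profile, and use the fact that the rescaled Lagrangian velocity is uniformly bounded in rescaled time while the physical support of the singular solution shrinks toward $(1,0)$. Combined with the stability estimate, this forces every trajectory starting in $\supp(\omega_0^\vartheta) \cup \supp(u_0^\vartheta)$ to remain inside $B_{(1,0)}(R_{1,\alpha}) \cap D$ up to $T_*$. For property (a), the asymptotically self-similar profile has a non-trivial swirl: the rescaled $(ru^\vartheta)^2$ converges to a profile with a non-trivial $\partial_z$-derivative, and the physical-to-rescaled conversion factor for derivatives diverges as $t \to T_*$. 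Since $\omega^r = -\partial_z u^\vartheta$, this yields $\|\omega^r(t)\|_\infty \to \infty$ and hence $\|\omega_p(t)\|_\infty \to \infty$; the stability estimate prevents the perturbation from cancelling the leading contribution of $\bar U$.

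The \emph{main obstacle} is the high-order extension of the weighted energy estimates required for (c). The vortex stretching term $r^{-4}\partial_z((ru^\vartheta)^2)$ in \eqref{eq:euler1} and the degenerate elliptic operator in \eqref{eq:euler2}--\eqref{eq:euler21} interact with the singular weights through commutators whose control becomes progressively more delicate at each order of differentiation. One must verify inductively in $k$ that the linear coercivity established at low order in \cite{chen2019finite2} is preserved at order $k$, generally at the cost of shrinking $\alpha_0$ further depending on $k$. Once these high-order interior weighted bounds are in place, the passage to $C^{50}$ via Sobolev embedding and elliptic regularity is routine, and (a) and (b) follow readily from the dynamic rescaling stability picture.
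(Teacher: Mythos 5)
Your overall route is the same as the paper's: import the blowup and basic properties from \cite{chen2019finite2}, push the weighted energy and (localized) elliptic estimates to high enough order that on compact subsets away from $r=0$, $r=1$, $z=0$ they are equivalent to standard Sobolev norms, get $u^r,u^z$ from the Biot--Savart law, confine trajectories via the support estimate in the dynamic rescaling variables, and obtain (a) from the blowup of $\partial_z\big((ru^\vth)^2/r^2\big)$ combined with the transport (hence boundedness) of $ru^\vth$. Your use of the transport equation for $ru^\vth$ to propagate interior regularity of the swirl is exactly the paper's mechanism.

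There is one genuine gap: the smoothness of the \emph{initial} swirl $u_0^\vth$ on $\S$, which is itself a conclusion of the theorem and which your propagation argument silently presupposes. The construction gives you $(ru_0^\vth)^2=\td\th_0\in C^{50}(\S)$, but $u_0^\vth=\td\th_0^{1/2}/r$ need not inherit this: $\td\th_0$ has compact support and can degenerate inside $\S$, and the square root of a nonnegative $C^{50}$ function that vanishes is in general not even $C^1$. Saying that the approximate steady state is explicitly smooth does not resolve this, because the issue is precisely the square root of the truncated profile. The paper handles it by a deliberate modification of the approximate steady state, taking $\bar\th = 1+\int_0^x\bar\th_x$ so that $\bar\th\geq 1$ on the region where the cutoff is nontrivial, and by choosing the cutoff $\chi_1=\td\chi_1^2$ so that $\chi_1^{1/2}$ is smooth; only then is $\th_0^{1/2}=\bar\th^{1/2}\chi_1^{1/2}$ in $C^{50}(\S)$ (this is the content of Remarks \ref{rem:oversight} and \ref{rem:change2} and Section \ref{sec:euler_uth}). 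Without some such arrangement your argument for $u^\vth\in L^\inf([0,T],C^{50}(\S))$ does not start. A secondary point, easier to repair: in the propagation step you should verify that the backward trajectories emanating from the relevant neighborhoods stay inside a fixed compact subset of $\{r\in(0,1), z\neq 0\}\cap B_{(1,0)}(R_{2,\al})$ where $u^r,u^z$ are known to be $C^{50}$; the paper does this by splitting according to whether the point is within distance $O(\d)$ of $\supp(u^\vth(t))$ (where (b) confines the flow) or not (where $u^\vth$ vanishes identically).
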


Except for result (c), the above theorem has been mostly proved in \cite{chen2019finite2}. We recall from Remark \ref{rem:oversight} that the oversight $u_0^{\vth} \notin C^{1,\al}$ in \cite{chen2019finite2} has been fixed in the updated arXiv version of \cite{chen2019finite2}. See also Remark \ref{rem:change2}.
The parameter $R_{2,\al}$ and domain $B_{(1,0)}(R_{2,\al})$ in the above theorem relate to the localized elliptic estimate. 
In particular, the cutoff function to localize the estimate is $1$ in $B_{(1,0)}(R_{2,\al})$. 
One of the main difficulties in the proof is to show that $u^{\vth}$ is smooth in $\S$. This does not follow from $(u^{\vth})^2 \in C^{50}(\S)$ since $u^{\vth}$ has compact support and can degenerate in $\S$. We use the property that $ r u^{\vth}$ is transported along the flow to prove that it is smooth. See Section  \ref{sec:euler_blowup} for the proof.

The singular solution constructed in \cite{elgindi2019finite,elgindi2019stability} enjoys the following properties, which follow from the estimates in  \cite{elgindi2019finite,elgindi2019stability}.
\begin{thm}\label{thm:euler_R3_blowup}
There exists $\al_0 > 0$ such that for $0 < \al < \al_0$, the unique local solution of the axisymmetric Euler equations \eqref{eq:euler1}-\eqref{eq:euler2} in $\R^3$ without swirl $u^{\vth} \equiv 0$ develops a singularity in finite time $T_*$ for some initial data $\om_0^{\vth} \in C_c^{\al}(\R^3)$ odd in $z$ with finite energy $|| \uu_0||_{L^2} < +\inf$. In addition, we have $u^r_r( t,0 ,0 ) > 0$ and 
\beq\label{eq:one_pt}
 \int_0^{T_*} u_r^r( t, 0, 0)  dt = \infty .
\eeq
For any compact domain $\S \subset \{ (r, z): r >0, z \neq 0 \} $ and $T < T_* $, we have $ \om^{\vth} , u^r, u^z \in  L^{\inf}( [0, T], C^{50}( \S) )$. 
\end{thm}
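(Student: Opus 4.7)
The plan is to extract the four listed properties from Elgindi's construction in \cite{elgindi2019finite, elgindi2019stability}, supplementing only the interior regularity with higher-order weighted estimates. The existence of a finite-time singularity from $C^{1,\al}$ initial data with $\om_0^{\vth}\in C_c^\al(\R^3)$ odd in $z$, finite energy, and no swirl is precisely the main result of \cite{elgindi2019finite}, with the quantitative convergence of the dynamic rescaling to an approximate self-similar profile $\bar{\Om}$ established in \cite{elgindi2019stability}. I would simply invoke these theorems for the first sentence of the statement.

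For the strict positivity $u^r_r(t,0,0)>0$ and \eqref{eq:one_pt}, I would use the dynamic rescaling formulation. Writing $\om^{\vth}(x,t)=C_\om(\tau)^{-1}\Om(x/C_l(\tau),\tau)$ with $t'(\tau)=C_\om(\tau)$ and $C_\om(\tau)\to 0$ as $\tau\to\infty$, the corresponding rescaled radial velocity satisfies $u^r_r(t(\tau),0,0)=C_\om(\tau)^{-1} U^r_R(0,0,\tau)$. The self-similar profile of \cite{elgindi2019finite} has the explicit property $\bar{U}^r_R(0,0)>0$ --- this is exactly the stretching coefficient at the origin which, together with $\bar\Om$, drives the singularity. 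Combined with the exponential closeness estimate $\|U(\tau)-\bar{U}\|\les e^{-c\tau}$ from \cite{elgindi2019stability}, we obtain $U^r_R(0,0,\tau)\ge \tfrac{1}{2}\bar{U}^r_R(0,0)>0$ for all sufficiently large $\tau$. Changing variables in the integral then gives
\[
\int_0^{T_*} u^r_r(t,0,0)\,dt \;=\; \int_0^{\infty} U^r_R(0,0,\tau)\,d\tau \;=\; +\infty,
\]
which is \eqref{eq:one_pt}. The positivity on the earlier interval is arranged by choosing the initial data near $\bar{\Om}$ and using continuity.

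For the interior regularity, the plan mirrors the sketch behind Theorems \ref{thm:bous_blowup} and \ref{thm:euler_blowup}. The weighted energy framework of \cite{elgindi2019finite, elgindi2019stability} uses norms that degenerate only at the origin, where the $C^{1,\al}$ regularity is essential; on any compact $\S\subset\{r>0,\ z\neq 0\}$ these weights are bounded above and below by positive constants, so the weighted norms are equivalent to standard Sobolev norms on $\S$. I would upgrade the weighted energy estimates of \cite{elgindi2019stability} to order $k\ge 52$, shrinking $\al_0$ accordingly, and combine with the Biot--Savart representation (which is smooth away from the axis) to propagate Sobolev regularity of $u^r,u^z$ of the same order. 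Sobolev embedding $H^k_{\mathrm{loc}}(\S)\hookrightarrow C^{50}(\S)$ then gives the stated regularity. Because $u^{\vth}\equiv 0$, the transport equation $D_t(\om^{\vth}/r)=0$ transfers the initial smoothness of $\om_0^{\vth}/r$ along the flow map, whose smoothness on $\S$ is closed together with the velocity estimates.

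The main obstacle is closing these higher-order weighted estimates uniformly in rescaled time $\tau\in[0,\infty)$ while propagating enough regularity of the approximate steady state. The nonlinearities and commutators in \cite{elgindi2019stability} are of a standard Moser/elliptic-estimate type, so no new cancellations are expected, but one must check that the dissipative structure exploited at low order survives differentiation to order $k$. One must also verify that, for each compact $\S\subset\{r>0,z\neq0\}$, the pullback $\{x/C_l(\tau):x\in\S\}$ remains in a fixed compact region of $\{R>0,\,Z\neq 0\}$ uniformly in $\tau$, so that the uniform smoothness of the rescaled solution there translates back to a uniform $C^{50}(\S)$ bound on $[0,T_*)$ in the physical variables.
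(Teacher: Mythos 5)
Your proposal is correct and follows essentially the same route as the paper: cite \cite{elgindi2019finite,elgindi2019stability} for the blowup, use the (approximately) self-similar structure to reduce \eqref{eq:one_pt} to positivity of the rescaled stretching rate at the origin, and obtain interior $C^{50}$ regularity from the higher-order weighted estimates of \cite{elgindi2019stability} (which are already established there for all $k$) together with the equivalence of the weighted and standard Sobolev norms on compact sets away from $\{r=0\}\cup\{z=0\}$ and embedding. The one place where the paper's argument is tighter than yours concerns positivity: rather than invoking an exponential closeness estimate for large $\tau$ plus a continuity argument on the early interval, the paper derives the explicit Biot--Savart identity $u_r^r(t,0,0)=-\f{1}{2\al}L(\Om(t))(0)$ and uses the fact that the normalization in the modulation analysis forces $L(\e(\tau))(0)\equiv 0$, so the perturbation contributes nothing to this functional and $u_r^r(t,0,0)=-\f{1}{2\al\lam(t)}L(F)(0)>0$ holds exactly for all $t$; your version additionally requires checking that the stability norm controls the pointwise value $U^r_R(0,0,\tau)$. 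Also, your concern about uniformity of the pullback in $\tau\in[0,\infty)$ is not needed, since the statement only asserts $L^{\inf}([0,T],C^{50}(\S))$ for $T<T_*$, so the constants may depend on $T$.
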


In the blowup results in Theorem \ref{thm:bous_blowup} and \ref{thm:euler_blowup}, $\na \uu$ also blows up at the singularity point. Since the blowup of $\na \uu$ implies the blowup of the solution,  \eqref{eq:one_pt} can be seen as a blowup criterion for the singular solution in \cite{elgindi2019finite}. A similar one-point blowup criterion has been established to prove global regularity of the De Gregorio model for a large class of initial data in \cite{chen2021regularity}.

In the remaining part of this Section, we prove Theorems \ref{thm:Euler_HL}-\ref{thm:bous} using the important properties of the blowup solution in Theorems \ref{thm:bous_blowup}-\ref{thm:euler_R3_blowup} and the argument in \cite{lafleche2021instability,shao2022instability}. We first prove Theorem \ref{thm:Euler_HL}.

\subsection{Trajectory and the bicharacteristics-amplitude ODE}

Due to the periodicity in $z$, we consider the domain within one period
\beq\label{eq:domain_HL}
D_1 \teq \{ (r,z) : r \in [0,1], |z| \leq 1 \} .
\eeq
We further decompose $D_1$ into the two parts and introduce $\Ups$
\beq\label{eq:domain_pm}
\bal
& D_1^+ \teq \{ (r,z) : r \in [0,1], z  \in [0, 1] \} , \quad  D_1^- \teq \{ (r,z) : r \in [0,1], z  \in [-1, 0] \} , \\
& \Upsilon \teq \{ (r, z) : r = 1 \mathrm{ \ or \ } z = 0 \}.
\eal
\eeq
The set $\Upsilon$ denotes the boundary of the cylinder $D$ and the symmetry plane $z = 0$.

Let $\uu$ be the velocity in Theorem \ref{thm:euler_blowup}. In the cylindrical coordinate $(r, \vth, z)$\eqref{eq:polar_basis}, 
we have $\uu = u^r e_r + u^{\vth} e_{\vth} + u^z e_z$. Since the singular solutions $\om^{\vth}, u^z$ in Theorem \ref{thm:euler_blowup} are odd in $z$ and we impose the no flow boundary condition \eqref{eq:euler21}, we obtain
\beq\label{eq:bc_vanish}
\uu(t) \cdot n \B|_{\Upsilon} = u^r(t) \cdot n^r + u^z(t) \cdot n^z  = 0,
\eeq
where $n$ is the normal vector of $\Ups$. Let $\td \g_t =  (r_t, z_t)$ \eqref{eq:polo} be the $(r, z)$ component of $\g_t$ in \eqref{eq:bichar1}. Since the flow is axisymmetric, we have
\beq\label{eq:char_polo}
  \f{d}{dt} r_t = u^r( r_t, z_t, t), \quad   \f{d}{dt} z_t = u^z( r_t, z_t, t),
  \quad \f{d}{dt} \td \g_t = (u^r, u^z)(\td \g_t, t).
\eeq
Thus, the angular coordinate $x_{0,\vth}$ of the initial data $x_0$ does not affect $\td \g_t$, and $\td \g_t$ depends on $\td x_0 = (r_0, z_0)$ only. Therefore, we have
\beq\label{eq:char_polo2}
\td \g_t( \td x_0 ) = \td \g_t( x_0 ) = (r_t, z_t), \quad  
\wt{ \g_t^{-1}(x) } =  ( \wt{\g_t} )^{-1} (x) =( \wt{\g_t} )^{-1} ( \td x) .
\eeq
We have the following results for the system \eqref{eq:bichar1}-\eqref{eq:bichar3}.

\begin{lem}\label{lem:traj}
Let $\g_t$ be the solution to \eqref{eq:bichar1} with initial data $x_0$, $T_*$ be the blowup time, $T < T_*$, and $D_1^{\pm}$ be the domains defined in \eqref{eq:domain_pm}. (a) For any $x_0 \in \Ups$ and $ t \in [0, T_*)$, the trajectory $\g_t$ remains in $\Ups$; for any $x_0 \in D_1^{\pm} \bsh \Ups$ and $t \in [0, T_*)$, we have $\g_t \in D_1^{\pm} \bsh \Ups$. For any $t \in [0, T]$, $\g_t$ is invertible, and $\g_t, \g_t^{-1}$ are Lipschitz in time and the initial value. 


Let $R_{1,\al}, R_{2,\al}$ be the radius in Theorem \ref{thm:euler_blowup}.

(b) Suppose that $x_0 \in ( D_1^{\pm} \bsh \Ups) \cap  \supp( \om_0 )  $. There exists $\d( \td x_0, T) \in (0, \f{1}{8})$ depending on $\td x_0, T$ and a compact set $\Sigma_2$, such that for any $ t\in[0, T]$, we have
\beq\label{eq:traj_tube3}
\td \g_t(  B_{\td x_0 }(  \d) ) \cup B_{ \td \g_t( \td x_0)}( \d)
\subset  \Sigma_2  \subset ( D_1^{\pm} \bsh \Ups) \cap B_{(1,0)}(R_{2,\al}) 
.\eeq
As a result, for initial data $z_0$ with $\td z_0 \in B_{ \td x_0}(  \d)$ and any $b_0, \xi_0$, there exist unique solutions $(\g_t, b_t, \xi_t)$ to \eqref{eq:bichar1}-\eqref{eq:bichar3} on $t\in [0, T]$. For $t \in [0, T]$, the functions $(\g_t, b_t, \xi_t)$ are Lipschitz in time and $C^4$ with respect to initial data $z_0$ with $\td z_0 \in B_{\td x_0}( \d)$ and $b_0, \xi_0$, and $\g_t^{-1}(x)$ is Lipschitz in time and $C^4$ in $x$ with $\td x \in \td \g_t( B_{\td x_0}(  \d)  ) \cup B_{\td \g_t( \td x_0)}(\d)$.
\end{lem}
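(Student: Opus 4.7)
The plan is to establish part (a) by combining the boundary condition $\uu \cdot n|_{\Ups} = 0$ from \eqref{eq:bc_vanish} with Picard uniqueness for \eqref{eq:bichar1}. Since the initial velocity is $C^{1,\al}$ and $\|\om_p\|_{L^\infty}$ blows up only at $T_*$, a Beale-Kato-Majda type argument shows that $\uu$ stays uniformly Lipschitz on $[0, T] \times D$ for every $T < T_*$, so solutions of \eqref{eq:bichar1} exist, are unique, and depend Lipschitz-continuously on $(t, x_0)$ both forward \emph{and} backward in time. The boundary condition forces $u^r \equiv 0$ on $\{r=1\}$ and the oddness of $u^z$ in $z$ forces $u^z \equiv 0$ on $\{z=0\}$, so by \eqref{eq:char_polo} the poloidal field $(u^r, u^z)$ is tangent to $\Ups$ and every trajectory starting on $\Ups$ remains on $\Ups$. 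Conversely, if $x_0 \in D_1^{\pm}\bsh\Ups$ with $\g_{t_1} \in \Ups$ at some $t_1 > 0$, then backward uniqueness from $\g_{t_1}$---applied to the trajectory which, by tangency, is forced to stay in $\Ups$---would give $x_0 \in \Ups$, a contradiction.

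For part (b), Theorem \ref{thm:euler_blowup}(b) confines $\td\g_t(\td x_0)$ to $B_{(1,0)}(R_{1,\al}) \cap D$ for $t \in [0, T_*)$, and part (a) keeps it inside $(D_1^{\pm}\bsh\Ups) \cap B_{(1,0)}(R_{1,\al})$. Continuity of $t \mapsto \td\g_t(\td x_0)$ on the compact interval $[0, T]$, together with $4R_{1,\al} < R_{2,\al}$, then yields a uniform positive distance from $\Ups \cup \pa B_{(1,0)}(R_{2,\al})$. Gronwall applied to \eqref{eq:char_polo}, using the uniform Lipschitz bound on $(u^r, u^z)$, shows $\td\g_t$ is Lipschitz in $\td x_0$ with constant independent of $t \in [0, T]$. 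Choosing $\d \in (0, 1/8)$ small enough (depending on $\td x_0$ and $T$) then guarantees that both $\td\g_t(B_{\td x_0}(\d))$ and $B_{\td\g_t(\td x_0)}(\d)$ are contained in a common compact $\Sigma_2 \subset (D_1^{\pm}\bsh\Ups) \cap B_{(1,0)}(R_{2,\al})$ for every $t \in [0, T]$, which is \eqref{eq:traj_tube3}.

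The high-order smoothness assertions then follow from the interior regularity in Theorem \ref{thm:euler_blowup}(c): on the compact set $\Sigma_2$ we have $\uu \in L^\infty([0, T], C^{50}(\Sigma_2))$, so $\na\uu$ is uniformly $C^{49}$ there. For $\xi_0 \neq 0$ (the WKB-relevant case; the $\xi_0 = 0$ case is trivial since the last term of \eqref{eq:bichar3} vanishes), linearity of \eqref{eq:bichar2} together with Gronwall keeps $|\xi_t|$ bounded away from zero on $[0, T]$, so \eqref{eq:bichar3} has $C^{49}$ coefficients along the tube. Standard smooth-dependence theorems for ODEs now give that $(\g_t, \xi_t, b_t)$ are Lipschitz in $t$ and $C^4$ (in fact much more) in $(z_0, \xi_0, b_0)$, while invertibility of $D\td\g_t$---the Jacobian satisfies a linear ODE whose fundamental solution is nonsingular on $[0, T]$---combined with the inverse function theorem delivers the matching $C^4$ regularity of $\td\g_t^{-1}$ on $\td\g_t(B_{\td x_0}(\d)) \cup B_{\td\g_t(\td x_0)}(\d)$.

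The main obstacle is reconciling the globally low $C^{1,\al}$ regularity of $\uu$ (which degenerates precisely on $\Ups$) with the high-order smoothness required to apply standard ODE smooth-dependence results to \eqref{eq:bichar2}-\eqref{eq:bichar3}. This is exactly why the argument must be localized to the compact interior tube $\Sigma_2$ staying away from $\Ups$, on which Theorem \ref{thm:euler_blowup}(c) supplies the needed $C^{50}$ regularity of the velocity.
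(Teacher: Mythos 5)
Your proposal is correct and follows essentially the same route as the paper: part (a) via the non-penetration condition and Cauchy--Lipschitz, part (b) by confining the trajectory to a compact tube $\Sigma_2$ at positive distance from $\Ups$ inside $B_{(1,0)}(R_{2,\al})$ using Theorem \ref{thm:euler_blowup}(b) and the Lipschitz flow map, then invoking the interior $C^{50}$ regularity to solve \eqref{eq:bichar2}--\eqref{eq:bichar3} with smooth dependence. The only differences are cosmetic: the paper obtains the regularity of $\g_t^{-1}$ by solving the backward ODE after shrinking $\d$ to $\d_2=\d/(L_\g+1)$ so that the backward trajectories from $B_{\td\g_t(\td x_0)}(\d_2)$ also stay in $\Sigma_2$ (a small bookkeeping step your inverse-function-theorem route should make explicit), and your Beale--Kato--Majda detour is unnecessary since $\uu\in C^0([0,T_*),C^{1,\al})$ is already given.
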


In the above Theorem, we have used the notation \eqref{eq:nota_abuse}. For example,  $x_0 \in D_1^{\pm} \bsh \Ups$ means $\td x_0 \in D_1^{\pm} \bsh \Ups$.
The domain of $x$ with $\td x \in B_{\td x_0}(\d)$ is the annulus $(r, z, \vth) \in B_{\td x_0}(\d) \times \R / (2\pi \BZ)$.

The ideas of the above Lemma are simple. Firstly, for any $x_0 \in D_1^{\pm} \bsh \Ups$, the trajectory $\g_t$ with $t \in [0, T]$ remains in $D_1^{\pm} \bsh \Ups$. Using the Lipschitz property of $\td \g_t, \td \g_t^{-1}$, we can find a neighborhood of $\td \g_t$ that still remains in $D_1^{\pm} \bsh \Ups$. We further restrict $\td x_0$ sufficiently close to $(1, 0)$ and use the property that $\uu(x)$ is smooth for $x$ with  $\td x \in D_1^{\pm} \bsh \Ups \cap B_{(1,0)}(R_{2,\al})$ from Theorem \ref{thm:euler_blowup} to solve \eqref{eq:bichar1}-\eqref{eq:bichar3}. 

\begin{proof}

Recall the notation $\td x = (r, z)$ from \eqref{eq:polo}. Due to $\uu \in C^0( [0, T_*), C^{1,\al} )$ and the non-penetrated property \eqref{eq:bc_vanish}, the results in (a) follow directly from the Cauchy-Lipschitz theorem. 

Without loss of generality, we consider the domain $D_1^+ \bsh \Ups$. For any $x_0 \in ( D_1^{+} \bsh \Ups)  \cap  \supp( \om_0 )$, from result (b) in Theorem \ref{thm:euler_blowup} and \eqref{eq:char_polo2}, we know 
\beq\label{eq:traj_pf1}
\td \g_t(\td x_0) \in ( D_1^+ \bsh \Ups) \cap B_{(1,0)}(R_{1,\al}), \quad t \in [0, T].
\eeq
Since $\td \g_t(\td x_0)$ is continuous in $t$, using compactness, we have $\mathrm{dist}( \td \g( \td x_0, [0, T]), \Ups) > 0$.  Let $L_{\g}$ be the Lipschitz constant of $\g_t, \g_t^{-1}$ on $[0, T]$. Denote 
\[
d_1 = \mathrm{dist}( \td \g( \td x_0, [0, T]), \Ups), \quad  \d_1 \teq \f{1}{2} \min( d_1 ,  R_{1,\al}) > 0, 
\quad \d =\min( \f{\d_1}{ 2(  L_{\g}+1)}, \f{1}{16}).
\]


For $y = \td \g_t( \td x), \td x \in B_{ \td x_0}( \d)$, using \eqref{eq:traj_pf1}, we yield
\[
\bal
&|y - \td \g_t( \td x_0)|  \leq L_{\g} | \td x -\td x_0 | \leq L_{\g} \d < \f{\d_1}{2}, 
\quad \dist(y, \Ups) \geq \dist( \td \g_t( \td x_0), \Ups) - \f{\d_1}{2} > \f{\d_1}{2}, \\
 & |y - (1,0)| < | \td \g_t( \td x_0) - (1,0)| + \f{\d_1}{2}  \leq \f{3}{2} R_{1,\al}.
\eal
\]
It follows that $ y \in  D_1^+ \bsh \Ups \cap B_{(1,0)}( \f{3}{2} R_{1,\al})$. We define the compact set
\beq\label{eq:traj_comp}
\Sigma_2 = \{ \td x : \mathrm{dist}(\td x, \Ups) \geq \f{1}{4} \d_1 \} \cap \bar D_1^+ \cap \bar B_{(1,0)}( 2 R_{1,\al}).
\eeq
Recall from Theorem \ref{thm:euler_blowup} that $R_{2,\al} > 4 R_{1,\al}$. The above derivations imply $  \td \g_t( B_{ \td x_0}( \d) ) \subset \Sigma_2$. The proof of $ B_{ \td \g_t( \td x_0)}( \d)  \subset \Sigma_2$ follows from the same argument and is easier. We obtain \eqref{eq:traj_tube3}.


Now, we consider \eqref{eq:bichar1}-\eqref{eq:bichar3} for initial data $z_0 $ with $\td z_0 \in B_{\td x_0}(  \d)$ and $b_0, \xi_0$. Since $\Sigma_2$ is a compact set in $(D_1^+ \bsh \Ups) \cap B_{(1,0)}(R_{2,\al})$, from Theorem \ref{thm:euler_blowup}, we have $u^r, u^z, u^{\vth} \in L^{\inf}( [0, T],  C^{50}(\Sigma_2) )$. Since $\td \g_t( B_{\td x_0}(  \d)) , B_{\td x_0}(  \d) \subset \Sigma_2$ and $\uu(x)$ is smooth for $x$ with $\td x \in \S_2$, using the Cauchy-Lipschitz theorem, there exist unique solutions $(\g_t, b_t, \xi_t)$ to \eqref{eq:bichar1}-\eqref{eq:bichar3} on $t\in [0, T]$, and $\g_t, b_t, \xi_t$ are Lipschitz in time and $C^4$ with respect to the initial data. 

Next, we consider the backward equation. Denote $\d_2 = \f{ \d}{ L_{\g} + 1}$. Fix $t \leq T$. For any $s \in [0, t]$, from \eqref{eq:char_polo2} and \eqref{eq:traj_tube3}, we get
\[
\td \g_s^{-1} \td \g_t( B_{\td x_0}(\d_2)) = \td \g_{t-s}( B_{\td x_0}(\d_2)) \subset \S_2, 
\quad \td \g_s^{-1} B_{ \td \g_t(\td x_0)}(\d_2) 
\subset B_{ \td \g_{t-s}(\td x_0) }(L_{\g} \d_2 )
\subset B_{ \td \g_{t-s}(\td x_0) }(\d) \subset \S_2 .
\]
From Theorem \ref{thm:euler_blowup} and $u^r, u^z, u^{\vth} \in L^{\inf}( [0, T],   C^{50}( \Sigma_2) )$, we can solve \eqref{eq:bichar1} backward on $[0, t]$ for initial data $x_t$ with $\td x_t \in \td \g_t( B_{ \td x_0}(  \d_2 ) ) \cup B_{ \td \g_t(\td x_0)}(\d_2) \subset \Sigma_2$, and $\g_t^{-1}$ is Lipschitz in time and $C^4$ in the initial data.

Finally, due to the inclusion 
\[
\td \g_t( B_{\td x_0}(\d_2)) \cup ( B_{ \td \g_t( \td x_0) }(\d_2))
\subset \td \g_t( B_{\td x_0}(\d)) \cup ( B_{ \td \g_t( \td x_0) }(\d)) \subset \S_2,  \quad t \in [0, T],
\]
we prove result (b) for $\S_2$ defined in \eqref{eq:traj_comp} and $\d = \d_2$. 
\end{proof}


\subsection{Relaxation of \texorpdfstring{$\b_{\s}(t)$}{Lg}}

Recall the definition of $\b_{\s}(t)$ from \cite{lafleche2021instability}
\beq\label{eq:def_beta0}
\b_{\s}(t) = \sup_{ (x_0, b_0, \td \xi_0) \in D_1 \times \R^3 \times S^1, b_0 \cdot \xi_0 =0, |b_0| = r_0^{\s}} | r_t^{-\s} b_t(x_0, \td \xi_0, b_0)|,
\eeq
where $D_1$ is the domain for the Euler equations \eqref{eq:domain_HL}. Here, the notation $\xi_0 = \td \xi_0 \in S^1$ means that the initial data $\xi_0 $ satisfies $ \xi_{0} \cdot e_{\vth(x_0)} = 0$ and  $ ( \xi_0 \cdot e_{r(x_0)} )^2 +( \xi_{0} \cdot e_z)^2 = 1 $, where $ e_{r(x_0)}, e_{\vth(x_0)}, e_z$ are the basis \eqref{eq:polar_basis} associated with $x_0$. Since $\xi_{0} \cdot e_{\vth(x_0)}=0$, it relates to the notation \eqref{eq:polo}.


We focus on the case $\s = 0$ and relax the domain $D_1$ \eqref{eq:domain_HL} to $(D_1 \bsh \Ups ) \cap \supp(\om_0)$
\beq\label{eq:def_beta}
\b(t) = \sup_{ (x_0, b_0, \td \xi_0) \in ( D_1 \bsh \Ups) \cap \supp(\om_0) \times \R^3 \times S^1, b_0 \cdot \xi_0 =0, |b_0| = 1 } |  b_t(x_0, \td \xi_0, b_0)|,
\eeq
where $\om_0$ is the vorticity of the singular solution in Theorem \ref{thm:euler_blowup}. From Lemma \ref{lem:traj}, for any $ t< T^*, x_0 \in  D_1\bsh \Ups  , b_0 \in \R^3, \td \xi_0 \in S^1$, $b_t(x_0, \td \xi_0, b_0 )$ is well defined.

We have the following result, which modifies  Proposition 2 in \cite{lafleche2021instability}.
\begin{prop}\label{prop:beta}
Assume that $\uu$ is the singular solution in Theorem \ref{thm:euler_blowup}, $\om$ is the associated vorticity, and $\om_p$ is the poloidal component \eqref{eq:polo_w}. For any $ t \in (0, T^*)$, we have 
\[
||  \om_p( t, \cdot) ||_{\inf} \leq ||  \om_p^{in} ||_{\inf} \b(t)^2.
\]
\end{prop}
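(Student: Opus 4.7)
The plan is to adapt the proof of Proposition 2 in \cite{lafleche2021instability}, which rests on the bicharacteristic-amplitude ODE system \eqref{eq:bichar1}--\eqref{eq:bichar3}. Two adaptations are needed: (i) we relax the high regularity hypothesis on $\uu$ used in \cite{lafleche2021instability} by exploiting the interior smoothness from Theorem \ref{thm:euler_blowup}, and (ii) we work with the sup in \eqref{eq:def_beta} restricted to $\supp(\om_0)\cap (D_1\bsh\Ups)$, which will be seen to suffice.

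First, by vorticity transport, $\om(t,\cdot)$ is supported in $\g_t(\supp(\om_0))$, so it suffices to bound $|\om_p(t,\g_t(x_0))|$ for $x_0\in\supp(\om_0)$. By Theorem \ref{thm:euler_blowup}(b), such $x_0$ lies in $(D_1\bsh\Ups)\cap B_{(1,0)}(R_{1,\al})$. Let $J_t(x_0)=\na_{x_0}\g_t(x_0)$. The vorticity stretching identity $\om(t,\g_t)=J_t\om_0$, combined with the axisymmetric block structure of $J_t$ in the local cylindrical basis \eqref{eq:polar_basis}, yields $\om_p(t,\g_t(x_0)) = J_p(t,x_0)\,\om_p(0,x_0)$, where $J_p$ is the $2\times 2$ Jacobian of the poloidal flow $(r_0,z_0)\mapsto (r_t,z_t)$. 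Thus it suffices to establish $||J_p(t,x_0)||_{\mathrm{op}}\le \b(t)^2$ for every such $x_0$.

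By Lemma \ref{lem:traj}, for each such $x_0$ and any $T<T_*$, the full system \eqref{eq:bichar1}--\eqref{eq:bichar3} admits a smooth solution on $[0,T]$ with smooth dependence on $(b_0,\xi_0)$ in a neighborhood. One then follows the argument of \cite[Proposition~2]{lafleche2021instability}: pick $\xi_0$ in the unit circle of the poloidal plane at $x_0$ and $b_0$ in the $2$-plane orthogonal to $\xi_0$ with $|b_0|=1$. The algebraic identity in \cite{lafleche2021instability} expresses $||J_p||_{\mathrm{op}}$ via $|b_t|$ for two admissible choices of $(b_0,\xi_0)$; each $|b_t|$ is bounded by $\b(t)$, and the two combine to yield $||J_p(t,x_0)||_{\mathrm{op}}\le \b(t)^2$. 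Putting everything together, $|\om_p(t,x)|\le \b(t)^2\,||\om_p^{in}||_{\inf}$ for a.e.\ $x$, and the claim follows by taking the essential supremum.

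The main technical obstacle is the faithful adaptation of the identity from \cite{lafleche2021instability}, originally derived under the strong hypothesis $\uu\in C^0([0,T],H^s)$ with $s$ large. We bypass this by noting that the identity only requires pointwise values of $\na\uu$ along the characteristic and smooth dependence of $(\g_t,b_t,\xi_t)$ on the initial data; both are secured by the interior regularity $\uu\in L^{\inf}([0,T],C^{50}(\S))$ in Theorem \ref{thm:euler_blowup}(c) together with the trajectory confinement supplied by Lemma \ref{lem:traj}, as long as we restrict attention to $x_0\in\supp(\om_0)\cap(D_1\bsh\Ups)$. This restriction is precisely the one built into \eqref{eq:def_beta}, so the bound closes.
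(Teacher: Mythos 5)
Your proof follows essentially the same route as the paper's: restrict to initial points in $(D_1\bsh\Ups)\cap\supp(\om_0)$, use Lemma \ref{lem:traj} to solve the bicharacteristics-amplitude system there, and then run the algebraic argument of Proposition 2 in \cite{lafleche2021instability} with the relaxed growth factor $\b(t)$ of \eqref{eq:def_beta}. The only structural difference is direction: you go forward from $\supp(\om_0)$, while the paper starts from a near-maximizer of $|\om_p(t,\cdot)|$ at time $t$ and flows backward to land in $(D_1^+\bsh\Ups)\cap\supp(\om_0)$. One step is misstated: Theorem \ref{thm:euler_blowup}(b) does \emph{not} guarantee that $x_0\in\supp(\om_0)$ avoids $\Ups$ (the support may touch $r=1$ or $z=0$), so before invoking Lemma \ref{lem:traj}(b) you need the small continuity/density reduction --- the sup of the continuous function $x_0\mapsto|\om_p(t,\g_t(x_0))|$ over $\supp(\om_0)$ equals its sup over the dense subset $\supp(\om_0)\bsh\Ups$ --- which is precisely the reduction the paper performs (at time $t$, using $\om(t)\in C^{\al}$ and the symmetry in $z$).
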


\begin{proof}

We assume  $||  \om_p(t)||_{\inf} > 0$. Otherwise, the result is trivial. Since $\om(t) \in C^{\al}$ and $|\om|$ is even in $z$, using continuity and symmetry, we get 
\[
||  \om_p( t, \cdot) ||_{\inf} = \sup_{ x \in D_1^+ \cap \supp(\om(t))} | \om_p(t, x)| = \sup_{ x \in ( D_1^+ \bsh \Ups ) \cap \supp(\om(t))} |  \om_p(t, x)|.
\]

Now, for each $(t, x_t) \in (0, T^*) \times ( D_1^+ \bsh \Ups) $ with $|\om(t, x_t)| > 0$, we can solve \eqref{eq:bichar1} backward on $[0, t]$ with initial data $\g_t = x_t$. Since $x_t \in D_1^+\bsh \Ups$ and $|\om(t, x_t)| > 0$, using \eqref{eq:euler} and a simple energy estimate along the trajectory implies $|\om(0, x_0) | > 0$. Thus, we get $x_0 \in \supp( \om_0)$. From Lemma \ref{lem:traj}, we further obtain $x_0 \in ( D_1^+ \bsh \Ups) \cap \supp( \om_0)$. Then we can solve \eqref{eq:bichar1}-\eqref{eq:bichar3} with initial data $x_0$ and any $b_0 , \xi_0$ and solve \eqref{eq:bichar1}-\eqref{eq:bichar3} backward with initial data $x_t$ and any $b_t, \xi_t$.

We relax the definition of $\b(t)$ since it suffices to consider $x_0 \in 
( D_1^+ \bsh \Ups) \cap \supp( \om_0) \subset
( D_1 \bsh \Ups) \cap \supp( \om_0)$ instead of all $x_0 \in D_1$. The rest of the proof follows the same argument in \cite{lafleche2021instability}. 
\end{proof}

Next, we show that for the singular solution in Theorem \ref{thm:euler_blowup}, Proposition 3 in \cite{lafleche2021instability} remains true. Recall the definition of $\lam_{p, \s}^{sym}$ from \eqref{eq:instab1}. We drop the domain $D$ to simplify the notation.
\begin{prop}\label{prop:lam}
Let $ t \in (0, T_*), p \in [ 1, \inf)$. Assume that $\uu$ is the singular solution in Theorem \ref{thm:euler_blowup}. Then we have $ \b(T) \les_{\s}  \lam^{sym}_{p, \s}(T)$ for any $\s \in \R$. 
\end{prop}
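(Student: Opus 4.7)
The plan is to reduce the assertion to the construction of an axisymmetric, symmetry-respecting perturbation of \eqref{eq:euler_lin} whose $L^p$ growth is controlled below by the amplitude recorded by $\b(T)$. I would start by fixing any admissible triple $(x_0, b_0, \td\xi_0)$ from the supremum defining $\b(T)$ in \eqref{eq:def_beta}, so that $x_0 \in (D_1\bsh\Ups) \cap \supp(\om_0)$, $|b_0|=1$, and $b_0\cdot\xi_0=0$. By Lemma \ref{lem:traj}, the system \eqref{eq:bichar1}--\eqref{eq:bichar3} is solvable up to time $T$ with $C^4$ dependence on the initial data over a poloidal disk $B_{\td x_0}(\d)$, and the image tube $\td\g_t(B_{\td x_0}(\d))$ remains in the compact set $\Sigma_2\subset (D_1\bsh\Ups)\cap B_{(1,0)}(R_{2,\al})$ on which $\uu$ is $C^{50}$ by Theorem \ref{thm:euler_blowup}(c). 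This smoothness lets me pass from the Lagrangian ODE picture to an Eulerian PDE picture on a tube: $b(t,x) = b_t(\g_t^{-1}(x))$ and $\xi(t,x) = \xi_t(\g_t^{-1}(x))$ satisfy transport-type equations with smooth coefficients, and $\xi$ is the gradient of a smooth phase $S(t,x)$ provided $\xi(0,\cdot)$ is chosen to be the gradient of the linear function $x\mapsto \xi_0\cdot(x-x_0)$.

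Next I would follow the WKB strategy of \cite{lafleche2021instability,vasseur2020blow}. For a large parameter $1/\e$ and a smooth cutoff $\chi$ supported in the tube and equal to $1$ on a slightly smaller tube, set $v_{\e}(t,x) = \chi(t,x)\,b(t,x)\exp(iS(t,x)/\e)$ and project onto divergence-free fields to obtain an initial datum for \eqref{eq:euler_lin}. The defining relations \eqref{eq:bichar1}--\eqref{eq:bichar3} make the $\e^0$ terms in $(\pa_t+\uu\cdot\na)v_\e + v_\e\cdot\na\uu$ collapse up to a pure gradient, which is absorbed into the pressure, while the Leray projection of the cutoff contribution contributes an $L^p$ error that tends to $0$ as $\e\to 0$ because the oscillation $e^{iS/\e}$ forces decay of the nonlocal projector on smooth, localized symbols. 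Taking $\e$ sufficiently small yields a genuine solution $v$ of \eqref{eq:euler_lin} with $\|v(T)\|_{L^p}/\|v(0)\|_{L^p}$ bounded below by $|b_T(x_0,\td\xi_0,b_0)|/|b_0|$ up to a constant depending on the tube geometry. Since the tube lies in $B_{(1,0)}(R_{2,\al})$ with $R_{2,\al}<1/4$, the weight $r^{-\s}$ is comparable to $1$ on it with a constant depending only on $\s$, yielding the $\lesssim_\s$ in the final bound.

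To land in the symmetry class $H_X^1$, I would take $b_0$ and $\xi_0$ to be the axisymmetric extensions of the constant vectors from the fiber over $x_0$, i.e.\ $b_0(x) = (b_0\cdot e_{r(x_0)})\,e_{r(x)} + (b_0\cdot e_{\vth(x_0)})\,e_{\vth(x)} + (b_0\cdot e_z)\,e_z$, and similarly for $\xi_0$. Axisymmetry of $\uu$ propagates axisymmetry of $(b,\xi)$ through \eqref{eq:bichar1}--\eqref{eq:bichar3}, so after using an axisymmetric cutoff $\chi$ the field $v_\e$ is axisymmetric. The $z$-parity \eqref{eq:sym} is enforced by superposing $v_\e$ with its mirror image under $z\mapsto -z$ (with a sign flip of the $z$-component); thanks to Theorem \ref{thm:euler_blowup}(b) and Lemma \ref{lem:traj}(a) the characteristic starting in $D_1^+\bsh\Ups$ stays in $D_1^+\bsh\Ups$, so for $\d$ small the tube and its mirror are essentially disjoint and the $L^p$ norms add up to a comparable constant.

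The step I expect to be the main obstacle is quantifying the $L^p$ decay of the WKB remainder after the Leray projection, since that projector is nonlocal while the whole construction lives on a small tube. I would control it by keeping $S(t,x)$ close to the linear phase $\xi_0\cdot(x-x_0)$ so that the oscillation dominates in frequency, which makes the Leray projection of $\chi b\, e^{iS/\e}$ differ from $\chi\,(b - (b\cdot\hat\xi)\hat\xi)\,e^{iS/\e}$ by a commutator of size $O(\e)$ in $L^p$; this is the same mechanism exploited in \cite{lafleche2021instability}. Combining this error estimate, the lower bound $\|v(T)\|_{L^p}/\|v(0)\|_{L^p} \gtrsim |b_T|/|b_0|$, and the symmetry superposition, and then taking the supremum over admissible triples, delivers $\b(T) \lesssim_\s \lam^{sym}_{p,\s}(T)$.
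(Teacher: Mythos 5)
Your overall strategy --- solve the bicharacteristics system on a tube via Lemma \ref{lem:traj}, build a WKB approximate solution with amplitude $b$ and phase $S$, axisymmetrize the initial data of $(b,\xi)$, symmetrize in $z$ by superposition with the mirror image, and use the fact that the tube lies in $B_{(1,0)}(R_{2,\al})$ to make $r^{-\s}$ comparable to $1$ --- is the paper's strategy. But two steps as you describe them would not go through.

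First, the divergence-free structure. You propose taking $v_\e = \chi\, b\, e^{iS/\e}$ and then applying the Leray projection, and you correctly identify the nonlocality of that projector as the main obstacle; in fact it is fatal to two of the claims you need. The projector (a Riesz-transform--type operator, further complicated here by the boundary of the cylinder) is not bounded on $L^1$, while the proposition covers $p=1$; and it destroys the compact support of the datum in the tube, which is exactly what you use both to keep the two mirror copies disjoint in the symmetrization and, crucially, to bound $\| r^{-\s} v_0\|_{L^p}$ by $C_\s \|v_0\|_{L^p}$ for \emph{every} $\s\in\R$ (for $\s>0$ this fails once the support reaches $r=0$). The construction in \cite{lafleche2021instability,vasseur2020blow} and in the paper avoids the projector entirely by taking the exact curl ansatz $v_{\e,\d} = \e\,\mathrm{curl}\big(\f{b\times\xi}{|\xi|^2}\vp\, e^{iS/\e}\big)$ as in \eqref{eq:WKB_euler}: this is pointwise divergence-free, supported in the tube, satisfies $v\cdot n=0$ on $\pa D_1$, and its leading term is $i\vp\, b\, e^{iS/\e}$ because $b\cdot\xi=0$; the residual gradient part is then absorbed into the pressure of \eqref{eq:euler_lin} rather than removed by a projection.

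Second, the phase. In your second paragraph you take $S(0,x)=\xi_0\cdot(x-x_0)$, a Cartesian-linear function, so that $\xi(0,\cdot)\equiv\xi_0$ is a constant vector field; in your third paragraph you instead take $\xi(0,\cdot)$ to be the axisymmetric extension $\xi_0^r e_{r(x)}+\xi_0^z e_z$. These are incompatible: the Cartesian-linear phase depends on $\vth$ (its restriction to a circle $r=\mathrm{const}$ is $r\xi_0^r\cos(\vth-\vth_0)+\cdots$), so with that choice $v_\e$ is \emph{not} axisymmetric and does not lie in $H_X^1$ --- this is precisely the defect of the construction in \cite{lafleche2021instability} that Remark \ref{rem:non_axi} isolates. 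The consistent choice, which the paper makes in \eqref{eq:v_axi_S}, is $S(0,x)=r\xi_0^r+z\xi_0^z$, whose gradient is the axisymmetric extension of $\xi_0$ and which satisfies $\pa_\vth S\equiv 0$; one then checks that axisymmetry of $b,\xi,S$ is propagated by the transport equations (with the velocity localized by a cutoff so that the coefficients are globally smooth). With these two corrections --- the curl ansatz in place of the Leray projection, and the cylindrical rather than Cartesian linear phase --- your argument matches the paper's proof.
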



One of the difficulties in the proof is to construct an \textit{axisymmetric} solution to \eqref{eq:euler_lin}.

\begin{remark}\label{rem:non_axi}
The approximate solution and the initial data $v_{\e, \d}^{in}$ to \eqref{eq:euler_lin} constructed in \cite{lafleche2021instability} 
\beq\label{eq:WKB_euler}
v_{\e, \d} = \e \mathrm{ curl }\B(\f{b\times \xi}{ |\xi|^2} \vp e^{iS / \e}   \B) =
i \vp b e^{i S / \e} + \e  c(x) e^{iS / \e} \teq A + B,  \quad c(x) = \mathrm{curl}( \f{ b \times \xi}{ |\xi|^2}  \vp )
\eeq
are not axisymmetric, where $b(t, x), \xi(t,x) \in \R^3$, $S, \vp$ are scalar functions, and $\e$ is a small parameter. See equation (21) in \cite{lafleche2021instability}. To illustrate this point, we  study the initial data more carefully. According to the construction in the proof of Proposition 3 in \cite{lafleche2021instability}, for $t= 0$, we have $b(0, x) \equiv b_0,  \xi(0, x) \equiv \xi_0 $ for some 
\beq\label{eq:non_axi0}
|b_0 | = 1, \quad |\xi_0| = 1, \quad b_0 \cdot \xi_0 = 0.
\eeq
In particular, $b, \xi$ are constant vectors. Moreover, $\vp, S$ are independent of the angular variable $\vth$ \cite{lafleche2021instability}, i.e. $\vp(x) = \vp(r, z), S(x) = S(r, z)$. Hence, we get 
\beq\label{eq:non_axi1}
c(x) = \na \vp \times \f{b_0 \times \xi_0}{ |\xi_0|^2}
= \na \vp \times s_0, \quad s_0 \teq \f{b_0 \times \xi_0}{ |\xi_0|^2}, \quad  \pa_{\vth} A = 0. 
\eeq
Suppose that $v_{\e, \d}$ is axisymmetric \eqref{axi}. Then $ v_{\e, \d } \cdot \eta$ does not depend on $\vth$ for $\eta = e_r, e_{\vth}, e_z$ \eqref{eq:polar_basis}. Using these properties, \eqref{eq:non_axi1}, and $\pa_{\vth} e_r = e_{\vth}, \pa_{\vth} e_{\vth} = - e_r$, we get
\[
0 = \pa_{\vth} ( v_{\e, \d} \cdot e_r ) 
=  \pa_{\vth} ( (A+B) \cdot e_r) 
= A \cdot e_{\vth} + \pa_{\vth}( B \cdot e_r) 
=  A \cdot e_{\vth} + \e e^{iS / \e} \pa_{\vth} c(x) \cdot e_r-  B \cdot e_{\vth}.
\]
Since the second and the third term have size $O(\e)$ and $\e$ is taken to $ \e \to 0$ in \cite{lafleche2021instability}, for sufficiently small $\e$,  $A \cdot e_{\vth}$ and $\pa_{\vth}( B \cdot e_r)$ must be $0$. Similarly, we get $A \cdot e_r = 0, \  \pa_{\vth}( B \cdot e_z) = 0$. 
Since the direction of $A$ is given by $b_0$, it follows that $b_0 = (0, 0, b_{0, z}) = b_{0, z} e_z$. Note that $\vp( x) = \vp(r, z)$ and $\na \vp = \pa_r \vp(r, z) e_r + \pa_z \vp(r, z) e_z $. 
From \eqref{eq:non_axi1} and $\pa_{\vth}( B \cdot e_z) = 0$, we get 
\[
\bal
 c(x) \cdot e_z  &=  ( \pa_r \vp  e_r \times s_0 + \pa_z \vp e_z \times s_0 ) \cdot e_z
= \pa_r \vp \cdot ( e_r \times s_0 ) \cdot e_z,  \\
 0 = \pa_{\vth}( B \cdot e_z ) = \e e^{ iS / \e}  \pa_{\vth}( c(x) \cdot e_z )
&=   \e e^{iS / \e}\pa_r \vp \cdot ( \pa_{\vth} e_r \times s_0) \cdot e_z
=  \e e^{iS / \e} \pa_r \vp \cdot (  e_{\vth} \times s_0) \cdot e_z.
\eal
\]
Since $b_0 =b_{0, 3} e_z$, we get $s_0 \cdot e_z =0, e_{\vth} \cdot e_z = 0$, which implies that $ e_{\vth} \times s_0$ and $e_z$ are parallel. Then the above identity implies $e_{\vth} \times s_0  = 0$. Since $s_0$ is a constant vector and $\vth$ is arbitrary, we further obtain $s_0 = 0$, which contradicts \eqref{eq:non_axi0} and \eqref{eq:non_axi1}.

\end{remark}


The proof of Proposition \ref{prop:lam} consists of several steps. Firstly, given $x_0, b_0, \xi_0$, we construct axisymmetric flows $\xi(t, x), b(t, x)$ and function $S(t, x)$ using the PDE form of \eqref{eq:bichar1}-\eqref{eq:bichar3} such that $\xi(0, x_0) = \xi_0, b(0, x_0) = b_0, \na S = \xi$. Since the singular solution $\uu$ in Theorem \ref{thm:euler_blowup} is only $C^{1,\al}$, these functions $\xi, b, S$ are not smooth enough to apply the argument in  \cite{lafleche2021instability} to prove Proposition \ref{prop:lam}. Our key observation is that the solution \eqref{eq:WKB_euler} leading to the instability \cite{lafleche2021instability} is constructed locally along the trajectory of $x_0$. Thus, we can apply Lemma \ref{lem:traj} and Theorem \ref{thm:euler_blowup} to localize $\uu$ and obtain a much smoother localized velocity $\uu \cdot \chi$.
Then we can obtain smooth $b, \xi, S$ and an axisymmetric velocity field given by \eqref{eq:WKB_euler}.
Finally, we show that $b(T, x)$ can control $\b(T)$ using the axisymmetric property of $b$. The remaining proof follows the argument in \cite{lafleche2021instability}.

Before we present the proof, we need a simple Lemma for axisymmetric flows.
\begin{lem}\label{lem:axi}
Suppose that $A(x), B(x)$ are axisymmetric flows \eqref{axi}, and $C(x) = C(r, z)$ is independent of $\vth$. Then $A \times B$, $C(x) A$, $\na \times A, \pa_r A, \pa_z A, \pa_{\vth} A$ are axisymmetric flows, and $A \cdot B$ is independent of $\vth$.
\end{lem}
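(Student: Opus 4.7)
The plan is to unpack each axisymmetric field in the cylindrical frame and verify that every operation in the statement produces new coefficients that depend only on $(r,z)$, so the result again matches the template \eqref{axi}. Write $A = A^r(r,z)\,e_r + A^\vth(r,z)\,e_\vth + A^z(r,z)\,e_z$ and analogously for $B$. The two facts I will use throughout are (i) the multiplication table of the orthonormal frame, namely $e_r\times e_\vth=e_z$, $e_\vth\times e_z = e_r$, $e_z\times e_r=e_\vth$, together with $e_i\cdot e_j = \delta_{ij}$, and (ii) the derivative relations $\pa_r e_i = \pa_z e_i = 0$ for all three basis vectors, while $\pa_\vth e_r = e_\vth$, $\pa_\vth e_\vth = -e_r$, $\pa_\vth e_z = 0$.

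For the algebraic operations, expanding gives
\[
A\cdot B = A^r B^r + A^\vth B^\vth + A^z B^z,\qquad C\cdot A = (CA^r)e_r+(CA^\vth)e_\vth+(CA^z)e_z,
\]
and a bilinear expansion of $A\times B$ using the cross-product table yields three coefficients that are polynomials in $A^r, A^\vth, A^z, B^r, B^\vth, B^z$. Since each of these scalars is a function of $(r,z)$ only and $C$ is independent of $\vth$, the resulting quantities match the required form.

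For the derivatives, $\pa_r A = (\pa_r A^r)e_r + (\pa_r A^\vth)e_\vth + (\pa_r A^z)e_z$ because $\pa_r$ kills the frame, and similarly for $\pa_z A$. For $\pa_\vth A$, since $A^r, A^\vth, A^z$ do not depend on $\vth$, only the basis vectors contribute: $\pa_\vth A = -A^\vth(r,z)\,e_r + A^r(r,z)\,e_\vth$, again axisymmetric. For the curl, I will invoke the standard cylindrical formula
\[
\na\times A = \Bigl(\tfrac{1}{r}\pa_\vth A^z - \pa_z A^\vth\Bigr)e_r + \bigl(\pa_z A^r - \pa_r A^z\bigr)e_\vth + \tfrac{1}{r}\bigl(\pa_r(rA^\vth) - \pa_\vth A^r\bigr)e_z,
\]
which, using $\pa_\vth A^r = \pa_\vth A^z = 0$, reduces to coefficients depending only on $(r,z)$.

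There is no real obstacle here; the lemma is essentially a bookkeeping exercise, and the only step that requires a moment of care is the cross product $A\times B$, where one must not forget that the $e_\vth$-components of $A$ and $B$ contribute to all three components of the result. I will arrange the proof as: (1) state the cylindrical-frame expansions, (2) list the derivative and product relations for $\{e_r,e_\vth,e_z\}$, (3) compute $A\cdot B$, $CA$, $A\times B$, then $\pa_rA$, $\pa_zA$, $\pa_\vth A$, and finally $\na\times A$, observing at each step that the result has the form \eqref{axi} with coefficients in $(r,z)$ only.
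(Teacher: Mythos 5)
Your proof is correct and follows essentially the same route as the paper: a direct computation in the orthonormal cylindrical frame, using $\pa_r e_i=\pa_z e_i=0$, $\pa_\vth e_r=e_\vth$, $\pa_\vth e_\vth=-e_r$, and the standard form of the cross product and curl in that frame. The paper states these calculations more tersely (citing the standard fact that the curl of an axisymmetric velocity is axisymmetric), but the content is identical.
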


\begin{proof}
Since $e_r, e_{\vth}, e_z$ \eqref{eq:polar_basis} are orthonormal basis, a simple calculation implies that $A \times B, C(r, z) A$ are axisymmetric and that $A \cdot B$ is independent of $\vth$. The property that the curl operator does not change axisymmetry is standard. For example, if the velocity $\uu$ is axisymmetric, the vorticity $\om = \na \times \uu$ is also axisymmetric. The same reasoning and calculation apply to $\na \times A$. Since $\pa_r \eta = 0, \pa_z \eta = 0 $ for $\eta = e_r, e_{\vth}, e_z$ and $\pa_{\vth} A 
= A^r(r, z) e_{\vth} - A^{\vth}(r, z) e_{r} $ for $A = A^r e_r + A^{\vth} e_{\vth} + A^z e_z$, we 
conclude that $\pa_rA, \pa_z A, \pa_{\vth} A$ are axisymmetric. 
\end{proof}

\begin{proof}[Proof of Proposition \ref{prop:lam}]

Recall the poloidal component \eqref{eq:polo},\eqref{eq:char_polo2}
\beq\label{eq:lam_polo}
\td x = (r , z), \quad \td \g_t = (r_t, z_t), \quad \td A = \{\td a: a \in A \}.
\eeq
We fix $T < T_*$. Suppose that $\b(T) > 0$. Otherwise, the proof is trivial.
Using the definition of \eqref{eq:def_beta} and result (b) in Theorem \ref{thm:euler_blowup}, for any $\eta > 0$, we can choose $(x_0, \xi_0, b_0)$ such that 
\beq\label{eq:lam_init}
x_0 \in (D_1 \bsh \Ups) \cap \supp(\om_0) \subset B_{(1,0)}(1/4), \quad r_0 \neq 0, \ \xi_0 = \td \xi_0,  \ \xi_0 \cdot b_0 = 0,
\eeq
and 
\beq\label{eq:lam_pf0}
0 < \b(T) \leq (1 + \eta) |b_T( x_0, \td \xi_0, b_0)|.
\eeq
We have $r_0 \neq 0$ since $x_0 \in B_{(1,0)}(1/4)$ implies $r_0 \geq \f{3}{4}$. Denote 
\beq\label{eq:vth0}
\vth_0 = x_{0,\vth}.
\eeq
Without loss of generality, we assume $x_0 \in D_1^+$. 
From Lemma \ref{lem:traj}, there exists $\d > 0$ and a compact set $\S_2$ such that \eqref{eq:bichar1}-\eqref{eq:bichar3} have a unique solution $( \g_t, b_t, \xi_t)$ on $[0, T]$ for initial data $x$ with $\td x  \in B_{\td x_0}( \d), b_0, \xi_0$ and 
\beq\label{eq:v_comp1} 
\td \g_t( B_{\td x_0 }(\d) ) \cup  B_{\td \g_t (\td x_0) }(\d) \subset \S_2 
\subset D_1^{+} \bsh \Ups \cap B_{(1,0)}(R_{2,\al}) , \quad t \in [0, T].
\eeq

\subsubsection{\bf{Construction of axisymmetric functions}}\label{sec:v_axi}

Our goal is to construct smooth (at least $C^4$) axisymmetric flows $\xi(t, x), b(t, x)$ satisfying \eqref{axi} and function $S(t, x)$ such that 
\begin{align}
&\xi(0, \td x, \vth_0 ) = \xi_0, \quad b(0,\td x, \vth_0 ) = b_0, \quad \xi(t, x) \cdot b(t, x) \equiv 0,  \label{eq:v_axi1} \\
&\xi(t, \g_t(\td x , \vth_0 )) = \xi_t( \td x, \vth_0,\xi_0 ), \quad b( t, \g_t(\td x, \vth_0))
= b_t( \td x, \vth_0, \xi_0, b_0) , \label{eq:v_axi12} \\
& \na S(t, x) = \xi(t, x) ,  \quad \pa_{\th} S(t, x) = \xi \cdot e_{\th} = 0, 
\label{eq:v_axi13} 
\end{align}
for any $\td x \in B_{\td x_0}(\d), t \in [0, T]$, where $\vth_0=x_{0,\vth}$ \eqref{eq:vth0} and $(\td x, \vth_0 )$ means  $(r, \vth_0, z)$ in the cylindrical coordinate. Thus, $b(t, x), \xi(t, x)$ can be seen as the
axisymmetric extensions of the solutions $\xi_t, b_t$ to the ODE \eqref{eq:bichar1}-\eqref{eq:bichar3} with initial data $(\td x, \vth_0),\xi_0, b_0$. We construct initial data as follows 
\beq\label{eq:v_axi_init}
\xi(0, x) = \xi_0^r e_r + \xi_0^z e_z, \quad b(0, x) = b_0^r e_r + b_0^{\vth} e_{\vth} + b_0^z e_z, 
\eeq
where $e_{r(x_0)} = ( \cos \vth_0, \sin \vth_0, 0 ), \ e_{\vth(x_0)} = ( - \sin \vth_0, \cos \vth_0, 0 )$, and 
\[
\xi_0^r = \xi_0 \cdot e_{r(x_0)} ,\   \xi_0^z = \xi_0  \cdot e_z,
\quad b_0^r = b_0 \cdot e_{r(x_0)} ,  \ b_0^{\vth} = b_0 \cdot e_{ \vth(x_0)}, \ b_0^z = b_0 \cdot e_z.
\]
The initial data $\xi(0, x), b(0, x)$ are axisymmetric and only depend on $x_{\vth}$ \eqref{eq:polar_basis}. From Lemma \ref{lem:axi}, $|\xi(0, x)|, |b(0,x)|, \xi(0, x) \cdot b(0, x)$ are independent of $\vth$. Using  \eqref{eq:lam_init} and \eqref{eq:v_axi_init}, we have 
\beq\label{eq:v_axi_init2}
\xi(0, \td x, \vth_0 ) = \xi_0, \  b(0,   \td x, \vth_0 ) = b_0 , \  |\xi(0, x)| = 1, \  | b(0, x)| = 1, \  \xi(0, x) \cdot b(0, x) = \xi_0 \cdot b_0 = 0.
\eeq

\subsubsection*{\bf{Localization of the velocity}}
We want to construct $\xi(t, x), b(t, x)$ using \eqref{eq:bichar2}-\eqref{eq:bichar3} with the above initial data. Yet, the singular solution $\uu$ is only $C^{1,\al}$ and the resulting solutions $\xi, b$ are not smooth enough. To fix this problem, we localize the velocity. From \eqref{eq:v_comp1}, using compactness, we can find a smooth cutoff function $\chi_T(r, z)$ such that 
\beq\label{eq:v_axi_u1}
\chi_T(\td x) = 1 , \quad \td x \in \S_2, \quad 
\S_2 \subset \supp(\chi_T) =\S_3  \subset D_1^{+} \bsh \Ups  \cap  B_{(1,0)}(R_{2,\al}) ,
\eeq
where $\S_3$ is another compact domain. Now, we modify the velocity $\uu$ as follows 
\beq\label{eq:v_axi_u2}
\uu_T(t, x) \teq \uu(t, x) \chi_T(r, z).
\eeq
From Lemma \ref{lem:axi} and Theorem \ref{thm:euler_blowup},  $\uu_T$ is axisymmetric and $\uu_T
 \in L^{\inf}( [0, T], C^{50}(D))$ is smooth in the whole domain. 

\subsubsection*{\bf{Constructions of $b, \xi, S$}}
Consider the PDE (Eulerian) formulations of \eqref{eq:bichar2}-\eqref{eq:bichar3} with the modified velocity $\uu_T$
\beq\label{eq:v_axi2}
\bal
\pa_t \xi + \uu_T \cdot \na \xi & = - (\na \uu_T)^T \xi , \quad
\pa_t b + \uu_T \cdot \na b  = - (\na \uu_T) b  + \f{2 \xi^T (\na \uu_T) b}{|\xi|^2} \xi 	
\eal
\eeq
and initial data $\xi(0,\cdot), b(0,\cdot)$. We will show that the evolution preserves the axisymmetry of $\xi, b$. For any axisymmetric functions $g, f$, using $\pa_{\vth} e_r = e_{\vth}, \pa_r e_{\vth} = -e_r$, we have 
\[
g \cdot \na f = (g^r \pa_r + \f{g^{\vth}}{r} \pa_{\vth} + g^z \pa_z) f 
= \sum_{\al = \al, \vth, z}( g^r \pa_r + g^z \pa_z) f^{\al} \cdot e_{\al}
+ \f{g^{\vth}}{r} ( f^r e_{\vth} - f^{\vth}  e_{r}),
\]
which is axisymmetric. Therefore, we obtain 
\[
\uu_T \cdot \na \xi,  \quad  (\na \uu_T) \xi = \xi \cdot \na \uu_T, \quad
\uu_T \cdot \na b , \quad (\na \uu_T) b
\]
are axisymmetric. Lemma \ref{lem:axi} implies that $\xi \cdot (\na \uu_T) b, |\xi|^2 = \xi \cdot \xi$ are independent of $\vth$. Thus $ \f{ \xi^T  (\na \uu_T) b}{|\xi|^2} \xi$ is axisymmetric. Using the identity 
\[
- (\na \uu_T)^T \xi =  ( \na \uu_T - (\na \uu_T)^T) \xi - (\na \uu_T ) \xi 
= (\na \times \uu_T) \times \xi - (\na \uu_T ) \xi  
\]
and Lemma \ref{lem:axi} again, we conclude that $- (\na \uu_T)^T \xi$ is axisymmetric. Therefore, the equations \eqref{eq:v_axi2} preserves axisymmetry. From \eqref{eq:v_axi2}, it is easy to see that 
\[
\pa_t( \xi \cdot b) + \uu_T \cdot \na ( \xi \cdot b) = 0.
\]
Recall the initial data \eqref{eq:v_axi_init}. From \eqref{eq:v_axi_init2}, we have $\xi(0, x) \cdot b(0, x) \equiv 0$.
The above transport equation implies that $ \xi(t, x) \cdot b(t, x) = 0$ in \eqref{eq:v_axi1}.

Next, we prove the identities in \eqref{eq:v_axi12}. First, for initial data $x$ with $\td x \in B_{\td x_0}(\d)$, due to \eqref{eq:v_comp1} and $\uu_T = \uu$ in $\S_2$ \eqref{eq:v_axi_u1}, \eqref{eq:v_axi_u2}, the flow maps on $[0, T]$ generated by $\uu_T$ and $\uu$ are identical. Hence, we obtain
 \[
\uu( t, \g_t(x)) = \uu_T(t, \g_t(x)), \quad (\na \uu)(t, \g_t(x)) = (\na \uu)( t, \g_t(x)).
\]
Using \eqref{eq:v_axi2} and the flow map $\g_t$ \eqref{eq:bichar1}, we have 
\[
\f{d}{dt}  \xi(t, \g_t(x)) = - (\na \uu)^T \xi(t, \g_t(x)), \quad 
\f{d}{dt} b(t, \g_t(x)) = - (\na \uu) b(t, \g_t(x)) + \f{2 \xi^T (\na \uu) b}{|\xi|^2} \xi(t, \g_t(x))
\]
where $\na \uu$ is evaluated at $(t, \g_t(x))$. Thus, $\xi(t, \g_t(x))$ and $b(t, \g_t(x))$ satisfy the same ODE \eqref{eq:bichar2}-\eqref{eq:bichar3} for $\xi_t, b_t$. According to Lemma \ref{lem:traj} and the discussion below \eqref{eq:vth0}, we can solve these ODEs for initial data $x$ with $\td x \in B_{\td x_0}(\d)$. Using \eqref{eq:v_axi_init2}, we get
\[
\xi(0, \g_0(\td x, \vth_0)) = \xi_0 = \xi_t( \td x, \vth_0, \xi_0) |_{t=0}, 
\quad b(0, \g_0(\td x, \vth_0)) = b_0 = b_t( \td x, \vth_0, \xi_0) |_{t=0}.
\]
Using the uniqueness of ODEs, we obtain \eqref{eq:v_axi12}.

To construct $S$, following \cite{vasseur2020blow,lafleche2021instability} we solve the transport equation with the modified velocity $\uu_T$
\beq\label{eq:v_axi_S}
 \pa_t S + \uu_T \cdot \na S = 0, \quad S(0, x) = r \xi_0^r + z \xi_0^z.
\eeq

The equation for $\na S$ reads
\[
\pa_t (\na S) + \uu_T \cdot \na ( \na S) = - (\na \uu_T)^T  (\na S ), \quad 
(\na S)(0, x) = \xi_0^r  e_r + \xi_0^z e_z = \xi(0, x).
\]
Comparing the above equations with \eqref{eq:v_axi2}, we yield $\na S(t, x) = \xi(t, x)$ for any $x$ and $t \in [0, T]$. 

Next, we consider $\pa_{\vth} S$. Since $\na S = \xi$ and $\uu_T$ are axisymmetric, using Lemma \ref{lem:axi}, we get 
\[
 \pa_{\vth}( \uu_T \cdot \na S) 
 =\pa_{\vth} (\uu_T \cdot \xi )= 0.
\]
Using \eqref{eq:v_axi_S} and $(\pa_{\vth} S )( 0, x)= 0$, we yield 
\[
\pa_t \pa_{\vth} S = 0, \quad \pa_{\vth} S (t, x) \equiv 0.
\]
This proves \eqref{eq:v_axi13}.



Since $\uu_T \in L^{\inf}([0, T], C^{50}(D) )$,  $\xi(t, x), b(t, x), S(t, x)$ are smooth and at least $C^4$ in $x$. 

\subsubsection{\bf{Control of $b(T, x)$} }\label{sec:b_control}
We will show that $b(T, x)$ can control $\b(T)$ via \eqref{eq:lam_pf0}. 

Recall the poloidal notation \eqref{eq:lam_polo}. Let $x_T = \g_T(x_0)$ and $L_{\g} \geq 1$  be the Lipschitz constant of $\g_t, \g_t^{-1}$ on $[0, T] \times D_1$. From \eqref{eq:lam_pf0} and \eqref{eq:v_axi12}, we get 
\[
 0 <  |b_T(x_0, b_0, \xi_0) | = |b(T, x_T)|.
\]
Using the  continuity of $b(T, \cdot)$, there exists small $\d_2$ with 
\beq\label{eq:lam_del}
\d_2  \in (0,  \f{ \d}{4 (L_{\g} + 1)^3} )
\eeq
such that 
\beq\label{eq:lam_pf4}
(1-\eta) |b_T(x_0, b_0, \xi_0)| = 
(1-\eta) |b(T, x_T )| 
\leq \infim_{ \td x \in B_{ \td x_T(\d_2) } } |b(T, \td x, x_{T,\vth} )|
= \infim_{ x \in A_{x_T}(\d_2) } |b(T, x)|.
\eeq
where we have used the continuity of $b(T, x)$ in the inequality, and the axisymmetry property that $|b(T, x)|$ is independent of $\vth$ in the third equality. Here, $A_{x_T}( \d_2)= \{ x : \td x \in B_{\td x_T}(\d_2) \}$  is an annulus. 
The above inequality reproduces Equation (19) in \cite{lafleche2021instability}.

\subsubsection{\bf{Construction of the axisymmetric velocity $v_{\e,\d}$}}

We follow \cite{lafleche2021instability,vasseur2020blow} to construct a cutoff function $\vp$ so that we can localize $b(T, x)$ to the domain where it is large 
using \eqref{eq:lam_pf4}. Let $\vp_T(x) = \vp_T(r, z)$ be a smooth function supported in $A_{x_T}( \d_2)$ with $|| \vp_T ||_p = 1$. For any $t \in [0, T]$, we define
\beq\label{eq:lam_vp}
\vp(t, x) \teq \vp_T( \g_T \circ \g_t^{-1}(x)).
\eeq
Since $ \vp_T$ is independent of $\vth$, using \eqref{eq:char_polo} and \eqref{eq:char_polo2}, we know that the $(r, z)$ component of $\g_T \circ \g_t^{-1}(x) $ only depends on $\td x$. Thus, we yield 
\[
\vp(t, x) = \vp_T( \td \g_T \circ \td \g_t^{-1}(  \td x))
\]
and $\vp(t, x)$ is independent of $\vth$. 

\begin{remark}
We can also solve $\vp(t, x)$ using the PDE similar to \eqref{eq:v_axi2}, \eqref{eq:v_axi_S}
\beq\label{eq:lam_vp2}
\pa_t \vp + \uu_T \cdot \na \vp = 0, \quad \vp(T, x) = \vp_T(x).
\eeq
Tracking the support of $\vp$ and using the argument similar to that in the proof of \eqref{eq:v_axi12}, one can show that these two constructions are the same.
\end{remark}

Using \eqref{eq:lam_vp} and \eqref{eq:char_polo2}, for $x \in \supp ( \vp(t, \cdot))$, we have $| \td \g_T \circ \td \g_t^{-1}(x) - \td \g_T(x_0)| \leq \d_2.$ Since $ \td \g_T \circ \td \g_t^{-1}$ has Lipschitz constant $L_{\g}^2$, from \eqref{eq:lam_del}, we get 
\[
| \td x -  \td \g_t(x_0)|  \leq L_{\g}^2 |  \td \g_T \circ \td  \g_t^{-1}(x) -\td \g_T(x_0)| \leq L_{\g}^2 \d_2,
\quad  \wt {\supp ( \vp(t, \cdot))} \subset B_{ \td \g_t(x_0)}( L_{\g}^2 \d_2) \subset B_{ \td \g_t(x_0) }(\d/2).  
\]



Using \eqref{eq:v_comp1},  we further obtain 
\beq\label{eq:lam_supp}
	\wt{ \supp ( \vp(t, \cdot) ) } 
\subset  \S_2 \subset
	 ( D_1^+ \bsh \Ups ) \cap B_{(1,0)}( R_{2,\al})\subset 	B_{(1,0)}(1/4) , \ t \in [0, T].
\eeq


For fixed $\eta, \d_2$, from Lemma \ref{lem:traj}, the function $\vp $ is Lipschitz in time and $C^4$ in $x$ on $[0, T] \times D_1$.  Moreover, from \eqref{eq:v_axi_u1}, \eqref{eq:v_axi_u2}, we get
\beq\label{eq:v_axi_u3}
\uu_T(t, x) = \uu(t, x),  \quad x \in \supp( \vp(t, \cdot)), \quad t \in [0, T].
\eeq


Now, we follow \cite{lafleche2021instability,vasseur2020blow} to construct an approximate solution \eqref{eq:WKB_euler} via the WKB expansion. Since $\xi, b$ are axisymmetric flows and $S(t, x), \vp(t,x), |\xi(t, x)|$ are independent of $\vth$, using Lemma \ref{lem:axi} repeatedly, we yield that 
\[
 b \times \xi,  \quad \f{b\times \xi}{|\xi|^2}  \vp e^{iS/\e}, \quad 
v_{\e, \eta} = \e \na\times ( \f{b\times \xi}{|\xi|^2}  \vp e^{iS/\e})
\]
are axisymmetric. We remark that $|\xi(t, x)|^{-1}$ is uniformly bounded on $[0, T] \times D_1$, which can be proved using the Lagrangian version of \eqref{eq:v_axi2}, the boundedness of $|\na \uu_T|$, and $|\xi(0, \cdot)| = |\xi_0| =1 $ \eqref{eq:v_axi_init}. Due to \eqref{eq:lam_supp}, $v_{\e, \eta}$ is supported in the interior of $D_1$ and $v_{\e, \eta} \cdot n = 0$ on $\pa D_1$.




Since $\supp(v_{\e, \eta})\subset \supp(\vp)$, from \eqref{eq:v_axi_u3}, the localization of $\uu$ in \eqref{eq:v_axi_u1} and \eqref{eq:v_axi_u2} does not change the estimates of $v_{\e, \eta}$ in \cite{lafleche2021instability,vasseur2020blow}. Following the argument in \cite{lafleche2021instability,vasseur2020blow}, we obtain that $v_{\e, \d_2}$ is a solution to \eqref{eq:euler_lin} with a small forcing term 
\beq\label{eq:lam_lin}
\pa_t v_{\e, \d_2} + ( \uu \cdot  \na) v_{\e, \d_2} + ( v_{\e, \d_2} \cdot \na) \uu
+ \na q_{\e, \d_2} =  R_{\e, \d_2}.  
\eeq
Moreover, we have the following estimates  
\beq\label{eq:lam_pf5}
\bal
&|| v_{\e, \d_2}(T) ||_{L^p} \geq (1 - \eta) |b(T, x_0, \xi_0)| - C_{\eta ,\d_2} \e , \\
&  || v_{\e, \d_2}(0, \cdot)||_{L^p}  \leq 1 + C_{\eta, \d_2} \e , \quad || R_{\e, \d_2} ||_{L^p} \leq C_{\eta , \d_2} \e, 
\eal
\eeq
where $C_{\eta, \d_2}$ is some constant independent of $\e$. The first two estimates are consequences of the leading order formula of $v_{\e, \eta}$ \eqref{eq:WKB_euler}, \eqref{eq:v_axi_init2}, \eqref{eq:lam_pf4}, and the conservation of $|| \vp(t, \cdot )||_{L^p} =1$, which follows from the fact that $\vp$ is transported by an incompressible flow, see e.g., \eqref{eq:lam_vp2}. See also Appendix \ref{app:WKB} for some formal derivations related to \eqref{eq:lam_lin}-\eqref{eq:lam_pf5}.

\subsubsection{\bf{Symmetrization}}\label{sec:sym}

An important observation is that $v_{\e, \d_2}$ is only supported in the upper half domain $D_1^+ \bsh \Ups$ due to \eqref{eq:lam_supp} and $\supp ( v_{\e, \d_2} ) \subset \supp ( \vp(t, \cdot) )$. 
For the singular solution $\uu$ in Theorem \ref{thm:euler_blowup}, $\om^{\vth}(t)$ is odd and $u^{\vth}(t)$ is even in $z$, which induces the symmetry property \eqref{eq:sym} that $u^z(t)$ is odd and $u^{\vth}(t), u^r(t)$ are even in $z$. 
For vector $f = v_{\e, \d_2}, R_{\e, \d_2}$, we extend it to $D_1^-$ according to the same symmetry 
\[
\bar f^r = f^r(r, z) + f^r (r, -z), \quad \bar f^z = f^z(r, z) - f^z(r, -z), \quad 
\bar f^{\vth} = f^{\vth}(r, z ) + f^{\vth}(r, - z), 
\]
where $ f = f^r e_r+  f^{\vth} e_{\vth} +  f^z e_z, \bar f = \bar f^r e_r+ \bar f^{\vth} e_{\vth} + \bar f^z e_z$. For the pressure $q_{\e, \d_2}$ in \eqref{eq:euler_lin}, we extend it as an even function in $z$ 
\[
\bar q_{\e, \d_2} = q_{\e, \d_2}(r, z) + q_{\e, \d_2}(r, -z).
\]

The above symmetry properties are preserved by  \eqref{eq:euler} and \eqref{eq:euler_lin}. We obtain that $\bar v_{\e, \d_2}$ is a solution to \eqref{eq:euler_lin} with pressure $\bar q_{\e, \d_2}$ and forcing $\bar R_{\e, \d_2}$ and enjoys the symmetry property \eqref{eq:sym}. Since $\supp ( v_{\e, \d_2} ) \in D_1^+$, $\bar v_{\e, \d_2} - v_{\e, \d_2}$ and $v_{\e, \d_2}$ are disjoint, applying \eqref{eq:lam_pf5} yields
\beq\label{eq:lam_pf6}
\bal
&||\bar v_{\e, \d_2}(T) ||_{L^p} \geq 2 (1 - \eta) |b(T, x_0, \xi_0)| - C_{\eta ,\d_2} \e,  \\
&  || \bar v_{\e, \d_2}(0, \cdot)||_{L^p}  \leq 2 + C_{\eta, \d_2} \e , \quad || \bar R_{\e, \d_2} ||_{L^p} \leq C_{\eta , \d_2} \e .
\eal
\eeq
The last inequality on $\bar R_{\e,\d_2}$ follows from the triangle inequality. Let $\bar v(T)$ be the solution to \eqref{eq:euler_lin} with initial data $\bar v_{\e, \d_2}(0)$. Following the argument in \cite{lafleche2021instability,vasseur2020blow}, we obtain 
\beq\label{eq:lam_pf7}
 || \bar v(T) -\bar v_{\e, \d_2}(T) ||_{L^p} \leq C_{\eta, \d_2} \e .
\eeq
Since the symmetry of $\bar v_{\e, \d_2}(0)$ in $z$ is preserved by \eqref{eq:euler_lin}, $v(T)$ satisfies the symmetry \eqref{eq:sym}.

\subsubsection{\bf{Control of $\lam_{p,\s}^{sym}$ for all power $\s$}}

Denote $\chi_2(x) = \one_{ B_{(1,0)}( \f{1}{2}) }(r, z)$. Since $\supp( v_{\e, \d_2}( t, \cdot) )= \supp( \vp(t, \cdot) ) \subset B_{ (1,0)}(1/4) $ \eqref{eq:lam_supp} and $\bar v_{\e, \d_2}$ is the symmetric extension of $ v_{\e, \d_2}$, we get $\chi_2 \bar v_{\e, \d_2} =\bar v_{\e, \d_2}$. Moreover, for $x \in  \supp ( \chi_2) \cap D_1  $, we get $r \in [1/2, 1]$. Then for any $\s \in \R$, using \eqref{eq:lam_pf5}, we obtain 
\[
|| r^{-\s} \bar v_{\e, \d_2}(0, \cdot) ||_{L^p}
 = || r^{-\s} \chi_2 \bar v_{\e, \d_2}(0, \cdot) ||_{L^p} \leq C_{\s} || \bar v_{\e, \d_2}(0, \cdot) ||_{L^p} 
 \leq C_{\s} (2 + C_{\eta, \d_2} \e) .
\]
Applying the above estimate, \eqref{eq:lam_pf4}, \eqref{eq:lam_pf6}, \eqref{eq:lam_pf7} and the definition \eqref{eq:instab1}, we yield
\[
\bal
C_{\s}(2 +& C_{\eta, \d_2}) \lam^{sym}_{p, \s}(T) \geq 
|| r^{-\s} \bar v_{\e, \d_2}(0, \cdot)||_{L^p} \lam^{sym}_{p, \s}(T) \geq || r^{-\s} \bar v(T)||_{L^p}
\geq || r^{-\s} \chi_2  \bar v(T)||_{L^p} \\ 
& \geq \td C_{\s}  ||  \chi_2  \bar v(T)||_{L^p}  
\geq \td C_{\s}  (  || \chi_2 \bar v_{\e, \d_2}(T) ||_{L^p} - || \chi_2 ( \bar v(T) -\bar v_{\e, \d_2}(T) ) ||_{L^p} )    \\
 & \geq  \td C_{\s} ( || \bar v_{\e, \d_2}(T) ||_{L^p} - C_{\eta, \d_2 } \e) 
 \geq \td C_{\s} ( 2 (1-\eta) b(T, x_0, \xi_0) -  C_{\eta, \d_2 } \e)
\geq \td C_{\s} \B( \f{1-\eta}{1 + \eta} \b(T) - C_{\eta, \d_2 } \e  \B).
\eal
\]
Taking $\eta = 1/2$ and letting $\e \to 0$ conclude the proof.
\end{proof}

\begin{proof}[Proof of Theorem \ref{thm:Euler_HL}]
From Theorem \ref{thm:euler_blowup}, we have 
$
\lim_{t \to T^*} ||  \om_p(t) ||_{\inf} = \inf.
$
Combining Propositions \ref{prop:beta} and \ref{prop:lam}, we establish
\[
\liminf_{t \to T_* } \lam^{sym}_{p, s}(t)^2 \geq C_{\s} \liminf_{ t \to T_*} \b^2(T) 
\geq  C_{\s} \lim_{ t \to T^*} \f{ ||  \om_p(t) ||_{\inf} }{ ||  \om_{p,0} ||_{\inf}} = \inf.
\]
We conclude the proof of Theorem \ref{thm:Euler_HL}. 
\end{proof}

\subsection{Proof of Theorem \ref{thm:bous}}

The proof of Theorem \ref{thm:bous} is completely similar to that of Theorem \ref{thm:Euler_HL} and is easier. We follow the arguments in \cite{shao2022instability}. Firstly, we note that there is a sign difference between the Boussinesq equations used in \cite{chen2019finite2} \eqref{eq:bous} and \cite{shao2022instability}. 
In \cite{shao2022instability}, the Boussinesq equations are given by 
\beq\label{eq:bous_v2}
 \th_t + \uu \cdot  \na \th = 0, \quad  \uu_t + \uu \cdot \na \uu + \na p = (0, \th)^T,  \quad \na \cdot \uu = 0.
\eeq
The velocity-density formulation of \eqref{eq:bous} is the above equations with $(0, \th)^T$ replaced by $(0, -\th)^T$. Clearly, \eqref{eq:bous} and \eqref{eq:bous_v2} are equivalent:  $(\uu, \th)$ solves \eqref{eq:bous} if and only if $(\uu, -\th)$ solves \eqref{eq:bous_v2}. The linearized equation of \eqref{eq:bous_v2} around a solution $(\uu, \th)$ of \eqref{eq:bous_v2} is given by 
\beq\label{eq:bous_lin}
\bal
&\pa_t \eta + \uu \cdot \na \eta + \vv \cdot \na \th  = 0, \quad \pa_t \vv + \uu \cdot \na \vv + \vv \cdot \na \uu + \na q =   (0, \eta)^T, \quad  \mathrm{div} \ \vv = 0,  \\
\eal
\eeq
which is also different from \eqref{eq:bous_lin0} with $(0, \eta)^T$ in \eqref{eq:bous_lin} replaced by $(0, -\eta)^T$ in \eqref{eq:bous_lin0}. Given solution $(\uu, \th)$ of \eqref{eq:bous} and $(v, \eta, \uu, \th)$ satisfying \eqref{eq:bous_lin0}, we obtain that $(\uu, -\th )$ is solution of \eqref{eq:bous_v2} and $(v, -\eta, \uu, -\th)$ satisfies \eqref{eq:bous_lin}. To keep the minimal changes of sign and other notations among this paper, \cite{chen2019finite2}, and \cite{shao2022instability}, due to this connection, we use the following setting. 
Given a singular solution $(\uu, - \th)$ of \eqref{eq:bous} in Theorem \ref{thm:bous_blowup}, we obtain the solution $( \uu,  \th)$ of \eqref{eq:bous_v2}, which satisfies the same properties in Theorem \ref{thm:bous_blowup}, e.g., the blowup quantities and the regularity. Then we consider \eqref{eq:bous_v2} and \eqref{eq:bous_lin} in the following discussions so that the derivations 
and notations are consistent with those in \cite{shao2022instability}.

The bicharacteristics-amplitude ODE system of \eqref{eq:bous_v2}  \cite{shao2022instability} read
\begin{align}
\dot \g(t, x_0) & = \uu( t, \g(t, x_0)) , \label{eq:bichar1_bous} \\
\dot \xi(t, x_0, \xi_0) & = - (\pa_x \uu)^T \xi(t, x_0, \xi_0),  \label{eq:bichar2_bous} \\
\dot b( t, x_0, \xi_0) & = - (\pa_x \vec z) b + \LL b + (2 \f{ \vec  \xi^T  ( \pa_x \vec z)  b  }{|\xi|^2}  - \f{ \vec \xi \cdot (\LL b)}{ |\xi|^2} ) \vec \xi,  \label{eq:bichar3_bous}
\end{align}
where $\vec z \teq (\th, \uu)$, $b\in \R^3$, the matrix $\pa_x \vec z$, vector $\vec \xi$, and linear operator $\LL$ are given below
\beq\label{eq:nota_bous}
\pa_x \vec z \teq 
\left(
\begin{array}{ccc}
0 & \pa_1 \th & \pa_2 \th \\
0 & \pa_1 u_1  & \pa_2 u_1 \\
0 & \pa_1 u_2  & \pa_2 u_2 \\
\end{array}
\right), \quad 
\vec \xi \teq 
\left(
\begin{array}{c}
0 \\
\xi_1 \\
\xi_2 \\
\end{array}
\right),
\quad 
\LL b \teq 
\left(
\begin{array}{c}
0 \\
0 \\
b_1 \\
\end{array}
\right).
\eeq
The initial data is given by $ \g|_{t=0} = x_0, \xi |_{t = 0} = \xi_0 \in \R^2 \bsh \{ 0\}$ and $ b|_{t = 0} = b_0 \in \R^3$. 
Denote 
\beq\label{eq:bous_domain}
\Ups_2 \teq \{ (x, y) \in \R^2_+: x = 0 \mathrm{ \ or \ } y = 0\}, \quad 
D = \R^2_+, \quad  D^{\pm} \teq \{ (x, y): y \geq 0, \pm x \geq  0 \}.
\eeq

For the singular solution $(\uu, -\th)$ in Theorem \ref{thm:bous_blowup} (then $(\uu, \th)$ solves \eqref{eq:bous_v2}), since $\om$ is odd, $\th$ is even in $x$, and $v(x, 0) = 0$, we have 
\beq\label{eq:bous_noflow}
\uu \cdot n |_{\Ups_2} = 0,
\eeq
where $n$ is the normal vector of $\Ups_2$. We first generalize Lemma \ref{lem:traj} as follows.
\begin{lem}\label{lem:traj_bous}
Let  $\g_t$ be the solution to \eqref{eq:bichar1_bous} with initial data $x_0$, $T_*$ be the blowup time, $T < T_*$, and $D^{\pm}$ be the domains defined in \eqref{eq:bous_domain}. (a) For any $x_0 \in \Ups_2$ and $ t \in [0, T_*)$, its trajectory $\g_t$ remains in $\Ups_2$; for any $x_0 \in D_1^{\pm} \bsh \Ups_2$ and $t \in [0, T_*)$, we have $\g_t \in D^{\pm} \bsh \Ups_2$. For any $t \in [0, T]$, $\g_t$ is invertible, and $\g_t, \g_t^{-1}$ are Lipschitz in time and the initial value.

(b) For $x_0 \in D^{\pm} \bsh \Ups_2$, there exists $\d(x_0, T) > 0$ depending on $x_0, T$ and a compact set $\Sigma_2$ such that 
\beq\label{eq:traj_bous}
\g_t(  B_{x_0 }(  \d) ) \cup B_{ \g_t(x_0)}( \d)
\subset  \Sigma_2 
\subset D^{\pm} \bsh \Ups_2
.\eeq
As a result, for initial data $z_0 \in B_{x_0}(  \d)$ and any $b_0, \xi_0$, there exista a unique solution $(\g_t, b_t, \xi_t)$ to \eqref{eq:bichar1_bous}-\eqref{eq:bichar3_bous} on $t\in [0, T]$. The functions $(\g_t, b_t, \xi_t)$ are Lipschitz in time and $C^4$ with respect to initial data $z_0 \in B_{x_0}( \d)$ and $b_0, \xi_0$, and $\g_t^{-1}(x)$ is Lipschitz in time and $C^4$ in $x \in \g_t( B_{x_0}(  \d)  ) \cup B_{\g_t(x_0)}(\d)$.
\end{lem}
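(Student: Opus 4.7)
The proof follows the same scheme as Lemma \ref{lem:traj} and is in fact simpler: Theorem \ref{thm:bous_blowup} provides $C^{50}$ interior regularity on every compact subset of $\{(x,y): x \neq 0, y > 0\}$, so no analogue of the restricting ball $B_{(1,0)}(R_{2,\al})$ is required. Part (a) would be established by invoking the global $C^{1,\al}$ regularity of $\uu$ on $[0,T]$ for any $T < T_*$, which puts us in the Cauchy--Lipschitz setting and gives existence, uniqueness, and bi-Lipschitz dependence of $\g_t$ on time and on the initial point. The non-penetration condition \eqref{eq:bous_noflow}, combined with ODE uniqueness, then forces both $\{y=0\}$ and $\{x=0\}$ to be invariant under the flow, so $\Ups_2$ is invariant and each open half $D^\pm \bsh \Ups_2$ is invariant as well.

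For part (b), I would localize. Fix $x_0 \in D^\pm \bsh \Ups_2$ and $T < T_*$. Part (a) implies that the compact curve $\{\g_t(x_0): t \in [0,T]\}$ has positive distance $d_1$ from $\Ups_2$. Writing $L_\g \geq 1$ for the bi-Lipschitz constant of $\g_t,\g_t^{-1}$ on $[0,T]$, I would set $\d = \min\bigl(d_1/(4(L_\g+1)^2),\,1\bigr)$ and
\[
\S_2 = \B\{ x \in \overline{D^\pm}:\, \dist(x,\Ups_2) \geq d_1/4,\ |x - x_0| \leq \sup_{t\in [0,T]} |\g_t(x_0)-x_0| + 1 \B\}.
\]
By construction $\S_2$ is compact and contained in $D^\pm \bsh \Ups_2$, and the Lipschitz bounds on $\g_t, \g_t^{-1}$ directly yield the inclusion \eqref{eq:traj_bous} for this choice of $\d$ and $\S_2$.

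Once the localization is in place, Theorem \ref{thm:bous_blowup} gives $\uu, \th \in L^\inf([0,T], C^{50}(\S_2))$, so the matrix $\pa_x \vec z$ in \eqref{eq:nota_bous} is $C^{49}$ in space and uniformly bounded in time on $\S_2$. The system \eqref{eq:bichar1_bous}--\eqref{eq:bichar3_bous} is linear in $(b,\xi)$ once $\g_t$ is fixed; the only subtlety is the $|\xi|^2$ in the denominator of \eqref{eq:bichar3_bous}. Since $\xi$ solves the linear ODE \eqref{eq:bichar2_bous} with coefficients bounded by $M \teq \|\pa_x \uu\|_{L^\inf([0,T]\times \S_2)}$, we have $|\xi_t| \geq |\xi_0|e^{-MT} > 0$ on $[0,T]$, so the denominator stays bounded away from zero. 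Standard ODE theory then produces the claimed $C^4$ regularity in $(z_0, b_0, \xi_0)$ and Lipschitz regularity in $t$; the backward statement for $\g_t^{-1}$ follows by solving the same system backward with initial data prescribed at time $t$ on $B_{\g_t(x_0)}(\d) \subset \S_2$.

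The main point requiring care, exactly as in Lemma \ref{lem:traj}, is arranging $\d$ so that $\g_t(B_{x_0}(\d))$ \emph{and} $B_{\g_t(x_0)}(\d)$ fit into a single compact subset of $D^\pm \bsh \Ups_2$ for all $t \in [0,T]$ simultaneously, so that the forward and backward flows (and the linear equations for $b,\xi$ along them) are both controlled on a common neighborhood. The factor $(L_\g+1)^2$ in the choice of $\d$ absorbs both the forward and the backward Lipschitz loss; beyond this bookkeeping, no new ideas beyond those already used in the proof of Lemma \ref{lem:traj} are needed.
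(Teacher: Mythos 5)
Your proof is correct and follows essentially the same route as the paper, which disposes of this lemma by invoking the non-penetration condition \eqref{eq:bous_noflow}, the global $C^{1,\al}$ and interior $C^{50}$ regularity from Theorem \ref{thm:bous_blowup}, and the localization argument of Lemma \ref{lem:traj} (simplified since no angular variable or restriction to a neighborhood of the singular point is needed). Your explicit lower bound $|\xi_t| \geq |\xi_0| e^{-MT}$ keeping the denominator in \eqref{eq:bichar3_bous} away from zero is a detail the paper leaves implicit here, and the only other quibble is cosmetic: with $\d = \min(d_1/(4(L_\g+1)^2),1)$ the forward image $\g_t(B_{x_0}(\d))$ may slightly overshoot the ball of radius $\sup_t|\g_t(x_0)-x_0|+1$ when $d_1$ is large, which is fixed by trivially enlarging $\S_2$ or shrinking $\d$ by another factor of $L_\g$.
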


Unlike Lemma \ref{lem:traj} for the 3D Euler equations, in the above Lemma, since it is in 2D, we do not need to consider the angular variable $\vth$ and the poloidal component $\td x$ \eqref{eq:polo}. 
Moreover, unlike  \eqref{eq:traj_tube3}, we do not restrict the initial data $x_0$ and the trajectory $\g_t(x_0)$ to a domain near the singularity $(0, 0)$ since the velocity $\uu(t)$ in Theorem \ref{thm:bous_blowup} is smooth in any interior compact domain in $\R_2^+$.
The proof of Lemma \ref{lem:traj_bous} follows from the non-penetrated condition \eqref{eq:bous_noflow}, the regularity $\uu, \th \in C^{1,\al}$ and $\uu ,\th \in C^{50}(\Sigma)$ for any compact set $\Sigma \subset D^{\pm} \bsh \Ups_2$ from Theorem \ref{thm:bous_blowup}, and the same argument in the proof of Lemma \ref{lem:traj}.

We adopt the following notation from \cite{shao2022instability}  by replacing the domain $D$ by $D \bsh \Ups_2$
\[
\al(T) \teq \sup_{ |b_0| = 1, |  \xi_0| = 1, x_0 \in D \bsh \Ups_2, b_0 \cdot \vec \xi_0 = 0} | b(T, x_0, \xi_0, b_0)|.
\]
Recall from \eqref{eq:nota_bous} that $b_0, \vec \xi_0 \in \R^3, \xi_0 \in \R^2$.  From Lemma \ref{lem:traj_bous}, for $x_0 \in D\bsh \Ups_2$, $b(T, x_0, \xi_0, b_0)$ and $\al(T)$ are well-defined. We modify Proposition 3.1 from \cite{shao2022instability} as follows.
\begin{prop}\label{prop:al_bous}
Assume that $(\uu, -\th)$ is the singular solution in Theorem \ref{thm:bous_blowup}. 
Then $(\uu, \th)$ is the  singular solution of \eqref{eq:bous_v2}. For any $ t \in (0, T^*)$, we have 
\[
 ||\na \th(T) ||_{\inf} 
 \leq ||  \na \th_0||_{\inf} \al^2(T).
\]
\end{prop}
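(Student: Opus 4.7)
The plan is to mirror the argument of Proposition~\ref{prop:beta}, relaxing the domain in the definition of $\al(T)$ from $D$ to $D\bsh\Ups_2$ and invoking Lemma~\ref{lem:traj_bous} to handle the low regularity of $(\uu,\th)$. We may assume $||\na\th(T)||_{\inf}>0$, as the claim is trivial otherwise. First I would reduce the supremum to points in the interior: since $\th_0$ is even in $x$ and this evenness is preserved by the flow of \eqref{eq:bous_v2} (because $u_1$ is odd and $u_2$ even in $x$), the modulus $|\na\th(T,\cdot)|$ is even in $x$; together with the continuity of $\na\th(T,\cdot)$ on $D^+\bsh\Ups_2$ granted by Theorem~\ref{thm:bous_blowup}, the supremum may be taken over $(D^+\bsh\Ups_2)\cap\supp(\na\th(T))$ by approximating any limiting point on $\Ups_2$ from the interior.

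Fix such a point $x_T$. By Lemma~\ref{lem:traj_bous}(a) and the non-penetration condition \eqref{eq:bous_noflow}, the backward trajectory stays inside $D^+\bsh\Ups_2$, so $x_0\teq \g_T^{-1}(x_T)\in D^+\bsh\Ups_2$. Differentiating the transport identity $\th(T,\g_T(x_0))=\th_0(x_0)$ (equivalently, observing that $\na\th$ satisfies $(\pa_t+\uu\cdot\na)(\na\th)=-(\na\uu)^T\na\th$, the very same ODE along characteristics as $\xi_t$ in \eqref{eq:bichar2_bous}) yields
\[
 \na\th(T,x_T) = (\na_{x_0}\g_T)^{-T}\na\th_0(x_0),
\]
so that $|\na\th(T,x_T)| = |\na\th_0(x_0)|\cdot|\xi_T(x_0,\hat\xi_0)|$ with the normalized choice $\hat\xi_0=\na\th_0(x_0)/|\na\th_0(x_0)|\in S^1$. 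By Lemma~\ref{lem:traj_bous}(b), the full bicharacteristics-amplitude system \eqref{eq:bichar1_bous}--\eqref{eq:bichar3_bous} is then smoothly solvable on $[0,T]$ from this initial data for any admissible $b_0\in\R^3$.

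The algebraic crux, borrowed essentially verbatim from \cite{shao2022instability}, is: for every unit $\hat\xi_0\in\R^2$ there exists $b_0\in\R^3$ with $|b_0|=1$ and $b_0\cdot\vec\xi_0=0$ such that
\[
 |\xi_T(x_0,\hat\xi_0)| \;\leq\; |b_T(x_0,\hat\xi_0,b_0)|^2.
\]
This rests on the unimodularity of the $2\times 2$ Jacobian $M_T=\na_{x_0}\g_T$ and the Householder-reflection structure of the right-hand side of \eqref{eq:bichar3_bous}, and is obtained by aligning $b_0$ with the singular directions of $M_T$. Combining with the relaxed definition of $\al(T)$ over $x_0\in D\bsh\Ups_2$ gives $|\xi_T(x_0,\hat\xi_0)|\leq\al(T)^2$, whence $|\na\th(T,x_T)|\leq\al(T)^2||\na\th_0||_{\inf}$; taking the supremum over $x_T$ concludes the proof.

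The main obstacle, exactly as in Proposition~\ref{prop:beta}, is not the algebraic inequality above but the justification of the domain relaxation $D\leadsto D\bsh\Ups_2$ in the definition of $\al(T)$ under the very weak $C^{1,\al}$ global regularity of $(\uu,\th)$. Lemma~\ref{lem:traj_bous} is the device that resolves this: the non-penetration on $\Ups_2$ confines trajectories to $D^\pm\bsh\Ups_2$, while the higher interior regularity $\uu,\th\in L^{\inf}([0,T],C^{50}(\S))$ from Theorem~\ref{thm:bous_blowup} produces smooth solutions of the bicharacteristics-amplitude ODE system on any compact interior set, so that the $\R^3$-valued $b$-equation can be run rigorously and the argument of \cite{shao2022instability} transfers without modification.
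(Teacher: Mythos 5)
Your proposal is correct and follows essentially the same route as the paper: the paper likewise reduces to interior points, invokes Lemma \ref{lem:traj_bous} to solve the bicharacteristics-amplitude system on $D^{\pm}\bsh\Ups_2$, and then defers to Proposition 3.1 of \cite{shao2022instability} with the same minor modifications used in Proposition \ref{prop:beta}. The identification of $\na\th$ along characteristics with $\xi_t$ and the $|\xi_T|\leq|b_T|^2$ inequality are exactly the ingredients the cited argument supplies.
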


Note that $\na \th \in C^{\al}$ is continuous, and we can solve \eqref{eq:bichar1_bous}-\eqref{eq:bichar3_bous} for $ x_0 \in D^{\pm} \bsh \Ups_2 $ from Lemma \ref{lem:traj_bous}. The proof 
follows from the proof of Proposition 3.1 in \cite{shao2022instability} with minor modifications similar to those in the proof of Proposition \ref{prop:beta}. Thus, we omit the proof.

We modify Proposition 3.2 from \cite{shao2022instability} as follows.
\begin{prop}\label{prop:lam_bous}
Assume that $(\uu, -\th)$ is the singular solution in Theorem \ref{thm:bous_blowup}. 
Then $(\uu, \th)$ is the  singular solution of \eqref{eq:bous_v2}.
For any $ T \in (0, T^*)$ and $p \in (1, \inf)$, we have 
\[
\al(T) \leq C_p \g^{sym}_p(T). 
\]
\end{prop}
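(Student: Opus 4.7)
The plan is to mimic the proof of Proposition \ref{prop:lam}, with the simplifications afforded by the 2D setting (no angular direction, no need to extend constant initial data axisymmetrically) and the modifications needed to handle the coupling between $\uu$ and $\th$ in the Boussinesq system. Fix $T<T_*$ and assume $\al(T)>0$. For any $\eta>0$, pick $(x_0,b_0,\xi_0)$ with $|b_0|=|\xi_0|=1$, $b_0\cdot \vec\xi_0=0$, $x_0\in D\bsh \Ups_2$, such that $(1+\eta)|b(T,x_0,\xi_0,b_0)|\geq \al(T)$. By the symmetry of $(\uu,\th)$ in $x$, we may assume $x_0\in D^+\bsh \Ups_2$. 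Lemma \ref{lem:traj_bous} then provides a radius $\d>0$ and a compact set $\S_2\subset D^+\bsh \Ups_2$ such that $\g_t(B_{x_0}(\d))\cup B_{\g_t(x_0)}(\d)\subset \S_2$ for all $t\in[0,T]$, and the bicharacteristics-amplitude ODE system \eqref{eq:bichar1_bous}--\eqref{eq:bichar3_bous} admits smooth ($C^4$) solutions on $[0,T]$ for initial positions in $B_{x_0}(\d)$.

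Next, I would choose a smooth cutoff $\chi_T(x,y)$ supported in a slightly larger compact set $\S_3\subset D^+\bsh \Ups_2$ with $\chi_T\equiv 1$ on $\S_2$, and replace $(\uu,\th)$ by the localized pair $(\uu_T,\th_T)=(\chi_T\uu,\chi_T\th)$. By Theorem \ref{thm:bous_blowup}, $\uu_T,\th_T\in L^\infty([0,T],C^{50}(\R^2_+))$. I would then solve the Eulerian (PDE) versions of \eqref{eq:bichar2_bous}--\eqref{eq:bichar3_bous} for fields $\xi(t,x),b(t,x)\in\R^3$ driven by $(\uu_T,\th_T)$, with initial data constant in space: $\xi(0,x)\equiv \xi_0$ (viewed as a 2D vector), $b(0,x)\equiv b_0$. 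The matching between the PDE fields and the ODE solutions follows from the method of characteristics and the fact that $(\uu_T,\th_T)\equiv (\uu,\th)$ inside $\S_2$: for $x$ with $x\in B_{x_0}(\d)$ we have $\xi(t,\g_t(x))=\xi_t(x,\xi_0)$ and $b(t,\g_t(x))=b_t(x,\xi_0,b_0)$. I would also construct $S(t,x)$ by transporting $S_0(x)=\xi_0\cdot x$ along $\uu_T$; differentiating shows $\na S=\xi$ throughout $[0,T]\times \R^2_+$.

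Following \cite{shao2022instability}, I would introduce a scalar cutoff $\vp_T(x)$ supported in a small ball $B_{\g_T(x_0)}(\d_2)$ with $\|\vp_T\|_{L^p}=1$ (for $\d_2$ small compared to $\d$ and the Lipschitz constants of $\g_t^{\pm 1}$), transport it to $\vp(t,x)=\vp_T(\g_T\circ\g_t^{-1}(x))$, and use $(b,\xi,S,\vp)$ to build a WKB approximate solution $({\bf w}_{\e,\d_2},q_{\e,\d_2})=(\eta_{\e,\d_2},v_{\e,\d_2},q_{\e,\d_2})$ to the linearized Boussinesq system \eqref{eq:bous_lin} of the form dictated by \cite{shao2022instability} (with amplitude involving $b$ and phase $S/\e$). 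Because $\supp\vp(t,\cdot)\subset \S_2$ for $t\in[0,T]$, the localized fields satisfy $\uu_T=\uu$, $\th_T=\th$ on $\supp {\bf w}_{\e,\d_2}$, so the modification does not affect the residual estimates. The WKB analysis of \cite{shao2022instability} then yields
\begin{equation*}
\|{\bf w}_{\e,\d_2}(0)\|_{L^p}\leq 1+C_{\eta,\d_2}\e,\quad \|{\bf w}_{\e,\d_2}(T)\|_{L^p}\geq (1-\eta)|b(T,x_0,\xi_0,b_0)|-C_{\eta,\d_2}\e,\quad \|R_{\e,\d_2}\|_{L^p}\leq C_{\eta,\d_2}\e .
\end{equation*}

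Finally, the symmetrization step: since $\supp {\bf w}_{\e,\d_2}\subset D^+\bsh \Ups_2$, I would reflect $(\eta_{\e,\d_2},v_{\e,\d_2})$ to $D^-$ by the prescribed symmetry ($\eta$ and $v_2$ even in $x$, $v_1$ odd in $x$), using that this symmetry is preserved by \eqref{eq:bous_lin} around the background $(\uu,\th)$ (whose own $x$-parity is inherited from Theorem \ref{thm:bous_blowup}). The reflected perturbation $\bar{\bf w}_{\e,\d_2}$ has disjoint supports on $D^\pm$, so the $L^p$ norms simply double, and $\bar{\bf w}_{\e,\d_2}$ satisfies \eqref{eq:bous_lin} with residual $\bar R_{\e,\d_2}$ of the same size. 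Solving \eqref{eq:bous_lin} with initial data $\bar{\bf w}_{\e,\d_2}(0)$ gives a true solution $\bar{\bf w}(T)$ with $\|\bar{\bf w}(T)-\bar{\bf w}_{\e,\d_2}(T)\|_{L^p}\leq C_{\eta,\d_2}\e$ (standard Gr\"onwall for \eqref{eq:bous_lin}, using boundedness of $\na\uu$ and $\na\th$ on $\supp\bar{\bf w}$). Plugging into the definition of $\g^{sym}_p(T)$, dividing by $\|\bar{\bf w}_{\e,\d_2}(0)\|_{L^p}$, letting $\e\to 0$ and $\eta\to 0$ yields $\al(T)\leq C_p\g^{sym}_p(T)$. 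The main obstacle I anticipate is the Gr\"onwall step: the linearized system \eqref{eq:bous_lin} involves the possibly singular factor $\na\th$, so the comparison between $\bar{\bf w}$ and $\bar{\bf w}_{\e,\d_2}$ must be carried out on the trajectories of $\uu$, where $\na\th$ is bounded by Theorem \ref{thm:bous_blowup} (interior $C^{50}$-regularity) since $\supp\bar{\bf w}_{\e,\d_2}(t,\cdot)$ is kept inside the compact sets $\S_2\cup(-\S_2)\subset D\bsh \Ups_2$, exactly as in the 3D Euler case.
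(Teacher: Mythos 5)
Your proposal follows essentially the same route as the paper's proof: localize $(\uu,\th)$ by a cutoff equal to $1$ on the compact tube $\S_2$ from Lemma \ref{lem:traj_bous}, solve the Eulerian form of the bicharacteristics--amplitude system with constant initial data to get smooth $b,\xi,S,\vp$ supported where the localization is inactive, build the WKB approximate solution of \cite{shao2022instability}, symmetrize by reflection in $x$, and let $\e\to 0$. Two small caveats: the final Duhamel/Gr\"onwall comparison cannot be ``carried out on trajectories'' as you suggest, since the difference $\bar{\bf w}-\bar{\bf w}_{\e,\d_2}$ is not compactly supported (the pressure is nonlocal) --- but this is harmless because for fixed $T<T_*$ the coefficients $\na\uu,\na\th$ are globally bounded by the $C^{1,\al}$ regularity, so the global estimate is immediate; and the one genuinely new ingredient the paper singles out, the $L^p$ bound $\|\na q\|_{L^p}\les_p\|g\|_{L^p}$ for the Neumann problem in the half-plane $\R^2_+$ (not needed in the boundaryless setting of \cite{shao2022instability}), is absent from your sketch.
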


The proof follows from the argument in \cite{shao2022instability} and the argument in the proof of Theorem \ref{prop:lam}.
The key point is that the approximate solution $( \eta_{\e, \d}, v_{\e, \d})$ constructed in \cite{shao2022instability} is similar to \eqref{eq:WKB_euler} and supported in a compact domain  $ \Sigma_2 \subset  D^{\pm} \bsh \Ups_2$. See \eqref{eq:lam_supp} for the case of the 3D Euler equations. The proof is much simpler since we do not need to construct an axisymmetric solution. 

We give a sketch of the proof. We fix $T < T_* $. For any initial data $x_0 \in D^{\pm } \bsh \Ups_2$ and $b_0, \xi_0$ with $b_0 \cdot \vec \xi_0 = 0, |b_0|=1, |\xi_0|=1$, from Lemma \ref{lem:traj_bous}, there exists $\d > 0$ and a compact set $\S_2$ such that \eqref{eq:traj_bous} holds. Without loss of generality, we assume $x_0 \in D^+ \bsh \Ups_2$. We construct a smooth cutoff function $\chi_T$ similar to \eqref{eq:v_axi_u1}  such that 
\[
\chi_T(x) = 1, \quad x \in \S_2, \quad \S_2 \subset \supp(\chi_T) = \S_3 \subset  D^{+} \bsh \Ups_2.
\]
We localize the singular solution $(\uu, \th)$ similar to \eqref{eq:v_axi_u2} as follows 
\beq\label{eq:v_bous1}
\uu_T(t, x) \teq \uu(t, x) \chi_T(x), \quad \th_T(t, x) \teq \th(t, x) \chi_T(x).  
\eeq
From Theorem \ref{thm:bous_blowup}, we get $\uu_T, \th_T \in L^{\inf}([0, T], C^{50}(D)) $. Then we construct $b( t, x ), \xi(t, x), \g(t, x)$ by solving the PDE (Eulerian) form of \eqref{eq:bichar1_bous}-\eqref{eq:bichar3_bous} with $\uu, \th, \vec{z}$ replaced by $\uu_T, \th_T, z_T = (\th_T, \uu_T)$ using the following initial data 
\[
b(0, x) \equiv b_0, \quad \xi(0, x ) = \xi_0, \quad S(0, x) = x \cdot \xi_0.
\]
We choose $\d_2 = \f{\d}{4 (1 + L_{\g})^3}$ similar to \eqref{eq:lam_del} and choose $\vp_T$ that is supported in $B_{\d_2}(\g_T(x_0))$ with $|| \vp_T ||_{L^p}=1$. Then we construct a localized function with properties similar to \eqref{eq:lam_supp}
\[
\vp(t, x) = \vp_T( \g_T \circ \g_t^{-1}(x) ) , \quad \supp( \vp(t, x)) \subset B_{\g_t(x_0)}(\d/ 2) \subset \S_2.
\]
These functions $b(t,x), \xi(t,x), S(t,x), \vp(t,x)$ are at least $C^4$ in the whole domain for $t \in [0, T]$. From \eqref{eq:v_bous1}, we have
\[
\uu_T(t, x) = \uu(t, x), \quad \th_T(t, x) = \th(t, x), \quad x \in \supp(\vp(t)) \subset \S_2, \quad t \in [0, T]. 
\]
Using these functions $b, \xi, S, \vp$, we follow \cite{shao2022instability} to construct the WKB solution, which is supported in $\supp(\vp(t)) \subset \S_2$. Due to the above relation, the localization \eqref{eq:v_bous1} does not change the estimates of the solution. We can further symmetrize the solution using the argument in Section \ref{sec:sym}. The rest of the proof follows \cite{shao2022instability}.


One difference between our settings and those in \cite{shao2022instability} is that our domain $\R_2^+$ has boundary, while the domain in \cite{shao2022instability} is $\R^2$ or $\BT^2$. In the proof of Proposition \ref{prop:lam_bous}, this difference appears only in the elliptic estimate 
\[
\bal
- \D q   = \na \cdot g , \ x \in  \R_2^+, \quad  - \f{\pa q}{\pa n}  =  n \cdot g, \ \mathrm{\ on \ } \pa \R_2^+,
\eal
\]
where $n$ is the unit normal vector. In \cite{shao2022instability}, there is no boundary and the second equation. In $\R_2^+$, the $L^p$ estimate 
\[
|| \na q||_{L^p} \les_p || g||_{L^p}, \quad p \in (1, \inf)
\]
follows from the Poisson's formula for $q$ and the Calderon-Zygmund estimates of the kernel.

Now, we are in a position to prove Theorem \ref{thm:bous}. The proof is simpler than that in \cite{shao2022instability} since we do not require the blowup criterion on $\int_0^T || \na \th ||_{\inf} dt$.
\begin{proof}[Proof of Theorem \ref{thm:bous}]
From Theorem \ref{thm:bous_blowup}, we have 
$
\lim_{t \to T^*} ||  \na \th(t) ||_{\inf} = \inf.
$
Combining Propositions \ref{prop:al_bous} and \ref{prop:lam_bous}, we establish
\[
\liminf_{t \to T_* } \g^{sym}_{p}(t)^2 \geq C_p \liminf_{ t \to T_*} \al^2(T) 
\geq  C_p \lim_{ t \to T^*} \f{ || \na \th(t) ||_{\inf} }{ || \na \th_0 ||_{\inf}} = \inf.
\]
We conclude the proof of Theorem \ref{thm:bous}. 
\end{proof}

\subsection{ Proof of Theorem \ref{thm:Euler_R3}}


For the singular solution \cite{elgindi2019finite}, near the singularity $(r, z) = (0, 0)$, the flow moves down the $z$ axis, and then travel outward in the $r$ direction. See also Remark 2.1 in \cite{elgindi2019finite}. We will use the outward flow to prove Theorem \ref{thm:Euler_R3}. Denote
\beq\label{eq:def_beta_R3}
\bal
\Ups_3 & \teq \{ (r, z) :  r =0 \mathrm{ \ or \ } z = 0 \},  \quad 
\td \b_{\s}(t) &= \sup_{ \substack{ (x_0, b_0, \td \xi_0) \in (\R^3 \bsh \Ups_3) \times \R^3 \times S^1,  \\ b_0 \cdot \xi_0 =0, |b_0| = r_0^{\s} } }   | r_t^{-\s} b_t(x_0, \td \xi_0, b_0)|. 
\eal
\eeq
The definition of $\td \b_{\s}(t)$ modifies \eqref{eq:def_beta0} and  is similar to \eqref{eq:def_beta}.  The velocity $u^r, u^z$ in Theorem \ref{thm:euler_R3_blowup} satisfies 
\beq\label{eq:vel_noflow_R3}
\uu(t) \cdot n \B|_{\Ups_3 } = u^r(t) \cdot n^r + u^z(t) \cdot n^z(t) = 0,
\eeq
and $(u^r, u^z)$ is smooth in $\R^3 \bsh \Ups_3$ and $u^{\vth } = 0$. In particular, $\g_t$ is a bijection from $\R^3 \bsh \Ups_3$ to $\R^3 \bsh \Ups_3$. Hence, we can generalize Lemma \ref{lem:traj} to the current setting, and solve \eqref{eq:bichar1}-\eqref{eq:bichar3} in $ \R^3 \bsh \Ups_3$ with solutions $b_t, \g_t, \xi_t, \g_t^{-1}$ that are $C^4$ on the initial data.

The following result is established in the proof of Proposition 2 in \cite{lafleche2021instability}.
\begin{prop}
For any $(T, x_T)  \in (0, T^*) \times \R^3 \bsh \{ r = 0\}$
and $\s \in \R$, let $x_t$ be the backward solution of \eqref{eq:bichar1} from time $T$ and $x_T$, $\om_0 = \om(0, x_0)$, $\xi_t$ be the solution of \eqref{eq:bichar2} with initial data $\xi_0 \cdot \om_0 = 0, \xi_0 \neq 0, \xi_0 \cdot e_{\vth(x_0)} = 0$, and $b_t$ be a solution of \eqref{eq:bichar3} with initial data $b_0 = r_0^{\s} e_{\vth}$ and $b_0 \cdot \xi_0 = 0$. Then we have 
$ r_0^{\s + 1} \leq r_T |b_T|$. 
\end{prop}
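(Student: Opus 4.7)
The plan is to exploit the no-swirl axisymmetric structure of the background solution in Theorem \ref{thm:euler_R3_blowup} (so that $u^{\vartheta} \equiv 0$ and $\omega = \omega^{\vartheta} e_{\vartheta}$) to reduce the bicharacteristics-amplitude system \eqref{eq:bichar1}--\eqref{eq:bichar3} along $\gamma_t$ to a pair of decoupled scalar ODEs. In fact this will give the stronger identity $r_t |b_t| \equiv r_0^{\sigma+1}$, from which the claimed inequality $r_0^{\sigma+1} \le r_T |b_T|$ follows immediately.

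Since $u^{\vartheta} \equiv 0$, the azimuthal coordinate of $\gamma_t$ is preserved by \eqref{eq:bichar1}. Rotating the Cartesian frame, I may assume $\gamma_t = (r_t, 0, z_t)$ with $r_t > 0$ throughout $[0, T]$: the axis $\{r=0\}$ is invariant by \eqref{eq:vel_noflow_R3} and $r_T > 0$ by hypothesis. At such points the Cartesian basis $(e_1, e_2, e_3)$ coincides with the cylindrical basis $(e_r, e_{\vartheta}, e_z)$. A direct computation from $u_1 = u^r(r,z)\, x_1/r$, $u_2 = u^r(r,z)\, x_2/r$, $u_3 = u^z(r,z)$, evaluated at $x_2 = 0$, yields
\begin{equation*}
\nabla u \big|_{(r,0,z)} = \begin{pmatrix} \partial_r u^r & 0 & \partial_z u^r \\ 0 & u^r/r & 0 \\ \partial_r u^z & 0 & \partial_z u^z \end{pmatrix}.
\end{equation*}
In particular, both $\nabla u$ and $(\nabla u)^T$ leave the line $\R\, e_{\vartheta}$ and its orthogonal complement $\mathrm{span}(e_r, e_z)$ invariant along the trajectory, with $e_{\vartheta}$ an eigenvector of eigenvalue $u^r/r$ for each.

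The block structure is used twice. For \eqref{eq:bichar2}: the invariance of $\mathrm{span}(e_r, e_z)$ under $(\nabla u)^T$ together with $\xi_0 \cdot e_{\vartheta(x_0)} = 0$ forces $\xi_t \cdot e_{\vartheta} = 0$ for all $t \in [0, T]$. For \eqref{eq:bichar3}: try the ansatz $b_t = \beta(t)\, e_{\vartheta}$. Then $(\nabla u) b_t = (u^r/r)\beta\, e_{\vartheta}$, so the projection coefficient $\xi_t^T (\nabla u) b_t = (u^r/r)\beta\,(\xi_t \cdot e_{\vartheta})$ vanishes, and the right-hand side of \eqref{eq:bichar3} stays parallel to $e_{\vartheta}$. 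By uniqueness, the ansatz solves \eqref{eq:bichar3} with
\begin{equation*}
\dot\beta(t) = -(u^r/r)(t, \gamma_t)\,\beta(t), \qquad \beta(0) = r_0^{\sigma}.
\end{equation*}

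To close the argument, combine this with the radial component of \eqref{eq:bichar1}, namely $\dot r_t = u^r(t, \gamma_t)$, equivalently $\tfrac{d}{dt}\log r_t = (u^r/r)(t, \gamma_t)$. Then
\begin{equation*}
\tfrac{d}{dt}\bigl(\beta(t)\, r_t\bigr) = \dot\beta\, r_t + \beta\, \dot r_t = -(u^r/r)\,\beta\, r_t + \beta\, u^r = 0,
\end{equation*}
so $\beta(t)\, r_t \equiv r_0^{\sigma+1}$ and hence $r_t |b_t| = |\beta(t)|\, r_t = r_0^{\sigma+1}$ for all $t \in [0, T]$, which at $t = T$ is exactly the claim. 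The only genuine computation is the verification of the block structure of $\nabla u$ on the half-plane $\{x_2 = 0, x_1 > 0\}$; everything else is elementary ODE manipulation along a single trajectory, and I anticipate no serious obstacle.
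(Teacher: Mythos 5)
Your proof is correct. The paper itself does not actually prove this proposition --- it cites the proof of Proposition 2 in \cite{lafleche2021instability}, where the inequality is obtained for general axisymmetric flows (possibly with swirl) from the conservation laws of the bicharacteristics-amplitude system. Your argument instead exploits the specific no-swirl structure of the background solution of Theorem \ref{thm:euler_R3_blowup} ($u^{\vartheta}\equiv 0$), which is the only setting in which the proposition is invoked. The block form of $\nabla \uu$ on the invariant half-plane $\{x_2=0,\,x_1>0\}$ is computed correctly; the preservation of $\xi_t\cdot e_{\vartheta}=0$ and of the ansatz $b_t=\beta(t)\,e_{\vartheta}$ follow from uniqueness for linear ODEs, which only needs $\nabla \uu$ continuous along the trajectory and is available from $\uu\in C^{1,\alpha}$ even where the trajectory meets $\{z=0\}$; and the cancellation $\frac{d}{dt}(\beta\, r_t)=0$ yields the stronger identity $r_t|b_t|\equiv r_0^{\sigma+1}$, of which the stated inequality is an immediate consequence. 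Two minor remarks: (i) you never use the hypothesis $\xi_0\cdot\omega_0=0$, but in the no-swirl case it is implied by $\xi_0\cdot e_{\vartheta(x_0)}=0$ since $\omega=\omega^{\vartheta}e_{\vartheta}$, so nothing is lost; (ii) if one wanted the proposition in the generality suggested by its bare statement (axisymmetric flow with swirl), your argument would not suffice as written, because for $u^{\vartheta}\neq 0$ the ansatz $b_t\parallel e_{\vartheta}$ is not propagated and only the inequality, not the equality, survives --- this is precisely what the extra hypotheses in the statement are there for in the cited reference.
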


Applying the above result to $x_T \in \R^3 \bsh \Ups_3 \subset \R^3 \bsh \{ r = 0\}$ and using definition \eqref{eq:def_beta_R3}, 
we yield 
\[
 \f{r_0^{\s+1}}{ r_T^{\s+1}} \leq r_T^{-\s} |b_T| \leq \td \b_{\s}(T).
\]
Since $x_T = \g_T(x_0)$ is arbitrary in $\R^3 \bsh \Ups_3$ and $\g_T$ is a bijection from $\R^3 \bsh \Ups_3$ to itself, we derive
$
\sup_{ x_0 \in \R^3 \bsh \Ups_3}   \f{r_0^{\s+1}}{ r_T^{\s+1}}  \leq  \td \b_{\s}(T).
$
Since $\uu \in C^{1,\al}$ and $\g_T(x)$ is Lipschitz in $x$, we get $ \g_T( r_0 , \vth_0, 0) = \lim_{ z \to 0} \g_T(r_0, \vth_0, z)$. Hence, we further obtain 
\beq\label{eq:expand1}
\sup_{ x_0 \in \R^3 \bsh \{ r= 0 \} }   \f{r_0^{\s+1}}{ r_T^{\s+1}}  \leq  \td \b_{\s}(T).
\eeq

We have the following estimate for $r_T / r_0$. The idea is that the outgoing flow in the $r$ direction near $(r, z) = (0, 0)$ generates rapid growth of $r_T / r_0$.
\begin{lem}\label{lem:expand}
Let $\uu$ be the singular solution in Theorem \ref{thm:euler_R3_blowup}. Then for any $T < T_*$, we have 
\[
\sup_{ r_0 \neq 0} \f{r_T}{r_0} \geq \exp\B( \f{1}{2} \int_0^{T} u_r^r( t, 0,0) dt \B).
\]
\end{lem}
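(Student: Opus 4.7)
\textbf{Proof plan for Lemma \ref{lem:expand}.} The plan is to prove the stronger statement $\sup_{r_0 > 0} r_T/r_0 \geq \lambda(T) := \exp\bigl(\int_0^T u^r_r(t,0,0)\,dt\bigr)$ by analyzing trajectories that start on the symmetry plane $\{z = 0\}$. The oddness of $\om^{\vth}$ in $z$ together with the Biot--Savart law in Theorem \ref{thm:euler_R3_blowup} yields $u^z(r, 0, t) \equiv 0$, so for initial data $x_0 = (r_0, \vartheta_0, 0)$ with $r_0 > 0$ the trajectory stays on $\{z = 0\}$. Because there is no swirl, the axis $\{r = 0\}$ is invariant under the flow, forcing $u^r(0, 0, t) = 0$. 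The radial coordinate thus satisfies the scalar ODE $\dot r_t = u^r(r_t, 0, t)$ with $r_t > 0$ throughout $[0, T]$.

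The uniform $C^{1,\alpha}$ regularity of $\uu$ on $[0, T]$ (valid for any $T < T_*$ by Elgindi's persistence-of-regularity estimate) gives the expansion
\[
u^r(r, 0, t) = r\,a(t) + R(r, t), \qquad a(t) := u^r_r(t, 0, 0), \qquad |R(r, t)| \leq K_T\, r^{1+\alpha},
\]
uniformly in $t \in [0, T]$. With $\lambda(t) := \exp\bigl(\int_0^t a(s)\,ds\bigr)$, the ODE becomes
\[
\frac{d}{dt}\log r_t = a(t) + \frac{R(r_t, t)}{r_t}, \qquad \left|\frac{R(r_t, t)}{r_t}\right| \leq K_T\, r_t^{\alpha}.
\]
A short bootstrap on the upper inequality $\dot r_t / r_t \leq a(t) + K_T r_t^{\alpha}$ shows that, for $r_0$ sufficiently small, $r_t \leq 2 r_0 \lambda(t)$ throughout $[0, T]$. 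Substituting this into the lower estimate and integrating gives
\[
\frac{r_T}{r_0} \geq \lambda(T)\,\exp\!\bigl(-C(T)\, r_0^{\alpha}\bigr)
\]
for some constant $C(T)$, and sending $r_0 \to 0^+$ at fixed $T$ produces $\sup_{r_0 > 0} r_T / r_0 \geq \lambda(T)$. Since $a(t) > 0$ gives $\lambda(T) \geq 1 \geq \lambda(T)^{1/2}$, this implies the claimed bound, with substantial room to spare.

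The main technical point is the uniform Taylor expansion of $u^r$ along the axis on $[0, T]$: this requires the Cartesian $C^{1,\alpha}$ seminorm of $\uu$ to remain bounded on $[0, T]$, which is part of the construction in \cite{elgindi2019finite,elgindi2019stability} and remains finite up to (but not including) $T_*$. The constants $K_T$ and $C(T)$ degenerate as $T \to T_*$, but this is harmless because the argument sends $r_0 \to 0$ at fixed $T < T_*$. A secondary subtlety is interpreting $u^r$ as a well-defined $C^{1,\alpha}$ function of $r$ near the axis (where the polar basis $e_r$ is singular); this follows from the standard identification of axisymmetric no-swirl $C^{1,\alpha}$ Cartesian velocities with their radial/axial components.
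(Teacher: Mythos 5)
Your proof is correct and rests on the same core mechanism as the paper's: restrict to trajectories in the symmetry plane $\{z=0\}$ (where $u^z$ vanishes by oddness), reduce to the scalar ODE $\dot r_t = u^r(t,r_t,0)$, and exploit $u^r(t,r,0)\approx r\,u^r_r(t,0,0)$ near the origin. The difference lies in how the approximation error is controlled. The paper uses only continuity of $u^r_r$ to obtain the two-sided comparison $\tfrac{1}{2}u^r_r(t,0,0)\le u^r(t,r,0)/r\le 2\,u^r_r(t,0,0)$ on a fixed ball $r\le\delta$, solves the ODE backward from $r_T=\delta/2$, and applies Gronwall; the factor $\tfrac{1}{2}$ in the exponent is exactly the loss from this crude comparison. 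You instead use the $C^{1,\alpha}$ H\"older remainder $|u^r(t,r,0)-r\,u^r_r(t,0,0)|\le K_T r^{1+\alpha}$, run a forward bootstrap to keep $r_t\lesssim r_0\lambda(t)$, and send $r_0\to0$ at fixed $T$, which removes the loss and gives the sharp exponent $\int_0^T u^r_r(t,0,0)\,dt$; the stated bound then follows since $\lambda(T)\ge1$ implies $\lambda(T)\ge\lambda(T)^{1/2}$ (your chain ``$\lambda(T)\ge 1\ge\lambda(T)^{1/2}$'' has the second inequality reversed, but the intended conclusion is right). Both arguments use the same inputs from Theorem \ref{thm:euler_R3_blowup} (uniform $C^{1,\alpha}$ regularity on $[0,T]$ for $T<T_*$, $u^z(t,r,0)=0$, $u^r(t,0,z)=0$, and $u^r_r(t,0,0)>0$), and since only the divergence of $\int_0^{T}u^r_r(t,0,0)\,dt$ in \eqref{eq:one_pt} is used downstream, the sharper constant does not strengthen Theorem \ref{thm:Euler_R3}; still, your version is a clean quantitative refinement at the cost of a slightly longer bootstrap.
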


\begin{proof}
Note that $u^r( t, 0, z) = 0$. For $T < T_*$, since $u^r(t) \in  C^0( [0, T], C^{1,\al})$ and $u^r_r(t, 0, 0) > 0, t \in [0, T]$, there exists $ \d > 0$, such that 
\beq\label{eq:expand_pf1}
 0 <  \f{1}{2} u^r_r(  t, 0, 0)  
 \leq \f{u^r( t, r, 0 )}{r} \leq 2 u^r_r(  t, 0, 0)  .
\eeq
for all $ r \leq \d, t\in [0, T]$. Since $u^z( t, r, 0) = 0$, solving the $r$ component of the ODE \eqref{eq:bichar1} backward with initial data $x_T = (r_T, 0), r_T = \d/2$, 
we get that the trajectory is on $z = 0$ and 
\[
\f{d}{dt} r_{T-t} = - u^r( T-t, r_{T-t}, 0) 
=  - r_{T-t}  \f{u^r( T-t, r_{T-t} , 0) } { r_{T-t}}
\]
Since $u^r(T-t, r, 0) \geq 0$ on $r \in [0, \d]$, $r_{T-t}$ is decreasing in $t$ and $r_{T-t} \in [0, \d]$. Using the above ODE, \eqref{eq:expand_pf1}, and Gronwall's inequality, we obtain
 \[
 r_0 \leq  \exp( -\f{1}{2}\int_0^T u_r^r( t,0,0) dt)  r_T, \quad  
 r_0 \geq \exp( -2 \int_0^T u_r^r( t,0,0) dt)  r_T > 0 .
 \]
 The desired result follows. 
\end{proof}

For the singular solution in Theorem \ref{thm:euler_R3_blowup} \;, Proposition 3 in \cite{lafleche2021instability} remains true.
\begin{prop}\label{prop:lam_R3}
Let $ t \in (0, T_*), p \in (1, \inf), \s \in (-\f{2}{p^{\prime}}, \f{2}{p} )$ and $\uu$ be the singular solution in Theorem \ref{thm:euler_blowup}.  Then we have $ \td \b_p(T) \les  \lam^{sym}_{p, \s}(T)$.
\end{prop}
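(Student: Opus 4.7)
The strategy is to parallel Proposition \ref{prop:lam}, replacing the cylinder $D_1$ by $\R^3\setminus\Ups_3$ and carrying the weight $r^{-\s}$ through weighted $L^p$ estimates. Fix $T<T_*$ and $\eta>0$. By the definition of $\td\b_\s(T)$ in \eqref{eq:def_beta_R3}, pick $(x_0,b_0,\td\xi_0)$ with $x_0\in\R^3\setminus\Ups_3$, $b_0\cdot\xi_0=0$, $|b_0|=r_0^\s$, and
\[
r_T^{-\s}|b_T(x_0,\td\xi_0,b_0)|\geq(1-\eta)\td\b_\s(T).
\]
Theorem \ref{thm:euler_R3_blowup}, together with the non-penetration identity \eqref{eq:vel_noflow_R3} and $u^\vth\equiv 0$, allows the verbatim analogue of Lemma \ref{lem:traj}: the orbit $\g_t(x_0)$ stays in $\R^3\setminus\Ups_3$ for $t\in[0,T]$, and on some tube $B_{\td x_0}(\d)$ of initial data the system \eqref{eq:bichar1}--\eqref{eq:bichar3} is uniquely solvable with $C^4$ dependence on data, with a compact $\S_2\subset\R^3\setminus\Ups_3$ containing $\td\g_t(B_{\td x_0}(\d))\cup B_{\td\g_t(\td x_0)}(\d)$ for every $t\in[0,T]$.

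Next I would execute the axisymmetric WKB construction of Section \ref{sec:v_axi} essentially unchanged. Pick a smooth cutoff $\chi_T$ with $\chi_T\equiv 1$ on $\S_2$ and $\supp\chi_T\subset\R^3\setminus\Ups_3$, set $\uu_T=\uu\chi_T\in L^\infty([0,T],C^{50}(\R^3))$, and construct axisymmetric $\xi(t,x),b(t,x)$ from the Eulerian PDE form of \eqref{eq:bichar2}--\eqref{eq:bichar3} driven by $\uu_T$, with initial data obtained as the $(e_r,e_\vth,e_z)$-extensions of $\xi_0,b_0$ at $x_0$, together with the transport equation for $S$ starting from $S(0,x)=r\xi_0^r+z\xi_0^z$. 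Lemma \ref{lem:axi} and the argument leading to \eqref{eq:v_axi12}--\eqref{eq:v_axi13} apply verbatim. Define the transported cutoff $\vp(t,x)=\vp_T(\g_T\circ\g_t^{-1}(x))$ with $\vp_T$ supported in $B_{\td x_T}(\d_2)$ and $\|\vp_T\|_{L^p}=1$, where $\d_2$ is chosen small enough that on $\supp\vp(t,\cdot)$ one has $|r-r_t|\leq\eta r_t$ uniformly in $t\in[0,T]$.

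The WKB family $v_{\e,\d_2}=\e\,\na\times(\vp(b\times\xi)|\xi|^{-2}e^{iS/\e})$ is then axisymmetric, compactly supported in $\R^3\setminus\Ups_3$, and solves \eqref{eq:euler_lin} with an $O(\e)$ forcing in $L^p(r^{-\s p}\,dx)$. Symmetrizing in $z$ as in Section \ref{sec:sym} yields $\bar v_{\e,\d_2}$ in the symmetry class \eqref{eq:sym}. Using the leading-order identity $v_{\e,\d_2}\approx i\vp b\,e^{iS/\e}$, the values $|b(t,\g_t(x_0))|=|b_t|$, the normalization $|b_0|=r_0^\s$, the conservation $\|\vp(t,\cdot)\|_{L^p}\equiv 1$ (transport by an incompressible flow), and $r^{-\s}\approx r_t^{-\s}$ on $\supp\vp(t,\cdot)$, one obtains
\[
\|r^{-\s}\bar v_{\e,\d_2}(0)\|_{L^p}\leq 2+O(\eta)+C_{\eta,\d_2}\e,\qquad\|r^{-\s}\bar v_{\e,\d_2}(T)\|_{L^p}\geq 2(1-\eta)\td\b_\s(T)-C_{\eta,\d_2}\e.
\]
Letting $\bar v$ be the genuine solution of \eqref{eq:euler_lin} with data $\bar v_{\e,\d_2}(0)$, a weighted stability estimate produces $\|r^{-\s}(\bar v(T)-\bar v_{\e,\d_2}(T))\|_{L^p}\leq C_{\eta,\d_2}\e$, and sending $\e\to 0$ then $\eta\to 0$ yields $\td\b_\s(T)\les\lam^{sym}_{p,\s}(T)$.

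The main obstacle will be this last weighted stability step: it requires an $L^p(r^{-\s p})$ bound for the pressure projection in \eqref{eq:euler_lin}, i.e.\ a weighted Calder\'on--Zygmund estimate of the form $\|\na q\|_{L^p(r^{-\s p})}\les\|F\|_{L^p(r^{-\s p})}$ with $F=(\vv\cdot\na)\uu+(\uu\cdot\na)\vv$. After integrating the angular variable, this reduces to a Calder\'on--Zygmund bound on $\R^2_+$ against the measure $r^{1-\s p}\,dr\,dz$, and the range $\s\in(-2/p',2/p)$ is precisely the Muckenhoupt $A_p$ window making this weight admissible, which is where the hypothesis on $\s$ enters. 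A secondary technical point is keeping $\supp\vp(t,\cdot)$ at a uniform positive distance from $\Ups_3$; this is automatic from $r_0>0$, compactness of $\{\td\g_t(x_0):t\in[0,T]\}$ in $\R^3\setminus\Ups_3$, and the small choice of $\d_2$.
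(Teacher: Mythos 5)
Your proposal is correct and follows essentially the same route as the paper: localize $\uu$ by a cutoff equal to $1$ on a compact tube around the trajectory in $\R^3\setminus\Ups_3$, run the axisymmetric WKB construction and symmetrization of Proposition \ref{prop:lam}, and close with the weighted $L^p(r^{-\s p})$ stability estimate for \eqref{eq:euler_lin}, which the paper delegates to Lemma 4.1 of \cite{lafleche2021instability}. Your identification of the range $\s\in(-\f{2}{p'},\f{2}{p})$ as the Muckenhoupt window for the pressure estimate is exactly where that hypothesis enters.
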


From Theorem \ref{thm:euler_R3_blowup}, for any $T < \infty$ and any compact domain $\S \subset \R^3 \bsh \Ups_3$,  we can localize $\uu$ using some cutoff function such that $\uu(t, x) \chi(x) = \uu(t, x)$ for $(x, t) \in \S \times [0, T]$, and $\uu \chi$ is much smoother.
The weighted estimate involving the weight $r^{-\s}$ in Lemma 4.1 in \cite{lafleche2021instability} does not require higher order regularity on $\uu$. Thus the proof follows from \cite{lafleche2021instability} and the proof of proposition \ref{prop:lam}.

Now, we are in a position to prove Theorem \ref{thm:Euler_R3}
\begin{proof}[Proof of Theorem \ref{thm:Euler_R3}]
From Theorem \ref{thm:euler_R3_blowup}, we have $\int_0^{T_*} u_r^r(t, 0,0) dt = \inf$. For $\s < -1$, $-\s - 1 > 0$, combining Lemma \ref{lem:expand} and \eqref{eq:expand1}, we obtain
\[
\lam^{sym}_{p, \s}(T) \geq C \td \b_{\s}(T) \geq C \sup_{r_0 \neq 0} (\f{ r_0}{r_T})^{\s+1}
= C (\sup_{r_0 \neq 0} \f{r_T}{r_0})^{-1-\s}
\geq C\exp( \f{ -1- \s}{2} \int_0^T u_r^r(t, 0, 0) dt).
\]
Letting $T \to T_*$, we complete the proof.
\end{proof}

\section{Properties of the singular solutions to the 2D Boussinesq equations}\label{sec:high}


In this Section, we prove Theorem \ref{thm:bous_blowup} regarding the properties of the singular solutions to the 2D Boussinesq equations \eqref{eq:bous} constructed in \cite{chen2019finite2}. 
In Section \ref{sec:euler_blowup}, we generalize these estimates to the 3D Euler equations with boundary using the connection between the 2D Boussinesq and the 3D Euler equations \cite{majda2002vorticity,chen2019finite2} and the argument in \cite{chen2019finite2}.

We remark that  we will only use the higher-order interior regularity 
in Theorems \ref{thm:bous_blowup}-\ref{thm:euler_R3_blowup}  \textit{qualitatively} to prove Theorems \ref{thm:Euler_HL}-\ref{thm:bous}. These estimates could be established by performing energy estimates of the physical equations directly with a continuation criterion similar to the BKM criterion \cite{beale1984remarks}. Yet, since the singular solution  has only low regularity in the whole domain, we need some delicate weighted estimates. 
Instead, we prove these regularity estimates by generalizing the nonlinear stability estimates in \cite{chen2019finite2} to the higher order and using embedding inequalities. These \textit{quantitative} stability estimates can be useful for future study of the singular solution.


\subsection{Setup for the 2D Boussinesq equations}

Firstly, we recall the setup from \cite{chen2019finite2}.

\subsubsection{Dynamic rescaling formulation}\label{sec:dsform}

The analysis of the singular solutions \cite{chen2019finite2} is based on the dynamic rescaling formulation \cite{mclaughlin1986focusing,landman1988rate}. To distinguish the solutions to \eqref{eq:bous}-\eqref{eq:biot} and the solutions to its dynamic rescaling formulation, we denote by $ \om_{phy}(x, t), \th_{phy}(x,t) , \uu_{phy}(x, t)$
the solutions of \eqref{eq:bous}-\eqref{eq:biot}. Then it is easy to show that 
\beq\label{eq:rescal1}
\bal
  \om(x, \tau) &= C_{\om}(\tau) \om_{phy}(   C_l(\tau) x,  t(\tau) ), \quad   \th(x , \tau) = C_{\th}(\tau)
  \th_{phy}( C_l(\tau) x, t(\tau)),  \\
    \uu(x, \tau) &= C_{\om}(\tau)  C_l(\tau)^{-1} \uu_{phy}(C_l(\tau) x, t(\tau)) , 
\eal
\eeq
are the solutions to the dynamic rescaling equations
 \beq\label{eq:bousdy1}
\bal
\om_{\tau}(x, \tau) + ( c_l(\tau) \xx + \uu ) \cdot \na \om  &=   c_{\om}(\tau) \om + \th_x , \qquad 
\th_{\tau}(x , \tau )+ ( c_l(\tau) \xx + \uu ) \cdot \na \th  = 0,
\eal
\eeq
where $\uu = (u, v)^T = \na^{\perp} (-\D)^{-1} \om $, $\xx = (x, y)^T$, 
\beq\label{eq:rescal2}
\bal
  C_{\om}(\tau) = \exp\lt( \int_0^{\tau} c_{\om} (s)  d \tau\rt), \ C_l(\tau) = \exp\lt( \int_0^{\tau} -c_l(s) ds    \rt) , \  C_{\th}  =  \exp\lt( \int_0^{\tau} c_{\th} (s)  d \tau\rt),
\eal
\eeq
$  t(\tau) = \int_0^{\tau} C_{\om}(\tau) d\tau $ and  the rescaling parameter $c_l(\tau), c_{\th}(\tau), c_{\om}(\tau)$ satisfies 
\[
c_{\th}(\tau) = c_l(\tau ) + 2 c_{\om}(\tau).
\]


We have the freedom to choose the time-dependent scaling parameters $c_l(\tau)$ and $c_{\om}(\tau)$ according to some normalization conditions. After we determine the normalization conditions for $c_l(\tau)$ and $c_{\om}(\tau)$, the dynamic rescaling equation is completely determined and the solution of the dynamic rescaling equation is equivalent to that of the original equation using the scaling relationship described in \eqref{eq:rescal1}-\eqref{eq:rescal2}, as long as $c_l(\tau)$ and $c_{\om}(\tau)$ remain finite. We refer more discussion about this reformulation for the 2D Boussinesq equations to \cite{chen2019finite2}.

To simplify our presentation, we still use $t$ to denote the rescaled time.

\subsubsection{ Change of coordinates and the approximate steady state }

 Consider the polar coordinate in $\R_2^+$
\[
r = \sqrt{x^2 + y^2}, \quad \b = \arctan(y / x), \quad R = r^{\al}.
\]
Let $\om, \th, \psi = (-\D)^{-1} \om$ be the vorticity, density, and the stream function in \eqref{eq:bousdy1}. Denote 
\beq\label{eq:nota_om}
\Om(R, \b, t) = \om(x, y, t), \quad \Psi = \f{1}{r^2} \psi, \quad \eta( R, \b, t) = (\th_x) ( x, y, t), \quad \xi( R, \b, t) = (\th_y)(x, y, t).
\eeq

Using the $(R, \b)$ coordinates and the above new variables, we reformulate \eqref{eq:bousdy1} as follows 
\beq\label{eq:bousdy2}
\bal
\Om_t  + \al c_l R \pa_R \Om + (\uu \cdot \na)  \Om  &=  c_{\om} \Om + \eta , \\
\eta_t + \al c_l R \pa_R \eta + (\uu \cdot \na)  \eta & = (2 c_{\om} -  u_x) \eta - v_x \xi ,\\
\xi_t + \al c_l R \pa_R \xi + (\uu \cdot \na)  \xi & = (2 c_{\om}- v_y) \xi -  u_y \eta .\\
\eal
\eeq
The elliptic equation \eqref{eq:biot} reduces to
\beq\label{eq:elli}
- \al^2 R^2 \pa_{R R} \Psi- \al(4+\al) R \pa_R \Psi - \pa_{\b \b} \Psi - 4 \Psi = \Om , 
\eeq
with  boundary conditions
\beq\label{eq:ellibc}
\Psi(R,0) = \Psi(R, \pi/2) =0, \quad \lim_{R\to \infty} \Psi(R, \b) =  0.
\eeq

Here, instead of working with the equations of $(\om, \th)$ \eqref{eq:bousdy1}, we consider the equations of $(\om, \th_x, \th_y)$ in \eqref{eq:bousdy2} since these variables have similar regularities. 

The above polar coordinates $(R,\b)$ and the change of variables from $\om,\psi$ to $\Om, \Psi$ were first introduced in \cite{elgindi2019finite}. The advantages of these transforms are that for small $\al$, the solution can be expressed as a smooth function of $R$, and the structure of the equations \eqref{eq:bousdy2} and \eqref{eq:elli} are clear under the $(R,\b)$ coordinates. 

The approximate steady state of \eqref{eq:bousdy2} under the coordinate $(R, \b)$ is given by 
\beq\label{eq:profile}
\bal
\bar{\Om}(R, \b)& =  \f{\al}{c} \G(\b) \f{ 3R }{ (1 + R)^2}, \quad \bar{\eta}(R, \b) =  \f{\al}{c} \G(\b) \f{ 6 R }{ (1 + R)^3} , \quad \bar{c}_l  = \f{1}{\al} + 3, \quad \bar{c}_{\om} = -1 ,\\
\G(\b) &= (\cos(\b))^{\al}, \qquad  c =  \f{2}{\pi} \int_0^{\pi/2} \G(\b) \sin(2\b) d\b.
\eal
\eeq


We decompose a solution $ (\hat \Om, \hat \eta, \hat \xi, \hat c_l, \hat c_{\om})$ of \eqref{eq:bousdy2} into the approximate steady state and their perturbations 
\[
\hat \Om = \bar \Om + \Om, \quad  \hat \eta = \bar \eta +  \eta, \quad \hat \xi = \bar \xi + \xi,
\quad \hat c_l = \bar c_l + c_l , \quad \hat c_{\om} = \bar c_{\om} + c_{\om}.
\]

To uniquely determine the dynamic rescaling formulation, we impose the following normalization conditions on the perturbation of the rescaling parameters $c_l(t), c_{\om}(t)$ 
\beq\label{eq:normal}
c_{\om}(t) = -\f{2 }{\pi \al} L_{12}(\Om(t))(0), \quad c_l(t) =  - \f{1-\al}{\al} \f{2}{\pi \al}L_{12}(\Om(t))(0)= \f{1-\al}{\al} c_{\om}(t),
\eeq
where $L_{12}(\cdot)$ is defined below in \eqref{eq:L12}. We use $\Om, \eta, \xi$ to denote the perturbation since we will mainly focus on the analysis of the perturbation in the rest of the paper. The reader should not confuse them with the solution to \eqref{eq:bousdy2}.

\subsubsection{ Linearization }


We introduce 
\beq\label{eq:L12}
\bal
L_{12}(\Om)  & \teq  \int_R^{\infty} \int_0^{\pi/2} \f{ \sin (2\b) \Om(s, \b) }{s} ds d \b,
\quad 
\td{L}_{12}(\Om)(R) \teq L_{12}(\Om)(R) - L_{12}(\Om)(0) , \\
\Psi_*  & \teq  \Psi - \f{ \sin( 2\b)}{ \pi \al} L_{12}(\Om). 
\eal
\eeq
For sufficiently small $\al$, the operator $L_{12}$ captures the leading order term of the Biot-Savart law $\uu = \na^{\perp}(-\D)\psi$ , and $\Psi_*$ is the lower order part in the modified stream function $\Psi$. In particular, the leading order parts of the velocity $u, \bar u$ are given by 
\beq\label{eq:vel_main}
\bal
u&  = -\f{2 r \cos \b }{\pi \al} L_{12}(\Om) + l.o.t., \quad  v  = \f{2r \sin \b}{\pi \al} L_{12}(\Om) + l.o.t., \\
  u_x  & = -v_y   = -\f{2}{\pi \al} L_{12} (\Om) + l.o.t., \quad u_{y}  =  l.o.t. , \quad v_x =l.o.t. .  \\
  &L_{12}(\bar{\Om}) =  \f{\pi}{2}  \f{3 \al }{1 + R} ,  \quad  \bar{\Psi}  =  \f{\sin(2\b)}{2} \f{3}{1 +R} + l.o.t. , \\
&  \bar{u}_x  = -\bar{v}_y = - \f{3}{1+R} + l.o.t. , \quad \bar{u}_y , \  \bar{v}_x = l.o.t. .
\eal
  \eeq
  where  \textit{l.o.t.} denotes the lower order terms and we have used the formula of $\bar \Om$ in \eqref{eq:profile} to derive $L_{12}(\bar \Om)$. The smallness of the lower order terms can be justified using the elliptic estimates in Propositions \ref{prop:key}, \ref{prop:psi}. The formulas of $u, v$ in $(R, \b)$ coordinate are given in \eqref{eq:vel_full}. 
  We refer the complete calculation and the formulas of the lower order terms to Section 8.1 in \cite{chen2019finite2}.

\begin{definition}\label{def:op} We define the differential operators 
\[
D_R = R \pa_R , \quad D_{\b} = \sin(2\b) \pa_{\b}
\]
and the linear operators $\cL_i$
\beq\label{eq:op1}
\bal
\cL_1(\Om, \eta) &\teq - D_R \Om - \f{3}{1 + R} D_{\b} \Om -   \Om +  \eta + c_{\om} ( \bar{\Om} - D_R \bar{\Om}) ,\\
\cL_2(\Om, \eta ) & \teq  - D_R \eta - \f{3}{1 + R} D_{\b} \eta + (-2  + \f{3}{1+R} ) \eta + \f{2}{\pi \al } \td{L}_{12}(\Om)  \bar{\eta}+ c_{\om} ( \bar{\eta} - D_R \bar{\eta})  ,\\
\cL_3(\Om, \xi) & \teq  - D_R \xi - \f{3  }{1 + R} D_{\b} \xi+ (-2  - \f{3}{1+R}) \xi 
- \f{2}{\pi \al} \td{L}_{12}(\Om) \bar{\xi} +c_{\om} ( 3\bar{\xi} - D_R \bar{\xi}  ),
 \eal
\eeq
where $\td{L}_{12}(\Om) $ is defined in \eqref{eq:L12} and $\bar{\Om}, \bar{\eta}$ are defined in \eqref{eq:profile}. 
\end{definition}

With the above notations, in the $(R, \b)$ coordinate, the linearized equations of \eqref{eq:bousdy2} around the approximate steady state $(\bar{\Om}, \bar{\eta}, \bar{\xi}, \bar{c}_l, \bar{c}_{\om})$ read
\beq\label{eq:equiv}
\Om_t  = \cL_1(\Om, \eta) + \cR_{\Om} , \quad  \eta_t = \cL_2(\Om, \eta) + \cR_{\eta} , \quad 
\xi_t = \cL_3(\xi) + \cR_{\xi},
\eeq
where $\Om, \eta, \xi$ are the perturbations. The above definitions of the linearized operators are motivated by the leading order structures of the velocity \eqref{eq:vel_main}, and we only keep the leading order terms in $\cL_i$. The remaining terms $\cR$ contain the lower order terms with a small factor $\al$, the residual error of the approximate steady state, and the nonlinear terms. They are treated as the lower order terms in the energy estimates. The full expansion of $\cR$ is rather lengthy, and we refer the formulas and derivations to Sections 5.1 and 8.1 in \cite{chen2019finite2}.


\subsubsection{ Weights and energy norms}

Recall the following singular weights from \cite{chen2019finite2}.
\begin{definition}\label{def:wg}
Recall $ \G(\b) = \cos^{\al}(\b)$.  Let $\s = \f{99}{100},  \g = 1 + \f{\al}{10}$.
Define $\vp_i, \psi_i, \phi_i$ by 
\beq\label{wg}
\bal
 \psi_0 & \teq  \f{3}{16}\lt( \f{(1+R)^3}{R^4} +\f{3}{2} \f{(1+R)^4}{R^3}  \rt) \G(\b)^{-1} , 
\quad 
 \vp_0  \teq \f{(1+R)^3}{R^3} \sin(2\b),  \\
\vp_1 & \teq \f{(1+R)^4}{R^4}  \sin(2\b)^{ - \s}, \quad 
 \vp_2 \teq \f{(1+R)^4}{R^4}  \sin(2\b)^{ - \g} , \\
\psi_1  &  \teq \f{(1+R)^4}{R^4} (  \sin(\b)\cos(\b) )^{-\s},  \quad
 \psi_2    \teq \f{(1+R)^4}{R^4}  \sin(\b)^{-\s}  \cos(\b)^{-\g} , \\
\phi_1  & \teq \f{1+R}{R}, \quad  \phi_2 \teq 1 + (R \sin(2\b)^{\al})^{- \f{1}{40}} , 
\quad \phi_{ij} = \one_{i \geq 1} \phi_1 + \one_{j\geq 1} \phi_2 . \\
\eal
\eeq
\end{definition}

The special forms of $\psi_0, \vp_0$ are designed carefully to exploit nonlocal cancellations in the linearized equations \eqref{eq:equiv} and are crucial for the linear stability analysis of the weighted $L^2$ part of the energy in \eqref{energy:E1}. The weights $\vp_i, \psi_i$, which are singular near $R=0$, are important to derive the damping effect from the linearized equations \eqref{eq:equiv}. The singular weights $\vp_1, \vp_2$ were first introduced in \cite{elgindi2019finite}.
We define the weighted $H^k$ norms as follows 
\beq\label{norm:Hk}
|| f ||_{\cH^m(\rho)}  \teq \sum_{ 0\leq k \leq m}   ||  \rho_1^{1/2} D^k_R f  ||_{L^2} 
+ \sum_{   \ i+ j \leq m - 1
} ||  \rho_2^{1/2} D^{i}_R D^{j+1}_{\b}  f ||_{L^2} .
\eeq
Choosing $\rho_i = \vp_i$ and $\rho_i = \psi_i, i= 1,2$, we get the $\cH^m(\vp)$ and $\cH^m(\psi)$
norm, respectively. We simplify $\cH^m(\vp)$ as $\cH^m$. The $\cH^m$ norm is used for $\Om, \eta$ and the $\cH^k(\psi)$ norm for $\xi$.

We need the weighted $C^k$ norm to control $\xi$ 
\beq\label{norm:ck}
\bal
||  f ||_{\cC^k} &\teq || f ||_{\infty} + \sum_{1 \leq i \leq k}
( || \phi_1 D_R^i f||_{\inf} + || \phi_2 D_{\b}^i f ||_{\inf} )
+ \sum_{ i, j \geq 1, i+ j \leq k} || ( \phi_1 + \phi_2) D_R^i D_{\b}^j f ||_{\inf}.
\eal
\eeq
We remark that the second weights $\vp_2, \psi_2, \phi_2$ are used to handle the angular derivatives. For mixed derivatives only involving $D_R$, we use the first weights $\vp_1, \psi_1, \phi_1$.

To estimate the velocity of the approximate steady state, we use the  $\cW^{l, \inf}$ norm \cite{chen2019finite2,elgindi2019finite} 
\beq\label{norm:W}
|| f ||_{\cW^{l,\infty}} \teq \sum_{0 \leq k+ j \leq l ,  j \neq 0} \B| \B| 
\sin(2\b)^{- \f{\al}{5}} \f{ D_R^k  D_{\b}^j  }{\f{\al}{10} + \sin(2\b)}   f   \B|\B|_{L^{\infty}}  + \sum_{0 \leq k  \leq l } \B| \B| D_R^k f \B|\B|_{L^{\infty}}.
\eeq

\subsection{ \texorpdfstring{$\cH^k$}{Lg} estimates}


The nonlinear stability analysis in \cite{chen2019finite2} is based on the weighted $L^2$ and $H^1$ linear stability analysis of the following energy with a lower order remaining term $\cR_1$ 
\beq\label{energy:E1}
\bal
E_1^2(\Om, \eta, \xi)
\teq   &
 || \Om \vp_0^{1/2}||_2^2 + || \eta \psi_0^{1/2}||_2^2  + \f{81}{4\pi c} L^2_{12}(\Om)(0 )
 + \mu_1 (  || \Om \vp_1^{1/2}||_2^2  + || \eta \vp_1^{1/2}||_2^2 )  \\
 & +  || \xi \psi_1^{1/2} ||_2^2 + \mu_2(  || \Om \vp_1^{1/2}||_2^2  + || \eta \vp_1^{1/2}||_2^2   )  
 + ||  D_{\b}\xi  \psi_2^{1/2}||_2^2   \\
& +  \mu_3 ( || D_R \Om  \vp_1^{1/2}||_2^2 + || D_R \eta \vp_1^{1/2}||_2   
			+ || D_R\xi  \psi_1^{1/2}||_2^2  ) , \\
\eal
\eeq
\[
\bal
 \cR_1(\Om, \eta, \xi)   \teq & 
\la \cR_{\Om}, \Om \vp_0 \ra + \la \cR_{\eta} ,\eta \psi_0 \ra +  \f{81}{4\pi c}  L_{12}(\Om)(0) \la \cR_{\Om}, \sin(2\b) R^{-1} \ra \\
  &+  \mu_1 ( \la D_{\b} \cR_{\Om}, D_{\b}\Om\vp_2 \ra +  \la D_{\b} \cR_{\eta}, D_{\b}\eta \vp_2 \ra  ) +  \la \cR_{\xi}, \xi   \psi_1 \ra  \\
&+ \mu_2 (  \la \cR_{\Om}, \Om \vp_1 \ra + \la \cR_{\eta}, \eta \vp_1 \ra )
+  \la D_{\b} \cR_{\xi}, ( D_{\b}  \xi) \psi_2 \ra  \\
& +  \mu_3 \B( \la D_R \cR_{\Om}, D_R \Om \vp_1 \ra 
+ \la D_R \cR_{\eta}, D_R \eta \vp_1 \ra + \la D_R \cR_{\xi}, D_R \xi \psi_1 \ra  \B), 
\eal
\]
where the absolute constant $\mu_1, \mu_2, \mu_3 > 0$ are determined in order. Higher order stability analysis is further established inductively for the energy $E_k$ with a remaining term $\cR_k$
\beq\label{energy:Ek}
\bal
E_k^2(\Om, \eta, \xi) & \teq E_1^2 +  \sum_{2 \leq i \leq k} \sum_{0 \leq j \leq i} 
 \mu_{i, j}  \lt( 
 ||  p_j^{1/2}  D_R^{j} D^{i - j}_{\b} \Om ||_2^2  
 +  ||  p_j^{1/2} D_R^j D^{i - j}_{\b} \eta ||_2^2  
 +  || q_j^{1/2}  D_R^j D^{i - j}_{\b} \xi  ||_2^2  
  \rt),  \\
  \cR_k(\Om, \eta,\xi) 
 &\teq \cR_1 + 
 \sum_{ 2 \leq i \leq k} 
\sum_{0\leq j \leq i}
   \mu_{i, j}  \lt( 
\la  D_R^{j} D^{i-j}_{\b} \cR_{\Om},  ( D_R^{ j } D^{ i - j }_{\b} \Om ) p_j \ra   
 +   \la  D_R^j D^{ i - j}_{\b}  \cR_{\eta} , ( D_R^j D^{i - j}_{\b} \eta ) p_j \ra  \rt. \\
 & \lt. +  \la   D_R^j D^{ i - j}_{\b} \cR_{\xi}, ( D_R^j D^{i - j}_{\b} \xi ) q_j \ra  \rt), \\
\eal
\eeq
where the weights $(p_j, q_j) = (\vp_1, \psi_1)$ for $j=0$  and  $(p_j, q_j) = (\vp_2, \psi_2)$ for $j \geq 1$. The absolute constants $\mu_{j, i-j}$ can be determined in the order $(2,0), (1,1), (0, 2), (3,0), (2, 1) ...$.  We apply the first weights $\vp_1, \psi_1$ if the mixed derivatives only contain $D_R$, and $(\vp_2, \psi_2)$ otherwise.


The case $k=1, 2, 3$ of the following estimate has been established in Corollary 6.4 \cite{chen2019finite2}, and its generalization to general $k\geq 3$ is straightforward. 
\begin{prop}\label{prop:Hk}
For any $k \geq 1$, there exists some absolute constants $C_k$ and $\mu_{i, j} > 0 , j \leq i$, which can be determined inductively in the order $(i_1, j_1) \preceq (i_2, j_2)$ if $i_1 < i_2$ or $i_1 = i_2$ and $j_1 \leq j_2$, such that 
\[
\f{1}{2}\f{d}{dt} E_k^2(\Om, \eta,\xi) 
\leq  (- \f{1}{15} + C_k \al ) E_k^2  + \cR_k.
\] 
\end{prop}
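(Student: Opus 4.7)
The plan is to argue by induction on $k$, using the cases $k = 1, 2, 3$ from Corollary 6.4 of \cite{chen2019finite2} as the base. For the step from $k - 1$ to $k$ (with $k \geq 4$), I would augment $E_{k - 1}^2$ by the new top-order weighted norms of $D_R^j D_\b^{k-j} (\Om, \eta, \xi)$ for $0 \leq j \leq k$, each multiplied by a small constant $\mu_{k, j}$ that is fixed at the end of the step in the ordering specified in the statement. For each fixed $(k, j)$, I would apply $D_R^j D_\b^{k-j}$ to the three equations in \eqref{eq:equiv} and take the weighted $L^2$ inner product of the differentiated unknowns against $p_j$, $p_j$, $q_j$ respectively, yielding an evolution identity for the new top-order norms.

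The key mechanism is that the principal parts of the $\cL_m$ commute with $D_R^j D_\b^{k-j}$ up to terms of strictly lower total order in $(\Om, \eta, \xi)$. The transport piece $-D_R - \f{3}{1 + R} D_\b$ acting on the top derivative contributes, after integration by parts against the singular weights of Definition \ref{def:wg}, the same favorable pointwise damping computed at the base level via $D_R \log \vp_i$ and $D_\b \log \vp_i$; combined with the pointwise coefficients $-1$, $-2 + \f{3}{1+R}$, $-2 - \f{3}{1+R}$ from $\cL_1, \cL_2, \cL_3$, which survive the differentiation unchanged at leading order, this yields a dissipation rate of at least $\f{1}{15}$ uniformly in $k$, since the weight exponents $\s = \f{99}{100}$ and $\g = 1 + \f{\al}{10}$ are fixed.

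The lower-order commutator contributions arising from $[D_R^j D_\b^{k-j}, \f{3}{1+R} D_\b]$, from differentiating the coefficient $\f{3}{1+R}$, and from the nonlocal $\f{2}{\pi \al}\td L_{12}(\Om) \bar\eta$ and $c_\om(\bar\Om - D_R \bar\Om)$ pieces of $\cL_m$, all involve strictly fewer derivatives of $(\Om, \eta, \xi)$ and so are bounded by $C_k E_{k-1}^2$ via Cauchy--Schwarz. Choosing $\mu_{k, j}$ sufficiently small compared with the previously fixed $\mu_{i, j'}$ with $i \leq k - 1$ allows these cross terms to be absorbed, with a loss of at most $C_k \al E_k^2$ once the factor $\al$ from the Biot--Savart asymptotics \eqref{eq:vel_main} and the $\cW^{l, \infty}$ bounds of \cite{chen2019finite2} has been pulled out; the term involving $L_{12}(\Om)(0)$ is already controlled by the base piece $\f{81}{4\pi c} L_{12}^2(\Om)(0)$ of $E_1$. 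The remainder $\cR_k$ is defined precisely to collect all contributions from $\cR_\Om, \cR_\eta, \cR_\xi$, namely the nonlinearity, the residual error of the approximate steady state, and the $\al$-small lower-order pieces of the velocity.

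The main obstacle is the careful bookkeeping of commutators at each order $(k, j)$ and the verification that boundary contributions from integration by parts at $R \in \{0, \infty\}$ and $\b \in \{0, \pi/2\}$ vanish, which requires the precise structure of the singular weights $\vp_1, \vp_2, \psi_1, \psi_2$ in Definition \ref{def:wg}; the splitting between $(p_j, q_j) = (\vp_1, \psi_1)$ for $j = 0$ and $(\vp_2, \psi_2)$ for $j \geq 1$ is designed so that the stronger $\g$-singularity in $\b$ is only activated when angular derivatives are present, keeping all boundary terms integrable. A secondary technical point is the higher-order elliptic estimate for $\Psi_*$ in terms of $\|\Om\|_{\cH^k}$, needed to control the top-order velocity contribution to $\cR_m$; this is obtained by applying $D_R^j D_\b^{k-j}$ to \eqref{eq:elli} and iterating the arguments of \cite{chen2019finite2} order by order.
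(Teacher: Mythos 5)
Your proposal matches the paper's approach: the paper itself gives no detailed proof here, stating only that the cases $k=1,2,3$ are Corollary 6.4 of \cite{chen2019finite2} and that the extension to general $k$ is a straightforward induction in which the constants $\mu_{i,j}$ are fixed successively in the stated order, exactly as you describe, with the damping coming from the unchanged local coefficients plus the weight terms $D_R\log\vp_i$, $D_\b\log\vp_i$ after integration by parts.

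One bookkeeping imprecision: the commutators $[D_R^j D_\b^{k-j}, \f{3}{1+R}D_\b]$ do \emph{not} all produce terms of strictly lower total order. When exactly one derivative falls on the coefficient $\f{3}{1+R}$ you get a term like $D_R(\f{3}{1+R})\, D_R^{j-1}D_\b^{k-j+1}\Om$, which has total order $k$ but a redistributed derivative count (compare the term $\cP_{i-1,j+1}\xi$ in Lemma \ref{lem:Linf_commut}). These cannot be bounded by $C_k E_{k-1}^2$, and absorbing them requires comparing $\mu_{k,j}$ not only with the $\mu_{i,j'}$ for $i\leq k-1$ but also with the $\mu_{k,j'}$ already fixed at the same total order $k$; this is precisely why the proposition prescribes an ordering \emph{within} each derivative level and not merely across levels. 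Your induction closes once this comparison is included, so the fix is immediate, but as written the absorption step does not cover these same-order cross terms.
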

We refer the details to Sections 5, 6 in \cite{chen2019finite2} and its arXiv version \cite{chen2019finite2arXiv}. Note that the remaining term $\cR_k$ is a lower order term and can be treated as a small perturbation in the final energy estimates.

We define the inner products on $\cH^k$ and $\cH^k(\psi)$ associated with energy $E_k$ \eqref{energy:E1}, \eqref{energy:Ek}, which are equivalent to $\cH^k, \cH^k(\psi)$
\beq\label{eq:inner}
\bal
\la f, g \ra_{\cH^k}  \teq & \mu_1 \la D_{\b} f , D_{\b} g \vp_2 \ra
+ \mu_2 \la  f ,   g \vp_1 \ra  + \mu_3 \la D_R f , D_R g \vp_1 \ra \\
& + \sum_{2 \leq i \leq k} \sum_{0 \leq j \leq i} 
 \mu_{i, j} \la D_R^j D_{\b}^{i-j} f, D_R^j D_{\b}^{i-j} g p_j \ra , \\
\la f, g \ra_{\cH^k(\psi)}  \teq &   \la D_{\b} f , D_{\b} g \psi_2 \ra
+  \la  f ,   g \psi_1 \ra  + \mu_3 \la D_R f , D_R g \psi_1 \ra  \\
&+ \sum_{2 \leq i \leq k} \sum_{0 \leq j \leq i} 
 \mu_{i, j} \la D_R^j D_{\b}^{i-j} f, D_R^j D_{\b}^{i-j} g q_j \ra , \\
\eal
\eeq
where $ p_j = \vp_1, q_j = \psi_1$ for $j =0$ and $p_j = \vp_2, q_j = \psi_2$ for $j \neq 0$. 
We will use the notations \eqref{eq:inner} in the estimates of the transport term in Proposition \ref{prop:tran1} \;.


To estimate the remaining terms $\cR_k$ \eqref{energy:Ek} in the weighted $H^k$ energy estimates in Proposition \ref{prop:Hk} and perform the weighted $C^k$ estimate in Section \ref{sec:Ck}, we need to generalize the functional inequalities in Section 7.2 in \cite{chen2019finite2} to the higher order. We generalize them one by one. Since most estimates follow from the argument in \cite{chen2019finite2}, for completeness, we present the generalizations and give a sketch of the proof in Appendix \ref{app:high} \;. We further generalize the estimates of the approximate steady state $\bar \Om, \bar \eta, \bar \xi$ from \cite{chen2019finite2} in Appendix \ref{app:profile}\;. The proof of these estimates follows the argument in \cite{chen2019finite2} and thus is omitted. 

The estimates of the remaining term $\cR_3$ in Proposition \ref{prop:Hk} with $k=3$ established in \cite{chen2019finite2} are based on the elliptic estimates in Proposition \ref{prop:key}, the functional inequalities in Appendix \ref{app:high}, the estimates of the approximate steady state in Appendix \ref{app:profile} and the $L_{12}(\cdot)$ operator in Lemma \ref{lem:l12}. 
With the higher order analogs of these estimates, we can follow the derivations and arguments in Section 8 in \cite{chen2019finite2} to establish the following estimate
\[
|\cR_k|
\les \al^{1/2} ( E_k^2 + \al || \xi||^2_{\cC^{k-2}}) + \al^{-3/2} ( E_k + \al^{1/2}||\xi||_{\cC^{k-2}})^3 + \al^2 E_k .
\]
It generalizes the corresponding estimate in the case of $k=3$ in Section 8.4.1 in \cite{chen2019finite2}. Plugging the above estimate in Proposition \ref{prop:Hk}, we complete the $\cH^k$ estimate 
\beq\label{eq:boot1}
\f{1}{2}\f{d}{dt}  E_k^2  \leq - \f{1}{15}    E_k^2  + C_k \al^{1/2} ( E_k^2 + \al || \xi||^2_{\cC^{k-2}}) + C_k \al^{-3/2} ( E_k + \al^{1/2}||\xi||_{\cC^{k-2}})^3 + C_k \al^2 E_k ,
\eeq
which generalizes estimate (8.1) in \cite{chen2019finite2} from $k=3$ to $k \geq 3$.

\subsection{ \texorpdfstring{$\cC^k$}{Lg} estimate of \texorpdfstring{$\xi$}{Lg} }\label{sec:Ck}

To close the $k-th$ order nonlinear estimates, we need to control the $L^{\inf}$ norm of $D_R^i D_{\b}^j \Om,D_R^i D_{\b}^j \eta, D_R^i D_{\b}^j \xi  , i+ j \leq k-2$. 
For $\xi$, however, since the weight $\psi_2$ (see Definition \ref{def:wg}) is less singular in $\b$ for $\b$ close to $0$, the weighted $\cH^k(\psi)$ space associated to $\xi$ is not embedded continuously into $\cC^{k-2}$. Alternatively, we estimate the $\cC^k$ norm of $\xi$. The case $k=1$ has been established in Section 6.3 and 8.5 in \cite{chen2019finite2}. The general case of $k \geq 1$ is not so straightforward and thus we present the estimate. Throughout the rest of the paper, we only consider $k \leq 100$ to avoid tracking absolute constants related to $k$.

Following the derivation in Section 6.3 in \cite{chen2019finite2} (or Section 6.4 in the arXiv version \cite{chen2019finite2arXiv}),  we can rewrite the linearized equation of $\xi$ in \eqref{eq:equiv} as follows 
\beq\label{eq:xi_linf1}
\pa_{t} \xi + \cA(\xi) = (-2 - \f{3}{1+R}) \xi + \Xi_1 + \Xi_2 +  \bar{F}_{\xi} + N_o,
\eeq
where $\cA(\xi)$ denotes the transport term in the $\xi$ equation in \eqref{eq:bousdy2}, including the nonlinear part 
\beq\label{eq:cA0}
\cA (\xi )\teq (1+ 3\al + \al c_l ) D_R \xi +  ( (\bar{\uu} + \uu ) \cdot \na) \xi ,
\eeq
$\Xi_1, \Xi_2$ are lower order terms that contain a small factor 
\beq\label{eq:Xi}
\Xi_1  \teq (\f{3}{1+R} - \bar{v}_y) \xi, \quad  \Xi_2 =  - v_y \bar{\xi} + c_{\om} ( 2 \bar{\xi} - R\pa_R \bar{\xi})
  +( \al c_{\om} R\pa_R - ( \uu \cdot  \na ) ) \bar{\xi} -  (u_y \bar{\eta} +\bar{u}_y \eta ),
\eeq
$N_o$ is some nonlinear term, and $\bar F_{\xi}$ is the error 
\beq\label{eq:non_xio}
 N_o = (2c_{\om} - v_y)\xi   -  u_y \eta , 
 \quad 
 \bar{F}_{\xi}  \teq -  (2 \bar{c}_{\om} -  \bar{v}_y) \bar{\xi} + \bar{u}_y \bar{\eta} +\al \bar{c}_l R \pa_R\bar{\xi} + (\bar{\uu} \cdot \na)  \bar{\xi} .\\
\eeq
Here, we have expanded the remaining term $\cR_{\xi}$ in \eqref{eq:equiv}. The above decomposition and $\Xi_1, \Xi_2$ are motivated by the leading order structure of the velocity \eqref{eq:vel_main} and the properties that $\bar \xi$ is a lower order term of size $\al^2$. 

We introduce $\cT$ \cite{chen2019finite2} to denote the lower order part of the transport operator $\uu \cdot \na$
\beq\label{eq:trans}
\cT(\Om) \teq -\f{ 2\cos(2\b)}{\pi } L_{12}(\Om)  D_R 
- \al \pa_{\b} \Psi_* D_R + \f{  2\Psi_* + \al D_R \Psi}{\sin(2\b)}  D_{\b} .
\eeq

Using this notation, we decompose $  \bar \uu \cdot \na$ into the main term and the lower order term $\cT(\bar \Om)$
\[
\bar \uu \cdot \na = \f{3}{1+ R} D_{\b} + \cT(\bar \Om) ,
\]
which further motivates the following decomposition of \eqref{eq:cA0}
\beq\label{eq:cA}
\cA(\xi) =( (1+ 3\al + \al c_l) D_R \xi +  \f{3 }{ 1+R} D_{\b} \xi )
+ (  ( (\uu + \bar{\uu}) \cdot \na - \f{3}{1+R} D_{\b} )  \xi  ) \teq \cA_1(\xi) + \cA_2(\xi).
\eeq
The main part in $\cA(\xi)$ is captured by $\cA_1(\xi)$.  We refer the detailed derivation to Section 6.4 and Section 8.1 in \cite{chen2019finite2}. 

We need the following simple estimates for the weights.
\begin{lem}\label{lem:dwg}
For $\phi = \phi_1, \phi_2, \phi_{ij}$ defined in \eqref{def:wg}, we have $|D_{R} \phi| \les \phi, |D_{\b} \phi| \les \al \phi, 1 \les \phi_1, 1 \les \phi_2$.
\end{lem}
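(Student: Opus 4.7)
The plan is to verify each of the four bounds by direct computation from the explicit definitions in \eqref{wg}, since $\phi_1, \phi_2$ are completely elementary weights. For $\phi_1 = (1+R)/R = 1 + R^{-1}$, I would observe that $\phi_1 \geq 1$ automatically, that $D_\beta \phi_1 = 0$ since $\phi_1$ depends only on $R$, and compute $D_R \phi_1 = -R^{-1}$, so that $|D_R \phi_1|/\phi_1 = 1/(1+R) \leq 1$ gives all three claims for $\phi_1$ trivially (with the $D_\beta$ bound holding with constant $0$, which is stronger than $\alpha$).

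For $\phi_2 = 1 + f$ with $f := R^{-1/40}\sin(2\beta)^{-\alpha/40}$, I would write $f$ as a product of monomials in $R$ and $\sin(2\beta)$ and use that $D_R = R\partial_R$ and $D_\beta = \sin(2\beta)\partial_\beta$ act as scaling operators on such monomials: direct differentiation gives $D_R f = -\tfrac{1}{40}f$ and $D_\beta f = -\tfrac{\alpha}{20}\cos(2\beta)\, f$. Since $\phi_2 = 1+f \geq 1 \geq f/(1+f)$, this yields $|D_R\phi_2| \leq \tfrac{1}{40}\phi_2$ and $|D_\beta \phi_2| \leq \tfrac{\alpha}{20}\phi_2$, which are the required estimates. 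The lower bound $\phi_2 \geq 1$ is immediate from the definition.

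Finally, for the combined weight $\phi_{ij} = \mathbf{1}_{i\geq 1}\phi_1 + \mathbf{1}_{j\geq 1}\phi_2$, I would simply invoke linearity of $D_R$ and $D_\beta$ together with the triangle inequality: $|D_R\phi_{ij}| \leq \mathbf{1}_{i\geq 1}|D_R\phi_1| + \mathbf{1}_{j\geq 1}|D_R\phi_2| \lesssim \mathbf{1}_{i\geq 1}\phi_1 + \mathbf{1}_{j\geq 1}\phi_2 = \phi_{ij}$, and identically for $D_\beta$ with an extra factor of $\alpha$.

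There is no substantive obstacle here; the lemma is purely computational. The only point worth noting is to control $|\cos(2\beta)| \leq 1$ on the physical domain $\beta \in [0,\pi/2]$ in the $D_\beta \phi_2$ calculation so that the constant remains absolute and the factor of $\alpha$ is preserved cleanly. The role of this lemma is just to allow the weights $\phi_1, \phi_2$ in the $\mathcal{C}^k$ norm \eqref{norm:ck} to be commuted freely with $D_R, D_\beta$ in the $C^k$ estimate of $\xi$ in Section~\ref{sec:Ck}, losing only a harmless constant (or a small factor $\alpha$ in the angular direction).
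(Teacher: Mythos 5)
Your proof is correct and is exactly the "simple calculation" plus "triangle inequality" that the paper invokes: the scaling identities $D_R\phi_1=-R^{-1}$, $D_Rf=-\tfrac{1}{40}f$, $D_\beta f=-\tfrac{\alpha}{20}\cos(2\beta)f$ for $f=(R\sin(2\beta)^\alpha)^{-1/40}$, and the decomposition of $\phi_{ij}$ are all the paper has in mind. No further comment needed.
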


The estimate for $\phi_1, \phi_2$ follows from a simple calculation. The estimate for $\phi_{ij}$ follows from a triangle inequality.

\subsubsection{ Estimate of $\phi_{ij} D_R^i  D^j_{\b} \xi$}
To establish the $\cC^k$ estimate, we perform $L^{\inf}$ estimate on $\phi_{ij} D_R^i  D^j_{\b} \xi $. 
Denote the commutator and the weighted derivatives 
\beq\label{eq:xi_ij}
[P, Q ] = P Q -  QP, \quad \cP_{ij} \teq \phi_{ij} D_R^i  D^j_{\b} . 
\eeq

In the $L^{\inf}$ estimate, we need to estimate the commutator $[ \cP_{ij}, \cA]$ \eqref{eq:cA}, which is given below.
\begin{lem}\label{lem:Linf_commut}
For $ i+j = k \leq 100$, we have 
\beq\label{eq:commut1}
\bal
  {[}  \cP_{ij},  \cA_1 ] 
 \xi = -(1 + 3 \al + \al c_l)  \f{ D_{R} \phi_{ij} }{ \phi_{ij}} \cP_{ij} \xi 
 + \e_{ij} 
\eal
\eeq
with error terms $\e_{ij}$ satisfying 
\beq\label{eq:commut1_err}
\e_{0 k} \les \al | \cP_{ij} \xi|,  \quad  
|\e_{ij} | \les | \cP_{i-1, j+1} \xi |
+ || \xi||_{\cC^{k-1}} + \al | \cP_{ij} \xi|, \quad  i \geq 1 \;.
\eeq
For $\cA_2$, we have
\beq\label{eq:commut2}
| [\cP_{ij}, \cA_2]  \xi | \les 
 (\al^{-1} || \Om||_{ \cH^{k+2}} + \al) || \xi ||_{\cC^k}. 
\eeq
\end{lem}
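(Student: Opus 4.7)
The plan is to compute each commutator directly via the Leibniz rule, exploiting the fact that $[D_R, D_\b] = 0$ since $D_R = R\pa_R$ and $D_\b = \sin(2\b)\pa_\b$ act on independent variables. For $\cA_1 = c D_R + g D_\b$ with $c = 1 + 3\al + \al c_l$ and $g = 3/(1+R)$, the coefficients are so explicit that the main term can be read off and every other contribution absorbed into $\e_{ij}$. For $\cA_2$, which is a first order operator in $(D_R, D_\b)$ whose coefficients are built from the velocity, I will expand the commutator and estimate each resulting coefficient by the elliptic estimates for $\uu$ (bringing a factor $\al^{-1}$) and by the approximate steady state bounds from Appendix \ref{app:profile} (bringing a factor $\al$).

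For $\cA_1$, I will split the commutator as $[\cP_{ij}, c D_R] + [\cP_{ij}, g D_\b]$. Since $c D_R$ commutes with $D_R^i D_\b^j$ and $c$ is independent of $x$, only the commutator with the multiplier $\phi_{ij}$ survives, producing
\[
[\cP_{ij}, c D_R]\xi = -c(D_R \phi_{ij})(D_R^i D_\b^j \xi) = -c\, \frac{D_R\phi_{ij}}{\phi_{ij}}\,\cP_{ij}\xi,
\]
which is precisely the main term in \eqref{eq:commut1}. For $[\cP_{ij}, g D_\b]$, since $g = g(R)$ commutes with $D_\b$, Leibniz in $R$ gives
\[
[\cP_{ij}, g D_\b]\xi = -g\,\frac{D_\b\phi_{ij}}{\phi_{ij}}\cP_{ij}\xi + \phi_{ij}\sum_{l=1}^{i}\binom{i}{l}(D_R^l g)(D_R^{i-l}D_\b^{j+1}\xi).
\]
The first piece is $O(\al |\cP_{ij}\xi|)$ by Lemma \ref{lem:dwg}; when $i = 0$ the sum is empty, giving the first bound in \eqref{eq:commut1_err}. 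For $i \geq 1$ the $l = 1$ term is $i\phi_{ij}(D_R g)(D_R^{i-1}D_\b^{j+1}\xi)$, and since $D_R g = -3R/(1+R)^2$ vanishes at $R = 0$, the factor $R$ cancels the $1/R$ singularity of $\phi_1$ so that $\phi_{ij}(D_R g)/\phi_{i-1,j+1}$ is uniformly bounded, yielding the $|\cP_{i-1,j+1}\xi|$ contribution. The terms with $l \geq 2$ carry total derivative order on $\xi$ at most $k - 1$, and after a similar boundedness check on $\phi_{ij}(D_R^l g)/(\phi_1 + \phi_2)$ they are absorbed into $\|\xi\|_{\cC^{k-1}}$.

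For $\cA_2$, I will use the decomposition $\bar\uu\cdot\na = \frac{3}{1+R}D_\b + \cT(\bar\Om)$ stated below \eqref{eq:trans}, which together with \eqref{eq:cA} reduces $\cA_2\xi$ to $(\uu\cdot\na)\xi + \cT(\bar\Om)\xi$. This exhibits $\cA_2$ as a first order operator in $(D_R, D_\b)$ whose coefficients involve $L_{12}(\Om)$, $\Psi_*$ and their low order derivatives for the perturbation piece, and explicit functions built from $\bar\Om$ for the background piece. Expanding $[\cP_{ij}, \cA_2]$ via Leibniz produces a sum of terms in which at least one derivative (of total $(D_R, D_\b)$-order at most $k+1$) falls on a coefficient. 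The perturbation coefficients are controlled in the required weighted norms by $\al^{-1}\|\Om\|_{\cH^{k+2}}$ via Proposition \ref{prop:key} and its higher order analogs from Appendix \ref{app:high}, where the $\al^{-1}$ is the normalization constant appearing in \eqref{eq:vel_main}; the background coefficients are $O(\al)$ by the estimates in Appendix \ref{app:profile}, since $\bar\Om, L_{12}(\bar\Om), \bar\Psi_*$ all carry a factor $\al$. The remaining derivative of $\xi$ has order at most $k$ and, once multiplied by $\phi_{ij}$, is controlled in $L^{\inf}$ by $\|\xi\|_{\cC^k}$, yielding \eqref{eq:commut2}.

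The hard part will be the weight bookkeeping: every Leibniz-generated term must be rewritten as a bounded multiple of either some $\cP_{i',j'}\xi$ (in the $\cA_1$ argument) or a quantity measured by $\|\xi\|_{\cC^k}$ (in the $\cA_2$ argument). For $\cA_1$ this comes down to the boundedness of ratios like $\phi_{ij}(D_R^l g)/\phi_{i-l,j+1}$, which as noted above uses the vanishing of $D_R g$ at $R = 0$ to cancel the $1/R$ singularity of $\phi_1$. For $\cA_2$ it requires matching the singular weights $\phi_1, \phi_2$ from Definition \ref{def:wg} against the weights built into $\cH^{k+2}$ and $\cW^{l,\inf}$ used by the elliptic and approximate-steady-state estimates; this is the most delicate point and relies on the weight design of \cite{chen2019finite2}.
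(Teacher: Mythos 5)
Your proposal is correct and follows essentially the same route as the paper: the same splitting $\cA_1 = c\,D_R + g\,D_\b$ with the main term coming from $D_R$ hitting $\phi_{ij}$, the same Leibniz remainder analysis using that $D_R^l g \lesssim R/(1+R)^2$ cancels the $1/R$ singularity of $\phi_1$, and the same reduction of $\cA_2$ to a first-order operator $\sum H\,D$ with $|[\cP_{ij},HD]\xi|\lesssim \|H\|_{\cC^k}\|\xi\|_{\cC^k}$ and $\|H\|_{\cC^k}\lesssim \al^{-1}\|\Om\|_{\cH^{k+2}}+\al$. The only quibble is that at most $k$ (not $k+1$) derivatives can land on a coefficient in the $\cA_2$ commutator, which is what keeps the bound at $\|\Om\|_{\cH^{k+2}}$ rather than $\|\Om\|_{\cH^{k+3}}$.
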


The first term on the right hand side of \eqref{eq:commut1} is a damping term so that we do not need to control it. The upper bound of \eqref{eq:commut2} is of order $\al || \xi||_{\cC^k}$, and the second commutator is a lower order term. The above estimate shows that $\cP_{0k}$ almost commutes with $\cA$ up to some lower order terms, which are small or can be controlled by lower order energy. 
Thus we will first perform $L^{\inf}$ estimate on $\cP_{0k} \xi$. Then we estimate other terms $\cP_{i, k-i} \xi$ in the order of $i=1,2,3.., k$ so that in each step, we can control the bad term 
$|\cP_{i-1, j+1}\xi|$ in \eqref{eq:commut1_err} using the previous estimate.

\begin{proof}
To simplify the notation, we denote $\phi = \phi_{ij}$. We decompose  $\cA_1$ \eqref{eq:cA} 
into 
\[
\cA_1 = \cA_{11} + \cA_{12}, \ \cA_{11} =a_{\al} D_R,  \ \cA_{12} = f(R) D_{\b}, \ a_{\al} =  (1 + 3 \al + \al c_l ), \ f(R) = \f{3}{1 + R}. 
\]
Note that $D_{\R}$ and $D_{\b}$ commute. For $\cA_{11}$, a direct computation yields 
\[
\bal
{[} \cP_{ij} , \cA_{11}] \xi 
&= \phi D_R^i D_{\b}^j  \cA_{11} \xi - \cA_{11}( \phi D_R^i D_{\b}^j \xi) 
= a_{\al} ( \phi D_R^{i+1} D_{\b}^j \xi  - D_R( \phi D_R^i D_{\b}^j \xi) ) \\ 
&= - a_{\al}  D_R \phi  D_R^i D_{\b}^j \xi = - a_{\al}  \phi^{-1} D_R{\phi}  \cP_{ij} \xi,
\eal
\]
which gives the first term on the right hand side of \eqref{eq:commut1}. For $\cA_{12}$, similarly, we get 
\[
\bal
{[} \cP_{ij} , \cA_{12}] \xi  
& = \phi D_R^i D_{\b}^j  ( f(R) D_{\b} \xi )
- f(R) D_{\b} ( \phi D_R^i D_{\b}^j \xi) \\
&= \phi \B( D_R^i D_{\b}^j ( f(R) D_{\b} \xi ) - f(R) D_R^i D_{\b}^{j+1} \xi  \B)
- f(R) D_{\b} \phi D_R^i D_{\b}^j \xi \teq I + II .
\eal
\]
Using Lemma \ref{lem:dwg} and $f(R) \les 1$, we yield 
\[
|II| \les \al f(R)  |\phi D_R^i D_{\b}^j \xi|  
\les \al |\cP_{ij} \xi|.
\]

For $I$, since $D_{\b} f(R) = 0$, if $i = 0$, we get $I = 0$. This proves the first inequality in \eqref{eq:commut1_err}. 

If $ i \geq 1$, using the Leibniz rule, we derive 
\beq\label{eq:commut1_est1}
\bal
|I| & =| \phi ( D_R^i ( f(R) D_{\b}^{j +1} \xi ) - f(R) D_R^i D_{\b}^{j+1} \xi)|  \\
&\les \phi D_R  f(R) D_R^{i-1} D_{\b}^{j+1} \xi  
+ \phi \sum_{ l \leq i - 2 } | D_R^{ i -l} f(R) D_R^l D_{\b}^{j+1} \xi|. 
\eal
\eeq

From the definition of $\phi_1, \phi_2, \phi_{ij}$ \eqref{def:wg}, we have 
\[
D_R^l f(R) \les \f{R}{ (1+R)^2} , \  1 \leq l \leq 100, \quad \phi_1 \f{R}{ (1+R)^2} \les 1,  \quad
\phi_{ij} \f{R}{ (1+R)^2}
\les 1 + \one_{j \geq 1} \phi_2
\les \phi_{n, j+1}
\]
for any $ n \geq 0$. Using the above derivations and the definition of $\cC^k$ \eqref{norm:ck}, we yield 
\[
|I| \les |\phi_{i-1, j+1}  D_R^{i-1} D_{\b}^{j+1} \xi|
+ \sum_{ l \leq i-2}  \phi_{ l, j+1}  |D_R^l D_{\b}^{j+1} \xi |
\les |\cP_{i-1, j+1} \xi| + || \xi||_{C^{i+j-1 }}.
\]
This completes the proof of \eqref{eq:commut1} and \eqref{eq:commut1_err}.

For $\cA_2$, we follow the decomposition in Section 8.5.3 in \cite{chen2019finite2}
\[
\bal
\cA_2 (\xi)  = & \f{2}{\pi \al} L_{12}(\Om) D_{\b} \xi + (  \cT(\bar{\Om}) +\cT( \Om) )\xi
= \f{2}{\pi \al} L_{12}(\Om) D_{\b} \xi
 - \f{2}{\pi} \cos(2\b) \B(  L_{12}(\Om) +L_{12}(\bar{\Om}) \B) D_R \xi\\
 &- \al (\pa_{\b} \Psi_* + \pa_{\b}\bar{\Psi}_*)D_R \xi
 +  \f{  2\Psi_* + \al D_R \Psi + 2 \bar{\Psi}_* + \al D_R \bar{\Psi}  }{\sin(2\b)}   D_{\b} \xi \\
   \teq & ( H_1 D_{\b}  + H_2 D_R + H_3 D_{R}+ H_4 D_{\b})\xi,
\eal
\]
where $L_{12}(\Om), \Psi_*$ defined in \eqref{eq:L12} relate to the leading order terms of the velocity,  and $\cT( \cdot)$ \eqref{eq:trans} denotes the lower order part in the transport operator $\uu \cdot \na$. 

Let $(H, D)$ be one of the pairs $(H_1, D_{\b}), (H_2, D_R), (H_3, D_R), (H_4, D_{\b})$ in the above decomposition. Next, we show that
\beq\label{eq:commute2_est1}
| {[} \cP_{ij}, H D ] \xi | \les || H||_{C^k} || \xi||_{C^k}.
\eeq

Since $D_R$ and $D_{\b}$ commute, applying the Leibniz rule, we derive 
\[
\bal
 {[} \cP_{ij}, H D ] \xi
 &= \phi D_R^i D_{\b}^j ( H D \xi) - H D( \phi D_R^i D_{\b}^j  ) \xi 
 = \phi D_R^i D_{\b}^j ( H D \xi) - H \phi  D_R^i D_{\b}^j  D \xi - H D\phi \cdot D_R^i D_{\b}^j \xi \\
&=\phi \sum_{ l \leq i, m \leq j, l + m< i+j} \binom{ i}{l} \binom{ j}{m} 
 (D_R^{i- l } D_{\b}^{ j-m} H ) ( D_R^l D_{\b}^m D \xi) 
 - H D \phi \cdot D_R^i D_{\b}^j \xi  \teq J_1 + J_2.
 \eal
\]

Using Lemma \ref{lem:dwg}, we obtain 
\[
|J_2 | \les || H||_{\inf}  | \phi D_R^i D_{\b}^j \xi | \les 
 || H||_{\inf}  | \cP_{ij} \xi| 
 \les  || H||_{\inf}  || \xi||_{\cC^k} . 
\]

Recall $\phi = \one_{i\geq 1} \phi_1 + \one_{ j \geq 1} \phi_2$. We estimate two weights in $J_1$ separately. If $i=0$, the estimate is trivial since $\one_{i\geq 1} \phi_1 = 0$. If $i \geq 1$, we have $i-l \geq 1$ or $l \geq 1$. In the first case, using the definition of $\cC^k$ \eqref{norm:ck}, Proposition \ref{prop:embed}, and $i + j - l - m  \leq i+ j = k, l+m \leq i+ j =k$, we get 
\[
\bal
|\phi_1 D_R^{i-l} D_{\b}^{j-m} H \cdot D_R^l D_{\b}^m \xi |
&\les | \phi_1 D_R^{i-l} D_{\b}^{j-m} H |_{\inf}  |D_R^l D_{\b}^m \xi |_{\inf} \\
&\les || D_R^{i-l-1} D_{\b}^{j-m} H ||_{\cC^1}  || \xi ||_{\cC^k}
\les || H||_{\cC^k} || \xi||_{\cC^k}.
\eal
\]

The estimates of the other case $l \geq 1$ and that of the second weight are completely similar. Thus, using the triangle inequality, we establish
\[
|J_1| \les || H||_{\cC^k} || \xi ||_{\cC^k},
\]
which along with the estimate of $J_2$ implies \eqref{eq:commute2_est1}.

To further control the $|| H||_{\cC^k}$, we have the following estimates 
\beq\label{eq:commut2_est2}
|| H||_{\cC^k} \les \al^{-1} || \Om||_{\cH^{k+2}} + \al.
\eeq
The case of $k =1$ has been established in Section 8.5.3 in \cite{chen2019finite2}. The general case $k \geq 1$ follows from a similar argument by using the elliptic estimates in Propositions \ref{prop:key}, \ref{prop:psi}, and the embedding in Proposition \ref{prop:embed}. We refer to \cite{chen2019finite2} for more discussions. 

Combining \eqref{eq:commute2_est1} and \eqref{eq:commut2_est2}, we conclude the proof of \eqref{eq:commut2}.
\end{proof}


Now, we are in a position to estimate $\cP_{ij} \xi$. Applying $\cP_{ij}$ on both sides of \eqref{eq:xi_linf1}, we yield 
\beq\label{eq:xi_linf2}
 \pa_{t} \cP_{ij} \xi  +  \cA   \cP_{ij} \xi 
=  \cP_{ij}  ( ( -2 - \f{3}{1 + R} ) \xi ) -  [ \cP_{ij}, \cA ] \xi 
+ \cP_{ij} ( \Xi_1 +  \Xi_2 +  \bar F_{\xi} +  N_o ).
\eeq
The main damping term is $ (-2 -  \f{3}{1+R} )\xi$.

For the first term on the right hand side, applying estimate similar to \eqref{eq:commut1_est1}, we have 
\[
 - \cP_{ij}  ( ( 2 + \f{3}{1 + R} ) \xi ) 
 = - \phi D_R^i D_{\b}^j ( ( 2 + \f{3}{1 + R} ) \xi ) 
 =  - (2 + \f{3}{1 + R}) \cP_{ij} \xi  + \d_{ij} ,
\]
with $\d_{0 k} = 0$ and 
\[
\bal
|\d_{i,j}| & = \B|  \phi \sum_{l \leq  i-1 } \binom{i}{l}  D_R^{ i - l} (2 + \f{3}{1+R}) D_R^l D_{\b}^j \xi \B|
\les \sum_{l \leq i-1} \phi \f{R}{ (1+R)^2} | D_R^l D_{\b}^j \xi |
\les \sum_{l \leq i-1} ( 1 + \phi_{l j} ) | D_R^l D_{\b}^j \xi | \\
& \les || \xi||_{C^{k-1}}
\eal
\] 
for $i \geq 1$.  From the definition of $\phi_1, \phi_2, \phi = \phi_{ij}$ \eqref{def:wg}, we have $D_R \phi_{ij} \leq 0 $. Using the above estimate and Lemma \ref{lem:Linf_commut}, we derive 
\[
\bal
 \cP_{ij} \xi & \cdot 
\B(  \cP_{ij}  ( ( -2 - \f{3}{1 + R} ) \xi ) -  [ \cP_{ij}, \cA ] \xi \B)
 \leq -2 | \cP_{ij} \xi|^2
+ |\cP_{ij} \xi| ( | \e_{ij} | + | \d_{ij} | )  \\
& \leq -2 | \cP_{ij} \xi|^2 + C  | \cP_{ij} \xi | \B( || \xi||_{\cC^{k -1 }} + \al |\cP_{ij} \xi|  + \one_{i\geq1} | \cP_{i-1, j+1} \xi| \B) .
\eal
\]
To derive the first inequality,  we have dropped the first term on the right hand side of \eqref{eq:commut1} since $D_R \phi_{ij} \leq 0$

 Now, multiplying both sides of \eqref{eq:xi_linf2} by $ \cP_{ij}\xi$ and performing the $L^{\inf}$ estimate, we establish
\[
\bal
\f{1}{2} \f{d}{dt} || \cP_{ij} \xi||^2_{\inf}
 \leq  & - (2 - C \al) || \cP_{ij} \xi||_{\inf}^2 + C  || \cP_{ij} \xi ||_{\inf} \B( || \xi||_{\cC^{k -1 }} 
 + \one_{i\geq1} || \cP_{i-1, j+1} \xi||_{\inf} \B) \\
 & + C || \cP_{ij} \xi ||_{\inf}  ||\cP_{ij} ( \Xi_1 +  \Xi_2 +  \bar F_{\xi} +  N_o )||_{\inf} .
 \eal
\]

Notice that for $i=0$, the leading order term $- (2 - C \al) || \cP_{ij} \xi||_{\inf}^2$ is a damping term. Moreover, if $ i+j = k = 0$, the term $|| \xi||_{\cC^{k-1}} $ on the right hand side is $0$. Thus, there exists absolute constants $\nu_{i ,j}>0$ with $i+j \leq k$, which can be determined inductively on $i + j$, such that for 
\beq\label{energy:Ck}
E_{k, \inf}^2( \xi) \teq || \xi||_{\inf}^2 + \sum_{ i+ j \leq k} \nu_{i j}  || \cP_{ij} \xi||_{\inf}^2,
\eeq
we have 
\[
\f{1}{2} \f{d}{dt} E_{k,\inf}^2 \leq -( \f{3}{2} - C\al ) E_{k,\inf}^2
+ C E_{k,\inf} || \Xi_1 +  \Xi_2 +  \bar F_{\xi} +  N_o ||_{\cC^k}.
\]

To further control $\Xi_1 +  \Xi_2 +  \bar F_{\xi} +  N_o$, we have the following estimate 

\begin{lem}
For $ 1 \leq k \leq 100$, we have
\[
\bal
&|| \Xi_1||_{\cC^k} \les  \al || \xi||_{\cC^k} , \quad 
|| \Xi_2||_{\cC^k} \les \al^{1/2} || \Om||_{\cH^{k+2}} + \al^{1/2} || \eta||_{\cH^{k+2}},  \\
& || N_o ||_{\cC^k} \les  \al^{-1} || \xi||_{\cC^k} || \Om||_{\cH^{k+2}} 
+ \al^{-1} || \Om||_{\cH^{k+2} } || \eta||_{\cH^{k+2} } ,
\quad || \bar F_{\xi} ||_{\cC^k} \les \al^2.
\eal
\]
\end{lem}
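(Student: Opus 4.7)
The plan is to establish the four $\cC^k$ bounds term by term, using three building blocks that are already in place (or are the higher-order analogs of ones used in the $k=1$ case treated in Section 8.5 of \cite{chen2019finite2}). First, a product estimate $\|fg\|_{\cC^k}\les \|f\|_{\cC^k}\|g\|_{\cC^k}$, which follows from the Leibniz rule applied to $\cP_{ij}=\phi_{ij}D_R^iD_\b^j$ together with $|D_R\phi_{ab}|,|D_\b\phi_{ab}|\les\phi_{ab}$ from Lemma \ref{lem:dwg} and the embedding $\cH^{k+2}\hookrightarrow\cC^k$ of Proposition \ref{prop:embed}; this is precisely the mechanism already exploited in the commutator estimate \eqref{eq:commut2_est1}. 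Second, the elliptic bound $\|u_y\|_{\cC^k}+\|v_x\|_{\cC^k}+\|v_y\|_{\cC^k}\les \al^{-1}\|\Om\|_{\cH^{k+2}}$ on the perturbed velocity, i.e. the higher-order analog of \eqref{eq:commut2_est2} obtained from Propositions \ref{prop:key} and \ref{prop:psi}; combined with the normalization \eqref{eq:normal} and Lemma \ref{lem:l12} it yields $|c_\om|\les \al^{-1}\|\Om\|_{\cH^{k+2}}$. Third, the profile estimates of Appendix \ref{app:profile}: $\bigl\|\tfrac{3}{1+R}-\bar v_y\bigr\|_{\cC^k}\les\al$, $\|\bar\xi\|_{\cC^{k+1}}\les\al^2$, $\|\bar\eta\|_{\cC^k}\les\al$, $\|\bar u_y\|_{\cC^k}+\|\bar v_x\|_{\cC^k}\les\al$, together with the corresponding $\cC^k$ bound on the residual $\bar F_\xi$ itself.

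Assembling these gives the four inequalities in sequence. For $\Xi_1=(\tfrac{3}{1+R}-\bar v_y)\xi$, the product estimate and the first profile bound yield $\|\Xi_1\|_{\cC^k}\les\al\|\xi\|_{\cC^k}$ immediately. For $N_o=(2c_\om-v_y)\xi-u_y\eta$, the product estimate combined with $|c_\om|,\|v_y\|_{\cC^k},\|u_y\|_{\cC^k}\les\al^{-1}\|\Om\|_{\cH^{k+2}}$ and with the embedding bound $\|\eta\|_{\cC^k}\les\|\eta\|_{\cH^{k+2}}$ gives the claimed estimate with the $\al^{-1}\|\xi\|_{\cC^k}\|\Om\|_{\cH^{k+2}}$ and $\al^{-1}\|\Om\|_{\cH^{k+2}}\|\eta\|_{\cH^{k+2}}$ contributions. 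For $\bar F_\xi$, which is the residual of the approximate steady state by construction, each summand is a product of profile quantities and Appendix \ref{app:profile} supplies the $O(\al^2)$ bound in $\cC^k$ directly.

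The delicate step, and the one I regard as the main obstacle, is $\Xi_2$. Each of its six summands pairs a perturbed velocity factor (whose $\cC^k$ norm is at most $\al^{-1}\|\Om\|_{\cH^{k+2}}+\al^{-1}\|\eta\|_{\cH^{k+2}}$) with a background factor which must be shown to be of size at least $\al^{3/2}$ in $\cC^{k+1}$ to net the claimed $\al^{1/2}$ prefactor. For the terms $v_y\bar\xi$ and $c_\om(2\bar\xi-R\pa_R\bar\xi)$ this works because $\|\bar\xi\|_{\cC^{k+1}}\les\al^2$. For the transport piece $(\al c_\om R\pa_R-\uu\cdot\na)\bar\xi$ the extra explicit $\al$ in front of $c_\om R\pa_R\bar\xi$, together with $\|\bar\xi\|_{\cC^{k+1}}\les\al^2$, again yields a net $\al^{1/2}$ factor once paired with $\|\uu\|_{\cC^k}\les\al^{-1}\|\Om\|_{\cH^{k+2}}$ from the elliptic estimates. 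For $u_y\bar\eta+\bar u_y\eta$ we use the sharper bound $\|\bar u_y\|_{\cC^k}\les\al$ (not just $O(1)$) from Appendix \ref{app:profile}, together with $\|\bar\eta\|_{\cC^k}\les\al$, so that each term contributes $O(\al^{1/2})(\|\Om\|_{\cH^{k+2}}+\|\eta\|_{\cH^{k+2}})$.

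Beyond this $\al$-bookkeeping, which mirrors line-by-line the $k=1$ argument in Sections 8.5.2 and 8.5.3 of \cite{chen2019finite2}, the extension to $k\le 100$ is essentially routine once the product estimate and the higher-order elliptic and profile estimates are in hand. In particular, I do \emph{not} expect to need any new functional inequality; the $\cC^k$ algebra property is precisely what handles all the mixed $D_R^iD_\b^j$ derivatives of the products without loss of powers of $\al$, and the weighted elliptic estimates of Propositions \ref{prop:key}, \ref{prop:psi} cleanly absorb the two extra orders of regularity of $\Om,\eta$ needed to control the velocity in $\cC^k$.
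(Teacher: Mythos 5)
Your overall route is the same as the paper's: the paper gives no detailed proof of this lemma, stating only that the case $k=1$ is in Sections 8.5.1--8.5.2 of \cite{chen2019finite2} and that the general case follows from the same argument together with Proposition \ref{prop:embed} and the elliptic estimates of Propositions \ref{prop:key}, \ref{prop:psi}. Your three building blocks (the $\cC^k$ algebra property, the $\cC^k$ elliptic bounds on the perturbed velocity, and the Appendix \ref{app:profile} profile bounds) are exactly the intended ingredients, and the treatment of $\Xi_1$, $\bar F_\xi$, and most of $\Xi_2$ is correct.

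There is, however, one place where your $\al$-bookkeeping as stated does not close: the term $u_y\bar\eta$ in $\Xi_2$. With your uniform bound $\|u_y\|_{\cC^k}\les \al^{-1}\|\Om\|_{\cH^{k+2}}$ and $\|\bar\eta\|_{\cC^k}\les\al$, you only get $O(1)\,\|\Om\|_{\cH^{k+2}}$, not the required $O(\al^{1/2})\|\Om\|_{\cH^{k+2}}$; the sharpening you invoke ($\|\bar u_y\|_{\cC^k}\les\al$) applies to the companion term $\bar u_y\eta$, not to $u_y\bar\eta$. The fix is that $u_y$ and $v_x$ are genuinely lower-order velocity components (see \eqref{eq:vel_main} and \eqref{eq:vel_vx}): they contain no $\al^{-1}L_{12}(\Om)$ leading part, so Propositions \ref{prop:key} and \ref{prop:embed} give the sharper bound $\|u_y\|_{\cC^k}+\|v_x\|_{\cC^k}\les\al^{-1/2}\|\Om\|_{\cH^{k+2}}$; only $u_x=-v_y$ carries the $\al^{-1}$ factor, and there the pairing with $\|\bar\xi\|_{\cC^{k+1}}\les\al^2$ saves you. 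A related small slip: Proposition \ref{prop:embed} gives $\|\eta\|_{\cC^k}\les\al^{-1/2}\|\eta\|_{\cH^{k+2}}$, not a lossless embedding; in the $N_o$ estimate the extra $\al^{-1/2}$ from the embedding and the gained $\al^{1/2}$ from the sharper $u_y$ bound cancel, so the stated conclusion still holds, but the two corrections do not cancel in $u_y\bar\eta$, which is why that term must be tracked with the sharper velocity bound.
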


The case of $k=1$ has been established in Sections 8.5.1 and 8.5.2 in \cite{chen2019finite2}. The general case follows from a similar argument, Proposition \ref{prop:embed}, and the elliptic estimates 
in Propositions \ref{prop:key} and \ref{prop:psi}. 
Note that the $\cC^k$ norm \eqref{norm:ck} and $E_{k, \inf}^2$ are equivalent, we conclude the $\cC^k$ estimate 
\beq\label{eq:boot2}
\bal
\f{1}{2} \f{d}{dt} E_{k,\inf}^2  \leq & - ( \f{3}{2} - C_k \al ) E_{k,\inf}^2
+ C_k E_{k,\inf}  \B( \al^2 
+  \al^{1/2} || \Om||_{\cH^{k+2}} + \al^{1/2} || \eta||_{\cH^{k+2}} \\
& +  \al^{-1} E_{k,\inf} || \Om||_{\cH^{k+2}} 
+ \al^{-1} || \Om||_{\cH^{k+2} } || \eta||_{\cH^{k+2} } \B).
\eal
\eeq

\subsection{Nonlinear stability and finite time blowup}\label{sec:blowup}


We fix $k = 100$ and construct the energy 
\beq\label{eg}
E( \Om, \eta, \xi) =( E_k(\Om, \eta, \xi)^2 + \al E_{k-2,\infty}(\xi)^2)^{1/2}, \quad k = 100.
\eeq

Adding the estimates  \eqref{eq:boot1} and $ \al \times$\eqref{eq:boot2} with $k$ replaced by $k-2$, we have
\[
\f{1}{2} \f{d}{dt} E^2(\Om, \eta,\xi)
\leq  -\f{1}{15} E^2 + K \al^{1/2} E^2 + K \al^{-3/2} E^3 + K \al^2 E ,
\]
for some absolute constant $K$, where we have used the fact that $E_{k,\inf}(\xi)$ \eqref{energy:Ck} is equivalent to $|| \xi||_{\cC^k}$ since $\nu_{ij}$ are absolute constants. Using a standard bootstrap argument, we establish that there exists a small absolute constant $\al_1 < \f{1}{1000}$ and $K_*$, such that if $E(\Om(\cdot, 0), \eta(\cdot ,0), \xi(\cdot, 0)) < K_* \al^2$, we have 
\beq\label{eq:boot}
E(\Om(t), \eta(t), \xi(t) )  < K_* \al^2
\eeq
for all time $t>0$ and $\alpha < \alpha_1$. We refer the detailed bootstrap argument to \cite{chen2019finite2}.

Finally, we consider the regularity of the solutions $ \om + \bar \om, \eta + \bar \eta, \xi + \bar \xi$ in the physical space using the relations \eqref{eq:rescal1}, \eqref{eq:rescal2}. Following the argument in \cite{chen2019finite2}, we obtain that the scaling parameters $c_l(t), c_{\om}(t)$ defined in \eqref{eq:profile}, \eqref{eq:normal}  satisfy
\[
      -\f{3}{2} < c_{\om}(t)  + \bar c_{\om} < -\f{1}{2},  \quad  \f{1}{2\al} + 3 <  c_l + \bar c_l  < \f{2}{\al} + 3,
\]
where $\bar c_{\om}, \bar c_l$ denote the  scaling parameters associated to the approximate steady state, and $c_{\om}, c_l$ are the perturbations. In particular, $C_{\om}(\tau), C_l(\tau)$ defined in \eqref{eq:rescal2} remains finite for any $\tau < +\inf$ with bounds depending on $\tau, \al$ only. 

Next, we show that $ \f{1}{r}( u + \bar u) \in \cC^{k-2}$. Applying $\bar L_{12}(\Om) = \f{\pi}{2} \f{3\al}{1 + R} $ \eqref{eq:vel_main}, the embedding in Proposition \ref{prop:embed}, and Proposition \ref{prop:psi} to $ U( \bar \Psi)$ \eqref{eq:vel_full}, we yield 
\beq\label{eq:U_Ck}
\bal
|| \f{1}{r} U(\bar \Psi)  ||_{\cC^k} 
&\les || \f{1}{1+R} ||_{\cC^k} 
+  ||   \bar \Psi_* ||_{ \cC^k} + || D_R \bar \Psi||_{\cC^k}
+ || \pa_{\b}  \bar \Psi_* ||_{\cC^k} \\
&\les_{\al} 1 + || \f{1+R}{R}  \bar \Psi_* ||_{ \cW^{k, \inf}} + || \f{1+R}{R} D_R \bar \Psi||_{\cW^{k, \inf}}
+ || \f{1+R}{R} \pa_{\b} \bar \Psi_* ||_{\cW^{k, \inf}}
\les_{\al} 1.
\eal
\eeq

For $U(\Psi)$, we first consider $L_{12}(\Om)$ using Lemma \ref{lem:l12}. Let $\chi$ be the radial cutoff function defined in Lemma \ref{lem:l12}, which is constant near $ r= 0$. Using Proposition \ref{prop:embed} and \eqref{eq:boot}, we have 
\[
|| L_{12}(\Om)||_{\cC^{k-2}}
\les || L_{12}( \Om) - \chi_1 L_{12}(\Om)(0)||_{\cC^{k-2}}
+ || \chi_1||_{\cC^{k-2}} | L_{12}(\Om)(0) |
\les_{\al} || \Om||_{\cH^k}\les_{\al} 1.
\]

Applying Propositions \ref{prop:key} and \ref{prop:embed} to control the $\Psi, \Psi_*$ terms in $U(\Psi)$ \eqref{eq:vel_full} , we get
\[
|| \f{1}{r} U(\Psi)||_{ \cC^{k-2}}
\les || L_{12}(\Om)||_{\cC^{k-2}} + || \Om||_{\cH^k} \les_{\al} 1.
\]

Similarly, using the estimates of the approximate steady state in Lemmas \ref{lem:gam}, \ref{lem:xi}, and Proposition \ref{prop:embed}, we obtain 
\beq\label{eq:V_Ck}
|| \f{1}{r} ( V(\Psi + \bar \Psi) ||_{\cC^{k-2}} \les_{\al} 1, \quad  || \bar \Om+  \Om ||_{\cC^{k-2}} \les_{\al} 1,  \quad 
|| \bar \eta + \eta ||_{\cC^{k-2}} \les_{\al} 1, \quad  || \bar \xi + \xi ||_{\cC^{k-2}} \les_{\al} 1.
\eeq

Since the $(R, \b)$ coordinate of $(C_l x, C_l y)$ is $(C_l^{\al} R, \b) $, using the rescaling relation \eqref{eq:rescal1}, \eqref{eq:nota_om}, in $(R,\b)$ coordinate, we obtain 
\[
\Om(R, \b, \tau) = C_{\om}(\tau) \om_{phy}( C_l^{\al}(\tau) R, \b, t(\tau) ), \quad 
\om_{phy}(R, \b , t(\tau) ) = 
C_{\om}^{-1}\Om( C_l^{-\al} R, \b, \tau) .
\]
Similar relations apply for $\th, \uu$. Applying \eqref{eq:U_Ck}, \eqref{eq:V_Ck},  the above relation, and $\rho( \lam R, \b) \les C(\lam) \rho(R, \b)$ for any weight $\rho$ in Definition \ref{def:wg}, we have
\beq\label{eq:phy_Ck}
\bal
&|| \om_{ phy}(t(\tau)) ||_{\cC^{k-2}}  + 
|| \th_{x,phy}(t(\tau) ) ||_{\cC^{k-2}}  +
|| \th_{y,phy}(t(\tau) ) ||_{\cC^{k-2}}  \\
&+ || \f{1}{r}  u_{phy} ||_{\cC^{k-2}}
+ || \f{1}{r}  v_{phy} ||_{\cC^{k-2}}  
 \les  C( C_l( \tau) , C_{\om}(\tau ), \al, \tau )
\les C(\al, \tau) < +\infty .
\eal
\eeq

To further estimate the $C^k$ regularity, we have the following simple embedding. 
\begin{lem}\label{lem:cCk}
Let $S\teq \{ (x, y) : x \neq 0, y > 0\} = \{ (r, \b) : r > 0, \b \in (0, \pi/2) \cup (\pi/2, \pi)\}.$
For any compact domain $\S  \subset S$ and $l \geq 1$, we have 
\[
|| f||_{C^{l-1}(\S)} \les_{l, \al, \S } || f||_{\cC^{l}}.
\]

\end{lem}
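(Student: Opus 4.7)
The plan is first to kill all the weight degeneracies by compactness, and then to convert $(R,\b)$-derivatives into Cartesian ones via the chain rule.

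First, I would observe that for any compact $\Sigma\subset S$ there exist constants $0<r_0<r_1<\infty$ and $\b_0>0$ depending on $\Sigma$ and $\al$ with
\[
r_0 \leq r \leq r_1,\qquad \sin(2\b)\geq \b_0,\qquad R = r^\al \in [r_0^\al, r_1^\al] \qquad \text{for every }(r,\b)\in\Sigma.
\]
Consequently, from Definition~\ref{def:wg} the weights $\phi_1=(1+R)/R$ and $\phi_2 = 1+(R\sin(2\b)^\al)^{-1/40}$ are bounded above on $\Sigma$ by a constant depending only on $\Sigma,\al$. The definition of the $\cC^k$ norm then gives the pointwise estimate
\[
\sup_{\Sigma}\bigl|D_R^i D_\b^j f\bigr| \les_{\Sigma,\al} || f||_{\cC^{i+j}}, \qquad 0\leq i+j\leq l.
\]

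Second, I would convert this into a bound on ordinary Cartesian derivatives. Since $R=r^\al$ and $\b$ (with the obvious branch for $x<0$) are $C^\infty$ functions of $(x,y)$ in a neighborhood of $\Sigma$, and since $R$ and $\sin(2\b)$ are bounded below by positive constants there, the identities $\pa_R=R^{-1}D_R$ and $\pa_\b=(\sin 2\b)^{-1} D_\b$ combined with the chain rule yield
\[
\pa_x f = (\pa_x R)\, R^{-1} D_R f + (\pa_x \b)\,(\sin 2\b)^{-1} D_\b f,
\]
and analogously for $\pa_y f$, with coefficients that together with all their derivatives are smooth and uniformly bounded on $\Sigma$ by constants depending only on $\Sigma$ and $\al$.

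Iterating the chain rule, any Cartesian derivative $\pa_x^a \pa_y^b f$ with $a+b\leq l-1$ becomes a finite linear combination of terms $D_R^i D_\b^j f$ with $i+j \leq a+b \leq l-1$ whose coefficients are bounded on $\Sigma$ by a constant depending on $l,\al,\Sigma$. Taking $L^\infty(\Sigma)$ norms and applying the previous display then yields $|| f ||_{C^{l-1}(\Sigma)}\les_{l,\al,\Sigma} || f ||_{\cC^l}$, which is the claim. The only real issue is the bookkeeping of the iterated chain rule, but since the conversion coefficients are smooth on the fixed compact set $\Sigma$ and the weights have already been removed, no analytic difficulty remains.
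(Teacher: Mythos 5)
Your proof is correct and follows essentially the same route as the paper's: use compactness of $\Sigma$ to control the coordinate-change coefficients and the weights, then iterate the chain rule to express Cartesian derivatives of order $\le l-1$ in terms of $D_R^iD_{\beta}^jf$ with $i+j\le l$ and conclude from the definition of $\cC^l$. One minor slip: to deduce $\sup_{\Sigma}|D_R^iD_{\beta}^jf|\les \|f\|_{\cC^{i+j}}$ from the weighted sup norms you need the weights bounded \emph{below} (so that $\phi_{ij}^{-1}$ is bounded above), not bounded above as you wrote; this is immediate since $\phi_1,\phi_2\ge 1$ everywhere (cf. Lemma \ref{lem:dwg}), so the argument stands.
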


\begin{proof}
Recall $D_R = R \pa_R $ and $R = r^{\al}$. Using the chain rule, we yield $ r \pa_r = \al R \pa_R$. 
For any compact domain $\S \subset  S$, $i \geq 0$ and $p \in \R$, since $r \neq 0, \sin(\b) , |\cos \b| \in (0, 1)$, it is easy to obtain that 
\[
  | \pa_r^i r^{ p }| \les_{ i, p, \S} 1, \quad  | \pa_{\b}^i  \sin^p(\b)| 
  \les_{i, p, \S} 1, \quad | \pa_{\b}^i  | \cos(\b)|^p | \les_{i, p, \S} 1.
\]

Recall the relation among $\pa_x, \pa_y, \pa_r, \pa_{\b}$
\[
\pa_x   = \cos (\b) \pa_{r}  -\f{\sin(\b)}{r} \pa_{\b} ,  \quad \pa_y  =  \sin(\b) \pa_{r}  + \f{\cos (\b)}{ r} \pa_{\b} .
\]

Using the Leibniz rule, induction on $l$ and the above estimate, for $i+j \leq l$ and $(x, y) \in \S$, we obtain 
\[
| \pa_x^i \pa_y^j f | \les_{l, \S}  \sum_{ m + n \leq l} | \pa_r^m \pa_{\b}^n f|
\les_{l, \S} \sum_{ m + n \leq l} | (r \pa_r )^m \pa_{\b}^n f|
\les_{l, \al, \S } \sum_{ m + n \leq l} | D_R^m \pa_{\b}^n f| 
\les_{l, \al, \S } || f||_{\cC^{l}}.
\]
It follows $f \in C^{l-1}( \S )$.
\end{proof}

Since $\uu = r \cdot \f{1}{r } \uu$ and $r \in C^{k-2}( \S)$, from \eqref{eq:phy_Ck} and Lemma \ref{lem:cCk}, we further get $ \uu \in C^{k-3}(\S)$ for any compact set $\S \subset \{ (x, y): x \neq 0,  y>0  \}$. Using the above bootstrap estimates, e.g. \eqref{eq:boot}, the regularity estimates, the arguments of localizing the initial data and that for the finite time blowup in \cite{chen2019finite2}, we prove Theorem \ref{thm:bous_blowup}. We refer these two arguments to \cite{chen2019finite2}.

\section{Properties of the singular solution to the 3D Euler equations}\label{sec:euler_blowup}

In \cite{chen2019finite2}, to generalize the blowup results from the 2D Boussinesq equations in $\R_2^+$ to the 3D axisymmetric Euler equations with boundary, we need two additional steps. The first step is to control the support of the solutions in the domain $(r, z) \in [0, 1] \times \BT$ so that it does not touch the symmetry axis $r = 0$. The second step is to generalize the $\cH^3$ elliptic estimates in the Boussinesq equations to the 3D Euler equations. See Section 1.3 and Section 9 in \cite{chen2019finite2}.

For higher order estimates of the singular solutions to the 3D Euler equations, we only need to generalize the $\cH^3$ elliptic estimates to the $\cH^k$ version since the first step does not involve higher order estimates. Note that the $\cH^3$ elliptic estimates in Proposition 9.9 in \cite{chen2019finite2} is proved inductively with the weighted $L^2( \f{(1+R)^4}{R^4})$ elliptic estimate being the based case. Therefore, its generalization to the $\cH^k$ estimate in Proposition \ref{prop:elli_euler} below is straightforward. 

These higher order estimates imply the interior regularity estimates of $\om^{\vth}, (u^{\vth})^2, u^r, u^z$ in Theorem \ref{thm:euler_blowup}. See Section \ref{sec:euler_traj}. The estimate of $u^{\vth}$ does not follow from that of $ (u^{\vth})^2$. In Sections \ref{sec:euler_uth} and \ref{sec:euler_uth_in}, we further estimate $u^{\vth}$.

The proof of Theorem \ref{thm:euler_R3_blowup} is similar and is mostly based on the estimates in \cite{elgindi2019finite,elgindi2019stability}. Thus, we will only sketch the proof.

\subsection{Setup of the 3D axisymmetric Euler equations}

We first review the basic setup of the 3D axisymmetric Euler equations from Section 9 in \cite{chen2019finite2}. Recall the 3D axisymmetric Euler equations from \eqref{eq:euler1}-\eqref{eq:euler21} and the cylindrical coordinates $(r, \vth, z)$ \eqref{eq:polar_basis} in $\R^3$.
 We introduce the following variables 
\beq\label{eq:omth}
\td{\th}(r, z) \teq (r u^{\vth})^2,  \quad \td{\om}(r, z) = \om^{\vth} / r,
\eeq
new coordinates $(x, y)$ centered at $r = 1, z= 0$, and its related polar coordinates
\beq\label{eq:euler_polar}
x =  C_l(\tau)^{-1} z, \quad  y = (1-r) C_l(\tau)^{-1}, \quad  \rho = \sqrt{x^2 + y^2}, \quad \b = \arctan(y/x) , \quad R = \rho^{\al},
\eeq
where $C_l(\tau)$ is defined below \eqref{eq:rescal42}. 
The reader should not confuse the relation $R = \rho^{\al}$ with $R = r^{\al}$ in the 2D Boussinesq. 
Since the domain $D = \{ (r, z): r \leq 1, z \in \BT \}$ of the equations \eqref{eq:euler1}-\eqref{eq:euler21} is periodic in $z$ with period $2$, we focus on one period 
\beq\label{eq:euler_HL_domain}
D_1 \teq \{ (r, z): r \leq 1, |z| \leq 1 \}.
\eeq
In the proof in \cite{chen2019finite2}, the variables $\td \om,  \td \th$ \eqref{eq:omth} are the analog of $(\om, \th)$ in the 2D Boussinesq equations \eqref{eq:bous}. 
The cylindrical coordinate $(r, z)$ for the 3D Euler equations relate to $(y, x)$ in the 2D Boussinesq equations \eqref{eq:bous} via the change of variables \eqref{eq:euler_polar}.

We consider the following dynamic rescaling formulation centered at $r = 1, z= 0$
\beq\label{eq:rescal41}
\bal
\th(x, y, \tau) &= C_{\th}(\tau) \td{\th}( 1 - C_l(\tau) y,   C_l(\tau) x , t(\tau) ) 
= C_{\th}(\tau ) \td \th (r, z, t(\tau))
, \\
  \om(x, y, \tau) &=  C_{\om}(\tau) \td{\om}( 1 - C_l(\tau) y,  C_l(\tau) x  , t(\tau)) 
  =  C_{\om}(\tau) \td \om( r, z, t(\tau)), \\
 \psi(x, y, \tau)  & =  C_{\om}(\tau) C_l(\tau)^{-2} \td{\psi} (1 - C_l(\tau) y,  C_l(\tau) x, t(\tau)) =  C_{\om}(\tau) C_l(\tau)^{-2}  \td \psi(r, z, t(\tau)),
\eal
\eeq
where $C_l(\tau), C_{\th}(\tau), C_{\om}(\tau), t(\tau)$ are given by $C_{\th}(\tau) = C_l^{-1}(\tau) C^2_{\om}(\tau)$, 
\beq\label{eq:rescal42}
\bal
  C_{\om}(\tau) = C_{\om}(0) \exp \lt( \int_0^{\tau} c_{\om} (s)  d \tau \rt), \ C_l(\tau) =C_l(0) \exp\lt( \int_0^{\tau} -c_l(s) ds \rt) , \   t(\tau) = \int_0^{\tau} C_{\om}(\tau) d\tau .
\eal
\eeq

These rescaling relations are similar to those in \eqref{eq:rescal1}-\eqref{eq:rescal2}. 
Denote
\beq\label{eq:nota_om3}
\Psi(R,\b) = \f{1}{\rho^2} \psi(\rho, \b), \quad \Om(R, \b) = \om(\rho, \b), \quad \eta(R, \b) = (\th_x)(\rho, \b),
\quad \xi(R, \b) = (\th_y)(\rho, \b).
\eeq

Since we rescale the cylinder $D_1 = \{ (r, z) : r \leq 1, |z|\leq 1  \}$, the domain for $(x, y)$ is 
\beq\label{eq:rescale_D}
\td D_1 \teq \{ (x, y) :  |x| \leq C_l^{-1}, y \in [0, C_l^{-1}] \}.
\eeq

Using the above change of variables, one can reformulate the elliptic equation \eqref{eq:euler2} as follows 
\beq\label{eq:elli_euler}
\bal
&- \al^2 R^2 \pa_{RR} \Psi- \al(4+\al) R \pa_R \Psi - \pa_{\b \b} \Psi - 4 \Psi \\
&+ \f{ C_l \rho}{r}( \sin(\b) (2  + \al D_R) \Psi + \cos(\b)  \pa_{\b} \Psi ) 
+ \f{C_l^2 \rho^2}{r^2} \Psi = r \Om,
\eal
\eeq
with boundary condition of $\Psi$  (in the sector $R \leq C_l^{-\al}$) given below 
\beq\label{eq:elli_euler2}
\Psi(R,0) = \Psi(R, \pi/2) =0. 
\eeq
See Sections 9.1 and 9.2 \cite{chen2019finite2} for the details.

\begin{definition}\label{def:supp}
We define the size of support of the rescaling variables $(\th, \om)$ \eqref{eq:rescal41}
\[
S(\tau) = \mathrm{ess} \infim  \{\rho :  \th(x, y,\tau) =0, \om(x, y, \tau ) = 0 \mathrm{  \ for \ } x^2 + y^2 \geq \rho^2 \} .
\]
\end{definition}
Obviously, the support of $\Om, \eta$ defined in \eqref{eq:nota_om3} is $S(\tau)^{\al}$. After rescaling the spatial variable, the support of $(\td{\th}, \td{\om})$ \eqref{eq:omth}, \eqref{eq:euler1} satisfies 
\beq\label{eq:omth_supp}
\mathrm{supp} ( \td{\th}(t(\tau)) ),  \ \mathrm{supp} ( \td{\om}(t(\tau)) ) \subset \{ (r, z) :  ( (r-1)^2 + z^2)^{1/2} \leq  C_l(\tau) S(\tau)  \}.
\eeq

\subsection{Localized elliptic estimates}
Let $\chi_1(\cdot) : [0, \infty) \to [0, 1]$ be a smooth cutoff function, such that $\chi_1(R) = 1$ for $R \leq 1$, $\chi_1(R) = 0$ for $R \geq 2$ and $(D_R \chi_1)^2  \les \chi_1$.  This assumption can be satisfied if $\chi_1 = \chi_0^2$ where $\chi_0$ is another smooth cutoff function. Denote 
\beq\label{eq:solu_chi}
\chi_{\lam}(R) = \chi_1(R/ \lam), \quad  \Psi_{\chi_{\lam}} = \Psi \chi_{ \lam } , \quad \Om_{\chi_{\lam}} = \Om  \chi_{\lam}.
\eeq
In Section 9.2.2 in \cite{chen2019finite2}, we showed that the leading order part of $\Psi$ near $0$ is captured by 
\beq\label{eq:l12Z0}
L_{12}(Z_{\chi_{\lam}})(0) =  - L_{12}(\Om)(0)  + 4\al \int_0^{\pi/2} \Psi(0, \b) \sin(2\b) d \b ,
\eeq
when $\lam \geq (S(\tau))^{\al}$.

As discussed at the beginning of Section \ref{sec:euler_blowup}, we can generalize Proposition 9.9 in \cite{chen2019finite2} as follows. 
\begin{prop}\label{prop:elli_euler}
Suppose that $\Psi$ is the solution of \eqref{eq:elli_euler} and $\Om \in \cH^k$. There exists some absolute constant $\al_2$ and constant $\d_k \in (0, 1/4)$, such that if $\al <\al_2$, $\lam = \d_k C_l^{-\al}, \ C_l S <  \al \cdot \d_k^{1/\al +1}$, then we have 
\[
\bal
&\al^2 || R^2 \pa_{RR} \Psi_{\chi_{\lam}} ||_{\cH^k} + 
\al || R \pa_{R \b} \Psi_{\chi_{\lam}}  ||_{\cH^k}  +|| \pa_{\b\b} ( \Psi_{\chi_{\lam}} - \f{\sin(2\b)  }{\al \pi} 
(L_{12}(\Om) + \chi_1 L_{12}(Z_{\chi_{\lam}})(0) )
)   ||_{\cH^k} \les_k  || \Om ||_{\cH^k} , \\
 &| L_{12}(Z_{\chi_{\lam}})(0) |  \les  3^{- \f{1}{\al}}  || \Om \f{1+R}{R}||_{L^2}.
\eal
\]
\end{prop}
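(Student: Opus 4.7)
My plan is to prove Proposition~\ref{prop:elli_euler} by induction on $k$, extending the inductive scheme of Proposition~9.9 in \cite{chen2019finite2} (which handled $k\le 3$) with a weighted $L^2$ base case in the weight $\vp_1\sim (1+R)^4/R^4$. The cutoff $\chi_\lam$ with $\lam=\d_k C_l^{-\al}$ localizes the estimate to $\{R\le 2\lam\}$, which lies strictly inside the rescaled domain $\td D_1$ defined in \eqref{eq:rescale_D}. On this region $r=1-C_l y\ge 1/2$ and $C_l\rho\le (2\d_k)^{1/\al}$ are uniformly controlled, so the zeroth-order $C_l$-perturbations $\tfrac{C_l\rho}{r}\bigl(\sin(\b)(2+\al D_R)\Psi+\cos(\b)\pa_\b\Psi\bigr)+\tfrac{C_l^2\rho^2}{r^2}\Psi$ in \eqref{eq:elli_euler} are of size $(2\d_k)^{1/\al}\,||\Psi_{\chi_\lam}||_{\cH^k}$ and can be absorbed into the left-hand side once $\d_k$ and $\al$ are small.

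For the inductive step, I would apply $D_R^i D_\b^j$ with $i+j=k$ to the localized equation obtained by multiplying \eqref{eq:elli_euler} by $\chi_\lam$. Since $D_R$ commutes exactly with $R\pa_R$ and $R^2\pa_{RR}$, and $[D_\b,\pa_{\b\b}]=2\cos(2\b)\pa_{\b\b}+\text{l.o.t.}$ is only a lower-order angular correction, the principal operator reproduces itself on $D_R^i D_\b^j\bigl(\Psi_{\chi_\lam}-\tfrac{\sin(2\b)}{\pi\al}(L_{12}(\Om)+\chi_1 L_{12}(Z_{\chi_\lam})(0))\bigr)$ modulo forcing and commutator terms. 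The cutoff commutators $[\chi_\lam,\cdot]$ are supported in $\{\lam\le R\le 2\lam\}$ and, using $(D_R\chi_\lam)^2\les\chi_\lam$ together with the decay of $\vp_1$ at large $R$, contribute only quantities controlled by $||\Psi||_{\cH^{k-1}}+||\Om||_{\cH^{k-1}}$, which are handled by the induction hypothesis. Commutators with $r^{-1}$, $\rho$, $\sin(\b)$, and $\cos(\b)$ either gain a small factor of $C_l$ or reduce to derivatives of order $\le k-1$. Applying the base-level coercivity to the differentiated equation then yields the $\cH^k$ bound.

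The bound $|L_{12}(Z_{\chi_\lam})(0)|\les 3^{-1/\al}\,||\Om\tfrac{1+R}{R}||_{L^2}$ follows from \eqref{eq:l12Z0}: the auxiliary $Z_{\chi_\lam}$ is supported in $R\ge\lam$, and a Cauchy--Schwarz estimate produces a geometric prefactor $\bigl(\int_\lam^\infty\tfrac{ds}{s(1+s)^2}\bigr)^{1/2}$ which the choices $\lam=\d_k C_l^{-\al}$ and $C_l S<\al\d_k^{1/\al+1}$ drive below $3^{-1/\al}$. The main obstacle will be bookkeeping the constants $\d_k$ across the induction: each order generates cutoff- and $r^{-1}$-commutators whose smallness requires $\d_k$ to shrink, so the sequence $(\d_k)_{k\le 100}$ must be determined successively and the smallness hypothesis on $C_l S$ tightened accordingly. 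Since only finitely many orders are needed, this iteration terminates, consistent with the authors' remark at the start of Section~\ref{sec:euler_blowup} that this extension is straightforward.
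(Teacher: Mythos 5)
Your proposal follows essentially the same route the paper takes: it explicitly defers to the inductive scheme of Proposition 9.9 in \cite{chen2019finite2} (weighted $L^2(\tfrac{(1+R)^4}{R^4})$ base case, localization via $\chi_\lam$ so that the $C_l$-perturbation terms in \eqref{eq:elli_euler} are absorbed, and the $L_{12}(Z_{\chi_\lam})(0)$ bound from \eqref{eq:l12Z0}), declaring the extension to general $k$ straightforward, which is exactly the induction you spell out. Aside from a harmless sign/coefficient slip in the commutator $[D_\b,\pa_{\b\b}]=-4\cos(2\b)\pa_{\b\b}+4\sin(2\b)\pa_\b$ (still a bounded lower-order correction, as you use it), the argument matches the intended proof.
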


In Proposition 9.9 in \cite{chen2019finite2}, we prove the case for $k=3$ with $\d_k = 2^{-13}$. The following results generalize Proposition 9.11 from \cite{chen2019finite2}. The conditions $\lam = \d_k C_l^{-\al}, C_l S< \al \d_k^{1/\al + 1} $ guarantee that $\lam \geq (S(\tau))^{\al}$ in \eqref{eq:l12Z0}. 

\begin{prop}\label{prop:psi2}
Let $\bar{\Psi}_0(t)$ be the solution of \eqref{eq:elli_euler} with source term $\Om = \bar{\Om}_0 = \bar{\Om} \chi(R / \nu)$, and $\al_2, \d_k$ be the constants in Proposition \ref{prop:elli_euler}.
If $\al < \al_2$, $\lam = \d_k C_l^{-\al} ,  \ 
C_l S < \al \d_k^{1/\al + 1}, \ 2 \nu < \lam$, then we have 
\[
\bal
& \al || \f{1+R}{R} D_R^2 \bar{\Psi}_{0,\chi_{\lam}} ||_{\cW^{k, \infty}} + 
\al || \f{1+R}{R} R \pa_{R \b} \bar{\Psi}_{0, \chi_{\lam}}  ||_{\cW^{k, \infty}} \\
&  +||\f{1+R}{R} \pa_{\b\b} ( \bar{\Psi}_{0,\chi_{\lam}} - \f{\sin(2\b)  }{\al \pi} 
(L_{12}( \bar \Om_0) + \chi_1 L_{12}( \bar Z_{\chi_{\lam}})(0) )
)   ||_{\cW^{k,\infty}} \les_k  \al , \\
 &| L_{12}( \bar Z_{\chi_{\lam}})(0) |  \les  3^{- \f{1}{\al}}  ,
\eal
\]
where $ L_{12}( \bar Z_{\chi_{\lam}})(0)$ associated to $\bar \Psi_0$ is defined in \eqref{eq:l12Z0}.
\end{prop}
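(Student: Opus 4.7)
The plan is to mirror the proof of Proposition \ref{prop:elli_euler} but work in the $L^\infty$-based norm $\mathcal{W}^{k,\infty}$ defined in \eqref{norm:W}, exploiting the fact that the approximate steady state $\bar{\Omega}$ is explicit and smooth by \eqref{eq:profile}. First I would set up the base case: an $L^\infty$ estimate for $\bar{\Psi}_{0,\chi_\lambda}$ with the prefactor $\frac{1+R}{R}$, using the explicit formula $\bar{\Omega}(R,\beta) = \frac{\alpha}{c} \Gamma(\beta) \frac{3R}{(1+R)^2}$ together with the estimates for $\Gamma$ in Lemma \ref{lem:gam}. The truncation $\chi(R/\nu)$ with $2\nu < \lambda$ ensures $\text{supp}(\bar{\Omega}_0) \subset \{R \leq 2\nu\}$ sits safely inside $\{\chi_\lambda = 1\}$, so the commutator between $\chi_\lambda$ and the elliptic operator is supported away from the source and contributes only lower order terms.

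Next I would identify the leading order singular part of $\bar{\Psi}_{0,\chi_\lambda}$ near $R = 0$ as $\frac{\sin(2\beta)}{\alpha\pi}\bigl(L_{12}(\bar{\Omega}_0) + \chi_1 L_{12}(\bar{Z}_{\chi_\lambda})(0)\bigr)$, exactly as in \eqref{eq:l12Z0}, where $\bar{Z}_{\chi_\lambda}$ is the analog of $Z_{\chi_\lambda}$ in \eqref{eq:solu_chi} for the approximate problem. The bound $|L_{12}(\bar{Z}_{\chi_\lambda})(0)| \lesssim 3^{-1/\alpha}$ follows from the same geometric argument as in Proposition \ref{prop:elli_euler}: the hypotheses $\lambda = \delta_k C_l^{-\alpha}$ and $C_l S < \alpha\, \delta_k^{1/\alpha+1}$ force the truncation region and the physical support to be separated by a factor of $3^{1/\alpha}$ in $R$, yielding the exponentially small constant through the $L_{12}$ integral.

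With these two ingredients in hand, I would run the induction on $k$ precisely as in Proposition \ref{prop:elli_euler}, but replacing every weighted $L^2$ estimate by its $\mathcal{W}^{k,\infty}$ counterpart. Applying $D_R^i D_\beta^j$ with $i+j \leq k$ to the localized elliptic equation \eqref{eq:elli_euler}, the perturbative terms $\frac{C_l\rho}{r}(\cdots)$ and $\frac{C_l^2\rho^2}{r^2}\Psi$ are controlled because on $\text{supp}(\chi_\lambda)$ we have $\rho \leq (2\lambda)^{1/\alpha} = (2\delta_k)^{1/\alpha} C_l^{-1}$, so $C_l \rho \lesssim (2\delta_k)^{1/\alpha} \ll 1$ for $\delta_k$ small and the resulting constants remain absolute. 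The source term $r\bar{\Omega}_0$ and the extra commutators generated by differentiating through $\chi_\lambda$ and $r$ are estimated using the explicit formula for $\bar{\Omega}$ and Leibniz, closing the induction at the expense of a $k$-dependent constant.

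The main technical obstacle will be the interaction between the angular weight $\sin(2\beta)^{-\alpha/5}/(\frac{\alpha}{10} + \sin(2\beta))$ appearing in $\mathcal{W}^{k,\infty}$ for mixed $D_\beta$ derivatives and the factor $\Gamma(\beta) = \cos^\alpha(\beta)$ carried by $\bar{\Omega}$; one must verify at each derivative order that $D_\beta^j \Gamma$ remains compatible with the weight so that the leading-order decomposition around $\frac{\sin(2\beta)}{\alpha\pi}L_{12}(\bar{\Omega}_0)$ absorbs the singular behavior at $\beta = 0, \pi/2$. This is bookkeeping in the spirit of Section 8 and Appendix \ref{app:profile} of \cite{chen2019finite2} and introduces no new conceptual difficulty, only $k$-dependent constants which are acceptable since we ultimately take $k = 100$ fixed in \eqref{eg}.
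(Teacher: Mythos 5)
Your proposal is correct and takes essentially the same route as the paper, whose proof of this proposition consists of citing Proposition 9.11 of \cite{chen2019finite2} for the case $k=5$ and asserting that general $k$ follows by the same argument. Your sketch simply fills in the steps that citation compresses — running the $\cW^{k,\infty}$ analogue of the induction behind Proposition \ref{prop:elli_euler}, using $2\nu<\lam$ to separate the source from the cutoff region, and checking the angular weight against $\G(\b)$ — with no substantive deviation.
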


The case of $k=5$ is Proposition 9.11 in \cite{chen2019finite2}. The general case follows from a similar argument. 

Choosing $k=100$ in Propositions \ref{prop:elli_euler} and \ref{prop:psi2} and using \eqref{eq:solu_chi}, we obtain the elliptic estimates for $ \Psi(R, \b) = \Psi_{\chi_{\lam}}(R, \b) , R \leq \lam = \d_{100} C_l^{-\al}$ 
in the dynamic rescaling equations. Using the relations \eqref{eq:euler_polar} and \eqref{eq:nota_om3} and rescaling the domain, we obtain that $R \leq \lam$ is equivalent to 
\[
\rho \leq  C_l^{-1} \d_{100}^{1/\al},  \quad  \rho C_l \leq \d_{100}^{1/\al}, 
\quad | (r, z) - (1, 0) | \leq \d_{100}^{1/100}. 
\]
Thus, we have $\cH^{100}$ estimate of the stream function $\td \psi$ \eqref{eq:euler1} in the physical domain 
\beq\label{eq:euler_R2al}
B_{ (1, 0)}( R_{2,\al}), \quad R_{2,\al} =  \d_{100}^{1/\al}  < 1/4. 
\eeq


Now, we are in a position to prove Theorem \ref{thm:euler_blowup}.  Denote 
\beq\label{eq:euler_comp}
D_{R_2} \teq \{ (r, z) : r \in (0, 1), z \neq 0  \} \cap B_{(1,0)}(R_{2,\al}) ,
\quad \Ups \teq \{ (r, z) : r = 1 \mathrm{\ or \ } z = 0 \}.
\eeq

\begin{remark}
In later estimates, we will choose $\al$ to be very small and then choose $S(0)$ to be very large. Finally, we choose $C_l(0)$ much smaller than $S(0)^{-1}, \al$. We treat $C_l(0)$ roughly as $0$.
\end{remark}

\subsection{Blowup, control of the trajectory and the interior regularity }\label{sec:euler_traj}

Recall from Definition \ref{def:supp} the size of support $S(t)$  in the dynamic rescaling equations. Then $C_l(t) S(t)$ is the size of the support of the solution in the physical space. In \cite{chen2019finite2}, 
for some small $\al_0> 0$ and any  $0< \al < \al_0$,  we construct a class of $C^{\al}$ singular solutions with the following control of the support and trajectory. For a point within the support of the initial data $(\th_0, \om_0)$ \eqref{eq:rescal41} and with trajectory $(R(t), \b(t))$, $R(t)$ satisfies a uniform estimate 
\beq\label{eq:supp}
 C_l(t) R(t)^{1/\al} \leq C(\al, S(0)) C_l(0) 
\eeq
for some constant $C(\al, S(0))$ up to the blowup time. See Section 9.3.5 in \cite{chen2019finite2}. For initial data with support size $S(0)$, we can pick $C_l(0)$ small enough such that 
\beq\label{eq:supp1}
C_l(t) S(t) \leq  C( \al, S(0)) C_l(0)  < R_{2, \al } / 8 \teq  R_{1, \al} ,
\eeq
where $R_{2,\al}$ is defined in \eqref{eq:euler_R2al}. It follows 
\[
(  S(t))^{\al} < R_{1,\al}^{\al}  C_l^{-\al} < (R_{2,\al} / 2)^{\al} C_l^{-\al} < \d_{100} C_l^{-\al}.
\]
Thus, within the support of the solution, we can apply the high order elliptic estimates $(k=100)$ in Propositions \ref{prop:elli_euler} and \ref{prop:psi2} to estimate $\Psi(R, \b)$. 

As discussed at the beginning of Section \ref{sec:euler_blowup}, using the argument in \cite{chen2019finite2} and the higher order elliptic estimates in Propositions \ref{prop:elli_euler} and \ref{prop:psi2}, we can generalize the blowup results in Theorem \ref{thm:bous_blowup} for the 2D Boussinesq equations to the 3D axisymmetric Euler equations. In particular, we have the control of the support and the trajectory \eqref{eq:supp}-\eqref{eq:supp1} and obtain the following 
generalization of \eqref{eq:U_Ck} and \eqref{eq:V_Ck} for the solution $(\th, \om, \psi)$ in the dynamic rescaling formulation \eqref{eq:rescal41} of \eqref{eq:euler1}
\beq\label{eq:euler_Ck}
|| \na \th(\tau) ||_{\cC^{60}} +  || \om(\tau) ||_{\cC^{60}} + || \f{1}{\rho} \na ( \psi(\tau) \chi_{\lam} ) ||_{\cC^{60}} \les_{\al} 1 .
\eeq

In general,  $\th, \om, \psi$ are only defined in the bounded and rescaled domain \eqref{eq:rescale_D}.  Since $\th, \om, \psi \chi_{\lam}$ have compact support with $ S(t)  < \f{1}{2} C_l^{-1}$ \eqref{eq:supp1} or $(2 \lam)^{1/\al} < \f{1}{2} C_l^{-1}$ (see Lemma \ref{prop:elli_euler}), these variables can be extended naturally to $(x, y) \in \R_+ \times \R$. Then the $\cC^k$ norm \eqref{norm:ck} of these variables are well-defined. From \eqref{eq:omth_supp} and \eqref{eq:supp1}, the solution $\td \th(t, r, z) , \td \om(t,r, z)$ are supported in $B_{(1,0)}(R_{1,\al})\subset B_{(1,0)}(R_{2,\al })$. Since $\chi_{\lam} = 1$ in $B_{(1,0)}(R_{2,\al })$, using \eqref{eq:euler_Ck}, \eqref{eq:euler2}, the rescaling relation \eqref{eq:rescal41}, \eqref{eq:omth}, and estimates similar to those in Lemma \ref{lem:cCk}, we yield 
\beq\label{eq:euler_C50}
 || \td \th(t) ||_{C^{50}(\S)} + || \td \om(t) ||_{C^{50}(\S)} + || u^r(t)||_{C^{50}(\S)}
 + || u^z(t)||_{C^{50}(\S)}  \les C( \al, \S, C_l(\tau), C_{\om}(\tau) )
\eeq
for the compact domain $\S \subset D_{R_2}$ \eqref{eq:euler_comp}. Since $r, \f{1}{r}$ is smooth away from $r=0$, from \eqref{eq:omth}, we yield $ ( u^{\vth})^2 , \om^{\vth} \in C^{50}(\S) $. We prove the estimates for $\om^{\vth}, (u^{\vth})^2, u^r, u^z$ in result (c) in Theorem \ref{thm:euler_blowup}

In the $(r, z)$ coordinate, from \eqref{eq:supp}, \eqref{eq:supp1}, and \eqref{eq:euler_polar}, for $ (r_0, z_0) \in \supp( \td \om_0) \cup \supp (\td \th_0) = \supp (u_0^{\vth}) \cup \supp( \om_0^{\vth})$, we have 
\beq\label{eq:supp2}
\g_t( r_0, z_0) \in B_{(1,0)}( R_{1,\al} ) .
\eeq
This proves result (b) in Theorem \ref{thm:euler_blowup}.

\subsection{Result (a): Blowup of \texorpdfstring{$\om_p$}{Lg}}

Recall the poloidal component of $\om$ from \eqref{eq:polo_w}
\[
 \om_p =\om^r e_r + \om^z e_z , \quad \om^r = - \pa_z u^{\vth}, \quad \om^z = \f{1}{r} \pa_r( ru^{\vth}).
\]
From \eqref{eq:rescal41}, \eqref{eq:rescal42}, and \eqref{eq:omth}, we get $\pa_x \th(x, y, \tau) = C_{\th} C_l \pa_z \td \th = C_{\om}^2 \pa_z( (u^{\vth} / r)^2)$. It follows 
\[
I(\tau) \teq  \int_0^{ t(\tau)} || \pa_z ( \f{  ( u^{\vth}(s) )^2 } {r^2} ) ||_{\inf}  ds
= \int_0^{\tau}  C_{\om}^{-2} || \pa_x \th(x, y, s)||_{\inf} d s .
\]

The nonlinear stability result implies that $|| \pa_x \th(x, y, s)||_{\inf} \approx || \bar \th_x ||_{\inf} \gtr_{\al} 1$ and $C_{\om}(\tau) \leq \exp( - \tau / 2 )$. See Section 9.3.6 in \cite{chen2019finite2} for the derivations. Since $u^{\vth}$ is supported in $B_{(1,0)}(1/4)$ and $ r u^{\vth}(r, z, t)$ is transported \eqref{eq:euler1}, we obtain
\[
|| \pa_z ( \f{  ( u^{\vth}(s) )^2 } {r^2} ) ||_{\inf}
\les || \pa_z u^{\vth}(s)||_{\inf} || r u^{\vth}(s ) ||_{\inf}
\les ||  \om_p(s) ||_{\inf} || r u_0^{\vth} ||_{\inf}.
\]
Therefore, we establish 
\[
\int_0^{\tau} \exp( s ) ds \les_{\al}   I(\tau) 
=  \int_0^{ t(\tau)} || \pa_z ( \f{  ( u^{\vth}(s) )^2 } {r^2} ) ||_{\inf}  ds
\les || r u_0^{\vth} ||_{\inf} \int_0^{ t(\tau)} ||  \om_p(s) ||_{\inf}  ds.
\]
Taking $\tau \to \infty$ yields $\int_0^{T_* } ||  \om_p(s)||_{\inf} ds  = \infty$, where $T_* = t(\infty) < +\infty$ \eqref{eq:rescal42} is the blowup time.

\subsection{Interior regularity of  \texorpdfstring{ $u_0^{\vth}$}{Lg} }\label{sec:euler_uth}
The smoothness of $u^{\vth}$ does not follow from $(u^{\vth})^2$ since $u^{\vth}$ can degenerate. 
In this section, we choose $u_0^{\vth}$ smooth in the interior of the domain. In Section \ref{sec:euler_uth_in}, we show that the regularity can be propagated. 

Let $\S_1$ be any compact domain with 
\beq\label{eq:euler_comp2}
 \S_1 \subset \{ (x, y): x \neq 0, y > 0 \}.
\eeq

\begin{remark}\label{rem:change2}
Recall from Remark \ref{rem:oversight} that we made a minor change of the approximate steady state of the 3D Euler equations in the updated arXiv version of \cite{chen2019finite2}, i.e. \cite{chen2019finite2arXiv}. 
More precisely, in \cite{chen2019finite2arXiv}, we modify $\bar \th_{old}$ used in \cite{chen2019finite2} by $\bar \th $ below 
\beq\label{eq:th_new}
\bar \th_{old} = \int_0^x \bar \th_x( z, y) , \quad \bar \th = 1 + \int_0^x \bar \th_x(z, y) d z, 
\eeq
where $\bar \th_x(x, y) = \bar \eta(R, \b)$ \eqref{eq:profile}. See Eq (A.20) in \cite{chen2019finite2arXiv}.
This modification does not change $\na \bar \th$, i.e. $\na \bar \th = \na \bar \th_{old}$, and we have $  \bar \th \in C^{1,\al}$. We remark that  \cite{chen2019finite2arXiv} and \cite{chen2019finite2} are essentially the same except for this minor change. In the following derivations, we use this new approximate steady state $\bar \th$. 
\end{remark}

The initial data for $\th$ in \cite{chen2019finite2arXiv} (see Eq (9.55) in Sections 9.3.2 and 9.3.6 \cite{chen2019finite2arXiv}) is chosen as 
\[
 \th_0(x, y) = \bar \th_0(x, y) = \chi_1(R / \nu) \bar \th( x, y),
\]
where $\bar \th$ is given in \eqref{eq:th_new} and $\chi_1$ is some smooth cutoff function satisfying that $\chi_1^{1/2}$ is smooth. We have the smoothness of $\chi_1^{1/2}$ by choosing $\chi_1 = \td \chi_1^2$ for another smooth cutoff function $\td \chi_1$. Since $\bar \th_x( x, y ) > 0$ for $x > 0 $,  $\bar \th(0, y) \geq 1$ and $\bar \th$ is even, we get $ \bar \th \geq 1.$
Using induction and the Leibniz rule, we get $\bar \th^{1/2} \in C^{60}(\S_1)$. 
Since $\bar  \th_0^{1/2} = \bar \th^{1/2}  \chi_1^{1/2}(R /\nu), R \in C^{60}(\S_1) $, and $ \chi_1^{1/2}$ is smooth, we further obtain $\th_0^{1/2}(x, y) = \bar \th_0^{1/2}(x, y) \in C^{60}(\S_1)$.

Since $\S_1$ is an arbitrary compact domain with \eqref{eq:euler_comp2}, using the relation among $\th_0, \td \th_0, u^{\vth}_0$ \eqref{eq:omth}, \eqref{eq:rescal41} and the relation between the coordinate $(r, z)$ and $(x, y)$ in \eqref{eq:euler_polar}, we obtain $u_0^{\vth}(r, z) 
= \th_0^{1/2} / r \in C^{60}(\S)$ for any compact domain $\S \subset D_{R_2}$ \eqref{eq:euler_comp}. Moreover, $u_0^{\vth}$ is even in $z$ and this symmetry is preserved by \eqref{eq:euler1}.

\subsection{Propagate the regularity of \texorpdfstring{$u^{\vth}$}{Lg}}\label{sec:euler_uth_in}





In Theorem \ref{thm:euler_blowup}, it remains to prove $u^{\vth}(t) \in L^{\inf}( [0, T], C^{50}(\S))$ for any compact set $\S \subset D_{R_2}$ \eqref{eq:euler_comp}. Recall $\Ups$ from \eqref{eq:euler_comp}.

The idea is that if the domain $\Sigma$ is away from $\supp ( u^{\vth} )$, then $u^{\vth}$ vanishes and it is smooth. Otherwise, the trajectory $g_t$ \eqref{eq:char_rz} through $\Sigma$ can be contained in a compact set in $ ( D_1 \bsh \Ups )  \cap B_{(1,0)}( R_{2,\al})$ and is smooth according to Theorem \ref{thm:euler_blowup}. Since $ r u^{\vth}$ is transported along the trajectory and the initial data $ u^{\vth}_0$ is smooth, we then obtain that $u^{\vth}(t)$ is smooth in $\S$.

\begin{proof}
Recall $D_1, \Ups$ from \eqref{eq:euler_HL_domain}, \eqref{eq:euler_comp}. We fix $T< T_{*}$ and a compact set $\S \subset ( D_1 \bsh \Ups) \cap B_{(1,0)}(R_{2,\al}) $. 
Consider the flow map $g_t: (r, z) \in D_1 \to D_1$ generated by $(u^r, u^z)$
\beq\label{eq:char_rz}
  \f{d}{dt} g_t( r, z ) = ( u^r( g_t(r, z), t) , u^z( g_t(r, z), t ) , \quad g_0(r, z) = (r, z).
\eeq

It is the same as $\td \g_t$ in \eqref{eq:char_polo},\eqref{eq:char_polo2}.
Since $u^r, u^z \in L^{\inf}( [0, T], C^{1,\al}(D_1))$, we can solve the above ODE with $g_t, g_t^{-1}$ being Lipschitz in $(r, z)$. Due to the non-penetrated condition \eqref{eq:bc_vanish}, we obtain that $g_t, g_t^{-1}$ are bijections from $D_1$ to $D_1$ and $D_1 \bsh \Ups$ to $D_1 \bsh \Ups$. One should not confuse \eqref{eq:char_rz} with \eqref{eq:bichar1}. Denote $L_{g}$ by the Lipschitz constant of $g_t, g_t^{-1}$ for $t \in [0, T]$. Recall from \eqref{eq:euler1} that 
\[
\pa_t (r u^{\vth}) + ( u^r \pa_r + u^z \pa_z )  (r u^{\vth}) = 0.
\]
We abuse the notation by denoting $ x = (r, z)$. We get $r_t u^{\vth}( t , g_t(x)) = r_0 u^{\vth}_0( x)$. Inverting $g_t$ yields 
\beq\label{eq:uth_trans}
r u^{\vth}( t, x) = r( g_t^{-1}(x) ) u^{\vth}_0( g_t^{-1}(x)).
\eeq

From result (b) in Theorem \ref{thm:euler_blowup}, we yield 
\beq\label{eq:uth_supp}
 \supp( u^{\vth}(t)) \subset g_t( \supp (u^{\vth}_0) ) \cap B_{(1,0)}(R_{1,\al}), \quad t \in [0, T_* ).
\eeq
Since $\S$ is compact, it suffices to show that for any $x \in \S$, there exists $\d > 0$ such that $u^{\vth}(t) \in C^{50}(B_x(\d))$ with norm uniformly bounded on $[0, T]$. Since $g_t, g_t^{-1}$ are bijections and Lipschitz in $t$ and $x$ and $ g_t^{-1} (\S) \cap \Ups = \emptyset$, we yield 
\beq\label{eq:uth_d1}
\d_1 \teq \min_{t \in [0, T]} \dist( g_t^{-1}(\S), \Ups) > 0 .
\eeq

Now, we define
\beq\label{eq:uth_d}
\bal
&\d = \f{1}{ 4 ( L_{g} + 1) } \min( R_{1, \al}, \d_1), \quad 
\Sigma_2 \teq \{ x:  \dist( x, \Ups)  \geq \d \} \cap \bar B_{(1,0)} ( 4 R_{1,\al} ) \cap \bar D_1,  \\
&S(t, \rho) = \{ x: |x- y| \leq \rho, y \in \supp(u^{\vth}(t)) \} \cap D_1.
\eal
\eeq
The set $S(t, \rho)$ is the $\rho$ neighborhood of $\supp( u^{\vth}(t))$, and  $\S_2$ is a compact set in $D_1 \bsh \Ups \cap B_{(1,0)}( R_{2,\al})$. From result (c) in Theorem \ref{thm:euler_blowup}, we have $u^r, u^z \in L^{\inf} ( [0, T], C^{50}(\S_2) ) $. 


If $x \in \S \bsh S(t, 2 \d)$, 
we get $u^{\vth}(t, x) = 0$ on $B_x( \d )$ and thus $ u^{\vth}(t) \in C^{50} (B_x(\d))$.

If $x \in \S \cap S(t, 2 \d))$, from \eqref{eq:uth_supp}, we have $x = \g_t(x_0) + z, x_0 \in B_{(1,0)}(R_{1,\al}), |z| \leq 2 \d $. Hence, we get $B_x(\d) \subset B_{ \g_t(x_0)}(3\d)$. Next, we show that the trajectory passing through $B_{ \g_t(x_0)}(3\d)$ is contained in $\S_2$.  Recall that $L_{g}$ is the Lipschitz constant of $g_t, g_t^{-1}$ on $[0, T]$. For any $ s \in [0, t]$ and $y = g_t(x_0) + z \in D_1, |z| \leq 3\d $, using \eqref{eq:uth_d1}, \eqref{eq:uth_d}, we get 
\[
\bal
 | g_s^{-1}(y) - g_s^{-1} g_t(x_0)|  &\leq L_{g} | y  - g_t(x_0) | \leq 3 L_{ g} \d,  \\
\dist( g_s^{-1} ( y) , \Ups) 
&\geq \dist( g_s^{-1} g_t(x_0), \Ups) -3 L_{ g} \d
\geq \d_1 - 3 L_{ g} \d > \d,  \\
 | g_s^{-1} (y) - (1,0)|
&\leq   |  g_s^{-1} g_t(x_0) - (1,0)| + 3 L_{ g} \d \leq 3 L_{ g} \d + R_{1, \al} \leq 2 R_{1,\al},
\eal
\]
where we have used $ g_{\tau}( x_0)  \in B_{(1,0)}(R_{1,\al}) $ from Theorem \ref{thm:euler_blowup} for $x_0 \in \supp( u^{\vth}_0)$ and $\tau \in [0, T]$. Hence, we establish
\[
g_s^{-1} B_x(\d) \subset g_s^{-1} B_{\g_t(x_0)}( 3\d) \subset \Sigma_2, \quad s \in [0, t].
\]
Since $u^r, u^z \in L^{\inf}( [0, T], C^{50}(\S_2))$ \eqref{eq:euler_C50}, solving \eqref{eq:char_rz} backward with backward initial data in $B_x(\d)$, we yield $ g_t^{-1} \in C^{50}( B_x(\d))$, with bound depending on $T$ and $\S_2$. Since $ r \in [1/2, 1]$ within the support of $u^{\vth}( \cdot)$, using \eqref{eq:uth_trans}, we prove $u^{\vth}(t) \in C^{50}(B_x(\d))$ with bound depending on $T, \S_2$. 

Combining both cases $x \in \S \bsh S(t, 2\d), x \in \S \cap S(t, 2\d)$, we obtain $u^{\vth} \in L^{\inf}( [0, T], C^{50}( B_x(\d))$. Since $\d$ is uniform for $x \in \S$ and $\S$ can be covered by finite balls with radius $\d$, we obtain $u^{\vth} \in L^{\inf}( [0, T], C^{50}( \S) )$.
\end{proof}

\subsection{Proof of Theorem \ref{thm:euler_R3_blowup}}

The proof of Theorem \ref{thm:euler_R3_blowup} is similar and simpler than that of Theorem \ref{thm:euler_blowup} since we do not need to control the trajectory and estimate the swirl $u^{\vth}$.

\begin{proof}

The first part of the theorem about the blowup result from some $\om_0^{\vth} \in C_c^{\al}$ and $u^{\vth}_0 = 0$ has been proved in Theorems 1, 2 in \cite{elgindi2019stability}. Moreover, higher order estimates of the perturbation in the $\cH^k$ norm for $k\geq 1$ and the profile have been established in Theorem 2 in \cite{elgindi2019stability}. Thus, the interior regularity $\om^{\vth}, u^r, u^z \in L^{\inf}( [0, T], C^{50}(D_2))$ in Theorem \ref{thm:euler_R3_blowup} follows from these higher order estimates and the argument in the proof of Theorem \ref{thm:bous_blowup}.

It remains to estimate $u_r^r(t, 0, 0)$. Let $(r, \vth, z)$ be the cylindrical coordinate in $\R^3$ \eqref{eq:polar_basis}, $\rho, R, \b$ be the modified polar coordinate for $(r, z)$ and $\Om$ be the vorticity in the new coordinate 
\beq\label{eq:polar_wholeR3}
\b = \arctan( z / r), \quad \rho = (r^2 + z^2)^{1/2}, \quad R = \rho^{\al}, \quad \Om(R, \b) = \om^{\vth}(\rho, \b). 
\eeq
Firstly, we show that 
\beq\label{eq:L12_R3}
u_r^r(0, 0) = - \f{1}{2} L(\om^{\vth})(0) = - \f{1}{2\al} L( \Om)(0), \quad 
L(f)( r ) \teq \int_r^{\inf} \int_0^{\pi/2} \f{ f( r, \b) \cos^2(\b) \sin(\b) }{ r } d r d \b.
\eeq
This can be obtained by following the derivations in \cite{elgindi2019finite,elgindi2019stability}. For the sake of completeness, we derive \eqref{eq:L12_R3} in Appendix \ref{app:urr} using the formula  $\uu = \na \times (-\D)^{-1} \om$ in $\R^3$.

In \cite{elgindi2019stability}, it is proved that the blowup solution $\Om$ satisfies
\beq\label{eq:profile_R3}
\bal
& \Om(R, \b, t) = \f{1}{\lam(t)} \Xi( \f{R}{ \lam^{1+\d}}, \b, s ) ,  \quad \f{ds}{dt} = \f{1}{\lam(t)},   \quad || \Xi||_{L^{\inf}} \les_{\al} 1,
 \\
& \Xi =  F + \e(\tau) = F_* + \al^2 g + \e(\tau), \quad \f{1}{\al} L(F)(0) =- 1 + O(\al), \quad L(\e(\tau))(0) \equiv 0,
\eal
\eeq
for some rescaled time $s$ and factor $ \f{T_*}{T_* - t} \lam(t) \to 1$ as $t\to T_*$, where $T_*$ is the blowup time. Here $F = F_* + \al^2 g$ is the time-independent self-similar profile of \eqref{eq:euler} without swirl constructed in \cite{elgindi2019finite}. See Sections 2.3-2.5 in \cite{elgindi2019stability}. In particular, for $\al$ small enough, we get 
\[
\bal
 u_r^r(0, 0, t) & =- \f{1}{2 \al}  L(\Om)(0)
 =  - \f{1}{2 \al \lam(t)}  L( \Xi)(0)  
 = - \f{1}{2 \al \lam(t)} L( F)(0) > 0,  \\
    u_r^r(t,0,0) & \gtr_{\al} || \Om(t) ||_{L^{\inf}}
 = || \om(t)||_{L^{\inf}}. 
 \eal
\]
The last inequality is a consequence of that $u_r^r(t, 0,0)$ and $||\om^{\vth}||_{L^{\inf}} = || \om||_{L^{\inf}}$ have the same scaling and that the blowup is asymptotically self-similar.  It follows $\int_0^{T_*} u_r^r(t, 0,0 )d t = \inf$.
\end{proof}

\begin{remark}
In \cite{elgindi2019finite}, the setup of the 3D axisymmetric Euler equations is not conventional and differs from \eqref{eq:euler1}-\eqref{eq:euler21} by a negative sign. See Section 2 in \cite{elgindi2019finite} for this difference. Therefore, in the current setting, the profile $F$ for the vorticity is negative, and $\f{1}{\al}L(F) (0) = -1 + O(\al)$, while the profile  $F$ is positive in \cite{elgindi2019finite,elgindi2019stability}. These changes do not affect the positive sign of $u^r_r(0,0,t)$. 
\end{remark}

\noindent
{\bf Acknowledgments}. The research was in part supported by NSF Grants DMS-1907977 and DMS- 1912654, and the Choi Family Gift Fund. We would like to thank Professors Tarek Elgindi, Sasha Kiselev, Vladimir Sverak and Yao Yao for their constructive comments on an earlier version of our paper and for bringing to our attention several relevant references. 

\appendix 

\section{Review of the construction of unstable solutions}\label{app:WKB}


We provide a brief review of the construction of the unstable solution in \cite{lafleche2021instability,vasseur2020blow} via a WKB expansion and explain the connections among the WKB expansion, the bicharacteristics-amplitude ODE system \eqref{eq:bichar1}-\eqref{eq:bichar3}, and the growth of the unstable solution.

\subsection{Construction of the approximate solution}
Suppose that $\uu(t, x)$ is a singular solution of \eqref{eq:euler}. Denote by $\g_t(x)$ the flow map
\beq\label{eq:ODE_flow}
 \f{d}{dt} \g_t(x) =  \uu( t, \g_t(x)), \quad \g_0(x) = x.
\eeq
The main idea in \cite{vasseur2020blow} is to construct an approximate solution  to \eqref{eq:euler_lin} using a WKB expansion 
\beq\label{eq:WKB0}
v(t, x) \approx b(t, x) \exp( \f{ i S(t, x)}{\e} )
\eeq
for sufficiently small $\e$ and the characteristics of the flow, where $b(t, x) \in \R^3$ and $S$ is a scalar. Plugging the above ansatz into \eqref{eq:euler_lin}, we obtain 
\[
R_{\e} = ( \pa_t  + \uu \cdot \na + \na \uu )  v  = 
\f{i}{\e}( \pa_t + \uu \cdot \na ) S  \cdot b e^{ iS / \e} 
+  ( \pa_t + \uu \cdot \na   + \na \uu) b  \cdot  e^{i S / \e},
\]
where $ (\na \uu) f = f \cdot \na \uu =  f_j \pa_j u_i  e_i $. To eliminate the $O(\e^{-1})$ term, one requires 
\beq\label{eq:WKB1}
(\pa_t + \uu \cdot \na ) S =0.
\eeq
Then we can rewrite $R_{\e}$ as follows 
\beq\label{eq:WKB12}
R_{\e} = (\pa_t + \uu \cdot \na + \na \uu) b  \cdot e^{ i S/ \e} 
\teq F(t, x) \cdot e^{ i S/ \e} , \quad  F \teq  (\pa_t + \uu \cdot \na + \na \uu) b .
\eeq

An important observation in \cite{vasseur2020blow} is that for high frequency oscillation, i.e. small $\e$, the pressure term in \eqref{eq:euler_lin} is almost local. 
We would like to construct $(v, Q)$ such that 
\[
R_{\e} = F(t, x) e^{i S/ \e} = \na Q + E_{\e} ,
\]
where $E_{\e}$ is a small error term. 
This is possible since $Q$ is one order more regular than a highly oscillatory function $F(t, x) e^{i S / \e}$. By integration and exploiting the cancellation, $Q$ can be of order $O(\epsilon)$. In fact, taking $\na \times$ on both sides, we obtain 
\[
\na \times R_{\e} = ( \na \times F)  e^{i S / \e} + \f{i}{\e} (\na S \times F) e^{i S / \e} 
=  \na \times ( \na Q + E_{\e} ) = \na \times E_{\e}\;.
\] 

To eliminate the $O(\e^{-1})$ term, we require $\na S \times F = 0$, which implies $F = c(t, x) \na S $ for some scalar $c(t, x)$. In this case, one can construct the pressure $Q$ as follows 
\[
Q = - i \e  c(t, x)   e^{i S / e} .
\]
As a result, the error is given by 
\beq\label{eq:WKB_error}
E_{\e} = R_{\e} - \na Q = c \na S e^{i S / \e} + i \e \cdot \na c \cdot e^{i S / \e}
+ i \e c \f{ i \na S}{\e} e^{iS / \e} = i \e \cdot \na c \cdot e^{i S / \e}.
\eeq
Suppose that $c$ is smooth, then the $L^p$ norm of the error $E_{\e}$ is small as $\e \to 0$. 

From $F = c(t, x) \na S$ and \eqref{eq:WKB12}, we yield
\[
( \pa_t + \uu \cdot \na  + \na \uu) b = F(t, x) = c(t, x) (\na S)( t, x).
\]

Using the Lagrangian coordinates and the flow map $\g_t$ \eqref{eq:ODE_flow}, we get 
\[
\pa_t b(t, \g_t(x)) = - ( \na \uu ) b(t, \g_t(x)) + c(t, x) (\na S)(t,  \g_t(x)) .
\]

Denote 
\beq\label{eq:WKB2}
\xi_t( x) \teq   (\na S)( t, \g_t(x)) , \quad b_t(x) \teq b(t, \g_t(x)) \;.
\eeq
The above equation reduces to 
\beq\label{eq:ODE_bt}
 \f{d}{dt} b_t = - (\na \uu) b_t + c(t, x) \xi_t .
\eeq

Next, we determine the equations for $b, \xi$. In order for $v(t, x)$ to be incompressible, from the ansatz \eqref{eq:WKB0} and 
\[
 \na \cdot v(t, x) = ( \na \cdot b ) e^{i S / \e}  + \f{i}{\e} b \cdot \na S e^{ i S /\e} \; ,
\]
we require $ b(t, x) \cdot (\na S)(t, x) = 0$ to eliminate the $O(\e^{-1})$ term. In the Lagrangian coordinates, this condition is equivalent to enforcing
\beq\label{eq:WKB3}
b(t, \g_t(x)) \cdot (\na S)( t, \g_t(x)) = b_t(x) \cdot \xi_t(x) = 0.
\eeq

Taking the gradient in the transport equation \eqref{eq:WKB1}, we get 
\[
 ( \pa_t + \uu \cdot \na  ) \na S =  - (\na \uu)^T \na S.
\]

Using the Lagrangian coordinates and \eqref{eq:WKB2}, we derive
\beq\label{eq:ODE_xit}
 \f{d}{d t} \xi_t = \f{d}{dt} (\na S)(t, \g_t(x)) = - (\na \uu )^T (\na S )( t ,\g_t(x))
 =  - (\na \uu )^T  \xi_t.
\eeq

The incompressible condition \eqref{eq:WKB3} implies $ \f{d}{dt} (b_t \cdot \xi_t ) = 0$. Thus, from \eqref{eq:ODE_bt} and \eqref{eq:ODE_xit}, we get 
\[
\la c(t, x) \xi_t ,  \xi_t \ra - \la  (\na u) b_t ,  \xi_t \ra - \la  (\na \uu)^T \xi_t ,  b_t  \ra = 0,
\]
where $\la p, q \ra = p_i q_i $. It follows that
\[
c(t, x) = 2 \f{ \xi_t^T ( \na \uu ) b_t}{ | \xi_t|^2} .
\]
Thus, from \eqref{eq:ODE_flow},\eqref{eq:ODE_bt},\eqref{eq:ODE_xit}, $\g_t, \xi_t, b_t$ satisfy the bicharacteristics-amplitude ODE system \eqref{eq:bichar1}-\eqref{eq:bichar3} of \eqref{eq:euler} \cite{lafleche2021instability,vasseur2020blow}

The above derivation reveals the main idea behind the construction of an approximate solution to \eqref{eq:euler_lin} in \cite{vasseur2020blow} and the relationship between the WKB expansion \eqref{eq:WKB0} and the bicharacteristics-amplitude ODEs \eqref{eq:bichar1}-\eqref{eq:bichar3}. The last step is to localize the solution $v(t, x)$ to some trajectory and add a correction to $v(t,x)$ \eqref{eq:WKB0} so that it is incompressible. We refer to  \cite{vasseur2020blow} for the details.

\subsection{Growth of the solution}

The solution $v(t, x)$ satisfies \eqref{eq:euler_lin} up to an error similar to \eqref{eq:WKB_error}. Since $E_{\e}$ contains the highly oscillatory phase $e^{i S / \e}$, the error may not be small in $C^{k, \al}$ or $H^s$ norm. In \cite{vasseur2020blow}, based on  the WKB construction \eqref{eq:WKB0} and using the smallness of the error in the $L^p$ norm, 
the authors constructed an approximate solution to \eqref{eq:euler_lin} with error controlled by $\e$. To prove the instability, they further showed the growth of $v(t, x)$. From \eqref{eq:WKB0}, the growth of $|| v||_p$ is due to $|| b_t||_p$. The authors showed that 
if the velocity $\uu(t, x)$ is smooth, the system \eqref{eq:bichar1}-\eqref{eq:bichar3} satisfies the following conservations along the characteristic $\g_t(x)$
\[
\bal
 \om(t, \g_t(x) ) \cdot \xi_t & = \om_0(x) \cdot \xi_0 ,  \quad 
b_t \cdot \xi_t   = \td b_t \cdot \xi_t , \quad 
( b_t \times \td b_t)  \cdot \xi_t = (b_0 \times \td b_0) \cdot \xi_0 ,
 \eal
\]
where $\om = \na \times \uu$ is the vorticity of the blowup solution $\uu$, $ \xi_t, b_t , \td b_t$ are the solution to \eqref{eq:bichar1}-\eqref{eq:bichar3} with  initial data $x_0, \xi_0, b_0, \td b_0$, $ b_0 \cdot \xi_0 = \td b_0 \cdot \xi_0 = 0$ and $ b_0, \td b_0, \xi_0$ being linearly independent. 

From the first and the third identity, formally, $ b_t \times \td b_t$ plays a role similar to $\om(t, \g_t(x))$. Indeed, using the above conservations, 
 the authors further proved 
\beq\label{eq:growth}
 || \om(t, \cdot )||_{\inf} \leq || \om_0||_{L^{\inf}} \B( \sup_{ |b_0| = |\xi_0| = 1, x_0 \in \Om, b_0 \cdot \xi_0 = 0} | b_t(x_0, \xi_0, b_0)| \B)^2. 
\eeq
 According to the BKM blowup criterion, $|| \om(t)||_{\inf}$ must blowup, which leads to the growth of $b_t$ and $|| v(t)||_{L^p}$ and implies linear instability.

\section{Embedding inequalities and estimates of nonlinear terms}\label{app:high}

\paragraph{\textbf{Notation}}
We use the notation $A \asymp B$ if there are some absolute constants $C_1, C_2>0$ with $A \leq C_1 B, B \leq C_2 A$.


We have the following equivalence, which allows us to generalize the lower order nonlinear estimates in \cite{chen2019finite2}, which are based on the $\cC^1$ and $\cH^3$ norms, to the higher order easily.

\begin{prop}\label{prop:equiv}
Let $\cH^k(\rho)$ and $\cC^k$ be the norms defined in \eqref{norm:Hk} and \eqref{norm:ck} with $\rho_1 \les \rho_2$. For $ i + j + k \leq m$, we have 
\beq\label{eq:norm_include}
 || D_R^i D_{\b}^j f ||_{\cC^k } \les || f||_{\cC^m}, \quad 
  || D_R^i D_{\b}^j f ||_{\cH^k(\rho) } \les || f||_{\cH^m(\rho)}. 
\eeq
For $k \geq 1 $, we have 
\beq\label{eq:equiv_Ck}
|| f||_{\cC^k } \asymp \sum_{i+j \leq k-1} || D_R^i D_{\b}^j f ||_{\cC^1}.
\eeq
For $k\geq 3$, we have 
\beq\label{eq:equiv_Hk}
|| f||_{\cH^k(\rho)} \asymp \sum_{ i + j \leq k-3} || D_R^i D_{\b}^j f ||_{\cH^3(\rho)}.
\eeq
\end{prop}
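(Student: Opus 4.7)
The plan is to reduce all three statements to a combinatorial analysis of the multi-index sets appearing in the definitions \eqref{norm:Hk} and \eqref{norm:ck} of the $\cH^k$ and $\cC^k$ norms. No new estimates are required; everything is a matter of matching weighted terms.

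For the inclusion \eqref{eq:norm_include}, I would substitute $g = D_R^i D_\beta^j f$ into the defining sum of $\|g\|_{\cC^k}$ (resp.\ $\|g\|_{\cH^k(\rho)}$). Each summand there has the form $\phi\, D_R^{i+a} D_\beta^{j+b} f$ or $\rho^{1/2} D_R^{i+a} D_\beta^{j+b} f$ with $a+b \leq k$; since $(i+a)+(j+b) \leq i+j+k \leq m$, each such term already appears among the summands defining $\|f\|_{\cC^m}$ or $\|f\|_{\cH^m(\rho)}$, with a weight that dominates the one on the left thanks to $\phi_1, \phi_2 \leq \phi_1+\phi_2$ and $\rho_1 \lesssim \rho_2$. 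The inclusion is simply a relabeling of indices.

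For the equivalence \eqref{eq:equiv_Ck}, the $\gtrsim$ direction is immediate from \eqref{eq:norm_include} with $\cC^1$ in place of $\cC^k$ and $m = i+j+1 \leq k$. For the $\lesssim$ direction I would walk through the summands of $\|f\|_{\cC^k}$ one by one: the constant term $\|f\|_\infty$ is the constant term of $\|f\|_{\cC^1}$; a pure-$D_R$ term $\phi_1 D_R^a f$ with $a \geq 1$ is the $D_R$-summand of $\|D_R^{a-1} f\|_{\cC^1}$; pure-$D_\beta$ terms are symmetric; and a mixed term $(\phi_1+\phi_2) D_R^a D_\beta^b f$ with $a,b \geq 1$ is bounded by $\phi_1 D_R(D_R^{a-1} D_\beta^b f) + \phi_2 D_\beta(D_R^a D_\beta^{b-1} f)$, which are respectively summands of $\|D_R^{a-1} D_\beta^b f\|_{\cC^1}$ and $\|D_R^a D_\beta^{b-1} f\|_{\cC^1}$; each inner index has total order $\leq k-1$.

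For \eqref{eq:equiv_Hk} the $\gtrsim$ direction again follows from \eqref{eq:norm_include}. The $\lesssim$ direction is the only step that needs real care: I must express each $\cH^k$-summand as an outer derivative of order $\leq 3$ applied to an inner base function $D_R^{a'} D_\beta^{b'} f$ with $a'+b' \leq k-3$. A pure-$D_R$ term $\rho_1^{1/2} D_R^a f$ with $a \leq 3$ sits already in $\|f\|_{\cH^3(\rho)}$; for $a > 3$ I write it as $\rho_1^{1/2} D_R^3(D_R^{a-3} f)$, an element of $\|D_R^{a-3} f\|_{\cH^3(\rho)}$. For a mixed term $\rho_2^{1/2} D_R^i D_\beta^{j+1} f$ of total order $t = i+j+1 \leq k$, set $a = i$, $b = j+1 \geq 1$; if $t \leq 3$ it already belongs to $\|f\|_{\cH^3(\rho)}$, otherwise I split $(a,b) = (a'',b'') + (a',b')$ with $a''+b''=3$, $b'' \geq 1$, and $a'+b' = t-3 \leq k-3$. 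The choice $(a'',b'') = (2,1)$ works when $a \geq 2$; $(1,2)$ works when $a=1$ (forcing $b \geq 3$ since $t \geq 4$); and $(0,3)$ works when $a=0$ (forcing $b \geq 4$). The outer operator $D_R^{a''} D_\beta^{b''}$ with $b''\geq 1$ is exactly a second-sum operator at the $\cH^3$ level (its indices $(i_0,j_0) = (a'', b''-1)$ satisfy $i_0+j_0 = 2$), so the term lies in $\|D_R^{a'} D_\beta^{b'} f\|_{\cH^3(\rho)}$.

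The main (and essentially only) obstacle is the case analysis at the end: one must verify that in the small-$a$ regime the $D_\beta$-power $b$ is large enough to supply $b''$, and this is precisely where the hypothesis $t \geq 4$ (equivalently, that the term lies above the $\cH^3$ ceiling) is used. Everything else is pure multi-index bookkeeping, and the argument should occupy well under a page once written out.
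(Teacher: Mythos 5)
Your proposal is correct and follows essentially the same route as the paper: the inclusion is index relabeling using $\phi_1,\phi_2\les\phi_1+\phi_2$ and $\rho_1\les\rho_2$, the $\gtrsim$ directions follow from the inclusion, and the $\lesssim$ direction of \eqref{eq:equiv_Hk} is handled by splitting each multi-index into an outer order-$3$ block containing at least one $D_\b$ and an inner block of order $\leq k-3$. The only cosmetic difference is the choice of split for the mixed terms (the paper takes $j_1=\min(3,j)$, maximizing the outer $\b$-derivatives, while you minimize them subject to $b''\geq 1$); both choices are valid and your case check at small $a$ is exactly the verification the paper's choice renders automatic.
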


\begin{proof}[Proof of Proposition \ref{prop:equiv}]
The inequalities \eqref{eq:norm_include} follow from the definitions of the norms \eqref{norm:Hk}, \eqref{norm:ck}. The key point is that in the definitions  \eqref{norm:Hk},\eqref{norm:ck}, the weight associated with the mixed derivatives $D_R^i D_{\b}^j, i , j \neq 0$ is larger than that with $D_R^i$ or $D_{\b}^j$. 

For \eqref{eq:equiv_Ck} and \eqref{eq:equiv_Hk}, the $\gtr$ side of the inequality follows from \eqref{eq:norm_include}. For the $\les$ side of \eqref{eq:equiv_Ck}, we have 
\[
\bal
|| f||_{\cC^k} & \les || f||_{\inf} + \sum_{i+j \leq k} || ( \phi_1 \one_{i\geq 1 } 
+ \phi_2 \one_{ j \geq 1} ) D_{R}^i D_{\b}^j  ||_{\inf}  \\
&\les || f||_{\inf} + \sum_{ i+j \leq k-1} ( ||  \phi_1 D_R^{i+1} D_{\b}^j  f||_{\inf}
+ ||  \phi_2 D_R^{i} D_{\b}^{j+1}  f||_{\inf} ) \les  \sum_{i+j \leq k-1} ||  D_R^{i} D_{\b}^j  f||_{\cC^1}.
\eal
\]

Denote by $A, B$ the left and right hand side of \eqref{eq:equiv_Hk}, respectively. From \eqref{norm:Hk}, we get 
\[
A = \sum_{ i \leq k} || \rho_1^{1/2} D_R^i f ||_2  
+ \sum_{ j \geq 1, i+ j \leq k} || \rho_2^{1/2} D_R^i D_{\b}^j f ||_2.
\]
We remark that the $\rho_1$ weight only applies to $D_R^i f$ terms, and $\rho_2$ applies to other derivatives. We have 
\[
|| \rho_1^{1/2} D_R^i f ||_2  \leq \one_{ i \leq 3} || f||_{\cH^3(\rho)}
+ \one_{ i \geq 3} || D_R^{i-3} f||_{\cH^3(\rho)} \les B.
\]

Denote $j_1 = \min(3, j) \geq 1,  i_1 = 3 - j_1$. When $ i + j \geq 3$, we get $i - i_1 = i + j_1 - 3 = \min( i , i+j - 3) \geq 0$. Since $j \geq 1$, we yield 
\[
\bal
|| \rho_2^{1/2} D_R^i D_{\b}^j f ||_2
&\leq \one_{i+j \leq 3} || f||_{\cH^3(\rho) } + \one_{i+j \geq 3} 
|| \rho_2^{1/2} D_R^{i_1} D_{\b}^{j_1}  ( D_R^{i-i_1} D_{\b}^{j-j_1} f) ||_{\cH^3(\rho)} \\
&\les || f||_{\cH^3(\rho)} + || D_R^{i-i_1} D_{\b}^{j-j_1} f||_{\cH^3(\rho) } \les B.
\eal 
\]
We conclude the proof. 
\end{proof}

\subsection{Higher order embedding Lemmas}

We have the following estimates for different norms. The first and last inequality generalize Proposition 7.6 in \cite{chen2019finite2}. The second inequality is exactly Proposition 7.7 in \cite{chen2019finite2}. The third inequality in \eqref{eq:norm_embed} generalizes Lemma 7.11 in \cite{chen2019finite2}. 
\begin{prop}\label{prop:embed}
Let $\cC^k$ and $\cW^{k,\inf}$ be the norms defined in \eqref{norm:ck} and \eqref{norm:W}. 
For $ k \geq 1$, 
\beq\label{eq:norm_embed}
\bal
|| f g||_{\cC^k} & \les || f||_{\cC^k} || g||_{\cC^k},  \quad
|| f g ||_{\cW^{k,\inf}}  \les || f ||_{\cW^{k,\inf}} || g||_{\cW^{k, \inf}} , \\
  || f||_{\cC^{ k}} & \les \al^{-1/2} || f||_{ \cH^{k+2}},  \quad || f||_{\cC^k} \les || \f{1+R}{R} f ||_{\cW^{k, \inf}}.
\eal
\eeq
\end{prop}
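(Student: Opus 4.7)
The strategy is to reduce each inequality to its $k=1$ (or $k=3$) analog already proved in \cite{chen2019finite2}, using the equivalence of norms in Proposition \ref{prop:equiv} together with the Leibniz rule for $D_R$ and $D_{\b}$. Since $D_R$ and $D_{\b}$ commute, the Leibniz rule has the clean form
\[
D_R^i D_{\b}^j(fg) = \sum_{a \leq i,\ b \leq j} \binom{i}{a}\binom{j}{b} (D_R^a D_{\b}^b f)(D_R^{i-a} D_{\b}^{j-b} g),
\]
so no commutator errors appear.

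First I would handle the $\cC^k$ product inequality. By \eqref{eq:equiv_Ck} of Proposition \ref{prop:equiv}, it suffices to bound $\sum_{i+j \leq k-1} \|D_R^i D_{\b}^j(fg)\|_{\cC^1}$. Apply Leibniz, then apply the base case $\|FG\|_{\cC^1} \les \|F\|_{\cC^1}\|G\|_{\cC^1}$ (Proposition 7.6 in \cite{chen2019finite2}) to each summand with $F = D_R^a D_{\b}^b f$, $G = D_R^{i-a} D_{\b}^{j-b} g$. Finally invoke the inclusion \eqref{eq:norm_include} to bound $\|D_R^a D_{\b}^b f\|_{\cC^1} \les \|f\|_{\cC^k}$ and similarly for $g$, and sum. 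Inequality (2) is exactly Proposition 7.7 of \cite{chen2019finite2}, hence nothing further is needed.

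For the Sobolev embedding (3), again by \eqref{eq:equiv_Ck} I reduce to controlling $\sum_{i+j\leq k-1}\|D_R^i D_{\b}^j f\|_{\cC^1}$. The base case $\|g\|_{\cC^1} \les \al^{-1/2}\|g\|_{\cH^3}$ is Lemma 7.11 in \cite{chen2019finite2}; applying it with $g = D_R^i D_{\b}^j f$ and then the second inclusion in \eqref{eq:norm_include} (which gives $\|D_R^i D_{\b}^j f\|_{\cH^3} \les \|f\|_{\cH^{i+j+3}} \les \|f\|_{\cH^{k+2}}$ whenever $i+j \leq k-1$) concludes the proof.

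The last inequality (4) is the main obstacle, because the weight $\frac{1+R}{R}$ must be commuted past the derivatives $D_R^i D_{\b}^j$. The remedy is to compute $D_R^m\!\left(\tfrac{1+R}{R}\right) = -(-1)^m m!\, R^{-1}$ for $m\geq 1$ (and $D_{\b}$ kills the weight), so that the Leibniz expansion of $D_R^i D_{\b}^j\!\left(\tfrac{1+R}{R} f\right)$ yields $\tfrac{1+R}{R} D_R^i D_{\b}^j f$ plus lower-order terms of the form $R^{-1} D_R^a D_{\b}^j f$ with $a < i$, all bounded by $\tfrac{1+R}{R}$ times the corresponding derivative. One then applies \eqref{eq:equiv_Ck}, the base case ($k=1$ portion of Proposition 7.6 in \cite{chen2019finite2}) to each $D_R^i D_{\b}^j f$, and the second inclusion in \eqref{eq:norm_include} for the $\cW^{k,\infty}$ norm, finally absorbing all lower-order terms into $\|\tfrac{1+R}{R} f\|_{\cW^{k,\infty}}$. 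Since $k \leq 100$ throughout, all combinatorial constants are absolute, and the argument closes.
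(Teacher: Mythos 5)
Your proposal is correct and follows essentially the same route as the paper: each inequality is reduced, via the norm equivalences of Proposition \ref{prop:equiv} and the Leibniz rule (using that $D_R$ and $D_{\b}$ commute), to the low-order base cases already proved in \cite{chen2019finite2}; the paper's own proof of the fourth inequality is organized slightly differently (it first records the equivalence $\|\tfrac{1+R}{R}f\|_{\cW^{k,\inf}} \asymp \sum \|\tfrac{1+R}{R}(\text{weights})D_R^iD_{\b}^j f\|_{\inf}$ and then compares the weights $\phi_1,\phi_2$ against the $\cW$-weights directly at order $k$, rather than descending to the $k=1$ base case), but both arguments rest on the same facts. Two small slips worth correcting: since $D_R=R\pa_R$, one has $D_R^m\bigl(\tfrac{1+R}{R}\bigr)=(-1)^m R^{-1}$ for $m\ge 1$ (no factorial and opposite sign to your formula), though the only property you actually use, $|D_R^m(\tfrac{1+R}{R})|\les \tfrac{1+R}{R}$, still holds; and the inclusion $\|D_R^iD_{\b}^j g\|_{\cW^{1,\inf}}\les \|g\|_{\cW^{k,\inf}}$ you invoke is not literally contained in \eqref{eq:norm_include} (which covers only $\cC^k$ and $\cH^k(\rho)$), but it does hold because the weight $\sin(2\b)^{-\al/5}(\tfrac{\al}{10}+\sin(2\b))^{-1}$ attached to every $\b$-derivative in \eqref{norm:W} is bounded below by an absolute constant.
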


\begin{proof}
The first inequality follows from the Leibniz rule. The second inequality has been proved in \cite{chen2019finite2arXiv}. For the third inequality, the case $k=1$ has been proved in Lemma 7.11 in \cite{chen2019finite2}
\beq\label{eq:H3_C1}
 || f ||_{\cC^1} \les \al^{-1/2} || f||_{\cH^3}.
\eeq
For $k \geq 2$, using \eqref{eq:H3_C1} and the equivalences \eqref{eq:equiv_Hk}, \eqref{eq:equiv_Ck}, we obtain 
\[
|| f||_{\cC^k} \les \sum_{i+j \leq k-1} || D_R^i D_{\b}^j f||_{\cC^1}
\les \al^{-1/2} \sum_{i+j \leq k-1}  || D_R^i D_{\b}^j f||_{\cH^3} 
\les \al^{-1/2} || f||_{\cH^{k+2}}.
\]

Next, we consider the last inequality in \eqref{eq:norm_embed}. By the triangle inequality and the Leibniz rule, it is not difficult to obtain the equivalence 
\beq\label{eq:embed_pf1}
\bal
|| \f{1 + R}{R} f ||_{\cW^{k, \inf}}  \asymp 
 \sum_{0 \leq i + j \leq k ,  j \neq 0} \B| \B| 
\f{1+R}{R} \sin(2\b)^{- \f{\al}{5}} \f{ D_R^i  D_{\b}^j  }{\f{\al}{10} + \sin(2\b)}   f   \B|\B|_{L^{\infty}}  + \sum_{0 \leq i  \leq k } \B| \B|\f{1+R}{R}  D_R^i f \B|\B|_{L^{\infty}}.
\eal
\eeq

By definition of $\cC^k$ \eqref{norm:ck} , it suffices to show 
\[
||  (1 + \one_{i\geq 1} \phi_1 + \one_{ j \geq 1} \phi_2  ) D_R^i D_{\b}^j f ||_{\inf} \les || \f{1 + R}{R} f ||_{\cW^{k, \inf}} ,
\]
for $i+j \leq k$. The estimate is trivial if $ j =0$. If $ j \geq 1$, we compare the weights. Since 
\[
\bal
&\f{1+R}{R}\sin(2\b)^{- \f{\al}{5} }  (\f{ \al}{10 } + \sin(2\b))^{-1}
\gtr \f{1+R}{R}\sin(2\b)^{-\al/5 } ,   \\
& \sin(2\b)^{-\al/5 } \gtr 1 + \sin(2\b)^{-\al / 40},  \quad \f{1+R}{R}  \gtr 1,
\eal
\]
the weight $1 + \one_{i\geq 1} \phi_1 + \one_{ j \geq 1} \phi_2$ with $j \geq 1$ can be bounded by the corresponding weight in \eqref{eq:embed_pf1}. We conclude the proof. 
\end{proof}

We have the following elliptic estimates for the stream function \eqref{eq:L12}. 
\begin{prop}\label{prop:key}
Assume that $\al \leq \f{1}{4}$ and $\Om \in \cH^k, k \geq 3$. Let $\Psi$ be the 
solution to \eqref{eq:elli} with boundary condition \eqref{eq:ellibc}. Then we have
\[
\bal
&\al^2 || R^2 \pa_{RR} \Psi ||_{\cH^k} + 
\al || R \pa_{R \b} \Psi ||_{\cH^k}  +|| \pa_{\b\b} (\Psi - \f{1}{\al \pi} \sin(2\b) L_{12}(\Om)) ||_{\cH^k} \les_k  || \Om ||_{\cH^k}.
\eal
\]
\end{prop}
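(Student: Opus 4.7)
The plan is to argue by induction on $k \geq 3$, using the case $k=3$ (proved in \cite{chen2019finite2}) as the base and invoking Proposition \ref{prop:equiv} to reduce the $\cH^k$ bounds to a finite family of $\cH^3$ bounds on $D_R^i D_\b^j$-derivatives of the relevant quantities with $i + j \leq k - 3$. Writing the elliptic operator as
\[
\cL = -\al^2 R^2 \pa_{RR} - \al(4+\al) R\pa_R - \pa_{\b\b} - 4 = -\al^2 D_R^2 - 4\al D_R - \pa_{\b\b} - 4,
\]
one sees immediately that $[D_R, \cL] = 0$, while $[D_\b, \cL] = 4 D_\b - 4\cos(2\b)\pa_{\b\b}$ is of no higher $R$-order and of no higher $\b$-order than $\cL$ itself (both are at most second-order in $\b$). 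This commutator structure is what will drive the induction.

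I would first introduce the reduced stream function $\td\Psi \teq \Psi - \f{\sin(2\b)}{\al\pi} L_{12}(\Om)$. A direct computation, using that $L_{12}(\Om)(R)$ is independent of $\b$ and that $-\pa_{\b\b}\sin(2\b) - 4\sin(2\b) = 0$, shows that the leading $\b$-part of $\cL$ annihilates the correction, so $\td\Psi$ satisfies $\cL\td\Psi = G$ where
\[
G \teq \Om + \f{\al\sin(2\b)}{\pi} R^2 \pa_{RR} L_{12}(\Om) + \f{(4+\al)\sin(2\b)}{\pi} D_R L_{12}(\Om);
\]
using Lemma \ref{lem:l12} and the definition of $L_{12}$, one checks $\|G\|_{\cH^k}\les \|\Om\|_{\cH^k}$. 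Then for every pair $i + j \leq k - 3$, applying $D_R^i D_\b^j$ to $\cL\td\Psi = G$ and peeling off commutators inductively yields an equation
\[
\cL(D_R^i D_\b^j \td\Psi) \;=\; D_R^i D_\b^j G + \cR_{i,j}(\td\Psi),
\]
where $\cR_{i,j}(\td\Psi)$ is a finite linear combination of terms of the form $D_R^{i'} D_\b^{j'} \pa_{\b\b}\td\Psi$ and $D_R^{i'} D_\b^{j'} \td\Psi$ with $i' + j' < i + j$, each multiplied by a smooth factor like $\cos(2\b)$ or a constant. Applying the base case $k = 3$ to $D_R^i D_\b^j \td\Psi$ and controlling $\cR_{i,j}(\td\Psi)$ by the induction hypothesis closes the estimates for $\al^2\|R^2\pa_{RR}\td\Psi\|_{\cH^k}$, $\al\|R\pa_{R\b}\td\Psi\|_{\cH^k}$, and $\|\pa_{\b\b}\td\Psi\|_{\cH^k}$.

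Finally, the estimates for $\Psi$ itself follow by adding back the correction: since $L_{12}(\Om)(R)$ depends only on $R$, the $\cH^k$-norm of $R^2\pa_{RR}(\f{\sin(2\b)}{\al\pi} L_{12}(\Om))$ and of $R\pa_{R\b}(\f{\sin(2\b)}{\al\pi} L_{12}(\Om))$ reduce, via Lemma \ref{lem:l12} together with the factor $\sin(2\b)$ which gains a power of the angular weight, to bounds of the form $\al^{-1}\|\Om\|_{\cH^k}$, which are exactly what is required in order to cancel the $\al^2$ and $\al$ prefactors in the statement.

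The main obstacle will be the book-keeping on the commutator side: the lower-order term $-4\cos(2\b)\pa_{\b\b}\td\Psi$ in $[D_\b, \cL]\td\Psi$ is of the \emph{same} $\b$-order as the term one is trying to estimate, so a naive induction cannot simply absorb it. The point is that after applying $D_\b^j$ the new highest-order $\b$-term has only $j$ factors of $D_\b$ rather than $j+1$, so the commutator contribution is genuinely lower order in the $(i,j)$-lexicographic sense once one passes through the equivalence of Proposition \ref{prop:equiv}. Verifying this compatibility with the singular weights $\vp_1, \vp_2$ near $\b = 0, \pi/2$, in particular that the $\cos(2\b)$ factor from the commutator does not spoil the weighted $L^2$ bound, is where the delicate work lies, and it is carried out exactly as in the analogous $k=3$ argument of \cite{chen2019finite2arXiv} after one reduces via Proposition \ref{prop:equiv}.
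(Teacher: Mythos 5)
The paper offers no independent proof of this proposition---it simply cites the $k=3$ case from \cite{chen2019finite2} and asserts that the general case ``can be proved similarly''---and your induction on $k$ via Proposition \ref{prop:equiv}, the commutation $[D_R,\cL]=0$, the reduced equation $\cL\td\Psi=G$, and the identification of the $\cos(2\b)\pa_{\b\b}$ term in $[D_\b,\cL]$ as the delicate point is exactly the intended generalization and is sound. One detail you elide: applying the $k=3$ base case to $D_R^iD_\b^j\td\Psi$ yields the $\pa_{\b\b}$ bound only after subtracting a further correction $\f{\sin(2\b)}{\al\pi}L_{12}\bigl(D_R^iD_\b^jG+\cR_{i,j}\bigr)$, which must then be estimated separately (e.g.\ using $L_{12}(D_Rh)(R)=-\int_0^{\pi/2}\sin(2\b)h(R,\b)\,d\b$ together with Lemma \ref{lem:l12}), but this is routine and does not affect the validity of the scheme.
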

The above estimate with $k =3 $ has been established in \cite{chen2019finite2}. The general case $k \geq 3$ can be proved similarly. See also \cite{elgindi2019finite}.

We have the following estimates for the velocity $\bar u$ of the approximate steady state. 
\begin{prop}\label{prop:psi}
For $\al \leq \f{1}{4}$ and $k \geq 5$, we have 
\[
\bal
|| \f{1+R}{ R} \pa_{\b \b} ( \bar{\Psi}  -  \f{\sin(2\b)}{\pi \al}  L_{12}(\bar{\Om})) ||_{\cW^{k+2,\infty}} \les \al , \qquad || L_{12}(\bar{\Om})||_{\cW^{k+2,\infty}} \les \al , \\
\al || \f{1+R}{R}  D_R^2 \bar{\Psi} ||_{\cW^{k,\infty}} +  \al||  \f{1+R}{R} \pa_{\b} D_R \bar{\Psi} ||_{\cW^{k,\infty}}  + || \f{1+R}{ R} \pa_{\b \b} ( \bar{\Psi}  -  \f{\sin(2\b)}{\pi \al}  L_{12}(\bar{\Om})) ||_{\cW^{k,\infty}} \les  \al  \notag. 
\eal
\]
\end{prop}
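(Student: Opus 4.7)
The plan is to generalize the $\cW^{5,\infty}$ estimate from \cite{chen2019finite2} (which underlies the case $k=5$) to arbitrary $k \geq 5$, relying on the explicit form of $\bar\Om$ in \eqref{eq:profile}. First I would compute $L_{12}(\bar\Om)$ in closed form: substituting $\bar\Om(R,\b) = \frac{\al}{c}\G(\b)\frac{3R}{(1+R)^2}$ into \eqref{eq:L12} and using the definition of the normalization constant $c$, one obtains the explicit identity $L_{12}(\bar\Om)(R) = \frac{3\pi\al}{2(1+R)}$ already recorded in \eqref{eq:vel_main}. Since this is a smooth radial function of $R$ with the decay factor $(1+R)^{-1}$, its $D_R^i$ derivatives are uniformly bounded by $\al$ times powers of $\frac{R}{(1+R)^2}$, which gives $\|L_{12}(\bar\Om)\|_{\cW^{k+2,\infty}}\les \al$ for every $k$.

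Next, motivated by the leading-order decomposition in \eqref{eq:vel_main}, I would split
\[
\bar\Psi \;=\; \bar\Psi_{\mathrm m} + \bar\Psi_{*},\qquad \bar\Psi_{\mathrm m} \teq \frac{\sin(2\b)}{\pi\al}L_{12}(\bar\Om),
\]
and derive the PDE for $\bar\Psi_*$ by applying the elliptic operator in \eqref{eq:elli} to $\bar\Psi_{\mathrm m}$. Because $\bar\Psi_{\mathrm m}$ is a product of $\sin(2\b)$ with the explicit radial function computed in step one, the residual source
$\bar\Om - \mathcal L\bar\Psi_{\mathrm m}$ (where $\mathcal L$ is the operator on the left of \eqref{eq:elli}) gains a factor of $\al^2$ in its angular part and retains the same decay in $R$, so its $\cW^{k+2,\infty}$ norm is of order $\al$. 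This reduces the estimate on $\bar\Psi_*$ to an inhomogeneous elliptic problem with a smooth, small, explicitly decaying source that vanishes at the boundary $\b\in\{0,\pi/2\}$ at the same rate as $\sin(2\b)$.

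The heart of the proof is then the $\cW^{k,\infty}$ elliptic regularity for $\bar\Psi_*$. I would proceed inductively in $k$: the base case $k=5$ is Proposition 7.8 of \cite{chen2019finite2arXiv}. For the induction step, I would commute the vector fields $D_R$ and $D_\b$ with the elliptic operator. The key observation is that $D_R$ commutes with $R\pa_R$ and $R^2\pa_{RR}$ exactly, and $D_\b$ acting on $\pa_{\b\b}$ produces only $\pa_\b\pa_\b D_\b$ plus commutators involving $\cos(2\b)\pa_\b$, which are lower order in the $\cW^{k,\infty}$ scale. The inductive source therefore sits in $\cW^{k+1,\infty}$ with a correct weight, and the base case is applied one derivative higher. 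To close the estimates near $\b = 0, \pi/2$ I would invoke the weight factor $\frac{1}{\al/10+\sin(2\b)}$ built into the $\cW^{k,\infty}$ norm \eqref{norm:W} together with the vanishing of $\bar\Psi_*$ on the boundary to absorb the extra $\sin(2\b)$ factor produced when $D_\b$ lands on $\cos^\al\b$.

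The main obstacle I anticipate is tracking mixed high-order derivatives $D_R^i D_\b^j$ with both $i,j$ large, because the Leibniz rule applied to $\cos^\al(\b)$ produces terms like $\al(\al-1)\cdots(\al-j+1)\cos^{\al-j}(\b)\sin^j(\b)$ that degenerate at $\b=0,\pi/2$. The weight $\sin(2\b)^{-\al/5}$ in \eqref{norm:W} is designed precisely to absorb such degeneracies as long as one does not differentiate more times than the weight can compensate; in our range $k\leq 100$ and $\al$ small, the algebra works out, but the bookkeeping is delicate. Apart from this, the argument is a routine but lengthy extension of the $k=5$ case in \cite{chen2019finite2arXiv}, and since the bounds are used only qualitatively in the present paper, I would present the induction schematically rather than re-derive each commutator estimate in full.
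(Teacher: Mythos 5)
Your proposal matches the paper's treatment: the paper simply cites Proposition 7.8 of \cite{chen2019finite2} for the base case $k=5$ and asserts that the general case $k\geq 5$ follows by the same argument, which is exactly the structure you propose (explicit formula for $L_{12}(\bar\Om)$, the decomposition $\bar\Psi=\bar\Psi_*+\frac{\sin(2\b)}{\pi\al}L_{12}(\bar\Om)$ from \eqref{eq:L12}, and an inductive commutator argument in the $\cW^{k,\infty}$ scale). Your sketch supplies more detail than the paper itself, and the details you give are consistent with the method of \cite{chen2019finite2,elgindi2019finite}.
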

The case of $k = 5$ has been proved in Proposition 7.8 \cite{chen2019finite2}. The general case $k \geq 5$ follows from a similar argument. See also \cite{elgindi2019finite}.

We generalize Proposition 7.9 in  \cite{chen2019finite2} as follows.
\begin{prop}\label{prop:W2}
Assume that $ \f{(1+R)^3}{R^2} f \in \cW^{k,\infty}$, then we have $f \in \cH^k$ and 
\[
|| f ||_{\cH^k} \les || \f{(1+R)^3}{R^2} f ||_{\cW^{k,\infty}}.
\]
\end{prop}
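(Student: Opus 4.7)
The plan is to generalize the argument for the $k=1$ case (Proposition 7.9 in \cite{chen2019finite2}) via the Leibniz rule, reducing the $\cH^k$-bound on $f$ to pointwise $L^\infty$ bounds on derivatives of $g \teq \f{(1+R)^3}{R^2} f$ and integrability of the residual weights. Setting $m(R) \teq \f{R^2}{(1+R)^3}$, so that $f = m(R) g$, a direct computation shows $|D_R^n m(R)| \les m(R)$ for every $n \geq 0$ since $D_R = R \pa_R$ preserves the homogeneity structure of rational functions of $R$. Since $m(R)$ is independent of $\b$, every $D_\b$ acts solely on $g$ in the Leibniz expansion, which is the key algebraic simplification.

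First I would bound the pure radial terms $\|\vp_1^{1/2} D_R^i f\|_{L^2}$ for $i \leq k$. Leibniz and the pointwise bound on $D_R^n m$ give $|D_R^i f| \les m(R) \sum_{l \leq i} |D_R^l g|$. Estimating $|D_R^l g|$ in $L^\infty$ by $\|g\|_{\cW^{k,\inf}}$ (the second sum in \eqref{norm:W}) reduces matters to showing
\[
\left\| \vp_1^{1/2} m(R) \right\|_{L^2} = \left\| \f{\sin(2\b)^{-\s/2}}{1+R} \right\|_{L^2} < \infty,
\]
which holds because $\s = 99/100 < 1$ makes $\sin(2\b)^{-\s}$ integrable in $\b$, and $(1+R)^{-2}$ is integrable in $R$.

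Next, for the mixed terms $\|\vp_2^{1/2} D_R^i D_\b^{j+1} f\|_{L^2}$ with $i + j \leq k-1$, Leibniz gives $D_R^i D_\b^{j+1} f = \sum_{l \leq i} \binom{i}{l}\, D_R^{i-l} m(R) \cdot D_R^l D_\b^{j+1} g$. Now the angular count $j+1 \geq 1$ lets us invoke the first sum in \eqref{norm:W} to obtain
\[
|D_R^l D_\b^{j+1} g| \les \sin(2\b)^{\al/5}\bigl(\tfrac{\al}{10} + \sin(2\b)\bigr) \|g\|_{\cW^{k,\inf}},
\]
and combining with $\vp_2^{1/2} m(R) = \f{\sin(2\b)^{-\g/2}}{1+R}$ yields an integrand dominated by $(1+R)^{-1}\sin(2\b)^{\al/5 - \g/2}(\tfrac{\al}{10} + \sin(2\b))\|g\|_{\cW^{k,\inf}}$. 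With $\g = 1 + \al/10$, one has $\al/5 - \g/2 > -1/2$, and squaring and pairing with the extra $(\tfrac{\al}{10}+\sin(2\b))$ factor gives a net angular exponent $\gtr -1 + \e$ (small $\e > 0$), hence integrable; the $R$ integral is again controlled by $\|(1+R)^{-1}\|_{L^2}$.

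The main obstacle is the accounting for the angular weights: one must verify that $\sin(2\b)^{-\s}$ and $\sin(2\b)^{-\g}$ in $\vp_1,\vp_2$ are tamed by the angular structure built into $\cW^{k,\inf}$, namely the factor $\sin(2\b)^{\al/5}(\tfrac{\al}{10}+\sin(2\b))/(\tfrac{\al}{10}+\sin(2\b)) = \sin(2\b)^{\al/5}$ that one gains whenever at least one $D_\b$ is present. Once this bookkeeping is checked on the boundary regimes $\sin(2\b) \gtr \al$ and $\sin(2\b) \les \al$ separately (as in \cite{chen2019finite2}), the higher-order extension is mechanical: Leibniz introduces no new weights, only finite combinatorial sums over $l \leq i$ and $j+1 \leq k$, each of which is controlled uniformly by $\|g\|_{\cW^{k,\inf}}$.
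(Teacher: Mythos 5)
Your proof is correct. The difference from the paper is organizational rather than substantive: the paper does not redo the weighted computation at all orders, but instead (i) uses $|D_R^i \tfrac{(1+R)^3}{R^2}|\les \tfrac{(1+R)^3}{R^2}$ and the Leibniz rule to show that the weight commutes with derivatives inside the $\cW^{k,\infty}$ norm, i.e. $\| \tfrac{(1+R)^3}{R^2} f\|_{\cW^{k,\infty}}$ is comparable to the sum of the weighted $L^\infty$ norms of $\tfrac{(1+R)^3}{R^2}D_R^iD_\b^j f$, and then (ii) invokes the equivalence $\|f\|_{\cH^k}\asymp\sum_{i+j\le k-3}\|D_R^iD_\b^jf\|_{\cH^3}$ from Proposition \ref{prop:equiv} to reduce everything to the base case $k=3$ already proved in the cited reference (note the base case there is $k=3$, not $k=1$ as you state — Proposition 7.9 of that paper is an $\cH^3$ statement). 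You instead carry out the base-case computation directly for all derivative counts: the Leibniz expansion with $|D_R^n m|\les m$ for $m=R^2/(1+R)^3$, the identity $\vp_1^{1/2}m=\sin(2\b)^{-\s/2}(1+R)^{-1}$ with $\s<1$, and the angular gain $\sin(2\b)^{\al/5}(\tfrac{\al}{10}+\sin(2\b))$ from the $\cW^{k,\infty}$ weight against $\vp_2^{1/2}m=\sin(2\b)^{-\g/2}(1+R)^{-1}$, giving net exponent $-1+3\al/10>-1$. These are exactly the estimates hidden inside the cited base case, so your version is more self-contained at the cost of repeating that bookkeeping, while the paper's version is a two-step reduction that leans on the equivalence lemmas. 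Both are valid; your exponent arithmetic checks out.
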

The proof with $k = 3$ is Proposition 7.9 in \cite{chen2019finite2} with proof given in its arXiv version \cite{chen2019finite2arXiv}. 

\begin{proof}
Denote $g(R) = \f{ (1+R)^3}{R^2}$. Note that for $i \geq 0$
\[
|| D_R^i g(R) | =  | D_R^i  \f{ (1+R)^3}{R^2} | \les \f{ (1+R)^3}{R^2} = g(R).
\]
From this estimate and using induction, it is not difficult to obtain 
\[
|| g(R) f||_{ \cW^{k, \inf}}
\asymp_k   \sum_{0 \leq i + j \leq k ,  j \neq 0} \B| \B| 
g(R) \sin(2\b)^{- \f{\al}{5}}  \f{ D_R^i D_{\b}^j}{\f{\al}{10} + \sin(2\b)}   f   \B|\B|_{L^{\infty}}  + \sum_{0 \leq i  \leq k } \B| \B| g(R)  D_R^i f \B|\B|_{L^{\infty}}.
\]

Now applying the equivalence \eqref{eq:equiv_Hk} in Proposition \ref{prop:equiv}, Proposition \ref{prop:W2} with $k=3$ proved in \cite{chen2019finite2} and the above equivalence on $ \cW^{k,\inf}$, we obtain 
\[
\bal
|| f||_{\cH^k} &\les \sum_{ i+ j \leq k-3} || D_R^i D_{\b}^j f||_{\cH^3}
\les \sum_{ i+ j \leq k-3} || g(R) D_R^i D_{\b}^j f||_{\cW^{3,\inf}}  \\
&\les 
\sum_{i + j \leq k-3}
\B( \sum_{ m + n \leq 3 , n \neq 0} 
|| g(R) \sin(2\b)^{-\al / 5} \f{ D_R^{i+m} D_{\b}^{ j+n}}{ \f{\al}{10 } + \sin(2\b) } f||_{\inf}
+ \sum_{ m \leq 3} || g(R) D_R^{i+m} D_{\b}^j f||_{\inf} \B) \\
&\les || g(R) f||_{\cW{l, \inf}},
\eal
\]
where we have used the fact that $1\les \sin(2\b)^{-\al /5} (\al / 10 + \sin(2\b))^{-1} $ to bound the term $g(R) D_R^{i+m} D_{\b}^j f $ by $|| g(R) f||_{\cW{l, \inf}}$.
\end{proof}

To control the remaining term $\la \cR_{\eta} ,\eta \psi_0 \ra$ in \eqref{energy:E1}, \eqref{energy:Ek}, we need Lemma 7.10 from \cite{chen2019finite2} for the decay estimate of $\xi$. We do not need to generalize it since we only apply it to estimate $\la \cR_{\eta} ,\eta \psi_0 \ra$.
\begin{lem}\label{lem:xi_decay}
Suppose that $\xi \in \cH^2(\psi)$, we have 
\[
|| R^{1/2} \sin(2\b)^{1/4} \xi||_{L^{\infty}} \les || \xi||_{\cH^2(\psi)}.
\]
\end{lem}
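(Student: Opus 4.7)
The plan is to reduce the weighted $L^\infty$ bound to a standard 2D Agmon-type inequality by changing coordinates so that the differential operators $D_R, D_\b$ become ordinary partial derivatives on $\R^2$. Concretely, set $s = \log R \in \R$ and define $t \in \R$ by $dt = d\b/\sin(2\b)$ (explicitly $t = \f{1}{2}\log\tan\b$), so the map $(R,\b) \mapsto (s,t)$ is a diffeomorphism from $(0,\infty)\times(0,\pi/2)$ onto $\R^2$. Under this change $D_R = \pa_s$, $D_{\b} = \pa_t$, and $dR\, d\b = R\sin(2\b)\, ds\, dt$. Set
\[
  g(s,t) \teq R^{1/2}\sin(2\b)^{1/4}\,\xi(R,\b),
\]
so that $\|g\|_{L^\infty(dsdt)} = \| R^{1/2}\sin(2\b)^{1/4}\xi\|_{L^\infty}$. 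The goal becomes $\|g\|_\infty \les \|\xi\|_{\cH^2(\psi)}$, which I will prove via the anisotropic Agmon embedding
\[
\|g\|_{L^\infty(\R^2)}^2 \les \|g\|_{L^2}\|\pa_s\pa_t g\|_{L^2} + \|\pa_s g\|_{L^2}\|\pa_t g\|_{L^2}.
\]

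For each derivative, the Leibniz rule combined with $D_R R^{1/2} = \tfrac12 R^{1/2}$ and $D_{\b}\sin(2\b)^{1/4} = \tfrac12 \cos(2\b)\sin(2\b)^{1/4}$ gives
\[
\pa_s g = R^{1/2}\sin(2\b)^{1/4}(\tfrac12\xi + D_R\xi),\qquad
\pa_t g = R^{1/2}\sin(2\b)^{1/4}(\tfrac12\cos(2\b)\xi + D_{\b}\xi),
\]
and analogously $\pa_s\pa_t g = R^{1/2}\sin(2\b)^{1/4}\cdot P$, where $P$ is a linear combination with bounded coefficients of $\xi$, $D_R\xi$, $D_{\b}\xi$, and $D_R D_{\b}\xi$. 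Using $dR\, d\b = R\sin(2\b)\, ds\, dt$, for any $f$,
\[
\int_{\R^2} |R^{1/2}\sin(2\b)^{1/4} f|^2\, ds\, dt = \int_0^\infty\!\int_0^{\pi/2} |f|^2\,\sin(2\b)^{-1/2}\, dR\, d\b,
\]
so the four $L^2(ds\, dt)$ norms entering the Agmon bound reduce to integrals of $|\xi|^2,|D_R\xi|^2,|D_{\b}\xi|^2,|D_R D_{\b}\xi|^2$ against the weight $\sin(2\b)^{-1/2}$.

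It remains to dominate this weight by $\psi_1$ and $\psi_2$ pointwise. Since $(1+R)^4/R^4 \geq 1$ and $\s = 99/100 > 1/2$,
\[
\psi_1 \gtr (\sin\b\cos\b)^{-\s} \asymp \sin(2\b)^{-\s} \gtr \sin(2\b)^{-1/2};
\]
likewise, $\min(\s,\g) > 1/2$ together with $\sin\b,\cos\b \in (0,1)$ gives
$\psi_2 \geq \sin\b^{-\s}\cos\b^{-\g} \gtr (\sin\b\cos\b)^{-1/2} \asymp \sin(2\b)^{-1/2}$. Applying these comparisons to each of the terms in $g,\pa_s g,\pa_t g,\pa_s\pa_t g$ (using the $\psi_1$ weight for terms involving only $\xi$ and $D_R\xi$, and the $\psi_2$ weight for terms involving $D_\b\xi$ or $D_R D_\b\xi$) bounds every $L^2(dsdt)$ quantity by $\|\xi\|_{\cH^2(\psi)}$. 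Invoking the Agmon inequality---which is proved in the standard way by writing $|g(s_0,t_0)|^2 \leq 2\int |g\pa_s g|(s,t_0)\, ds$ and then controlling the resulting function of $t_0$ by $\int|\pa_t(g\pa_s g)|\,dsdt$ via Cauchy--Schwarz---concludes the proof.

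The main obstacle is ensuring that the fractional powers in the multiplier $R^{1/2}\sin(2\b)^{1/4}$ are exactly balanced so that the weight $\sin(2\b)^{-1/2}$ emerging from the Jacobian is dominated by \emph{both} $\psi_1$ and $\psi_2$; the calculation goes through precisely because $\s,\g > 1/2$ in Definition~\ref{def:wg}. A secondary technical point is keeping track of the lower-order terms generated by differentiating $R^{1/2}\sin(2\b)^{1/4}$, but these are absorbed trivially since $\xi$ itself is already controlled by the $\psi_1$-part of the norm.
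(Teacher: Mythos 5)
Your proof is correct. Note that the paper itself does not prove this lemma; it simply cites Lemma 7.10 of \cite{chen2019finite2}, so there is no in-paper argument to compare against. Your argument is a valid self-contained proof and is, in substance, the standard one used for such weighted decay estimates in this line of work: a fundamental-theorem-of-calculus/Agmon argument in both variables, which you have merely rewritten after the logarithmic change of variables $s=\log R$, $t=\tfrac12\log\tan\b$ that converts $D_R, D_\b$ into $\pa_s,\pa_t$. The key exponent bookkeeping checks out: the Jacobian $dR\,d\b = R\sin(2\b)\,ds\,dt$ turns the multiplier $R^{1/2}\sin(2\b)^{1/4}$ into the weight $\sin(2\b)^{-1/2}$, which is dominated by both $\psi_1$ (since $\s=\tfrac{99}{100}>\tfrac12$) and $\psi_2$ (since $\min(\s,\g)>\tfrac12$), and every term appearing in $g,\pa_sg,\pa_tg,\pa_s\pa_tg$ (namely $\xi, D_R\xi, D_\b\xi, D_RD_\b\xi$ with bounded coefficients) is present in the $\cH^2(\psi)$ norm with the correct weight. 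The only point worth making explicit in a final write-up is the decay of $g(\cdot,t_0)$ and of $g\pa_sg(s,\cdot)$ at infinity needed to apply the fundamental theorem of calculus, which follows from the finiteness of the relevant $L^2(ds\,dt)$ and $L^1(ds\,dt)$ norms (or by a density argument), as you indicate.
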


\subsubsection{The product rules}
In this subsection, we generalize the estimates of nonlinear terms and the transport terms established in \cite{chen2019finite2} to higher order. 

Denote the sum space $ X_k \teq \cH^k \oplus \cW^{k+2, \infty}$ with sum norm 
\beq\label{norm:X}
|| f||_{X_k} \teq \inf \{ || g||_{\cH^k} + || h||_{\cW^{k+2,\infty}} :  f = g + h  \}.
\eeq

We generalize the $\cH^3$ product rules in Proposition 7.12 \cite{chen2019finite2} as follows.
\begin{prop}\label{prop:prod1}
For all $ f \in X,  g \in \cH^3,  \xi \in \cH^3(\psi) \cap \cC^1$, we have 
\beq\label{eq:prod1}
\bal
|| f g  ||_{\cH^k} & \les \al^{-1/2}  || f||_{X_k} || g  ||_{\cH^k} ,  \\
|| f \xi||_{\cH^k(\psi)} & \les  \al^{-1/2}  || f ||_{X_k} ( \al^{1/2} ||\xi||_{\cC^{k-2}} + || \xi||_{\cH^k(\psi)}) . 
\eal
\eeq
\end{prop}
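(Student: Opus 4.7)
\textbf{Proof proposal for Proposition \ref{prop:prod1}.} The strategy is to reduce the higher-order estimate to the $\cH^3$ product rule of Proposition 7.12 of \cite{chen2019finite2} via the equivalence of norms in Proposition \ref{prop:equiv} and a straightforward Leibniz expansion. Writing $f = g_1 + h_1$ with $g_1 \in \cH^k$ and $h_1 \in \cW^{k+2,\infty}$ and using linearity in $f$, it suffices to estimate $fg$ (resp.\ $f\xi$) separately in each of these two cases.

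For the first inequality, I would use the equivalence \eqref{eq:equiv_Hk} to write
\[
\| fg \|_{\cH^k} \asymp \sum_{i+j \leq k-3} \| D_R^i D_\b^j (fg) \|_{\cH^3},
\]
then apply the Leibniz rule
\[
D_R^i D_\b^j(fg) = \sum_{l\leq i,\, m\leq j} \binom{i}{l}\binom{j}{m} (D_R^l D_\b^m f)(D_R^{i-l} D_\b^{j-m} g),
\]
and bound each product in $\cH^3$ by the $k=3$ product rule:
\[
\| (D_R^l D_\b^m f)(D_R^{i-l} D_\b^{j-m} g) \|_{\cH^3} \les \al^{-1/2} \| D_R^l D_\b^m f \|_{X_3} \| D_R^{i-l} D_\b^{j-m} g \|_{\cH^3}.
\]
The hypothesis $l+m+(i-l+j-m) = i+j \leq k-3$ ensures via \eqref{eq:norm_include} that $\| D_R^l D_\b^m f \|_{\cH^3} \leq \| f \|_{\cH^{l+m+3}} \leq \| f \|_{\cH^k}$ and similarly $\| D_R^l D_\b^m h_1 \|_{\cW^{5,\infty}} \leq \| h_1 \|_{\cW^{l+m+5,\infty}} \leq \| h_1 \|_{\cW^{k+2,\infty}}$; hence $\| D_R^l D_\b^m f \|_{X_3} \les \| f \|_{X_k}$. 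The second factor is controlled analogously by $\| g \|_{\cH^k}$. Summing over $i,j,l,m$ yields the first inequality.

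For the second inequality the same Leibniz expansion is applied and $D_R^i D_\b^j(f\xi)$ is estimated in $\cH^k(\psi)$. Here the difficulty is that, in contrast to $\cH^k$, the $\cH^k(\psi)$-weights $\psi_1,\psi_2$ are less singular near $\b = 0$ (they lose the symmetry factor in $\b$ from $\vp_i$), so one cannot embed $\cH^k(\psi)$ into $\cC^{k-2}$ via Proposition \ref{prop:embed}. Consequently, in each Leibniz term I would split according to how many derivatives land on $\xi$: when $(i-l)+(j-m) \leq k-2$, put $D_R^{i-l} D_\b^{j-m}\xi$ in $L^{\infty}$ via the $\cC^{k-2}$ norm (this is where the $\al^{1/2}\|\xi\|_{\cC^{k-2}}$ term appears, picking up an extra $\al^{1/2}$ to match the weights); otherwise the factor on $f$ has order $\leq 2$, so $D_R^l D_\b^m f$ can be placed in $L^{\infty}$ by Proposition \ref{prop:embed} (the $\cC^{k-2} \les \al^{-1/2}\cH^k$ embedding and the $\cW^{k+2,\infty} \les \cC^k$ embedding from \eqref{eq:norm_embed}), while $D_R^{i-l} D_\b^{j-m}\xi$ is controlled by $\|\xi\|_{\cH^k(\psi)}$ with weight $\psi^{1/2}$.

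The main obstacle is the bookkeeping between the two weight systems $\vp$ and $\psi$ and, particularly, ensuring that the Leibniz split is compatible with the weight $\psi$ present in the norm. Once one has identified the cut-off order $k-2$ as the right threshold for distributing the derivatives between $f$ and $\xi$, the rest is a direct combination of the $k=3$ estimate, the equivalence \eqref{eq:equiv_Hk}, and the embedding inequalities of Proposition \ref{prop:embed}. Since the absolute constants depending on $k$ are not tracked (as we have fixed $k \leq 100$ throughout), the proof reduces to a finite number of applications of these tools.
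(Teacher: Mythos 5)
Your proposal is correct and follows essentially the same route as the paper's proof: reduce to the $k=3$ product rule of \cite{chen2019finite2} via the equivalence \eqref{eq:equiv_Hk} and the Leibniz rule, then absorb each factor into $\|f\|_{X_k}$, $\|g\|_{\cH^k}$, $\al^{1/2}\|\xi\|_{\cC^{k-2}}$, $\|\xi\|_{\cH^k(\psi)}$ using \eqref{eq:norm_include}. The additional case-splitting you sketch for the second inequality is superfluous (and, as stated with $i+j\le k-3$ after the reduction, vacuous, since every Leibniz term then puts at most $k-3<k-2$ derivatives on $\xi$): once the $k=3$ estimate $\|F\,\Xi\|_{\cH^3(\psi)}\les \al^{-1/2}\|F\|_{X_3}(\al^{1/2}\|\Xi\|_{\cC^{1}}+\|\Xi\|_{\cH^3(\psi)})$ is applied term by term, the $\vp$-versus-$\psi$ weight bookkeeping is already encapsulated in that lower-order result.
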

The case $k=3$ has been established in \cite{chen2019finite2}, which will be used in the following proof.

\begin{proof}
We focus on the inequality for $\cH^k(\psi)$, which is more difficult. Using the equivalence \eqref{eq:equiv_Hk} and the Leibniz rule, we yield 
\[
S \teq || f \xi ||_{\cH^k( \psi)}
\les \sum_{ i+j \leq k-3} || D_R^i D_{\b}^j ( f \xi) ||_{\cH^3(\psi)}
\les \sum_{ i+j \leq k- 3}
\sum_{ p \leq i, q \leq j}
 || D_R^p D_{\b}^q f D_R^{i-p} D_{\b}^{j - q} \xi ||_{\cH^3(\psi)}.
\]
Applying the above Proposition \ref{prop:prod1} with $k=3$ and Proposition \ref{prop:equiv}, we prove
\[
\bal
S &\les  \al^{-1/2} \sum_{ i+j \leq k- 3}
\sum_{ p \leq i, q \leq j} || D_R^p D_{\b}^q f ||_{X_3}
( \al^{1/2} || D_R^{i-p} D_{\b}^{j-q} \xi||_{\cC^{1}} + || 
D_R^{i-p} D_{\b}^{j-q} \xi||_{\cH^3(\psi)})   \\
&\les \al^{-1/2} || f||_{X^k} ( \al^{1/2} || \xi||_{\cC^{k-2}} 
+ || \xi||_{\cH^k(\psi)} ) .
\eal
\]
The desired inequality follows. The proof of the first inequality in \eqref{eq:prod1} is similar.
\end{proof}

Recall the inner products $\la \cdot, \cdot \ra_{\cH^k}, \la \cdot, \cdot \ra_{\cH^k(\psi)}$ from \eqref{eq:inner}. We generalize the $\cH^3$ estimates of the transport terms in Propositions 7.13, 7.14, 7.15 \cite{chen2019finite2} to the following Propositions \ref{prop:tran1}-\ref{prop:tran3}.


\begin{prop}\label{prop:tran1}
Assume that $u, \pa_{\b}u, D_R u \in \cH^k$ and $\Om \in \cH^k, \xi \in \cH^k(\psi) \cap \cC^{k-2}$ we have 
\[
\bal
| \la \Om , u D_R \Om \ra_{\cH^k} |& \les \al^{-\f{1}{2}} \lt( ||u||_{\cH^k} + || \pa_{\b} u ||_{\cH^k} 
+ || D_R u ||_{\cH^k}   \rt) || \Om ||^2_{\cH^k} , \\
| \la \xi , u D_R \xi\ra_{\cH^k(\psi)} | & \les \al^{-\f{1}{2}} \lt( ||u||_{\cH^k} + || \pa_{\b} u ||_{\cH^k} + || D_R u ||_{\cH^k}   \rt) ( || \xi ||_{\cH^k(\psi)} + \al^{1/2} ||\xi||_{\cC^{k-2} }  )^2.
\eal
\]
Moreover, for all $u, D_R u \in X_k = \cH^k \oplus \cW^{k+2,\infty}$ and $\Om \in \cH^k, \xi \in \cH^k(\psi)
\cap \cC^{k-2}$, we have 
\[
\bal
| \la \Om , u D_{\b} \Om \ra_{\cH^k} | &\les \al^{-1/2} \lt( || u||_{X_k} + || D_R u ||_{X_k} ) \rt)  
|| \Om ||^2_{\cH^k} ,\\
| \la \xi , u D_{\b} \xi \ra_{\cH^k(\psi)} | & \les \al^{-1/2} \lt( || u||_{X_k} +
  || D_R u ||_{X_k}) \rt)  ( || \xi ||_{\cH^k (\psi)} + \al^{1/2} ||\xi||_{\cC^{k-2}} )^2.
\eal
\]
\end{prop}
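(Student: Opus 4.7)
The strategy is to reduce the higher-order estimates to the base case $k=3$, already proved as Propositions 7.13--7.15 in \cite{chen2019finite2}, by invoking the norm equivalences of Proposition \ref{prop:equiv} together with the product rule of Proposition \ref{prop:prod1}. Expanding $\langle \Om, u D_R \Om \rangle_{\cH^k}$ via \eqref{eq:inner} produces a finite sum of weighted pairings $\mu_{ij}\langle D_R^j D_\b^{i-j}\Om,\, D_R^j D_\b^{i-j}(u D_R \Om)\, p_j \rangle$, with $p_j = \vp_1$ for $j=0$ and $p_j = \vp_2$ otherwise, and each such pairing is treated separately.

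For the $u D_R \Om$ case I apply the Leibniz rule to split $D_R^j D_\b^{i-j}(u D_R \Om) = u\, D_R(D_R^j D_\b^{i-j}\Om) + \cE_{ij}$, where $\cE_{ij}$ is a finite sum of products $(D_R^a D_\b^b u)(D_R^c D_\b^d \Om)$ with $a+b\ge 1$ and $a+b+c+d = i+1$. The main piece, in which no derivative falls on $u$, is a direct match for the base-case transport estimate of \cite{chen2019finite2} applied to $D_R^j D_\b^{i-j}\Om$ regarded as an element of $\cH^3$ (legitimate by Proposition \ref{prop:equiv}); the resulting bound involves $\|u\|_{\cH^3}$ and $\|D_R u\|_{\cH^3}$ of the coefficient, each dominated by $\|u\|_{\cH^k}+\|D_R u\|_{\cH^k}$. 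For the commutator $\cE_{ij}$, Cauchy--Schwarz factorises the weighted pairing and Proposition \ref{prop:prod1} absorbs it into $\alpha^{-1/2}(\|u\|_{\cH^k}+\|\partial_\b u\|_{\cH^k}+\|D_R u\|_{\cH^k})\|\Om\|_{\cH^k}^2$; the $\|\partial_\b u\|_{\cH^k}$ term enters precisely because $\cE_{ij}$ can place a $\b$-derivative on $u$. The $\xi$-version in the $\cH^k(\psi)$ inner product is identical, except that the $\xi$-branch of Proposition \ref{prop:prod1} contributes the extra $\alpha^{1/2}\|\xi\|_{\cC^{k-2}}$ factor, reflecting the milder $\b$-degeneracy of the $\psi$-weights and the need for an $L^\infty$ bound on $\xi$ (cf.\ Lemma \ref{lem:xi_decay}).

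The $u D_\b$ estimates proceed by the same expansion-and-split scheme, but the integration-by-parts step applied to the main term $u\, D_\b(D_R^j D_\b^{i-j}\Om)\, p_j$ now generates a $D_\b u$ factor and a $D_\b p_j$ factor. Since $\|\partial_\b u\|_{\cH^k}$ is \emph{not} present on the right-hand side, the $\b$-derivative must be paid for by the $\cW^{k+2,\infty}$ component of the sum norm $X_k$: the weight $\sin(2\b)^{-\alpha/5}/(\alpha/10+\sin(2\b))$ built into \eqref{norm:W} exactly compensates the singular $\b$-behaviour of $\vp_1,\vp_2$ near $\b=0,\pi/2$, so one $D_\b$ derivative of an element of $\cW^{k+2,\infty}$ is affordable. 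For a decomposition $u = g + h$ with $g\in\cH^k$ and $h\in\cW^{k+2,\infty}$, the $g$-contribution is treated as in the $D_R$ case via Proposition \ref{prop:prod1}, while the $h$-contribution uses the embedding $\cW^{k+2,\infty}\hookrightarrow \cC^k$ of Proposition \ref{prop:embed} so that $h$ acts as a bounded multiplier compatible with the weighted $L^2$ pairings; taking the infimum over all such decompositions yields the factor $\|u\|_{X_k}$.

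The main obstacle is this weight-splitting analysis in the $D_\b$ case: one must verify that every term arising from Leibniz, when weighted by the singular $\vp_1$ or $\vp_2$, admits the $\cH^k\oplus\cW^{k+2,\infty}$ decomposition uniformly, and that the $\cW^{k+2,\infty}$ norm (including its $D_R h$ component) interacts correctly with the degenerate weights after integration by parts in $\b$. Once this is carried out, the remainder of the argument is bookkeeping that combines Propositions \ref{prop:equiv}, \ref{prop:embed}, and \ref{prop:prod1} with the base-case transport estimates of \cite{chen2019finite2}.
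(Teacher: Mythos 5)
Your proposal is correct and takes essentially the same route as the paper: expand the $\cH^k$ (resp.\ $\cH^k(\psi)$) inner product into weighted pairings, apply the Leibniz rule, integrate by parts on the top-order term (which is what brings in $D_R u$, and in the $D_\b$ case forces the $\cW^{k+2,\infty}$ component of $X_k$ to absorb the singular $\b$-weights), and control the commutator terms by Cauchy--Schwarz together with the product rule of Proposition \ref{prop:prod1} and the norm equivalences of Proposition \ref{prop:equiv}, reducing everything to the $k=3$ case of \cite{chen2019finite2}. This is precisely the argument the paper sketches after Proposition \ref{prop:tran3}.
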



\begin{prop}\label{prop:tran2}
Let $\cH^k(\rho)$ be either $\cH^k$ or $\cH^k(\psi)$ defined in \eqref{norm:Hk}.  For all $g \in \cH^k(\rho)$, $u$ with $ || D_R^i u ||_{L^{\infty}} < \infty $ for $i \leq k$ and $|| D_R^i D^j_{\b} \pa_{\b} u ||_{L^{\infty}} < \infty$ for $ i+ j \leq k-1$, we have 
\[
\bal
| \la g , u D_R g \ra_{\cH^k(\rho)} | & \les \al^{-1/2} ( 
\sum_{ 0 \leq i \leq k} || D_R^i u ||_{L^{\infty}} + \sum_{i + j \leq k-1}|| D_R^i D^j_{\b} \pa_{\b} u ||_{L^{\infty}}  ) || g ||^2_{\cH^k(\rho)} ,\\
\eal
\]
\end{prop}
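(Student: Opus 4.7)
\subsection*{Proof proposal for Proposition \ref{prop:tran2}}

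The plan is to generalize the $k=3$ case (Proposition 7.14 in \cite{chen2019finite2}) by combining integration by parts at the top order with the Leibniz expansion for all lower-order cross terms, exploiting the fact that the operators $D_R$ and $D_\b$ commute. First I would expand the inner product by its definition \eqref{eq:inner}:
\[
\la g, u D_R g \ra_{\cH^k(\rho)} = \sum_{i+j \le k} \mu_{ij}\,\la D_R^i D_\b^j g,\; D_R^i D_\b^j(u D_R g)\, p_j\ra,
\]
where the weights $p_j$ ($= \vp_1,\psi_1$ for $j=0$ and $\vp_2,\psi_2$ for $j\ge1$) are the ones used in the $\cH^k(\rho)$ norm. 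Applying the Leibniz rule to $D_R^i D_\b^j(u D_R g)$ separates the top-order piece $u\,D_R^{i+1} D_\b^j g$ from a finite sum of cross terms $\binom{i}{p}\binom{j}{q}\,(D_R^{i-p}D_\b^{j-q} u)\,(D_R^{p+1} D_\b^q g)$ with $(p,q)\neq(i,j)$.

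The top-order contribution is handled by integration by parts in $R$: writing $D_R=R\pa_R$, one has
\[
\int u\cdot(D_R f)\cdot f\, p_j\,R^{-1}\,dR\,d\b = -\tfrac{1}{2}\int \bigl(D_R u + u\,\tfrac{D_R p_j}{p_j} - u\bigr) f^2\, p_j\, R^{-1}\,dR\,d\b
\]
(the factor $R^{-1}$ comes from the $R\,dR\,d\b$ area element), applied with $f = D_R^i D_\b^j g$. The two weights $D_R p_j/p_j$ and the constant from $D_R = R\pa_R$ are $O(1)$ pointwise by direct inspection of \eqref{wg}, so this yields a bound by $\|u\|_{L^\infty} + \|D_R u\|_{L^\infty}$ times $\|p_j^{1/2}D_R^i D_\b^j g\|_{L^2}^2$, which is dominated by $\|g\|_{\cH^k(\rho)}^2$. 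The boundary terms at $R=0,\infty$ and $\b=0,\pi/2$ vanish because of the polynomial growth of $p_j$ at $0$ and decay at $\infty$, together with the boundary behavior already exploited in \cite{chen2019finite2}.

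For the cross terms with $(p,q)\neq(i,j)$, the total number of derivatives on $g$ is at most $k$ while the number on $u$ is at least $1$. When the derivative on $u$ is purely radial, i.e.\ $q=j$ and $p<i$, the factor is $D_R^{i-p}u$ with $i-p\le k$, bounded in $L^\infty$ by hypothesis, and we pair it with two copies of $g$ in weighted $L^2$ via Cauchy-Schwarz, which bounds the term by $\|D_R^{i-p}u\|_{L^\infty}\|g\|_{\cH^k(\rho)}^2$. When at least one $D_\b$ falls on $u$, i.e.\ $j-q\ge 1$, we rewrite $D_R^{i-p}D_\b^{j-q}u = D_R^{i-p}D_\b^{j-q-1}(\pa_\b u)\cdot\sin(2\b)^{\cdot}$ and absorb the extra $\sin(2\b)$ factors into the weights (using Lemma \ref{lem:dwg} and the form of $p_j$), controlling the factor in $L^\infty$ by $\|D_R^{i-p}D_\b^{j-q-1}\pa_\b u\|_{L^\infty}$ with $i-p + j-q-1 \le k-1$, as required by the hypothesis. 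In the $\b$-derivative case one also uses a Hardy-type inequality in $\b$ (already used in \cite{chen2019finite2}) to convert $D_\b g$ factors paired with the $\b$-weaker weight into $g$ factors paired with the $\b$-stronger weight, picking up the universal $\al^{-1/2}$ factor from the weighted $\b$-Hardy inequality.

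The hard step will be the bookkeeping for the mixed weights: $\cH^k(\rho)$ uses the weight $p_0=\vp_1$ or $\psi_1$ (singular in $\b$ only mildly) for pure $D_R$ terms, and $p_{\geq 1}=\vp_2$ or $\psi_2$ (more singular in $\b$) for terms involving $D_\b$. A cross term where $D_\b$ derivatives redistribute from $g$ onto $u$ shifts the left slot from a $p_{\geq 1}$ to a $p_0$ weight and must be re-estimated; this is exactly where the $\al^{-1/2}$ loss appears, and it is handled by the same weighted Hardy inequality and the embedding $\|\cdot\|_{\cC^{k-2}}\les\al^{-1/2}\|\cdot\|_{\cH^k}$ in Proposition \ref{prop:embed}. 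Summing all the cross terms with the absolute constants $\mu_{ij}$ (fixed independently of $\al$) gives the stated bound. The statement for $\cH^k(\psi)$ is identical in structure, with $\vp_i$ replaced by $\psi_i$; the only property used in the integration by parts is $|D_R\psi_i/\psi_i|\les 1$, which follows from \eqref{wg}.
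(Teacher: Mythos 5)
Your proposal is correct and follows essentially the same route as the paper, which itself only sketches this argument: expand via the inner product \eqref{eq:inner}, apply Leibniz, integrate by parts when all derivatives land on $D_R g$, and bound the cross terms by $L^\infty$ norms of derivatives of $u$ (converting $D_\b u$ to $\sin(2\b)\pa_\b u$ so the extra $\sin(2\b)$ absorbs the $\vp_2$-versus-$\vp_1$ weight mismatch), exactly as in the $k=3$ case of \cite{chen2019finite2}. The only quibble is the spurious $-u$ term in your integration-by-parts identity (with the measure $R^{-1}\,dR\,d\b$ the lower-order term cancels, and with $dR\,d\b$ it enters with a $+$ sign), but since it is controlled by $\|u\|_{L^\infty}$ in either case this does not affect the estimate.
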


\begin{prop}\label{prop:tran3}
 Let $\Psi$ be a solution of \eqref{eq:elli}. Suppose that $g, \Om \in \cH^k, \xi \in \cH^k(\psi) \cap \cC^{k-2}$. We have 
\[
\bal
| \la g ,  \f{1}{\sin(2\b)} D_R \Psi D_{\b} g \ra_{\cH^k} | &\les \al^{-3/2} || \Om ||_{\cH^k}   || g ||^2_{\cH^k } ,\\
| \la \xi , \f{1}{\sin(2\b)} D_R \Psi D_{\b}  \xi \ra_{\cH^k(\psi)} | & \les \al^{-3/2} 
|| \Om ||_{\cH^k }  ( || \xi ||_{\cH^k(\psi)} + \al^{1/2} || \xi||_{\cC^{k-2}} )^2.
\eal
\]
\end{prop}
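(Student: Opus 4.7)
The plan is to reduce Proposition \ref{prop:tran3} to an application of Proposition \ref{prop:tran1} by recognizing that $\f{1}{\sin(2\b)} D_R \Psi$ plays the role of the scalar coefficient multiplying $D_{\b} g$ (respectively $D_{\b} \xi$). Concretely, set $u \teq \f{1}{\sin(2\b)} D_R \Psi$; if I can show $\| u \|_{X_k} + \| D_R u \|_{X_k} \les \al^{-1} \| \Om \|_{\cH^k}$, then Proposition \ref{prop:tran1} with the $D_{\b}$ transport term yields the desired bound with the extra $\al^{-1/2}$ it already provides, giving the stated $\al^{-3/2}$ prefactor.

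To control $u$, I will use the decomposition supplied by Proposition \ref{prop:key}: write $\Psi = \f{\sin(2\b)}{\pi\al} L_{12}(\Om) + \Psi_*$ with $\| \pa_{\b\b} \Psi_* \|_{\cH^k} \les \| \Om \|_{\cH^k}$ and $\al \| R \pa_{R\b} \Psi \|_{\cH^k} + \al^2 \| R^2 \pa_{RR} \Psi \|_{\cH^k} \les \| \Om \|_{\cH^k}$. Then
\[
 u = \f{1}{\pi \al} D_R L_{12}(\Om) + \f{D_R \Psi_*}{ \sin(2\b) } .
\]
The first summand depends on $R$ alone; its $\cW^{k+2, \inf}$ norm is controlled by $\al^{-1}\| \Om \|_{\cH^k}$ via the estimates of the $L_{12}$ operator from Lemma \ref{lem:l12} (paired with Proposition \ref{prop:embed}). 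For the second summand, the boundary conditions \eqref{eq:ellibc} force $D_R \Psi_*$ to vanish on $\{\b = 0, \pi/2\}$, so a Hardy-type estimate in $\b$ (integrate $D_R \Psi_*(R, \b) = \int_0^\b \pa_{\b'} D_R \Psi_*\, d\b'$ against the $\cH^k$-weights) yields $\B\| \f{ D_R \Psi_*}{\sin(2\b)} \B\|_{\cH^k} \les \| \pa_{\b} D_R \Psi_* \|_{\cH^k} \les \al^{-1} \| \Om \|_{\cH^k}$. The same scheme handles $D_R u = \f{1}{\sin(2\b)} D_R^2 \Psi$, this time using the $\al^2 R^2 \pa_{RR}\Psi$ portion of Proposition \ref{prop:key} together with the identical Hardy argument applied to $D_R^2 \Psi_*$, which still vanishes on $\b = 0, \pi/2$.

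With $u$ and $D_R u$ thereby placed in $X_k$ with norm $\les \al^{-1} \| \Om\|_{\cH^k}$, Proposition \ref{prop:tran1} directly produces
\[
\bal
 | \la g, u D_{\b} g \ra_{\cH^k} | &\les \al^{-1/2} ( \| u \|_{X_k} + \| D_R u \|_{X_k} ) \| g \|_{\cH^k}^2 \les \al^{-3/2} \| \Om \|_{\cH^k} \| g \|_{\cH^k}^2, \\
 | \la \xi, u D_{\b} \xi \ra_{\cH^k(\psi)} | &\les \al^{-3/2} \| \Om \|_{\cH^k} ( \| \xi \|_{\cH^k(\psi)} + \al^{1/2} \| \xi \|_{\cC^{k-2}} )^2,
\eal
\]
which are exactly the two claims.

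The main obstacle is the Hardy-type control of $\f{D_R \Psi_*}{\sin(2\b)}$ at higher order: one needs to propagate the boundary vanishing of $D_R^m \Psi_*$ through all the mixed weighted derivatives appearing in $\cH^k$. For the low-order case $k=3$ this was done in Proposition 7.15 of \cite{chen2019finite2}, and the general $k$ follows by combining that argument with the $\cH^k$-equivalence from Proposition \ref{prop:equiv}: the latter reduces the $\cH^k$ bound on $\f{D_R \Psi_*}{\sin(2\b)}$ to $\cH^3$ bounds on each $D_R^i D_{\b}^j \bigl(\f{D_R \Psi_*}{\sin(2\b)}\bigr)$ with $i+j \leq k-3$, which after distributing $D_R^i D_{\b}^j$ via the Leibniz rule are all controllable by the elliptic estimate in Proposition \ref{prop:key} applied at the right order. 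All other pieces are routine once this Hardy step is in place.
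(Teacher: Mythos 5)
Your reduction to Proposition \ref{prop:tran1} is not the paper's route (the paper proves Proposition \ref{prop:tran3} directly, by expanding the inner product, integrating by parts on the top-order term, and invoking the $k=3$ case from \cite{chen2019finite2}), and it contains a genuine derivative-counting gap at the hypothesis $D_R u \in X_k$. Write $u = \f{D_R\Psi}{\sin(2\b)}$, so $D_R u = \f{D_R^2\Psi}{\sin(2\b)}$. Your Hardy argument converts the division by $\sin(2\b)$ into one extra $\pa_{\b}$; hence controlling $\| D_R u\|_{\cH^k}$ requires $D_R^i D_{\b}^j \pa_{\b} D_R^2 \Psi_*$ for $i+j \le k$ in weighted $L^2$, i.e.\ up to $k+3$ total derivatives of $\Psi$. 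But Proposition \ref{prop:key} with source $\Om \in \cH^k$ only controls second-order derivatives of $\Psi$ (namely $R^2\pa_{RR}\Psi$, $R\pa_{R\b}\Psi$, $\pa_{\b\b}\Psi_*$) in $\cH^k$, i.e.\ at most $k+2$ total derivatives. So $\|D_R u\|_{X_k}$ can only be bounded by a power of $\al^{-1}$ times $\|\Om\|_{\cH^{k+1}}$, not $\|\Om\|_{\cH^k}$, and the hypothesis of Proposition \ref{prop:tran1} cannot be verified at the claimed regularity. The same loss appears in the other summand: Lemma \ref{lem:l12} controls $D_R L_{12}(\Om)$ in $X_k$ from $\Om\in\cH^k$, but $D_R^2 L_{12}(\Om) = -\int_0^{\pi/2} \sin(2\b)\, D_R\Om \, d\b$ costs one more derivative of $\Om$ at the top order of the $X_k$ norm.

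This is exactly why Proposition \ref{prop:tran3} is stated and proved separately rather than deduced from Proposition \ref{prop:tran1}: the coefficient $\f{D_R\Psi}{\sin(2\b)}$ sits at critical regularity. In the direct expansion, the coefficient receives at most $k$ derivatives (hence $\Psi$ at most $k+1$, which is within reach of Proposition \ref{prop:key} after the Hardy step), and the single dangerous term in which all derivatives plus one fall on $g$ is removed by integration by parts in $\b$, which only requires $\|\f{D_R\Psi}{\sin(2\b)}\|_{\infty}$ and $\|\pa_{\b} D_R\Psi\|_{\infty}$; these cost $\al^{-3/2}\|\Om\|_{\cH^k}$ via the elliptic estimate and the embedding of Proposition \ref{prop:embed}, matching the stated prefactor. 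The first half of your argument --- the bound $\|u\|_{X_k}\les \al^{-1}\|\Om\|_{\cH^k}$ via the $L_{12}$ decomposition, the boundary vanishing of $D_R\Psi_*$, and the weighted Hardy inequality in $\b$ --- is sound and is indeed an ingredient of the direct proof; the failure is only that routing it through Proposition \ref{prop:tran1} forces one derivative too many onto the coefficient.
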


The case of $k=3$ in Propositions \ref{prop:tran1}-\ref{prop:tran3} has been proved in \cite{chen2019finite2}.  The ideas of the proof of the above propositions are simple. To estimate a typical term 
\[
S = \la  D_R^i D_{\b}^j,  \ \rho_j \cdot D_R^i D_{\b}^j( f Dg ) \ra
\]
in the expansion of $\la g, f D g \ra_{\cH^k(\rho)}$ with $\cH^k(\rho) = \cH^k, \rho_j = \vp_1 \one_{j= 0} + \vp_2 \one_{ j \neq 0 }$ or $\cH^{\psi}, \rho = \one_{j=0} \psi_1 + \one_{j \neq 0} \psi_2$ \eqref{eq:inner}, we perform integration by parts if all the derivatives $D_R^i D_{\b}^j$ falls on $Dg$
\[
|\la  D_R^i D_{\b}^j g, \ \rho_j f \cdot D_R^i D_{\b}^j Dg ) \ra|
\les || \rho_j^{-1} D( \rho_j f )  ||_{\inf} || \rho_j^{1/2} D_R^i D_{\b}^j g ||_2^2,
\]
which can be further bounded by the desired upper bound. In other cases, we estimate 
\[
\B|\B\la  D_R^i D_{\b}^j g, \rho_j D_R^m D_{\b}^nf  \cdot D_R^{i-m} D_{\b}^{j-n} Dg  \B\ra\B|
\les || \rho_j^{1/2}  D_R^i D_{\b}^j g ||_2^2 \cdot
|| \rho_j^{1/2} D_R^m D_{\b}^nf  \cdot D_R^{i-m} D_{\b}^{j-n} Dg ) ||_2^2,
\]
for some $m + n \geq 1, m \leq i, n \leq j$. Since the number of derivatives on $f$ is less than $m+n \leq k$, and that on $ g$ is less than $i-m + j-n+1 \leq  i+j\leq k$, the estimate of $|| \rho_j^{1/2} D_R^m D_{\b}^nf  \cdot D_R^{i-m} D_{\b}^{j-n} Dg ) ||_2$ follows from the same method of estimating the product in Proposition \ref{prop:prod1}. We refer to \cite{chen2019finite2} for the estimates in the case of $k=3$, which can be generalized to $k\geq 3$ in a straightforward manner. 



We generalize the $\cH^3$ estimates in Proposition 7.16 in \cite{chen2019finite2} to the $\cH^k$ estimate. 
\begin{prop}\label{prop:prod3}
Let $\Psi, \bar{\Psi}$ be a solution of \eqref{eq:elli} with source term $\Om, \bar{\Om}$, respectively, and $V_1(\Psi)$ be the operator associated to $v_x$ 
\beq\label{eq:vel_vx}
\bal
V_1(\Psi)   =& \al (1 + 2\cos^2 \b) D_R \Psi - \al D_R D_{\b}\Psi -  D_{\b} \Psi_* + 2 \Psi_*  + \sin^2 (\b) \pa^2_{\b} \Psi_*  \\
& + \al^2 \cos^2(\b) D_R^2 \Psi \teq A(\Psi)  + \al^2 \cos^2(\b) D_R^2 \Psi.
\eal
\eeq

Assume that $\xi \in \cH^k(\psi) \cap \cC^{k-2}, \Om \in \cH^k$. We have 
\beq\label{eq:prod3}
\bal
|| V_1(\Psi) \xi||_{\cH^k}  &\les  \al^{-1/2}  || \Om||_{\cH^k} ( 
\al^{1/2} || \xi||_{\cC^{k-2} } + || \xi||_{\cH^k(\psi)}),  \\
\quad || V_1(\bar{\Psi}) \xi ||_{\cH^k }  &\les \al^{1/2}  || \xi||_{\cH^k(\psi)}.
\eal
\eeq
\end{prop}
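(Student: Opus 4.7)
The plan is to prove Proposition~\ref{prop:prod3} by combining the higher-order elliptic estimates in Propositions~\ref{prop:key} and \ref{prop:psi} with the product rule in Proposition~\ref{prop:prod1} and the equivalence of norms in Proposition~\ref{prop:equiv}, reducing ultimately to the already-established case $k=3$ of \cite{chen2019finite2}.

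First I would decompose $V_1(\Psi) = A(\Psi) + \al^2 \cos^2(\b) D_R^2 \Psi$ as given in \eqref{eq:vel_vx} and control each piece in the sum space $X_k = \cH^k \oplus \cW^{k+2,\inf}$. For the term $\al^2 \cos^2(\b) D_R^2\Psi$, I use $D_R^2 = R^2 \pa_{RR} + D_R$ together with the elliptic bound $\al^2 \|R^2 \pa_{RR}\Psi\|_{\cH^k} \les \|\Om\|_{\cH^k}$ and $\al\|R\pa_R \Psi\|_{\cH^k} \les \|\Om\|_{\cH^k}$ from Proposition~\ref{prop:key}, giving an $\cH^k$ bound of order $\|\Om\|_{\cH^k}$. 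For the individual pieces of $A(\Psi)$, the terms $\al(1+2\cos^2\b)D_R\Psi$ and $-\al D_R D_\b \Psi$ are handled by $\al\|R\pa_{R\b}\Psi\|_{\cH^k} \les \|\Om\|_{\cH^k}$, while the $\Psi_*$-terms $-D_\b\Psi_* + 2\Psi_* + \sin^2(\b)\pa_\b^2\Psi_*$ are controlled by the central estimate $\|\pa_{\b\b}\Psi_* \|_{\cH^k} \les \|\Om\|_{\cH^k}$, recovering $\Psi_*$ and $\pa_\b \Psi_*$ from $\pa_{\b\b}\Psi_*$ by integrating in $\b$ using the Dirichlet boundary conditions $\Psi_*(R,0)=\Psi_*(R,\pi/2)=0$.

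Next I would apply the product rule of Proposition~\ref{prop:prod1} (second inequality) to obtain
\[
\|V_1(\Psi)\xi\|_{\cH^k(\psi)} \les \al^{-1/2} \|V_1(\Psi)\|_{X_k}\bigl(\al^{1/2}\|\xi\|_{\cC^{k-2}} + \|\xi\|_{\cH^k(\psi)}\bigr).
\]
The key point for the \emph{output} $\cH^k$ norm (weight $\vp$) rather than $\cH^k(\psi)$ (weight $\psi$) is that every summand in $V_1(\Psi)$ carries an intrinsic angular vanishing factor $\sin^2\b$, $\sin(2\b)$, or $\cos^2\b$ that converts the $\psi_2$-weight (singular like $\cos(\b)^{-\g}$ near $\b=\pi/2$) into a $\vp_2$-weight (symmetric in $\sin\b,\cos\b$). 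I would track these vanishing factors termwise, absorbing them into the weights and thereby replacing $\cH^k(\psi)$ by $\cH^k$ on the left-hand side. For the profile estimate on $V_1(\bar\Psi)\xi$, the same strategy goes through with Proposition~\ref{prop:psi} in place of Proposition~\ref{prop:key}, depositing $V_1(\bar\Psi)$ into the $\cW^{k+2,\inf}$ component of $X_k$ with norm of order $\al$, which ultimately yields the claimed $\al^{1/2}$ prefactor after combining with the $\al^{-1/2}$ from the product rule.

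Finally, to pass from general $k$ to the case $k=3$ already proven in \cite{chen2019finite2}, I would invoke Proposition~\ref{prop:equiv} to write
\[
\|V_1(\Psi)\xi\|_{\cH^k} \asymp \sum_{i+j\leq k-3} \|D_R^i D_\b^j(V_1(\Psi)\xi)\|_{\cH^3},
\]
then distribute the derivatives via the Leibniz rule. Each term is of the form $D_R^p D_\b^q V_1(\Psi) \cdot D_R^{i-p} D_\b^{j-q} \xi$; after observing that $D_R^p D_\b^q V_1(\Psi) = V_1(D_R^p D_\b^q\Psi)$ up to commutator corrections (which themselves satisfy the same elliptic bounds applied at lower order), the $k=3$ case yields the needed bound in terms of $\|D_R^p D_\b^q \Om\|_{\cH^3}$, which sums to $\|\Om\|_{\cH^k}$, and correspondingly for $\xi$. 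The main obstacle I expect will be the careful bookkeeping in this last step: ensuring that the commutator terms generated when moving $D_R^i D_\b^j$ across $V_1$ (arising, e.g., from the non-commutativity of $D_R$ and $D_\b$ with the coefficients $\cos^2\b$ and $\sin^2\b$) remain controlled in the sum space $X_k$ with the correct scaling in $\al$, and that the angular vanishing structure needed to convert $\cH^k(\psi)$ to $\cH^k$ is preserved under differentiation, since $D_\b$ can reduce the vanishing order.
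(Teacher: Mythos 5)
Your overall architecture (decompose $V_1$, feed the elliptic estimates of Propositions~\ref{prop:key} and \ref{prop:psi} into the product rules, reduce to $k=3$) matches the paper's, but there is a genuine gap at the one point where this proposition is actually nontrivial: the passage from an $\cH^k(\psi)$ bound on the product to the claimed $\cH^k$ bound. You have the weight comparison backwards. By Lemma~\ref{lem:H2H2}, $\cH^k$ is the \emph{stronger} norm, and the discrepancy between $\vp_2=\f{(1+R)^4}{R^4}\sin(2\b)^{-\g}$ and $\psi_2=\f{(1+R)^4}{R^4}\sin(\b)^{-\s}\cos(\b)^{-\g}$ sits near $\b=0$ (where $\g>1>\s$ makes $\vp_2$ more singular), not near $\b=\pi/2$, where both weights are $\cos(\b)^{-\g}$. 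Consequently the "intrinsic vanishing factors" that do the work are not $\cos^2\b$ but the vanishing of $V_1(\Psi)$ \emph{at $\b=0$}, coming from the boundary condition $\Psi(R,0)=\Psi_*(R,0)=0$, from $D_\b=\sin(2\b)\pa_\b$, and from the explicit $\sin^2\b$ factor. The paper's proof makes this precise: $\sin(2\b)^{-1/2}A(\Psi)\in\cH^k$ (via Proposition~\ref{prop:key} and Hardy's inequality in $\b$) and $\sin(2\b)^{1/2}\xi\in\cH^k$ with $\|\sin(2\b)^{1/2}\xi\|_{\cH^k}\les\|\xi\|_{\cH^k(\psi)}$ (Lemma~\ref{lem:H2H2}), after which the \emph{first} product rule of Proposition~\ref{prop:prod1} applies directly in $\cH^k$. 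Your proposal never supplies this redistribution of angular weight, and "absorbing the vanishing factors termwise" aimed at $\b=\pi/2$ will not close the estimate.

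A second, related omission is the term $\al^2\cos^2(\b)D_R^2\Psi$, which you fold into the same blanket product-rule step. The paper treats it separately because the Hardy-type gain at $\b=0$ is not available for it at full strength (the elliptic estimate controls $\al\,R\pa_{R\b}\Psi$ and $\al^2 R^2\pa_{RR}\Psi$, but an extra bare $\pa_\b$ on $D_R^2\Psi$ is not affordable). Instead one splits the $\cH^k$ norm of the product into its pure-$D_R$ components, where $\vp_1\asymp\psi_1$ lets the $\cH^k(\psi)$ product rule suffice, and the components containing a $D_\b$, where one extracts a factor $\sin(2\b)^{1/4}$ from $D_\b(\al^2D_R^2\Psi\cdot\xi)$ and uses $\sin(2\b)^{1/4}\vp_2^{1/2}\les\vp_1^{1/2}$ to land back in a norm controlled by $\|\Om\|_{\cH^k}\,\|\xi\|_{\cH^k(\psi)}$. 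Without some version of this term-by-term weight bookkeeping, your argument only yields $\|V_1(\Psi)\xi\|_{\cH^k(\psi)}$, which is weaker than the statement.
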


We refer the derivation of \eqref{eq:vel_vx} to Section 8.1 in \cite{chen2019finite2}. The difficulty is due to the fact that $\cH^k(\psi)$ is weaker than $\cH^k$ (see Lemma \ref{lem:H2H2}). Moreover, it is more difficult to control the singular weight $\phi_2$ in the $\cH^k$ norm, which is singular in $\b$. Thus we cannot apply Proposition \ref{prop:prod1} directly to estimate $v_x \xi$. 

\begin{lem}\label{lem:H2H2}
Let $\g  = 1 + \f{\al}{10} , \s = \f{99}{100}$ be the parameter given in Definition \ref{def:wg}. 
For $  \f{\g - \s}{2} \leq \lam \leq \f{1}{2}$ and $m \geq  1$, we have 
\beq\label{eq:H2H2}
|| f||_{\cH^m(\psi)}  \les  || f||_{\cH^m} , \quad  || \sin(\b)^{\lam} f ||_{\cH^m}  \les || f||_{\cH^m(\psi)} .
\eeq
\end{lem}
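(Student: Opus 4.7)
The plan is to obtain both bounds by direct weight comparison, using the pointwise inequalities between $\varphi_i$ and $\psi_i$ and applying the Leibniz rule to $\sin(\beta)^{\lambda} f$.

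The first estimate is immediate from Definition~\ref{def:wg}: one has $\varphi_1 \asymp \psi_1$ (they differ only by the constant $2^{-\sigma}$), while $\psi_2/\varphi_2 \asymp \sin(\beta)^{\gamma-\sigma}\leq 1$ because $\gamma>\sigma$. Substituting these pointwise bounds into the definition \eqref{norm:Hk} of the $\mathcal{H}^m(\rho)$ norm yields $\|f\|_{\mathcal{H}^m(\psi)}\lesssim \|f\|_{\mathcal{H}^m}$.

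For the second estimate, I first commute $D_R$ past $\sin^{\lambda}\beta$ (trivial, since $\sin^\lambda\beta$ is independent of $R$), reducing matters to bounding $D_\beta^{j+1}(\sin^{\lambda} g)$ for $g = D_R^i f$. Since $D_\beta = \sin(2\beta)\partial_\beta$ is a derivation, Leibniz gives
\begin{equation*}
D_\beta^{j+1}(\sin^{\lambda} g) = \sum_{k=0}^{j+1}\binom{j+1}{k}(D_\beta^k \sin^{\lambda})(D_\beta^{j+1-k} g).
\end{equation*}
The key observation is that iterating $D_\beta$ on $\sin^{\lambda}\beta$ does not worsen the $\sin$-singularity: starting from $D_\beta \sin^{\lambda} = 2\lambda\sin^{\lambda}\cos^2\beta$ and using the recursion $D_\beta(\sin^{\lambda} P)=\sin^{\lambda}\left(2\lambda\cos^2\beta\cdot P + \sin(2\beta)\partial_\beta P\right)$, induction on $k$ shows $D_\beta^k\sin^{\lambda} = \sin^{\lambda} P_k(\sin^2\beta,\cos^2\beta)$ for a polynomial $P_k$; moreover $P_k$ carries a factor of $\cos^2\beta$ for every $k\geq 1$, as each step of the recursion produces one. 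Hence pointwise
\begin{equation*}
|D_\beta^{k}\sin^{\lambda}|\lesssim_k \sin^{\lambda}\quad (k=0),\qquad |D_\beta^{k}\sin^{\lambda}|\lesssim_k \sin^{\lambda}\cos^2\beta\quad (k\geq 1).
\end{equation*}

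With these pointwise bounds in hand, each Leibniz term is estimated in $L^2(\varphi_i)$ against the appropriate $L^2(\psi_i)$ piece of $\|f\|_{\mathcal{H}^m(\psi)}$. For the pure $D_R^k$ contributions one needs only $\sin^{2\lambda}\leq 1$, which is free. For $k\in[0,j]$ (so $g$ still carries a $\beta$-derivative) the comparison reduces to $\varphi_2\sin^{2\lambda}\lesssim \psi_2$, i.e.\ $2\lambda-\gamma\geq -\sigma$, which is precisely the hypothesis $\lambda\geq(\gamma-\sigma)/2$. The delicate case is the endpoint $k=j+1$ where all $D_\beta$'s land on $\sin^{\lambda}$; then $g=D_R^i f$ carries no angular derivative, so the bound must be by the $\psi_1$-weighted norm. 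Here the required inequality is $\varphi_2\sin^{2\lambda}\cos^4\beta\lesssim \psi_1$, i.e.\ $\sin^{2\lambda-\gamma}\cos^{4-\gamma}\lesssim \sin^{-\sigma}\cos^{-\sigma}$: the $\sin$-exponent condition again gives $\lambda\geq(\gamma-\sigma)/2$, while the $\cos$-exponent survives precisely because of the extra $\cos^2\beta$ coming from $P_{j+1}$.

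The main obstacle is exactly this endpoint term: $\psi_1$ is strictly less singular than $\psi_2$ near $\cos\beta=0$ (by a factor $\cos^{\gamma-\sigma}$), and without the additional $\cos^2\beta$ provided by $P_k$ for $k\geq 1$ the estimate would fail at $\beta=\pi/2$. The upper bound $\lambda\leq 1/2$ plays only a mild role, ensuring uniform control of the constants in the induction generating the $P_k$; the sharp constraint for the conclusion is the lower bound $\lambda\geq(\gamma-\sigma)/2$.
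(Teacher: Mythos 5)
Your proof is correct and follows the same route the paper indicates for this lemma, namely a direct comparison of the weights $\vp_i$ and $\psi_i$ (the paper defers the case $m\le 3$ to \cite{chen2019finite2} and notes the general case is the same weight comparison). Your treatment of the endpoint Leibniz term — the extra $\cos^2\b$ factor produced by each application of $D_\b$ to $\sin^\lam\b$, which is exactly what rescues the comparison $\vp_2\sin^{2\lam}\b\cos^4\b\les\psi_1$ near $\b=\pi/2$ — is the right key point and is verified correctly.
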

The case of $m \leq 3$ has been proved in \cite{chen2019finite2}. The general case follows from the same argument, which compares the corresponding weights in $|| \sin(\b)^{\lam} f ||_{\cH^m}$ and $|| f||_{\cH^m(\psi)}$.

\begin{proof}[Proof of Proposition \ref{prop:prod3}]
The proof of the second inequality in \eqref{eq:prod3} follows from the product rules in Proposition \ref{prop:prod1}, the elliptic estimates in Proposition \ref{prop:psi}, and the argument in \cite{chen2019finite2}. 

 The proof of the first inequality in \eqref{eq:prod3} also follows from the argument in \cite{chen2019finite2} for the special case $k=3$. Note that $V_1(\Psi)$ vanishes on $\b = 0$. Thus, we have $ \sin(2\b)^{-1/2} V_1( \Psi)  \in \cH^k, \sin(2\b)^{1/2} \xi \in \cH^k$ for $\xi \in \cH^k(\psi)$, which allows us to apply the product rules similar to Proposition \ref{prop:prod1}. We only give a sketch and refer related details to \cite{chen2019finite2}.


Recall the decomposition \eqref{eq:vel_vx}. From Propositions \ref{prop:key} and Hardy's inequality, we get $\sin(2\b)^{-1/2} A(\Psi ) \in \cH^k$. Using Propositions \ref{prop:key}, \ref{prop:prod1} and Lemma \ref{lem:H2H2}, we yield 
\[
|| A(\Psi) \xi ||_{\cH^k}  \les \al^{-1/2} || \sin(2\b)^{-1/2} A(\Psi) ||_{\cH^k}
|| \sin(2\b)^{1/2} \xi||_{\cH^k} 
\les \al^{-1/2} || \Om||_{\cH^k} || \xi||_{\cH^k(\psi)}. 
\]


To estimate $ ||\al^2 D_R^2\Psi \xi||_{\cH^k}$, from \eqref{norm:Hk}, we need to estimate two types of terms 
\[
I =  || \vp_1^{1/2} D_R^i ( \al^2  D_R^2 \Psi \cdot \xi ) ||_{2}, \quad 
II = || \vp_2^{1/2} D_R^m D_{\b}^n D_{\b} ( \al^2  D_R^2 \Psi \cdot \xi ) ||_{2}
\]
for some $i\leq k, m+ n \leq k-1$. Since $\vp_1 \asymp \psi_1$ \eqref{wg}, using definition of $\cH^k(\psi)$ in \eqref{norm:Hk} and Proposition \ref{prop:prod1}, we get 
\[
 |I| \les || \al^2 D_R^2 \Psi \cdot \xi ||_{\cH^k(\psi)} 
 \les  \al^{-1/2}  || \Om||_{\cH^k} (  \al^{1/2} || \xi||_{\cC^{k-2} } + || \xi||_{\cH^k(\psi)}).
\]

For $II$, it contains at least one $D_{\b}$ derivative. We perform the following decomposition 
\[
\bal
D_{\b} ( \al^2 D_R^2 \Psi \cdot \xi)
&=  \sin(2\b)^{1/4} \B( \al^2 D_R^2 \pa_{\b} \Psi \cdot \sin (2\b)^{3/4} \xi 
+ \sin(2\b)^{-1/2} \al^2 D_R^2  \Psi \cdot \sin(2\b)^{1/4} D_{\b} \xi \B) \\
&\teq \sin(2\b)^{1/4} ( J_1 \cdot J_2 + J_3 \cdot J_4) \teq \sin(2\b)^{1/4} J.
\eal
\]

Since $m+n \leq k-1$ and $ \sin(2\b)^{1/4} \vp_2^{1/2} \les  \vp_1^{1/2} $, using the triangle inequalities and Propositions \ref{prop:prod1}, we yield 
\[
II = || \vp_2^{1/2} D_R^m D_{\b}^n ( \sin(2\b)^{1/4} J) ||_2 
\les 
 \sum_{ l\leq n} 
|| \vp_2^{1/2} \sin(2\b)^{1/4} D_R^m D_{\b}^l J ||_2
\les \sum_{ l \leq n} || \vp_1^{1/2} D_R^m D_{\b}^l J ||_2
\les || J||_{\cH^{k-1}}.
\]

Applying Propositions \ref{prop:prod1}, \ref{prop:key}, and Lemma \ref{lem:H2H2}, we get
\[
\bal
|II| 
&\les \al^{-1/2} ( || J_1||_{\cH^{k-1}} || J_2||_{\cH^{k-1}} 
+ || J_3||_{\cH^{k-1}} || J_4||_{\cH^{k-1}}  ) \\
&\les \al^{-1/2} || \Om||_{\cH^k} ( || \sin(2\b)^{3/4}  \xi||_{\cH^{k-1} } 
+  || \sin(2\b)^{1/4} D_{\b} \xi||_{\cH^{k-1} } )
\les \al^{-1/2}  || \Om||_{\cH^k} || \xi||_{\cH^k(\psi)}.
\eal
\]
We conclude the proof.
\end{proof}

\section{Estimate of the approximate steady state}\label{app:profile}

Recall from \eqref{eq:nota_om} that $\bar \Om, \bar \eta, \bar \xi$ denote the approximate steady state $\bar \om, \bar\th_x, \bar \th_y$ under the coordinate $(R, \b)$, and  the formula of $\bar{\Om}, \bar{\eta}$ in \eqref{eq:profile}. 
\beq\label{eq:bar0}
\bar{\Om} = \f{\al}{c}  \f{3 R \G(\b) }{(1+R)^2},  \quad
\bar{\eta} = \f{\al}{c} \f{6 R \G(\b)  }{(1+R)^3} .
\eeq


We generalize Lemma A.6 in \cite{chen2019finite2} from $k \leq 3$ to any $k$ below.
\begin{lem}\label{lem:bar}
The following results apply to any $ k \geq 0, 0 \leq i + j \leq k, j \neq 0$.
(a) For $f = \bar{\Om}, \bar{\eta}, \bar{\Om} - D_R \bar{\Om}, \bar{\eta} - D_R \bar{\eta}$, we have
\beq\label{eq:bar}
| D_R^k f | \les_k f , \quad  |D_R^i D^j_{\b} f | \les_k \al \sin(\b) f.
\eeq
(b) Let $\vp_i$ be the weights defined in \eqref{wg}. For $g = \bar{\Om}, \bar{\eta}$, we have
\beq\label{eq:bar_ux}
\int_0^{\pi /2} R^2 (D_R^k g )^2 \vp_1 d \b \les_k \al^2,  \quad
\int_0^{\pi /2} R^2 (D_R^i D^j_{\b} g )^2 \vp_2  d \b \les_k \al^3, 
\eeq
uniformly in $R$ and 
\beq\label{eq:bar_ing}
\la  (D^k_R  ( g - D_R g   ) )^2 , \vp_1 \ra  \les_k \al^2, \quad
\la  (D^i_R D^j_{\b} ( g - D_R g   ) )^2 , \vp_2  \ra  \les_k \al^3.
\eeq
\end{lem}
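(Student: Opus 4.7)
The plan is to exploit the tensor-product structure of the approximate steady state. All four target functions in part (a) are of the form $f = C\al\,\G(\b)\,h(R)$ with $h(R) = R^a/(1+R)^b$ and $b \geq a \geq 1$; a direct computation gives
\[
\bar\Om - D_R\bar\Om = \f{6\al}{c}\,\G(\b)\,\f{R^2}{(1+R)^3}, \qquad \bar\eta - D_R\bar\eta = \f{18\al}{c}\,\G(\b)\,\f{R^2}{(1+R)^4},
\]
so the two differenced functions share the same factorised form as $\bar\Om, \bar\eta$ themselves. Since $D_R$ acts only on $h$ and $D_\b$ only on $\G$, the mixed derivative factorises as $D_R^i D_\b^j f = C\al\,(D_\b^j \G)(D_R^i h)$, and part (a) reduces to two one-variable estimates.

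On the $R$ side, write $s = R/(1+R) \in [0,1)$; then $D_R s = s(1-s)$ and $D_R h / h = a - b s$, so by induction $D_R^k h = h\,Q_k(s)$ for a polynomial $Q_k$ depending only on $(k, a, b)$, and uniform boundedness of $Q_k$ on $[0,1]$ yields $|D_R^k h| \les_k h$. On the $\b$ side, $D_\b\G = \sin(2\b)\pa_\b\cos^\al\b = -2\al\sin^2\b\,\G$, so writing $D_\b^j\G = P_j(\b)\,\G$ one has $P_1 = -2\al\sin^2\b$ and the recursion $P_{j+1} = D_\b P_j + P_j P_1$. Let $\cS$ denote the linear span of monomials $\sin^m\b\cos^n\b$ with $m \geq 2$. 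A direct check shows that both $D_\b$ and multiplication by $P_1$ send $\al\cdot\cS$ into itself, because differentiating $\sin^m\b\cos^n\b$ and multiplying by $\sin(2\b) = 2\sin\b\cos\b$ preserves divisibility by $\sin^2\b$. By induction $P_j \in \al\cdot\cS$ for $j \geq 1$, giving $|P_j| \les_j \al\sin^2\b \leq \al\sin\b$. Combining the two one-variable bounds proves $|D_R^k f| \les_k f$ and $|D_R^i D_\b^j f| \les_k \al\sin\b\,f$ for $j \geq 1$, which is part (a).

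Part (b) then follows by substituting the pointwise bounds from (a) into the definitions of $\vp_1, \vp_2$ and reducing to one-dimensional $\b$-integrals. Since $R^2 g^2 \asymp \al^2\,\G^2\,R^4/(1+R)^{2b}$ with $b \geq 2$,
\[
R^2 (D_R^k g)^2\,\vp_1 \les_k \al^2\,\G(\b)^2\,(1+R)^{4-2b}\,\sin(2\b)^{-\s} \leq \al^2\cos^{2\al}\b\,\sin(2\b)^{-\s},
\]
which integrates in $\b$ to $O(\al^2)$ uniformly in $R$ because $\s = 99/100 < 1$. The mixed-derivative $\vp_2$ bound picks up an extra factor $\al^2\sin^2\b$ from part (a) and reduces to a $\b$-integral of $\sin^{2-\g}\b\cos^{2\al-\g}\b$; this is $O(1)$ because $\g = 1 + \al/10$ satisfies $\g < 2$ and $2\al - \g > -1$ for $\al$ small, giving a bound $\les_k \al^4 \leq \al^3$. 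The global estimates of \eqref{eq:bar_ing} reduce to the same integrands, this time with an extra decay factor $(1+R)^{-2}$ coming from the fact that $g - D_R g = C\al\,\G\,R^2/(1+R)^{b+1}$ vanishes faster than $g$ at the origin and infinity, making the $R$-integral converge.

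The only point requiring careful bookkeeping is the inductive preservation of the $\sin^2\b$-divisibility in $P_j$, which is what prevents the $\al$-smallness from degenerating as $j$ grows; every other step is a direct generalisation of the $k \leq 3$ argument in Lemma A.6 of \cite{chen2019finite2}, so no fundamental obstacle is expected.
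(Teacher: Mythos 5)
Your proof is correct and is exactly the argument the paper intends (the paper omits the proof, deferring to the $k\le 3$ case in Lemma A.6 of \cite{chen2019finite2}); the factorization $f=C\al\,\G(\b)h(R)$, the substitution $s=R/(1+R)$ with $D_R^k h = h\,Q_k(s)$, and the inductive $\sin^2\b$-divisibility of $P_j$ are the right reductions, and the weighted integrals then follow by inspection of the exponents. One small imprecision: the $\b$-integral $\int_0^{\pi/2}\sin^{2-\g}\b\cos^{2\al-\g}\b\,d\b$ is not $O(1)$ uniformly in $\al$ but rather $\asymp \al^{-1}$, since $2\al-\g=1.9\al-1$ makes the endpoint contribution at $\b=\pi/2$ of size $(1.9\al)^{-1}$; combined with the pointwise factor $\al^4$ this still yields exactly the stated bound $\les_k\al^3$, so the conclusion is unaffected.
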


We generalize Lemma A.7 in \cite{chen2019finite2} from $k = 7$ to any $k \geq 7$ below.
\begin{lem}\label{lem:gam} 
For any $k \geq 7$, it holds true that
$\G(\b) ,\bar{\Om}, \bar{\eta} \in \cW^{k, \infty}$ with
\[
\bal
&|| \G(\b)||_{\cW^{k,\infty}} \les_k 1, \quad
||\f{(1+R)^2}{R} \bar{\Om}||_{\cW^{k,\infty}}  + || \f{(1+R)^2}{R}\bar{\eta} ||_{\cW^{k,\infty}} \les_k \al ,  \\
&|| D_{\b} \bar{\Om}||_{\cW^{k,\infty}}  + || D_{\b}\bar{\eta} ||_{\cW^{k,\infty}} \les_k \al^2 .
\eal
\]
\end{lem}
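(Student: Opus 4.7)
The plan is to reduce everything to direct computation on the explicit formulas in \eqref{eq:bar0}, namely
\[
\bar\Omega = \frac{\alpha}{c}\, F_{\Omega}(R)\,\Gamma(\beta), \qquad \bar\eta = \frac{\alpha}{c}\, F_{\eta}(R)\,\Gamma(\beta),
\]
with $F_\Omega(R)=3R/(1+R)^2$, $F_\eta(R)=6R/(1+R)^3$, and $\Gamma(\beta)=\cos^\alpha\beta$. Since $D_R$ acts only on the $R$-factor and $D_\beta$ only on the $\beta$-factor, the $\mathcal{W}^{k,\infty}$ norm factors as a product and it suffices to establish (i) $R$-estimates on $F_\Omega,F_\eta$ and (ii) $\beta$-estimates on $\Gamma$ of the appropriate orders, and then combine them term-by-term.

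\textbf{Step 1 ($R$-estimates).} By induction on $k$ one checks $|D_R^k F_\Omega(R)|\lesssim_k R/(1+R)^2$ and $|D_R^k F_\eta(R)|\lesssim_k R/(1+R)^3$; indeed $D_R=R\partial_R$ applied to $R/(1+R)^p$ yields terms of the same type. Hence $\frac{(1+R)^2}{R}D_R^k F_\Omega$ and $\frac{(1+R)^2}{R}D_R^k F_\eta$ are uniformly bounded for all $k\ge 0$, which takes care of the pure radial part of the $\mathcal{W}^{k,\infty}$ norm.

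\textbf{Step 2 ($\beta$-estimates on $\Gamma$).} The key observation is that
\[
D_\beta\Gamma=\sin(2\beta)\partial_\beta \cos^\alpha\beta=-2\alpha\sin^2\beta\,\cos^\alpha\beta,
\]
so a factor of $\alpha$ and a factor of $\sin^2\beta$ appear after the very first $D_\beta$. By induction on $j\ge 1$ one verifies the structural identity
\[
D_\beta^j\Gamma(\beta)=\alpha\,\sin^2\beta\,\cos^\alpha\beta\,\cdot Q_j(\sin\beta,\cos\beta,\alpha),
\]
where $Q_j$ is a polynomial with $|Q_j|\lesssim_j 1$ uniformly in $\alpha\in(0,1)$; the induction step uses $\sin(2\beta)\cos^{\alpha-1}\beta=2\sin\beta\cos^\alpha\beta$ to absorb the apparent singularity at $\beta=\pi/2$ produced by $\partial_\beta\cos^\alpha\beta$. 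From this,
\[
\frac{|D_\beta^j\Gamma|}{\alpha/10+\sin(2\beta)}\lesssim_j\min\!\Big(\alpha\cdot\frac{\sin^2\beta\,\cos^\alpha\beta}{\sin(2\beta)},\ \cos^\alpha\beta\Big)\lesssim_j\alpha\,\cos^\alpha\beta,
\]
and multiplying by $\sin(2\beta)^{-\alpha/5}$, which contributes at most $C_\alpha\cos^{-\alpha/5}\beta$ near $\beta=\pi/2$ and a bounded factor near $\beta=0$ (again absorbed by the $\sin^2\beta$ vanishing), leaves a quantity uniformly $\lesssim\alpha$. The $j=0$ case $\|D_R^k\Gamma\|_\infty=\|\Gamma\|_\infty\le 1$ is trivial.

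\textbf{Step 3 (Assembly).} Combining Steps 1 and 2 via the Leibniz-like factorization of the $\mathcal{W}^{k,\infty}$ weights and using $\tfrac{\alpha}{c}\lesssim\alpha$ yields $\|\Gamma\|_{\mathcal{W}^{k,\infty}}\lesssim_k 1$ and $\|\tfrac{(1+R)^2}{R}\bar\Omega\|_{\mathcal{W}^{k,\infty}}+\|\tfrac{(1+R)^2}{R}\bar\eta\|_{\mathcal{W}^{k,\infty}}\lesssim_k\alpha$. For $\|D_\beta\bar\Omega\|_{\mathcal{W}^{k,\infty}}$ and $\|D_\beta\bar\eta\|_{\mathcal{W}^{k,\infty}}$, every contributing term contains at least one $D_\beta$ on $\Gamma$, which by Step 2 provides an extra factor of $\alpha$ beyond the prefactor $\alpha/c$, yielding the $\alpha^2$ bound.

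\textbf{Main obstacle.} The one genuinely delicate point is verifying the structural identity in Step 2: the competition between $\partial_\beta\cos^\alpha\beta\sim-\alpha\sin\beta\cos^{\alpha-1}\beta$ (which is singular at $\beta=\pi/2$) and the compensating factor $\sin(2\beta)$ in each $D_\beta$ has to be tracked through all $j\le k$, and then paired with the singular angular weight $\sin(2\beta)^{-\alpha/5}/(\alpha/10+\sin(2\beta))$ in the definition of $\mathcal{W}^{k,\infty}$. This is exactly the content of the $k\le 7$ computation in \cite{chen2019finite2}, and the induction on $j$ sketched above extends it verbatim to all $k\ge 7$ without introducing any new mechanism; only the book-keeping grows.
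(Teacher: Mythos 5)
Your overall strategy — direct computation on the explicit product formulas \eqref{eq:bar0}, with separate $R$- and $\beta$-estimates assembled by Leibniz — is the natural one (the paper itself omits the proof and defers to the $k\le 7$ computation in the cited reference, which proceeds the same way). Steps 1 and 3 are fine, as is the structural identity $D_\beta^j\Gamma=\alpha\sin^2\beta\cos^\alpha\beta\,Q_j$ with $|Q_j|\lesssim_j 1$. This correctly yields the first two estimates of the lemma, because there one only needs $\sin(2\beta)^{-\alpha/5}\,|D_\beta^j\Gamma|/(\alpha/10+\sin(2\beta))\lesssim_j 1$, which is true.

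However, the displayed inequality at the end of Step 2 is false, and it is precisely the point needed for the $\alpha^2$ bounds. You assert
\[
\min\Bigl(\alpha\,\frac{\sin^2\beta\cos^\alpha\beta}{\sin(2\beta)},\ \cos^\alpha\beta\Bigr)\lesssim \alpha\cos^\alpha\beta,
\]
which amounts to $\min\bigl(\tfrac{\alpha}{2}\tan\beta,\,1\bigr)\lesssim\alpha$; this fails as soon as $\tan\beta\gtrsim 1$. Concretely, at $\cos\beta=\alpha/2$ one has $\sin(2\beta)\approx\alpha$ and $|D_\beta\Gamma|=2\alpha\sin^2\beta\cos^\alpha\beta\approx 2\alpha$, so $|D_\beta\Gamma|/(\alpha/10+\sin(2\beta))$ is of order $1$, not of order $\alpha$. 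With only $|Q_j|\lesssim 1$ the weighted terms of $\|D_\beta\bar{\Omega}\|_{\cW^{k,\infty}}$ come out as $\alpha\cdot\alpha\cdot O(\alpha^{-1})=O(\alpha)$ (the weight $\sin(2\beta)^{-\alpha/5}/(\alpha/10+\sin 2\beta)$ applied to $\sin^2\beta\cos^\alpha\beta$ peaks at size $\sim\alpha^{-1}$ near $\cos\beta\sim\alpha^2$), one full power of $\alpha$ short of the claim. The estimate is nevertheless true, but it needs a refinement of the structure for $j\ge 2$: writing $D_\beta^j\Gamma=-2\alpha\,D_\beta^{j-1}(\sin^2\beta\,\Gamma)$ and using $D_\beta(\sin^2\beta)=4\sin^2\beta\cos^2\beta$, induction gives
\[
|D_\beta^j\Gamma|\lesssim_j \alpha\,\sin^2\beta\,\cos^\alpha\beta\,\bigl(\cos^2\beta+\alpha\bigr),\qquad j\ge 2.
\]
The extra factor $\cos^2\beta+\alpha$ is what defeats the singular weight: the $\alpha$ part is handled by $(\alpha/10+\sin 2\beta)^{-1}\le 10/\alpha$ together with $\sin(2\beta)^{-\alpha/5}\sin^2\beta\cos^\alpha\beta\lesssim 1$, while the $\cos^2\beta$ part combines with $\sin^2\beta$ into $\sin^2(2\beta)/4$, which absorbs both the denominator and $\sin(2\beta)^{-\alpha/5}$; each route gives $\lesssim\alpha$. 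Since every weighted term of $\|D_\beta\bar{\Omega}\|_{\cW^{k,\infty}}$ and $\|D_\beta\bar{\eta}\|_{\cW^{k,\infty}}$ involves $D_\beta^j\Gamma$ with $j\ge 2$, and the unweighted purely radial terms see only $D_\beta\Gamma$ itself (bounded by $2\alpha$ in $L^\infty$), this repairs the $\alpha^2$ bound. You should replace the false pointwise inequality by this two-case argument.
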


We generalize Lemma A.8 in \cite{chen2019finite2} from $k= 5$ to any $k \geq 5$ below.
\begin{lem}\label{lem:xi}
Assume that $0\leq \al \leq \f{1}{1000}$. For $R \geq0, \b \in [0, \pi/2], k \geq 1$ and $0 \leq i + j \leq k$,
we have
\begin{align}
    & | D^i_R D^j_{\b} \bar{\xi} | \les_k  - \bar{\xi},  \quad  | D^i_R D^j_{\b} (3\bar{\xi} - R\pa_R \bar{\xi}) | \les_k -\bar{\xi}, \label{eq:xi0} \\
 & |\bar{\xi} |  \les  \f{\al^2R^2}{1+R} \lt( \one_{\b < \pi /4} \f{ \sin^{\al}(\b)}{ (1+R \sin^{\al}(\b) )^3} 
+ \one_{\b \geq \pi/4} \f{\cos^{\al+1}(\b)}{(1+R)^3}  \rt) 
\label{eq:xi} , \\
  & -\bar{\xi } \les \al^2 \cos(\b) , \quad   || \bar{\xi} ||_{\cC^k} \les  || \f{1+R}{R} 
( 1 + ( R \sin( 2 \b)^{\al} )^{-\f{1}{40} } ) \bar{\xi} ||_{L^{\infty}} \les \al^2, \notag
\end{align}
where $||\cdot ||_{\cC^k}$ is defined in \eqref{norm:ck}. Let $\psi_1, \psi_2$ be the weights defined in \eqref{wg}. We have 
\beq\label{eq:xi_ux}
\int_0^{\pi /2} R^2 (D^i_R D^j_{\b} \bar{\xi} )^2 \psi_k d \b  
\les \al^4  
\eeq
uniformly in $R$, and 
\beq\label{eq:xi_cw}
\la  (D^i_R D^j_{\b} ( 3\bar{\xi} - R \pa_R \bar{\xi}  ) )^2 , \psi_k \ra  
\les \al^4, \quad  \la  (D^i_R D^j_{\b}  \bar{\xi})^2 , \psi_k \ra  
\les \la \bar{\xi}^2 , \psi_k \ra \les \al^4,
\eeq
where $ (D^i_R D^j_{\b},  \psi_k )$ represents 
$ ( D^i_R, \psi_1)$ for $0\leq i \leq k$,  and $ (D^i_R D^j_{\b},  \psi_2 )$ for $i+j \leq k, j \geq 1$.
\end{lem}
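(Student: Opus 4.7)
The plan is to extend the case $k \leq 5$ already treated in Lemma A.8 of \cite{chen2019finite2} to arbitrary $k \geq 5$ by essentially the same computations, exploiting that $\bar\xi$ admits an explicit representation obtained from the approximate steady state. Starting from Remark \ref{rem:change2}, we have $\bar\theta(x,y) = 1 + \int_0^x \bar\eta(z,y)\,dz$, so $\bar\xi = \bar\theta_y$ is an explicit integral of $\bar\eta_y$, which in $(R,\beta)$ coordinates involves only the profile $\bar\eta = \frac{\alpha}{c}\,\frac{6R}{(1+R)^3}\Gamma(\beta)$ with $\Gamma(\beta) = \cos^\alpha(\beta)$. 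The overall $\alpha^2$ size of $\bar\xi$ comes from one factor $\alpha$ in $\bar\eta$ and another $\alpha$ from differentiating $\Gamma$.

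I would first establish the pointwise bounds \eqref{eq:xi0}. Using the explicit formula for $\bar\xi$, each application of $D_R = R\partial_R$ produces a rational function in $R$ of the same decay rate, while each application of $D_\beta = \sin(2\beta)\partial_\beta$ produces a factor of $\alpha\sin\beta$ coming from differentiating $\cos^\alpha(\beta)$ (or lower-order factors). By induction on $i+j$ using the Leibniz rule, and by comparing the resulting rational and trigonometric expressions against the base expression $-\bar\xi$, one obtains $|D_R^i D_\beta^j \bar\xi| \lesssim_k -\bar\xi$ for $j=0$ and with the extra $\alpha\sin\beta$ gain for $j\geq 1$. The corresponding statement for $3\bar\xi - R\partial_R\bar\xi$ is handled identically, since this combination again has an explicit formula of the same structural form. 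The pointwise bound \eqref{eq:xi} and $-\bar\xi \lesssim \alpha^2\cos\beta$ are purely algebraic consequences of the explicit expression and are already proved in the $k=5$ case; they are $k$-independent.

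Next, I would derive the $\mathcal{C}^k$ bound. By the last embedding in Proposition \ref{prop:embed}, $\|\bar\xi\|_{\mathcal{C}^k} \lesssim \|\frac{1+R}{R}\bar\xi\|_{\mathcal{W}^{k,\infty}}$, and the latter is controlled by pointwise estimates on $D_R^i D_\beta^j \bar\xi$ weighted by $\sin(2\beta)^{-\alpha/5}(\frac{\alpha}{10}+\sin(2\beta))^{-1}\cdot\frac{1+R}{R}$. The pointwise bounds from \eqref{eq:xi0} combined with the explicit form \eqref{eq:xi} absorb exactly these singular factors because $\bar\xi$ itself contains a favorable $\cos^\alpha\beta$ or $\sin^\alpha\beta$ factor (depending on the angular region); the small loss of $\sin(2\beta)^{-\alpha/5}$ is dominated by the gain of $\sin\beta$ in every $D_\beta$ derivative. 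This gives $\|\bar\xi\|_{\mathcal{C}^k} \lesssim \alpha^2$ with $k$-dependent constants absorbed into $\lesssim_k$.

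Finally, the weighted $L^2$ estimates \eqref{eq:xi_ux} and \eqref{eq:xi_cw} follow by inserting the pointwise bounds \eqref{eq:xi0} and integrating. Fixing $R$ and using $|D_R^i D_\beta^j \bar\xi|^2 \lesssim_k \bar\xi^2$ (with the $\alpha^2\sin^2\beta$ improvement when $j\geq 1$), the integrands reduce to $\bar\xi^2 \psi_k$, and the required bound $\alpha^4$ follows by the same integration carried out in the $k=5$ case in \cite{chen2019finite2}. The full spatial integrals in \eqref{eq:xi_cw} are treated identically. The only place where $k$-dependence could bite is the convergence of the $\beta$-integrals against $\psi_2$, which is singular at $\beta = 0$ like $\sin(\beta)^{-\sigma}\cos(\beta)^{-\gamma}$; however, the $j\geq 1$ gain of $\alpha\sin\beta$ together with the $\cos^\alpha\beta$ factor intrinsic to $\bar\xi$ is precisely what renders the integrals finite, uniformly in $k$ up to a $k$-dependent combinatorial constant from the Leibniz rule. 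This last step is the main (though routine) technical obstacle, since one must carefully track that every derivative of $\bar\xi$ preserves the angular decay that tames the singular weights.
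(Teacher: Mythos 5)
The paper itself omits the proof of this lemma, stating only that it ``follows from the argument in \cite{chen2019finite2}'' (Lemma A.8 there, which is the case $k=5$), so your overall strategy --- explicit formula for $\bar\xi$ from $\bar\theta = 1+\int_0^x\bar\eta\,dz$, induction with the Leibniz rule to get the pointwise bounds \eqref{eq:xi0}, then integration against the weights --- is indeed the intended route, and your treatment of \eqref{eq:xi}, \eqref{eq:xi_ux} and \eqref{eq:xi_cw} is sound: once $|D_R^iD_\beta^j\bar\xi|\lesssim_k-\bar\xi$ is in hand, the $\beta$-integrals against $\psi_1,\psi_2$ converge directly from \eqref{eq:xi} (near $\beta=0$ one has $\bar\xi^2\psi_2\sim\sin^{2\alpha-\sigma}\beta$ with $\sigma=\tfrac{99}{100}<1$, and near $\beta=\pi/2$ one has $\cos^{2+2\alpha-\gamma}\beta$), with the $\alpha^4$ coming from $\bar\xi=O(\alpha^2)$; no extra angular gain is needed there.

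The step that does not work as written is your derivation of $\|\bar\xi\|_{\cC^k}\lesssim\alpha^2$ via the embedding $\|\bar\xi\|_{\cC^k}\lesssim\|\tfrac{1+R}{R}\bar\xi\|_{\cW^{k,\infty}}$. First, the mechanism you invoke --- ``the gain of $\sin\beta$ in every $D_\beta$ derivative'' --- is exactly what \eqref{eq:xi0} does \emph{not} assert for $\bar\xi$: contrast Lemma \ref{lem:bar}, where the $j\neq 0$ bound carries the extra factor $\alpha\sin\beta$ for $\bar\Om,\bar\eta$, with \eqref{eq:xi0}, where the bound is only $\lesssim_k-\bar\xi$ with no gain. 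Second, the $\cW^{k,\infty}$ weight on the $j\geq 1$ terms contains the factor $(\tfrac{\alpha}{10}+\sin 2\beta)^{-1}$, which costs up to $\alpha^{-1}$; combined with $|D_R^iD_\beta^j\bar\xi|\lesssim-\bar\xi\lesssim\alpha^2$ and no compensating gain, this route only yields $\|\tfrac{1+R}{R}\bar\xi\|_{\cW^{k,\infty}}\lesssim\alpha$, one power of $\alpha$ short of the claim (and the $\alpha^2$ bound is what is actually used downstream, e.g.\ in $\|\bar F_\xi\|_{\cC^k}\lesssim\alpha^2$). The fix is the one the lemma statement itself displays: bound $\|\bar\xi\|_{\cC^k}$ directly from the definition \eqref{norm:ck} using \eqref{eq:xi0}, so that only the mild weights $\phi_1=\tfrac{1+R}{R}$ and $\phi_2=1+(R\sin(2\beta)^\alpha)^{-1/40}$ appear; the singularity $\sin(2\beta)^{-\alpha/40}$ of $\phi_2$ is then absorbed by the $\sin^\alpha\beta$ (resp.\ $\cos^{1+\alpha}\beta$) factor in \eqref{eq:xi}, giving $\|\bar\xi\|_{\cC^k}\lesssim\|\phi_1\phi_2\,\bar\xi\|_{L^\infty}\lesssim\alpha^2$ as stated.
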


The proofs of Lemmas \ref{lem:bar}-\ref{lem:xi} follows from the argument in \cite{chen2019finite2}, and thus are omitted.

For the $L_{12}$ operator \eqref{eq:L12}, we generalize Lemma A.4 in \cite{chen2019finite2} from $\cH^3$ to its $\cH^k$ version. The proof follows from a similar argument.

\begin{lem}\label{lem:l12}
Let $\chi(\cdot) : [0, \infty) \to [0, 1]$ be a smooth cutoff function, such that $\chi(R) = 1$ for $R \leq 1$ and $\chi(R) = 0$ for $R \geq 2$. For $k=1,2$, we have
\beq\label{eq:l12}
\bal
& 
 || L_{12}(\Om) ||_{L^{\infty}}\les || \f{1+R}{R} \Om||_{L^2}, \quad || \td{L}_{12}(\Om) (R^{-2} + R^{-3})^{1/2} ||_{L^2(R)}  \les || \Om \f{(1+R)^2}{R^2} ||_{L^2} ,
 \\
& || L_{12}(\Om)||_2 \les || \Om||_2, \quad 
|| \f{(1+R)^k}{R^k} ( L_{12}(\Om) - L_{12}(\Om)(0) \chi ) ||_{L^2(R)} \les || \f{(1+R)^k}{R^k} \Om||_{L^2},
\eal
\eeq
provided that the right hand side is bounded. 
Moreover, if $\Om \in \cH^n$, then for $ 0 \leq k \leq n, 0\leq l \leq n-1, n \geq 3$, we have 
\beq\label{eq:l12X}
\bal
& ||   L_{12}(\Om) - L_{12}(\Om)(0) \chi ||_{\cH^n} + ||  D_R( L_{12}(\Om) - L_{12}(\Om)(0) \chi) ||_{\cH^n} \les_n || \Om ||_{\cH^n},   \\
& 
|| D^k_R L_{12}(\Om) ||_{\infty} 
+ || D^k_R ( L_{12}(\Om) -\chi L_{12}(\Om)(0)) ||_{\infty} \les_n || \Om||_{\cH^n},  \\
&
|| (1 +R) \pa_R D^l_R L_{12}(\Om) ||_{\infty} 
+ || (1+R) \pa_R D^l_R ( L_{12}(\Om) -\chi L_{12}(\Om)(0)) ||_{\infty} \les_n || \Om||_{\cH^n},  \\
& || L_{12}(\Om) ||_{X_n} + || D_R L_{12}(\Om) ||_{X_n} \les_n  || \Om||_{\cH^n},
\eal
\eeq
where $X_n \teq \cH^n \oplus \cW^{n+2,\infty}$ is defined in \eqref{norm:X}.
\end{lem}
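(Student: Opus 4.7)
The proof proceeds by reducing everything to one-dimensional estimates in $R$ and then bootstrapping from the base case ($n=3$) established as Lemma A.4 of \cite{chen2019finite2}. The key structural observation is that $L_{12}(\Om)(R)$ is a function of $R$ alone, since $\b$ has been integrated out. Consequently $D_{\b} L_{12}(\Om) \equiv 0$, and every term in the $\cH^n$, $X_n$ and $\cC^k$ norms involving an angular derivative vanishes automatically. It therefore suffices to bound weighted $L^2$ and $L^{\infty}$ norms in $R$ of $D_R^k L_{12}(\Om)$ for $k \leq n$.

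The workhorse identity, obtained by differentiating under the integral sign in \eqref{eq:L12}, is
\[
D_R L_{12}(\Om)(R) \;=\; R\,\pa_R L_{12}(\Om)(R) \;=\; -\int_0^{\pi/2}\! \sin(2\b)\,\Om(R,\b)\,d\b,
\]
so that $D_R^k L_{12}(\Om)(R) = -D_R^{k-1}\!\!\int_0^{\pi/2} \sin(2\b)\,\Om(R,\b)\,d\b$ for $k\geq 1$. A Cauchy--Schwarz bound in $\b$, using that $\int_0^{\pi/2} \sin(2\b)^{\s}\,d\b$ is finite since $\s=\tfrac{99}{100}<1$, yields the pointwise estimate $|D_R^k L_{12}(\Om)(R)|^2 \lesssim \int_0^{\pi/2}|D_R^{k-1}\Om(R,\b)|^2 \sin(2\b)^{-\s}\,d\b$. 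Multiplying by $(1+R)^4/R^4$ and integrating in $R$ bounds $\|D_R^k L_{12}(\Om)\|_{L^2(\vp_1)}$ by $\|\Om\|_{\cH^n}$, using the definition \eqref{norm:Hk}.

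For the $k=0$ term and the case $k=0$ in \eqref{eq:l12X}, the subtraction of $L_{12}(\Om)(0)\chi$ is essential to absorb the $R^{-4}$ weight of $\vp_1$ at the origin: writing $L_{12}(\Om)(R)-L_{12}(\Om)(0) = \int_0^R D_R L_{12}(\Om)(s)\,s^{-1}\,ds$ and applying a Hardy-type inequality reduces matters to the already-controlled $\|D_R L_{12}(\Om)\|_{L^2(\vp_1)}$, precisely as in the base case \eqref{eq:l12}. The pointwise estimates $\|D_R^k L_{12}(\Om)\|_{\infty}$ and the $(1+R)\pa_R$ estimates come either directly from Cauchy--Schwarz on the defining integral (for $L_{12}$ itself, using $\|\tfrac{1+R}{R}\Om\|_{L^2}$ and the above identity for $\pa_R$), or by applying the embedding $\cH^n\hookrightarrow\cC^{n-2}$ from Proposition \ref{prop:embed} to the already-established weighted $L^2$ bounds on $D_R^j L_{12}(\Om)$, $j\leq k$.

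The $X_n$ estimates in the final line of \eqref{eq:l12X} follow at once by splitting
\[
L_{12}(\Om) \;=\; \bigl[L_{12}(\Om) - \chi\,L_{12}(\Om)(0)\bigr] \;+\; \chi\,L_{12}(\Om)(0),
\]
where the first piece lies in $\cH^n$ by what precedes, and the second is $\chi(R)$ times a scalar with $|L_{12}(\Om)(0)|\lesssim \|\Om\|_{\cH^n}$, which sits in $\cW^{n+2,\infty}$ by smoothness and compact support of $\chi$; an identical decomposition handles $D_R L_{12}(\Om)$ (note $D_R L_{12}(\Om)$ already vanishes as $R\to\infty$, so no further subtraction is needed). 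The main technical obstacle, as in \cite{chen2019finite2}, is the interplay between the singular weight $\vp_1 \asymp R^{-4}$ at $R=0$ and the required decay of $L_{12}(\Om)$ there; this is precisely what forces the subtraction $L_{12}(\Om)(0)\chi$ and the use of the Hardy inequality, but once those are in hand the induction on $k$ is routine since each additional $D_R$ commutes with integration in $\b$ via the identity above.
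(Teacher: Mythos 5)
Your proposal is correct and follows essentially the same route as the paper, which simply defers to Lemma A.4 of \cite{chen2019finite2} and asserts that the generalization to $\cH^n$ is "similar": reduction to one dimension via $\pa_{\b} L_{12}(\Om) \equiv 0$, the identity $D_R L_{12}(\Om) = -\int_0^{\pi/2}\sin(2\b)\,\Om\,d\b$, Cauchy--Schwarz in $\b$ against the $\sin(2\b)^{-\s}$ weight of $\vp_1$, a Hardy inequality near $R=0$ for the subtracted quantity, and the $\cH^n \oplus \cW^{n+2,\infty}$ splitting for the $X_n$ bound. One cosmetic correction: the reason no subtraction is needed for $D_R L_{12}(\Om)$ is its behavior at $R=0$ (the Cauchy--Schwarz bound already carries the full $R^{-4}$ weight of $\vp_1$ there), not its decay as $R\to\infty$.
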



\section{Some derivations}

The following formulas of velocity in the $(R, \b)$ coordinate are derived in Section 8.1 in \cite{chen2019finite2}
\beq\label{eq:vel_full}
\bal
U(\Psi) &= -\f{2r\cos(\b)}{\pi \al} L_{12}(\Om) - 2 r \sin(\b) \Psi_*  - \al r  \sin \b D_R \Psi  -  r\cos \b  \pa_{\b}\Psi_*, \\
V(\Psi) &= \f{2r \sin(\b)}{\pi \al} L_{12}(\Om) + 2 r \cos \b \Psi_* + \al r  \cos \b D_R \Psi - r \sin \b \pa_{\b} \Psi_*, 
\quad \Psi_* = \Psi - \f{\sin(2 \b)}{\pi \al } L_{12}(\Om),
\\ 
\eal
\eeq
where $\Psi$ is the solution of \eqref{eq:elli}, and  $L_{12}(\cdot), \Psi_*$ are defined in \eqref{eq:L12}.

\subsection{Derivation of \texorpdfstring{$u_r^r( 0,0)$}{Lg} }\label{app:urr}

We derive the formula \eqref{eq:L12_R3} for $u_r^r(0, 0)$ using the formula 
\[
\uu(x) = \na \times (-\D)^{-1} \om =  \f{1}{4\pi} \int_{\R^3} \f{\om(y) \times (x- y)}{ |x-y|^3} dy .
\]

Recall the coordinates and change of variables \eqref{eq:polar_wholeR3}
\[
\b = \arctan( z / r), \quad \rho = (r^2 + z^2)^{1/2}, \quad R = \rho^{\al}, \quad \Om(R, \b) = \om^{\vth}(\rho, \b),
\]
where $(r,  \vth, z)$ is the cylindrical coordinate in $\R^3$ \eqref{eq:polar_basis}. Note that $u_r^r(0,0) = -\f{1}{2} u_z^z(0,0)$ \eqref{eq:euler2}, we compute $u_z^z(0, 0)$. Since there is no swirl $u^{\vth} \equiv 0$, we get
\[
\om = \om^{\vth} e_{\vth} = ( - \om^{\vth}  \sin \vth, \om^{\vth} \cos \vth, 0) , \quad 
( \om \times (x - y) )_3 
= - \om^{\vth} \sin(\vth ) (x_2 - y_2 ) - \om^{\vth} \cos(\vth ) (x_1 - y_1). 
\]
Since the above formula is independent of $z= x_3$ and $\om^{\vth}(y)$ is odd in $y_3$, we yield 
\[
\pa_3 u^3 = \f{1}{4\pi} \int_{\R^3} (\om \times (x - y))_3  \pa_{x_3} \f{1}{|x-y|^3} dy 
=  \f{1}{4\pi} \int_{\R^3} (\om \times (x - y))_3  \f{-3( x_3 - y_3)}{  |x-y|^5} dy .
\]
Evaluating at $x = 0$ and using 
\[
 ( \om \times ( - y) )_3 =  \om^{\vth}( y) \sin(\vth) y_2 + \om^{\vth} \cos (\vth) y_1,
 =  \om^{\vth}(y) r
\]
and $r = \rho \cos \b, z = \rho \sin \b, \b \in [-\pi/2, \pi/2]$, we obtain
\[
\bal
\pa_3 u^3(0, 0) &= \f{3 }{4\pi} \int_{\R^3} \f{\om^{\vth}(y) r y_3}{ |y|^5} d y
=  \f{3}{4\pi} \int_0^{\inf}\int_0^{2\pi}\int_{\R} \f{\om^{\vth}(y) r z }{ |y|^5} r d r d \vth d z  
=\f{3}{2} \int_{\R_+\times \R} \f{\om^{\vth}(y) r^2 z }{ \rho^5} d r  d z  
 \\ 
&= \f{3}{2} \int_0^{\inf} \int_{-\pi/2}^{\pi/2} \f{\om^{\vth}(\rho, \b) \cos^2(\b) \sin(\b) }{ \rho}  d \rho d \b 
= 3 \int_0^{\inf} \int_{0}^{\pi/2} \f{\om^{\vth}(\rho, \b) \cos^2( \b) \sin(\b) }{ \rho}  d \rho d \b  .
\eal
\]
Using $u_r^r(0,0) = -\f{1}{2}u_z^z(0,0)$ \eqref{eq:euler2} and $\f{ d \rho}{\rho} = \f{1}{\al} \f{dR}{R}$, we prove \eqref{eq:L12_R3}.

\bibliographystyle{plain}
\bibliography{selfsimilar}

\end{document}